 \pgfplotsset{compat=1.14}
\newcommand*{\rom}[1]{\expandafter\@slowromancap\romannumeral #1@}
\newcommand{\gettikzxy}[3]{%
  \tikz@scan@one@point\pgfutil@firstofone#1\relax
  \edef#2{\the\pgf@x}%
  \edef#3{\the\pgf@y}%
}
\def\namedlabel#1#2{\begingroup
	#2%
	\def\@currentlabel{#2}%
	\phantomsection\label{#1}\endgroup
}
\newcommand{\sB}{\mathcal B}
\newcommand{\sC}{\mathcal P}
\newcommand{\sD}{\mathcal D}
\newcommand{\sE}{\mathcal E}
\newcommand{\sF}{\mathcal F}
\newcommand{\sI}{\mathcal I}
\newcommand{\sL}{\mathcal L}
\newcommand{\sM}{\mathcal M}
\newcommand{\sN}{\mathcal N}
\newcommand{\sP}{\mathcal P}
\newcommand{\sU}{\mathcal U}
\newcommand{\R}{\mathbb R}
\newcommand{\E}{\mathbb E}
\newcommand{\F}{\mathbb F}
\newcommand{\Prob}{\mathbb P}
\newcommand{\argsup}{\mbox{argsup}}
\newcommand{\Leb}{\mbox{Leb}}
\newtheorem{thm}{Theorem}[section]
\newtheorem{prop}{Proposition}[section]
\newtheorem{eg}{Example}[section]
\newtheorem{lem}{Lemma}[section]
\newtheorem{cor}{Corollary}[section]
\newtheorem{rem}{Remark}[section]
\newtheorem{defn}{Definition}[section]
\renewcommand{\email}[2][]{%
	\ifx\emails\@empty\relax\else{\g@addto@macro\emails{,\space}}\fi%
	\@ifnotempty{#1}{\g@addto@macro\emails{\textrm{(#1)}\space}}%
	\g@addto@macro\emails{#2}%
}
\numberwithin{equation}{section}
\begin{document}
\title[Explicit construction of the increasing supermartingale coupling]{A potential-based construction of the increasing supermartingale coupling}
\author{Erhan Bayraktar}
\email{erhan@umich.edu}
\author{Shuoqing Deng}
\email{shuoqing@umich.edu}
\author{Dominykas Norgilas}
\email{dnorgila@umich.edu}
\address{Department of Mathematics, University of Michigan}
\thanks{E. Bayraktar is partially supported by the National Science Foundation under grant  DMS-2106556 and by the Susan M. Smith chair.} 

\keywords{Couplings, supermartingales, optimal transport}
\subjclass[2020]{Primary: 60G42; Secondary: 49N05.}


\begin{abstract}
The increasing supermartingale coupling, introduced by Nutz and Stebegg (Canonical supermartingale couplings, Ann. Probab., 46(6):3351--3398, 2018) is an extreme point of the set of `supermartingale' couplings between two real probability measures in convex-decreasing order. In the present paper we provide an explicit construction of a triple of functions, on the graph of which the increasing supermartingale coupling concentrates. In particular, we show that the increasing supermartingale coupling can be identified with the left-curtain martingale coupling and the antitone coupling to the left and to the right of a uniquely determined regime-switching point, respectively.

Our construction is based on the concept of the \textit{shadow} measure. We show how to determine the potential of the shadow measure associated to a supermartingale, extending the recent results of Beiglb\"ock et al. (The potential of the shadow measure, Electron. Commun. Probab., 27, paper no. 16, 1--12, 2022) obtained in the martingale setting.
\end{abstract}
\maketitle
\tableofcontents

\section{Introduction}
The joint distribution $\pi$ of two real-valued random variables $X$ and $Y$ is called a supermartingale coupling if $\E^\pi[Y\lvert X]\leq X$. The classical result by Strassen \cite{Strassen:65} states that, for two probability measures $\mu$ and $\nu$ on $\R$, the set of supermartingale couplings of $X\sim\mu$ and $Y\sim\nu$ is non-empty if and only if $\mu\leq_{cd}\nu$, i.e., $\mu$ is smaller than $\nu$ with respect to the convex-decreasing order. The natural question is then whether there is any canonical choice to couple $\mu$ and $\nu$. Nutz and Stebegg \cite{NutzStebegg:18} recently introduced the \textit{increasing} supermartingale coupling, denoted by $\pi_I$, and proved that it is canonical in several ways.

First, $\pi_I$ solves the supermartingale optimal transport (SOT) problem for a class of cost functions $c:\R\times\R\mapsto\R$ (essentially those $c$ that are such that $c(x_2,\cdot)-c(x_1,\cdot)$ is strictly decreasing and strictly convex for all $x_1<x_2$):
$$
\textrm{minimise}\quad\E^\pi[c(X,Y)]\quad\textrm{subject to }X\sim\mu,Y\sim\nu,
$$
where the infimum is taken over all supermartingale couplings $\pi$ (in this context the couplings $\pi$ are often called \textit{transport plans}). Second, the optimality of $\pi_I$ is closely linked to the monotonicity properties of its support. In particular, $\pi_I$ is a unique supermartingale coupling whose support is both, first-order right-monotone (see Definition \ref{def:forightmon}) and second-order left monotone (see Definition \ref{def:lmon}). Finally, the increasing supermartingale coupling has one further, order-theoretic, characterisation: it is canonical with respect to the convex-decreasing order. More precisely, for a transport plan $\pi$ and any real number $t$ denote by $\nu^\pi_t$ the terminal law of $\mu\lvert_{(-\infty,t]}$ within $\nu$ when a coupling $\pi$ is used. Then $\pi_{I}$ is such that, for each $t$, $\nu^{\pi_{I}}_t\leq_{cd}\nu^\pi_t$ for all supermartingale couplings $\pi$.

\textbf{Literature Review.} From the optimal transportation point of view, the SOT is a classical Monge-Kantorovich optimal transport (OT) problem with an additional moment constraint. On the other hand, the basic martingale optimal transport (MOT) problem (introduced by Beiglb\"{o}ck et al. \cite{BeiglbockHenryLaborderePenkner:13} (in a discrete time setting) and Galichon et al. \cite{GalichonHenryLabordereTouzi:14} (in continuous time), and first solved by Hobson and Neuberger~\cite{HobsonNeuberger:12} and Hobson and Klimmek~\cite{HobsonKlimmek:15} for the specific cost functions $c(x,y)=-|y-x|$ and $c(x,y)=|y-x|$, respectively) is to construct a martingale $M$, with $M_1\sim\mu$ and $M_2\sim\nu$, and such that $\E[c(M_1,M_2)]$ is minimal. Since the martingale requirement can be expressed in terms of couplings $\pi$ satisfying $\E^\pi[M_2\lvert M_1]=M_1$, the SOT problem can be also seen as a relaxation of an MOT problem. Given the connectedness of the aforementioned three variations of the transportation problem, we in fact have that they all share a similar theory.

It is a well-known fact in the classical OT setting that the support of an optimal coupling is a $c$\textit{-cyclically monotone} set (see, for example, Villani \cite{villani:08}). Furthermore, if one considers the cost functions $c$ that can be represented as $c(x,y)=h(y-x)$ for a strictly convex $h$ (the so-called Spence-Mirrlees condition), then the optimal coupling $\hat\pi$ is canonical with respect to the first-order stochastic dominance. In particular, $\hat\pi=\pi_{HF}$, where $\pi_{HF}$ is the so-called Hoeffding-Fr\'{e}chet (or \textit{quantile}) coupling, and we have that $\nu^{\pi_{HF}}_t$ is the \textit{left-most} measure within $\nu$ and with total mass $\mu\lvert_{(-\infty,t]}(\R)$. 

In the martingale setting, Beiglb\"{o}ck and Juillet \cite{BeiglbockJuillet:16} introduced the \textit{left-curtain} coupling $\pi_{lc}$ that can be viewed as a martingale counterpart to the monotone quantile coupling $\pi_{HF}$. Some notable similarities are that $\pi_{lc}$ can also be described via three different characterizations: order-theoretic, optimality, monotonicity of the support. First, $\pi_{lc}$ is canonical with respect to the convex order, denoted by $\leq_c$: for each $t$, $\nu^{\pi_{lc}}_t\leq_{c}\nu^\pi_t$  for any martingale coupling $\pi$. Beiglb\"{o}ck and Juillet \cite{BeiglbockJuillet:16} also showed that the left-curtain coupling is optimal for a range of different cost functions. Later Henry-Labord\`{e}re and Touzi~\cite{HenryLabordereTouzi:16} extended their result and proved that $\pi_{lc}$ is optimal for even larger class of cost functions, namely those satisfying the martingale Spence-Mirrlees condition $c_{xyy}>0$. Finally $\pi_{lc}$ can be characterized by its support: it is a unique second-order left-monotone martingale coupling. Several other authors further investigate the properties and extensions of the left-curtain coupling, see Beiglb\"{o}ck et al. \cite{{BeiglbockHenryLabordereTouzi:17},{BeiglbockCox:17},BHN:20}, Juillet \cite{{Juillet:16},Juillet:18}, Hobson and Norgilas \cite{HobsonNorgilas:17}, Nutz et al. \cite{{NutzStebegg:18},NutzStebeggTan:17}, Campi et al. \cite{Campi:17}, Henry-Labord\`{e}re et al.~\cite{HenryLabordereTanTouzi:16} and Br\"{u}ckerhoff at al.~\cite{BHJ:20}.

We will study one further characterisation of the increasing supermartingale coupling $\pi_I$, which is also satisfied by $\pi_{HF}$ and $\pi_{lc}$ in their respective settings. The fundamental result in the theory of OT is Brenier's theorem (see Brenier \cite{Brenier:87} and R\"{u}schendorf and Rachev \cite{RuRach:90}). It considers the optimal transport problem (in $\R^d$ and) in the particular case $c(x,y)=\lvert x-y\lvert^2$, where $\lvert\cdot\lvert$ denotes the Euclidean norm on $\R^d$. Under some regularity conditions on the initial measure $\mu$, the optimal coupling is supported by the graph of the gradient of a convex function. In dimension one, the supporting function is monotonically increasing and the optimal coupling coincides with the quantile coupling $\pi_{HF}$. More precisely, $\pi_{HF}$ is supported on the graph of $G_\nu\circ F_\mu$, where $G_\nu$ is the quantile function of $\nu$ while $F_\mu$ is the cumulative distribution function of $\mu$. In the martingale setting, Beiglb\"{o}ck and Juillet \cite{BeiglbockJuillet:16} established the Brenier-type result for the left-curtain coupling as well. Given that the initial measure $\mu$ does not contain atoms, the authors showed that $\pi_{lc}$ is supported by the graphs of two functions $T_u,T_d:\R\mapsto\R$ satisfying certain monotonicity properties (see Definition \ref{def:lmonfns}). While the result of Beiglb\"{o}ck and Juillet \cite{BeiglbockJuillet:16} is purely an existence result, Henry-Labord\`{e}re and Touzi~\cite{HenryLabordereTouzi:16} used an ordinary differential equation approach and showed how to explicitly determine $T_d$ and $T_u$ under some further assumptions on $\mu$ and $\nu$ of technical nature. The most general result regarding the functional representation of $\pi_{lc}$ is due to Hobson and Norgilas \cite{HobsonNorgilas:21}. The authors showed (for arbitrary $\mu$ and $\nu$) how to recover the property that the left-curtain coupling is supported on a graph of two (explicitly constructed) functions, provided that we generalise the notion of a coupling.

\textbf{Our contribution.} In the present paper, our main effort is dedicated to proving the following Brenier-type result that provides the functional representation of the (generalised) increasing supermartingale coupling $\pi_{I}$. 
\begin{thm}
	\label{thm:mainTHM_intro}
	Let $(\Omega,\sF,\Prob) = ((0,1) \times (0,1), \sB(\Omega), \Leb(\Omega))$. Let $\omega = (u,v)$ and let $(U,V)$ be the canonical random variables on $(\Omega,\sF,\Prob)$ given by $(U(\omega),V(\omega))=(u,v)$ so that $U$ and $V$ are independent $U(0,1)$ random variables. Let $\F = (\sF_1 = \sigma(U), \sF_2 = \sigma(U,V) \})$ and set $\mathbf{S} = (\Omega, \sF, \F, \Prob)$.
	
	Fix $\mu\leq_{cd}\nu$ and let $G=G_\mu$ be a quantile function of $\mu$.
	
	Then there exists $u^*\in[0,1]$ and a triple of functions $R,S:(0,u^*]\mapsto\R$ and $T:(u^*,1)\mapsto\R$ such that
	\begin{itemize}
		\item $R\leq G\leq S$ on $\sI=(0,u^*]$, $S$ is non-decreasing on $\sI$, $R(u')\notin (R(u),S(u))$ for all $u,u'\in\sI$ with $u<u'$,
		\item $T<G$ on $\hat{\sI}=(u^*,1)$, $T$ is non-increasing on $\hat{\sI}$, $T(u')\notin (R(u),S(u))$ for all $u\in\sI$ and $u'\in\hat{\sI}$.
	\end{itemize}
	Furthermore, if we define $X(u,v)=X(u)=G(u)$ and $Y(u,v) \in \{R(u),S(u),T(u) \}$ by 
	\begin{equation*}
	\begin{aligned}
	Y(u,v) &= I_{\{u\leq u^*\}\cap\{R(u)=S(u)\}}G(u)\\&+I_{\{u\leq u^*\}\cap\{R(u)<S(u)\}}\Big\{R(u) I_{\{ v \leq \frac{S(u) - G(u)}{S(u)-R(u)} \}} +  S(u) I_{ \{ v > \frac{S(u) - G(u)}{S(u)-R(u)} \} }\Big\}\\ &+ I_{\{u>u^*\}}T(u),
	\end{aligned}
	\end{equation*}
	then
	$S = (X(U),Y(U,V))$ is a $\mathbf{S}$-supermartingale for which $\sL(X) = \mu$ and $\sL(Y) = \nu$.
\end{thm}

We will prove Theorem \ref{thm:mainTHM_intro} by explicitly constructing the triple of supporting functions $(R,S,T)$, see Figure \ref{fig:RGSfg}. Furthermore we will determine a unique regime-switching point $u^*\in[0,1]$ such that, to the left of $G_\mu(u^*)$, $\pi_I$ coincides with the left-curtain martingale coupling and concentrates on $R$ and $S$, while to the right of $G(u^*)$, $\pi_I$ concentrates on a deterministic decreasing map $T$, and thus corresponds to the classical \textit{antitone} coupling $\pi_{AT}$, which is the symmetric counterpart of $\pi_{HF}$ (if $\pi_{HF}$ concentrates on $G_\nu\circ F_\mu$, then $\pi_{AT}$ concentrates on $G_\nu\circ(1-F_\mu)$). Finally, Nutz and Stebegg \cite{NutzStebegg:18} showed that the set $\mathbb M$ appearing in the definition of the first-order right-monotonicity of $\pi_I$ (see Definition \ref{def:forightmon}) is such that $\pi_I\lvert_{\mathbb M\times\R}$ is a martingale. From or construction it will follow that the set of `martingale points' $\mathbb M$ is in fact an interval $(-\infty,G(u^*)]$ (and extra care will be needed if $\mu$ has an atom at $G_\mu(u^*)$).
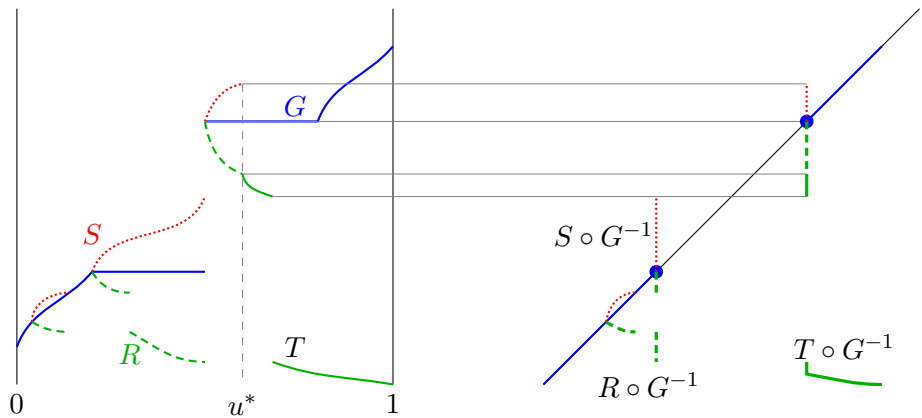
\begin{figure}[H]
	\centering
	\begin{tikzpicture}[scale=1,
	declare function={	
		k1=2.1;
		k2=1;
		a(\x)=((k1-\x)*(\x<k1))-k1-1;
		b(\x)=((k2-\x)*(\x<k2))-k1-1;
		x1=-6.8;
		x2=-6.3;
		z1=-5.2;
		z2=-4.7;
	}]

	\draw[ black] (-7,0)--(-7,5);
	\draw[black] (-2,0)--(-2,5);
	
	\draw[name path=diag, black] (0,0) -- (5,5);

	\draw[blue,thick, name path=g1] (-7,0.5) to[out=70, in=230] (-6,1.5) -- (-4.5,1.5) ;
	\draw[blue,thick, name path=g2] (-4.5,3.5) -- (-3,3.5) to[out=70, in=230] (-2,4.5)  ;
	
	\path [name path=lineA](x1,0) -- (x1,5);
	\path [gray, very thin, name intersections={of=lineA and g1}] (x1,0) -- (intersection-1);
	\coordinate (temp1) at (intersection-1);

	\path [name path=lineA](x2,0) -- (x2,5);
	\path [gray, very thin, name intersections={of=lineA and g1}] (x2,0) -- (intersection-1);
	\coordinate (temp2) at (intersection-1);
	
	\draw[red,thick, densely dotted,name path=s1] (temp1) to[out=90, in=180] (temp2) ;
	\draw[red,thick, densely dotted,name path=s2] (-6,1.5) to[out=70, in=250] (-4.5,2.5) ;
	\draw[red,thick, densely dotted,name path=s3] (-4.5,3.5) to[out=70, in=190] (-4,4) ;
	
	\gettikzxy{(temp1)}{\l}{\k};
	\gettikzxy{(temp2)}{\n}{\m};
	\draw[black!30!green,thick, densely dashed, name path=r1] (temp1) to[out=330, in=180] (\n,0.7) ;	
	\draw[black!30!green,thick, densely dashed, name path=r2] (-6,1.5) to[out=300, in=180] (-5.5,\m) ;
	\draw[black!30!green,thick, densely dashed, name path=r3] (-5.5,0.7) to[out=330, in=180] (-4.5,0.3) ;
	\draw[black!30!green,thick, densely dashed, name path=r4] (-4.5,3.5) to[out=280, in=160] (-4,2.8) ;
	\draw[black!30!green,thick, name path=r4] (-4,2.8) to[out=280, in=160] (-3.6,2.5) ;
	\draw[black!30!green,thick, name path=r5] (-3.6,0.3) to[out=340, in=170] (-2,0) ;
	
	\draw[gray, very thin, name path=r4] (-4,2.8) -- (3.5,2.8) ;
	

	\path[gray, very thin] (\n,0.7) -- (temp2);
	\path[gray, very thin] (\n,0.7) -- (0.7,0.7);
	\path[gray, very thin]  (temp2)--(\m,\m);
	\path[gray, very thin]  (-2,4.5)--(4.5,4.5);

	\path [name path=lineA](-5.5,0.7) -- (-5.5,5);
	\path [gray, very thin, name intersections={of=lineA and s2}] (-5.5,0.7) -- (intersection-1);
	\coordinate (vs2) at (intersection-1);
	\gettikzxy{(vs2)}{\aa}{\bb}
	\path[gray, very thin]  (vs2)--(\bb,\bb);

	\path[gray, very thin] (-4.5,0.3) -- (-4.5,3.5);
	\draw[gray, very thin] (-4.5,3.5) -- (3.5,3.5);
	\path[gray, very thin] (-4.5,0.3) -- (0.3,0.3);
	
	\path[gray, very thin] (-3.6,0.3) -- (-3.6,2.5) ;
	\draw[gray, very thin] (-3.6,2.5) -- (3.5,2.5) ;
	\draw[gray, very thin] (-4,4) -- (3.5,4) ;
	\draw[gray, very thin, dashed] (-4,4) -- (-4,0) ;
	
	\path [name path=lineA](-3.6,2.5) -- (-3.6,5);
	\coordinate (test1) at (intersection-1);
	\gettikzxy{(test1)}{\testx}{\testy}
	
	\path [name path=lineA](-3,3.5) -- (-3,5);
	\path [gray, very thin, name intersections={of=lineA and s3}] (-3,3.5) -- (intersection-1);
	\coordinate (vs3) at (intersection-1);
	\gettikzxy{(vs3)}{\aaa}{\bbb}
	\path[gray, very thin]  (vs3)--(\bbb,\bbb);

	\path [name path=lineA](-3,3.5) -- (-3,0);
	\path [gray, very thin, name intersections={of=lineA and r5}] (-3,3.5) -- (intersection-1);
	\coordinate (vr5) at (intersection-1);
	\gettikzxy{(vr5)}{\aaaa}{\bbbb}
	\path[gray, very thin]  (vr5)--(\bbbb,\bbbb);
	
	\path[gray, very thin]  (vs2)--(\bb,\bb);
	\path[gray, very thin]  (vs2)--(\bb,\bb);
	
	\node [scale=0.5, shape=circle, fill, blue] at (3.5,3.5) {} ;
	\node [scale=0.5, shape=circle, fill, blue] at (1.5,1.5) {} ;

	\draw[blue,thick] (0,0) -- (1.5,1.5);
	
	\draw[blue,thick] (3.5,3.5) -- (4.5,4.5);
	
	\draw[red,densely dotted, thick] (\k,\k) to[out=80, in=180] (\m,\m);
	
	\draw[red,densely dotted, thick] (1.5,1.5) -- (1.5,2.5);
	
	\draw[red,densely dotted, thick] (3.5,3.5) -- (3.5,4);

	\draw[black!30!green, densely dashed,very thick] (\k,\k) to[out=330, in=180] (\m,0.7);
	\draw[black!30!green,densely dashed,very thick] (1.5,1.5) -- (1.5,\m);
	\draw[black!30!green,densely dashed,very thick] (1.5,0.7) -- (1.5,0.3);
	
	\draw[black!30!green,densely dashed,very thick] (3.5,3.5) -- (3.5,2.8);
	\draw[black!30!green,very thick] (3.5,2.8) -- (3.5,2.5);
	
	\draw[black!30!green,very thick] (3.5,0.3) -- (3.5,\bbbb) to[out=350, in=180] (4.5,0);

	\node[red] at (-6,2) {$S$};
	\node[blue] at (-3.3,3.7) {$G$};
	\node[] at (-3.3,0.5) {$T$};

	\node[black!30!green] at (-5.5,0.4) {$R$};
	
	\node[below] at (-7,0) {$0$};
	\node[below] at (-2,0) {$1$};
	\node[below] at (-4,0) {$u^*$};
	\node[black] at (4,0.5) {$T \circ G^{-1}$};
	\node[black] at (0.8,2) {$S \circ G^{-1}$};
	\node[black] at (1.4,0) {$R \circ G^{-1}$};
	\end{tikzpicture}
	\caption{Sketch of $R,G,S$ and $T$ in the case $u^*\in(0,1)$. For each $u\in (0,u^*]$ the mass at $G(u)$ either remains at $G(u)$ or it splits and is mapped either to $R(u)$ or to $S(u)$, while for each $u\in(u^*,1)$ the mass at $G(u)$ is mapped to a single point located at $T(u)<G(u)$. On the atoms of $\mu$, $G$ is flat, and $R \circ G^{-1}$, $S \circ G^{-1}$ and $T \circ G^{-1}$ are multi-valued, but $R,S$ and $T$ remain well-defined.}
	\label{fig:RGSfg}
\end{figure}

There are two special cases of Theorem \ref{thm:mainTHM_intro}, namely, $u^*=0$ and $u^*=1$. When $u^*=1$, then we have that, for the given $\mu,\nu$ with $\mu\leq_{cd}\nu$, the set of supermartingale couplings coincides with the set of martingale couplings. In this case the function $T$ is irrelevant. In particular, our construction then corresponds to the generalised, or lifted, left-curtain martingale coupling. Hobson and Norgilas \cite{HobsonNorgilas:21} constructed $R$ and $S$ for (generalised version of) $\pi_{lc}$ on a single `irreducible' component only. Our construction, on the other hand, does not place any irreducibility conditions. The second special case is when $u^*=0$, so that the functions $R$ and $S$ do not play any role. In particular, the generalised increasing supermartingale coupling then concentrates on the deterministic decreasing map $T$ and we have that  $\pi_I=\pi_{AT}$. To achieve this we will show that $u^*=0$ if and only if the support of $\mu$ is (strictly) to the right of the support of $\nu$, and thus no part of $\mu$ can be embedded into $\nu$ using a martingale. 

Nutz and Stebegg \cite{NutzStebegg:18} introduced the notion of positive convex-decreasing order of two measures, denoted by $\leq_{pcd}$, which compares measures of possibly different total mass. If a pair of measures $\mu,\nu$ is such that $\mu\leq_{pcd}\nu$, then the set of measures $\eta$ with $\mu\leq_{cd}\eta\leq\nu$ is non-empty, and each such $\eta$ corresponds to a terminal law of a supermartingale that embeds $\mu$ into $\nu$. Nutz and Stebegg \cite{NutzStebegg:18} also proved that there exists a canonical choice of such $\eta$ with respect to $\leq_{cd}$: the shadow of $\mu$ in $\nu$, denoted by $S^\nu(\mu)$, is a unique measure such that $S^\nu(\mu)\leq_{cd}\eta$ for all $\eta$ satisfying $\mu\leq_{cd}\eta\leq\nu$. Our interest in the shadow measure lies in the fact that the increasing supermartingale coupling can be defined as a unique measure $\pi_I$ on $\R^2$ such that, for each $x\in\R$, $\pi_I\lvert_{(-\infty,x]\times\R}$ has
the first marginal $\mu\lvert_{(-\infty,x]}$ and the second marginal $S^\nu(\mu\lvert_{(-\infty,x]})$. One of our main achievement is that, for arbitrary $\mu$ and $\nu$ with $\mu\leq_{pcd}\nu$, we are able to explicitly construct the potential function of the shadow measure (and then the shadow measure itself can be identified as the second derivative of the potential function in the sense of distributions). This generalises the recent results of Beiglb\"{o}ck et al. \cite{BHN:20}, where the authors showed how to construct the potential of the shadow measure in the martingale setting. Surprisingly, in both, supermartingale and martingale cases, the potential of the shadow measure has the same functional representation.

The ability to explicitly determine the shadow measure will be the key asset in constructing the triple $(R,S,T)$ that supports the increasing supermartingale coupling. In particular, for each $x\in\R$, the graph of the potential function of $S^\nu(\mu\lvert_{(-\infty,x]})$ will suggest a candidate locations to which the mass of $\mu$ at $x$ should be mapped.
The remarkable property of the supermartingale shadow measure is that it is able to determine both, the left-curtain martingale coupling in the case $u^*=1$ and also the antitone coupling in the case $u^*=0$. This is of independent interest.

The paper is structured as follows. In Section \ref{sec:prelims} we discuss the relevant notions of probability measures and (positive) convex-decreasing order, and some important (for our main theorems) results regarding the convex hull of a function. In Section \ref{sec:shadowLC} we introduce the supermartingale shadow measure and the increasing supermartingale coupling. Section \ref{sec:Slc} is dedicated to our main results. In Section \ref{sec:u^*} we first determine the regime-switching point $u^*\in[0,1]$. Then in Section \ref{sec:Constr} we prove Theorem \ref{thm:mainTHM_intro}, first in the case $u^*=1$, then we cover the case $u^*=0$, and finally we prove the general case $u^*\in(0,1)$. Some proofs are deferred until the appendix.
\section{Preliminaries}\label{sec:prelims}
\subsection{Measures and Convex order}
\label{prelims:convex}
The law of a random variable $X$ will be denoted by $\sL(X)$.

Let $\sM$ (respectively $\sP$) be the set of measures (respectively probability measures) on $\R$ with finite total mass and finite first moment, i.e., if $\eta\in\sM$, then $\eta(\R)<\infty$ and $\int_\R\lvert x\lvert\eta(dx)<\infty$. Given a measure $\eta\in\sM$ (not necessarily a probability measure), define $\overline{\eta} = \int_\R x \eta(dx)$ to be the first moment of $\eta$ (and then $\overline{\eta}/\eta(\R)$ is the barycentre of $\eta$). Let $\sI_\eta$ be the smallest interval containing the support of $\eta$, and let $\{ \ell_\eta, r_\eta \}$ be the endpoints of $\sI_\eta$. If $\eta$ has an atom at $\ell_\eta$ then $\ell_\eta$ is included in $\sI_\eta$, and otherwise it is excluded, and similarly for $r_\eta$. 

For $\eta\in\sM$, by $F_\eta:\R\mapsto[0,\eta(\R)]$ we denote the right-continuous cumulative distribution function of $\eta$. Let $G_\eta:(0,\eta(\R))\mapsto\R$ be a quantile function of $\eta$, i.e., a generalised inverse of $F_\eta$. There are two canonical versions of $G_\eta$: the left-continuous and right-continuous versions correspond to $G_\eta^-(u)=\sup\{k\in\R: F_\eta(k)<u\}$ and $G^+_\eta(u)=\inf\{k\in\R: F_\eta(k) > u\}$, for $u\in(0,\eta(\R))$, respectively. However any $G$ with $G_\eta^-(u)\leq G(u)\leq G^+_\eta(u)$, for all $u\in(0,\eta({\R})$, is still called a quantile function of $\eta$, which is motivated by the fact that for any such $G$ we have that $\sL(G(U))=\eta$, where $U\sim U[0,1]$. (Note that $G_\eta$ may take values $-\infty$ and $\infty$ at the left and right end-points of $[0,\eta(\R)]$, respectively.)

For $\alpha \geq 0$ and $\beta \in \R$ let $\sD(\alpha, \beta)$ denote the set of non-negative, non-decreasing and convex functions $f:\R \mapsto \R_+$ such that
\[ \lim_{ z \downarrow -\infty}  \{ f(z) \} =  0, \hspace{10mm} \lim_{z \uparrow \infty} \{ f(z) - (\alpha z- \beta) \}   =0. \]
Then, when $\alpha = 0$, $\sD(0,\beta)$ is empty unless $\beta = 0$ and then $\sD(0,0)$
contains one element, the zero function.

For $\eta\in\sM$, define the functions $P_\eta,C_\eta : \R \mapsto \R^+$ by
\begin{equation*}
P_\eta(k) := \int_{\R} (k-x)^+ \eta(dx),\quad k\in\R,\hspace{10mm}C_\eta(k) := \int_{\R} (x-k)^+ \eta(dx),\quad k\in\R,
\end{equation*}
respectively. Then $P_\eta(k) \geq 0 \vee  (\eta(\R) k - \overline{\eta} )$ and $C_\eta(k) \geq 0 \vee (\overline{\eta} - \eta(\R)k)$. Also $C_\eta(k) - P_\eta(k) = (\overline{\eta}- \eta(\R)k)$.

The following properties of $P_\eta$ can be found in Chacon~\cite{Chacon:77}, and Chacon and Walsh~\cite{ChaconWalsh:76}: $P_\eta \in \sD(\eta(\R), \overline{\eta})$ and $\{k : P_{\eta}(k) > (\eta(\R)k - \overline{\eta})^+  \} =  \{k : C_{\eta}(k) > (\overline{\eta}- \eta(\R)k)^+\} =(\ell_\eta,r_\eta)$. Conversely (see, for example, Proposition 2.1 in Hirsch et al. \cite{peacock}),  if $h \in \sD(k_m,k_f)$ for some numbers $k_m \geq 0$ and $k_f\in\R$ (with $k_f = 0$ if $k_m=0$), then there exists a unique measure $\eta\in\sM$, with total mass $\eta(\R)=k_m$ and first moment $\overline{\eta}=k_f$, such that $h=P_{\eta}$. In particular, $\eta$ is uniquely identified by the second derivative of $h$ in the sense of distributions. Furthermore, $P_\eta$ and $C_\eta$ are related to the potential $U_\eta$, defined by
\begin{equation*}
U_\eta(k) : =  - \int_{\R} |k-x| \eta(dx),\quad k\in\R,
\end{equation*}
by $-U_\eta=C_\eta+P_\eta$. We will call $P_\eta$ (and $C_\eta$) a modified potential. Finally note that all three second derivatives $C^{\prime\prime}_{\eta},P^{\prime\prime}_{\eta}$ and $-U_\eta^{\prime\prime}/2$ identify the same underlying measure  $\eta$.

For $\eta,\chi\in\sM$, let $\sD_\eta:=\{\sD(\eta(\R),\bar\eta-q):q\in\R_+\}$ and 
$$
\sC(\eta,\chi):=\{\tilde{P} \in \sD_\eta:P_\chi-\tilde{P}\textrm{ is convex and }P_{\eta}\leq \tilde P\}.
$$

For $\eta,\chi\in\sM$, we write $\eta\leq\chi$ if $\eta(A) \leq \chi(A)$ for all Borel measurable subsets $A$ of $\R$, or equivalently if
\begin{equation*}
\int fd\eta\leq\int fd\chi,\quad \textrm{for all non-negative }f:\R\mapsto\R_+.
\end{equation*}
Since $\eta$ and $\chi$ can be identified as second derivatives of $P_\chi$ and $P_\eta$ respectively, we have $\eta\leq\chi$ if and only if $P_\chi-P_\eta$ is convex, i.e., $P_\eta$ has a smaller curvature than $P_\chi$.

Two measures $\eta,\chi\in\sM$ are in convex (resp. convex-decreasing) order, and we write $\eta\leq_c\chi$ (resp. $\eta \leq_{cd} \chi$), if
\begin{equation}\label{eq:cd}
\int fd\eta\leq\int fd\chi,\quad\textrm{for all convex (resp. convex and non-increasing) }f:\R\mapsto\R.
\end{equation}
Since we can apply \eqref{eq:cd} to all constant functions, including $f\equiv -1$ and $f\equiv1$, we have that if $\eta\leq_{c}\chi$ or $\eta\leq_{cd}\chi$ then $\eta(\R)=\chi(\R)$. On the other hand, applying \eqref{eq:cd} to $f(x)=-x$ gives that $\bar{\eta}\leq\bar{\chi}$. However, a reversed inequality holds only in the case $\eta\leq_c\chi$ (since $f(x)=x$ is strictly increasing).

For $\eta,\chi\in\sP$, let ${\Pi}(\eta,\chi)$ be the set of probability measures on $\R^2$ with the first marginal $\eta$ and second marginal $\chi$. Let ${\Pi}_S(\eta,\chi)$ be the set of supermartingale couplings of $\eta$ and $\chi$.
Then
\begin{equation*}
{\Pi}_S(\eta,\chi) = \big\{ \pi \in {\Pi}(\eta,\chi) : \mbox{\eqref{eq:martingalepi} holds} \big\},
\end{equation*}
where \eqref{eq:martingalepi} is the supermartingale condition
\begin{equation}
\int_{x \in B} \int_{y \in \R} y \pi(dx,dy) \leq \int_{x \in B} \int_{y \in \R} x \pi(dx,dy) = \int_B x \eta(dx), \quad
\mbox{$\forall$ Borel $B \subseteq \R$}.
\label{eq:martingalepi}
\end{equation}
Equivalently, $\Pi_S(\eta,\chi)$ consists of all transport plans $\pi$ (i.e., elements of ${\Pi}(\eta,\chi)$) such that the disintegration in probability measures $(\pi_x)_{x\in\R}$ with respect to $\eta$ satisfies $\int_\R y\pi_x(dy)\leq x$ for $\eta$-almost every $x$.

The following is classical (see, for example, F\"{o}llmer and Schied \cite[Theorem 2.58]{FS:16}).
\begin{lem}\label{lem:put_cd}
Let $\eta,\chi\in\sP$. The following are equivalent:\begin{enumerate}
	\item $\eta\leq_{cd}\chi$,
	\item $\eta(\R)=\chi(\R)$ and $P_\eta\leq P_\chi$ on $\R$,
	\item $\Pi_S(\eta,\chi)\neq\emptyset$.
\end{enumerate}
\end{lem}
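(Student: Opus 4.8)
The plan is to run the chain of implications $(3)\Rightarrow(1)\Rightarrow(2)\Rightarrow(1)\Rightarrow(3)$, of which only the last is substantive. For $(3)\Rightarrow(1)$ I would argue by conditional Jensen: fix $\pi\in\Pi_S(\eta,\chi)$ and disintegrate $\pi(dx,dy)=\eta(dx)\pi_x(dy)$, so that $\int_\R y\,\pi_x(dy)\le x$ for $\eta$-a.e.\ $x$ by \eqref{eq:martingalepi}. Given a convex non-increasing $f:\R\mapsto\R$ (for which $\int f\,d\eta$ and $\int f\,d\chi$ are well defined in $(-\infty,+\infty]$, $f$ being minorised by an affine function), Jensen's inequality on $\pi_x$ gives $\int_\R f(y)\,\pi_x(dy)\ge f\!\big(\int_\R y\,\pi_x(dy)\big)$, and since $f$ is non-increasing and $\int_\R y\,\pi_x(dy)\le x$ this is $\ge f(x)$; integrating against $\eta$ yields $\int f\,d\chi\ge\int f\,d\eta$, and taking $f\equiv\pm1$ gives $\eta(\R)=\chi(\R)$. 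Hence $\eta\leq_{cd}\chi$. The implication $(1)\Rightarrow(2)$ is then immediate: each $x\mapsto(k-x)^+$ is convex and non-increasing, so $\eta\leq_{cd}\chi$ forces $P_\eta(k)\le P_\chi(k)$ for every $k$, while $f\equiv\pm1$ again yields equality of masses.

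For $(2)\Rightarrow(1)$ I would first note that, letting $k\to\infty$ in $P_\eta(k)\le P_\chi(k)$ and using $P_\eta\in\sD(\eta(\R),\overline{\eta})$, $P_\chi\in\sD(\chi(\R),\overline{\chi})$ together with $\eta(\R)=\chi(\R)$, one obtains $\overline{\chi}\le\overline{\eta}$. Next I would reduce an arbitrary convex non-increasing $f$ to the test functions $(k-\cdot)^+$, the constants, and $x\mapsto-x$ by approximation: let $f_n$ coincide with $f$ on $[-n,n]$ and be continued affinely beyond $[-n,n]$ with slopes $f'(-n)$ and $f'(n)$, so that each $f_n$ is convex, non-increasing, equal to a tangent line of $f$ outside $[-n,n]$, and $f_n\uparrow f$ pointwise (using that for a convex function a tangent taken at a point closer to $x$ lies above one taken further away). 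Since the second-derivative measure of $f_n$ has compact support, $f_n$ admits a representation $f_n(x)=\gamma_n-b_n x+\int_\R(k-x)^+\,\sigma_n(dk)$ with $\gamma_n\in\R$, $b_n:=-f'(n)\ge0$ and $\sigma_n\ge0$ compactly supported; hence
\[
\int f_n\,d\chi-\int f_n\,d\eta \;=\; -\,b_n(\overline{\chi}-\overline{\eta})+\int_\R\big(P_\chi(k)-P_\eta(k)\big)\,\sigma_n(dk)\;\ge\;0,
\]
using $\eta(\R)=\chi(\R)$, $b_n\ge0$, $\overline{\chi}\le\overline{\eta}$ and $P_\eta\le P_\chi$. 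Monotone convergence (the $f_n$ being bounded below by a fixed affine, hence integrable, function) then gives $\int f\,d\eta\le\int f\,d\chi$, i.e.\ $\eta\leq_{cd}\chi$.

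Finally, $(1)\Rightarrow(3)$ is precisely Strassen's theorem \cite{Strassen:65} in the convex-decreasing regime, and this is the hard part; the cleanest self-contained route is a Hahn--Banach/minimax separation argument showing that the inequalities $\int f\,d\eta\le\int f\,d\chi$ for all convex non-increasing $f$ rule out the possibility $\Pi_S(\eta,\chi)=\emptyset$, or, alternatively, a reduction to the martingale case via an intermediate law (the law of $\E^\pi[Y\mid X]$ for the sought $\pi$, which one must build by hand from $P_\eta\le P_\chi$ — this is where the shadow-type machinery of Section~\ref{sec:shadowLC} secretly enters). Since the statement is classical I would simply cite \cite{Strassen:65} (or \cite[Theorem 2.58]{FS:16}) for $(1)\Rightarrow(3)$ and give the short arguments above for $(3)\Rightarrow(1)$ and $(1)\Leftrightarrow(2)$. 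The main obstacle, should one insist on full self-containedness, is exactly the construction of this intermediate measure (equivalently, a direct proof of Strassen's embedding theorem), everything else being elementary bookkeeping with the functions $P_\eta$.
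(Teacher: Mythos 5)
Your proposal is correct. The paper does not prove this lemma at all --- it is stated as classical with a citation to F\"ollmer and Schied [Theorem 2.58] --- so there is no argument to compare against; your fleshed-out version (conditional Jensen for $(3)\Rightarrow(1)$, the put-function/mixture representation with the tangent-line truncations $f_n$ for $(1)\Leftrightarrow(2)$, and Strassen for the genuinely hard implication $(1)\Rightarrow(3)$) is the standard and sound way to do it, modulo the trivial remark that $f'(\pm n)$ should be read as one-sided derivatives and that the mass condition in (2) is automatic for $\eta,\chi\in\sP$.
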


\begin{rem}\label{rem:cVScd}
If $\eta,\chi\in\sP$ with $\eta\leq_{cd}\chi$, but $\bar\eta=\bar\chi$, then $\Pi_S(\eta,\chi)$ reduces to the set of martingale couplings, denoted by $\Pi_M(\eta,\chi)$ (i.e., elements of $\Pi(\eta,\chi)$ for which \eqref{eq:martingalepi} holds with equality). Indeed, any supermartingale with constant mean is a martingale. In this case $\eta\leq_c\chi$ (see Strassen~\cite{Strassen:65}).
\end{rem}

For our purposes in the sequel we need a generalisation of the convex (resp. convex-decreasing) order of two measures. We follow Beiglb\"{o}ck and Juillet~\cite{BeiglbockJuillet:16} (resp. Nutz and Stebegg \cite{NutzStebegg:18}) and say $\eta,\chi\in\sM$ are in a \textit{positive} convex (resp. \textit{positive} convex-decreasing) order, and write $\eta\leq_{pc}\chi$ (resp. $\eta\leq_{pcd}\chi$), if $\int fd\eta\leq\int fd\chi$, for all non-negative and convex (resp. non-negative, convex and non-increasing) $f:\R\mapsto\R_+$.
If $\eta\leq_{c}\chi$ (resp. $\eta\leq_{cd}\chi$) then also $\eta\leq_{pc}\chi$ (resp. $\eta\leq_{pcd}\chi$), since non-negative and convex (resp. non-negative, convex and non-increasing) functions are convex (resp. convex and non-increasing). If $\eta\leq\chi$ then both,  $\eta\leq_{pc}\chi$ and $\eta\leq_{pcd}\chi$, since non-negative, convex and non-increasing functions are non-negative and convex, and thus also non-negative. Note that, if $\eta\leq_{pc}\chi$ or $\eta\leq_{pcd}\chi$, then $\eta(\R)\leq\chi(\R)$ (apply the function $f(x)\equiv1$ in the definition of $\leq_{pc}$ and $\leq_{pcd}$). On the other hand, if $\eta(\R)=\chi(\R)$, then $\eta\leq_{pc}\chi$ (resp. $\eta\leq_{pcd}\chi$) is equivalent to $\eta\leq_{c}\chi$ (resp. $\eta\leq_{cd}\chi$).
\begin{eg}\label{eg1}
	Let $\eta,\chi\in\sM$ with $\eta\leq_{cd}\chi$ (resp. $\eta\leq_{c}\chi$). Fix a Borel set $B\subseteq\R$, and let $\eta\lvert_B\in\sM$ be a restriction of $\eta$ to $B$. Then $\eta\lvert_B\leq_{pcd}\chi$ (resp. $\eta\lvert_B\leq_{pc}\chi$).
\end{eg}
The following result allows us to verify (in a special case of disjoint supports) whether $\eta,\chi\in\sM$ satisfy $\eta\leq_{pc}\chi$. (The proof is presented in the appendix.)
	\begin{lem}\label{lem:disjointSupport}
		Let $\eta,\chi\in\sM$ be such that $\eta$ and $\chi$ have disjoint supports.  If
		$$0<\eta(\R)\leq	(\chi((-\infty,\ell_\eta])\wedge\chi([r_\eta,\infty)))
		$$
		then $\eta\leq_{pc}\chi$.		
	\end{lem}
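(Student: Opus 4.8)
The plan is to exhibit an explicit intermediate measure $\chi'$ with $\eta\leq_c\chi'\leq\chi$. This suffices, because $\eta\leq_c\chi'$ implies $\eta\leq_{pc}\chi'$ and $\chi'\leq\chi$ implies $\chi'\leq_{pc}\chi$ (both implications are recorded in the preliminaries), and $\leq_{pc}$ is transitive, being defined by a family of integral inequalities; hence $\eta\leq_{pc}\chi$. All the work lies in constructing $\chi'$.

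First I would clear away the degenerate possibilities. The hypothesis $0<\eta(\R)\leq\chi((-\infty,\ell_\eta])\wedge\chi([r_\eta,\infty))$ forces $\ell_\eta$ and $r_\eta$ to be finite; since $\operatorname{supp}\eta$ is then a non-empty closed subset of the compact interval $[\ell_\eta,r_\eta]$, both $\ell_\eta$ and $r_\eta$ belong to $\operatorname{supp}\eta$, so by disjointness of supports $\chi(\{\ell_\eta\})=\chi(\{r_\eta\})=0$. Writing $m:=\eta(\R)$, $\chi^-:=\chi|_{(-\infty,\ell_\eta)}$ and $\chi^+:=\chi|_{(r_\eta,\infty)}$, I then have $\chi^-(\R)\geq m$, $\chi^+(\R)\geq m$, and $\chi^-+\chi^+\leq\chi$ since $(-\infty,\ell_\eta)\cap(r_\eta,\infty)=\emptyset$.

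The core is a mass-balancing argument. For $q\in[0,m]$ I would pick $\chi_L(q)\leq\chi^-$ of mass $q$ and $\chi_R(q)\leq\chi^+$ of mass $m-q$ depending continuously on $q$ — concretely, $\chi_L(q)$ can be taken to be the top $q$ units of mass of $\chi^-$, realised via a quantile function so as to depend (absolutely) continuously on $q$, and symmetrically $\chi_R(q)$ the bottom $m-q$ units of mass of $\chi^+$. Then $g(q):=\overline{\chi_L(q)}+\overline{\chi_R(q)}$ is continuous on $[0,m]$, with $g(0)>r_\eta m\geq\overline{\eta}$ (all the mass of $\chi_R(0)$ lies strictly above $r_\eta\geq\overline{\eta}/m$) and $g(m)<\ell_\eta m\leq\overline{\eta}$; by the intermediate value theorem there is $q^*\in(0,m)$ with $g(q^*)=\overline{\eta}$. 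Then $\chi':=\chi_L(q^*)+\chi_R(q^*)$ satisfies $\chi'\leq\chi^-+\chi^+\leq\chi$, $\chi'(\R)=m=\eta(\R)$ and $\overline{\chi'}=\overline{\eta}$, and the barycentres $a:=\overline{\chi_L(q^*)}/q^*$ and $b:=\overline{\chi_R(q^*)}/(m-q^*)$ obey $a<\ell_\eta\leq r_\eta<b$ (so $\operatorname{supp}\eta\subseteq(a,b)$), $q^*a+(m-q^*)b=\overline{\eta}$ and $q^*+(m-q^*)=m$.

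Finally I would check that $\eta\leq_c\chi'$, i.e.\ $\int f\,d\eta\leq\int f\,d\chi'$ for every convex $f:\R\to\R$. Integrating the chord bound $f(x)\leq\frac{b-x}{b-a}f(a)+\frac{x-a}{b-a}f(b)$ — valid on $[a,b]\supseteq\operatorname{supp}\eta$ — against $\eta$, and using the relations $q^*a+(m-q^*)b=\overline{\eta}$ and $q^*+(m-q^*)=m$, yields $\int f\,d\eta\leq q^*f(a)+(m-q^*)f(b)$; Jensen's inequality applied to the probability measures $\chi_L(q^*)/q^*$ and $\chi_R(q^*)/(m-q^*)$, whose barycentres are $a$ and $b$, yields $q^*f(a)+(m-q^*)f(b)\leq\int f\,d\chi'$. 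Combining the two gives $\eta\leq_c\chi'$, and if moreover $f\geq0$ then $\int f\,d\chi'\leq\int f\,d\chi$ because $\chi-\chi'$ is a non-negative measure, so $\eta\leq_{pc}\chi$. I expect the only genuinely delicate point to be the mass-balancing step: securing a continuous selection $q\mapsto(\chi_L(q),\chi_R(q))$ in the presence of atoms in $\chi^\pm$, and verifying the strict endpoint inequalities $g(m)<\overline{\eta}<g(0)$, which are exactly what place $q^*$ in the open interval $(0,m)$ and thereby guarantee $a<\ell_\eta$ and $b>r_\eta$; with $\chi'$ in hand the chord and Jensen estimates are routine.
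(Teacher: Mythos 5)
Your proof is correct and follows essentially the same route as the paper's: both arguments build a one-parameter family of submeasures of $\chi$ of total mass $\eta(\R)$ supported outside $(\ell_\eta,r_\eta)$, use the intermediate value theorem to match the mean of $\eta$, and then verify $\eta\leq_c\chi'$ by a chord estimate before invoking the characterisation of $\leq_{pc}$ via an intermediate measure. The only (cosmetic) differences are that the paper selects the extreme quantiles of $\chi$ rather than the mass adjacent to $\sI_\eta$, and compares $f$ with the linear function agreeing with it at $\ell_\eta,r_\eta$ instead of combining a chord at the barycentres with Jensen's inequality.
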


For a pair of measures $\eta,\chi\in\sM$, let the function $ D_{\eta,\chi}: \R \mapsto \R$ be defined by $D_{\eta,\chi}(k) = P_\chi(k) - P_\eta(k)$. Note that if $\eta,\chi$ have equal mass then $\eta \leq_{cd} \chi$ is equivalent to $D_{\eta,\chi} \geq 0$ on $\R$.

The following result (see Hobson~\cite[page 254]{Hobson:98b} or Beiglb\"{o}ck and Juillet ~\cite[Section A.1]{BeiglbockJuillet:16}) tells us that, if $\eta\leq_{cd}\chi$ and $D_{\eta,\chi}(x)=0$ for some $x$, then in any supermartingale coupling of $\eta$ and $\chi$ no mass can cross $x$. (Hobson~\cite{Hobson:98b} and Beiglb\"{o}ck and Juillet ~\cite{BeiglbockJuillet:16} considered the case $\eta\leq_c\chi$, however the same arguments also work in the case $\eta\leq_{cd}\chi$, see Nutz and Stebegg \cite[Proposition 3.2]{NutzStebegg:18}.)

\begin{lem} Suppose $\eta$ and $\chi$ are probability measures with $\eta \leq_{cd} \chi$. Suppose that $D_{\eta,\chi}(x)=0$. If $\pi \in {\Pi_S}(\eta,\chi)$, then we have $\pi((-\infty,x),(x,\infty)) + \pi((x,\infty),(-\infty,x))=0$.
	\label{lem:chacon}
\end{lem}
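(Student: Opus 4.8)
The plan is to reduce the statement to a single test function: $g(y):=(x-y)^+$ is convex and non-increasing, so the supermartingale condition \eqref{eq:martingalepi} interacts with it in the simplest possible way, and the hypothesis $D_{\eta,\chi}(x)=0$ says exactly that the resulting inequality $P_\chi(x)\ge P_\eta(x)$ is tight. (This is the classical argument of Hobson~\cite[page 254]{Hobson:98b} and Beiglb\"ock--Juillet~\cite[Section A.1]{BeiglbockJuillet:16}, the proof for $\leq_{cd}$ being identical to the one for $\leq_c$, cf. Nutz--Stebegg~\cite[Proposition 3.2]{NutzStebegg:18}.) Concretely, I would disintegrate $\pi=\eta(dx')\,\pi_{x'}(dy)$ and set $b(x'):=\int_\R y\,\pi_{x'}(dy)$, which is finite for $\eta$-a.e.\ $x'$ since $\chi\in\sP$; then \eqref{eq:martingalepi} reads $b(x')\le x'$ for $\eta$-a.e.\ $x'$.

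The first step is the pointwise bound $\int_\R g\,d\pi_{x'}\ge g(x')$, valid for $\eta$-a.e.\ $x'$. Indeed $g\ge 0$ and $g(y)\ge x-y$, so $\int g\,d\pi_{x'}\ge\big(x-b(x')\big)^+$, and since $b(x')\le x'$ and $t\mapsto (x-t)^+$ is non-increasing, this is $\ge (x-x')^+=g(x')$. Integrating this against $\eta$ gives $P_\chi(x)\ge P_\eta(x)$; but $D_{\eta,\chi}(x)=P_\chi(x)-P_\eta(x)=0$, so the integrated inequality is an equality and hence $\int_\R g\,d\pi_{x'}=g(x')$ for $\eta$-a.e.\ $x'$.

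The second step reads off the conclusion by splitting on the sign of $x'-x$. For $\eta$-a.e.\ $x'>x$ one has $g(x')=0$, so $\int_\R g\,d\pi_{x'}=0$ with $g\ge0$ forces $\pi_{x'}\big((-\infty,x)\big)=0$. For $\eta$-a.e.\ $x'<x$ one has $g(x')=x-x'$, so equality in $\int_\R g\,d\pi_{x'}\ge x-b(x')\ge x-x'$ forces $\int_\R\big(g(y)-(x-y)\big)\,\pi_{x'}(dy)=0$; since the integrand equals $(y-x)^+\ge 0$, this gives $\pi_{x'}\big((x,\infty)\big)=0$. Integrating these two facts over the respective half-lines yields $\pi\big((-\infty,x)\times(x,\infty)\big)=\int_{(-\infty,x)}\pi_{x'}\big((x,\infty)\big)\,\eta(dx')=0$ and $\pi\big((x,\infty)\times(-\infty,x)\big)=\int_{(x,\infty)}\pi_{x'}\big((-\infty,x)\big)\,\eta(dx')=0$, and summing proves the claim. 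I do not expect a genuine obstacle here; the only points needing care are the standard measurability and integrability (which hold because $\eta,\chi\in\sP$, so $g$ is integrable against both) that justify the disintegration and the passage from an integrated equality to an $\eta$-a.e.\ pointwise equality, and the observation that it suffices to test against the single non-increasing convex function $(x-\cdot)^+$ rather than against the whole cone defining $\leq_{cd}$.
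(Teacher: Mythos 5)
Your proof is correct, and it is exactly the classical tightness argument that the paper invokes by citation (the lemma is stated without proof, with references to Hobson, Beiglb\"ock--Juillet, and Nutz--Stebegg): test against the single put payoff $(x-\cdot)^+$, use the supermartingale disintegration to get the pointwise inequality, and use $D_{\eta,\chi}(x)=0$ to force $\eta$-a.e.\ equality, from which no mass crosses $x$ in either direction. All the steps (Jensen-type bound via $g\ge 0$ and $g(y)\ge x-y$, the passage from an integrated equality of ordered integrands to a pointwise a.e.\ equality, and the case split on $x'\gtrless x$) are sound.
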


It follows from Lemma \ref{lem:chacon} that, if there is a point $x\in(\ell_\eta,r_\eta)$ such that $D_{\eta,\chi}(x)=0$, then the problem of constructing supermartingale couplings of $\eta$ to $\chi$ can be separated into a pair of subproblems involving mass to the left and right of $x$ (and by carefully choosing how to allocate the mass of $\chi$ at $x$). In particular, if there are multiple $\{ x_i \}_{i\geq1}$ with $D_{\eta,\chi}(x_i)=0$, then the problem splits into a sequence of `irreducible' problems, each taking place on an interval $\sI_j$ such that $D_{\eta,\chi}>0$ on the interior of $\sI_j$ and $D=0$ at the endpoints. All mass starting in the interior of $\sI_j$ is transported to a point in $\sI_j$. This is summarised in the following lemma.

\begin{lem}[{Nutz and Stebegg \cite[Proposition 3.4]{NutzStebegg:18}}]\label{lem:irreducibleComp}
	Let $\eta,\chi\in\sM$ with $\eta\leq_{cd}\chi$. Set $x_{\eta,\chi}:=\sup\{k\in\R:D_{\eta,\chi}(k)=0\}\in[-\infty,\infty]$.
	
	Define $\sI^{\eta,\chi}_0:=(x_{\eta,\chi},\infty)$ and let $(\sI_i^{\eta,\chi})_{i\geq 1}$ be the open components of $\{k\in\R:D_{\eta,\chi}(k)>0\}\cap(-\infty,x_{\eta,\chi})$. Define a closed set $\sI^{\eta,\chi}_{-1}$ by $\sI^{\eta,\chi}_{-1}=\R\setminus\bigcup_{i\geq0}\sI^{\eta,\chi}_i$.
	
	Set $\eta_i=\eta\lvert_{\sI^{\eta,\chi}_i}$, $i\geq-1$, so that $\eta=\sum_{i\geq-1}\eta_i$.
	
	Then there exists a unique decomposition $\chi=\sum_{i\geq-1}\chi_i$ such that
	$$
	\eta_{-1}=\chi_{-1},\quad \eta_0\leq_{cd}\chi_0\quad\textrm{and}\quad\eta_i\leq_c\chi_i,\textrm{ for all }i\geq1.
	$$
	Set $\sI^{\eta,\chi}_i=(\ell_i,r_i)$, $i\geq0$ (here $r_i\leq\ell_0=x_{\eta,\chi}\leq r_0=\infty$, $i\geq1$).
	
	Then for all $i\geq0$,
	$$
	D_{\eta_i,\chi_i}>0\textrm{ on } \sI^{\eta,\chi}_i\quad\textrm{and}\quad D_{\eta_i,\chi_i}=0\textrm{ on }\R\setminus\sI^{\eta,\chi}_i,
	$$
	and there exists a unique choice for $0\leq\alpha_i\leq\nu(\{\ell_i\})$ and $0\leq\beta_i\leq\nu(\{r_i\})$ satisfying
	$$
	\chi_i=\chi\lvert_{\sI^{\eta,\chi}_i}+\alpha_i\delta_{\ell_i}+\beta_i\delta_{r_i}.
	$$
	
	Finally, any $\pi\in\Pi_S(\eta,\chi)$ admits a unique decomposition $\pi=\sum_{i\geq-1}\pi_i$ such that $\pi_0\in\Pi_S(\eta_0,\chi_0)$ and $\pi_i\in\Pi_M(\eta_i,\chi_i)$ for all $i\geq-1$ with $i\neq0$.
\end{lem}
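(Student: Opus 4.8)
The plan is to reduce the whole statement to the analysis of the single function $D:=D_{\eta,\chi}=P_\chi-P_\eta$, using the correspondence between measures in $\sM$ and their modified potentials recalled above. Since $\eta\leq_{cd}\chi$ forces equal masses, $D\geq0$ on $\R$; also $D$ is continuous (a difference of convex functions), $\lim_{k\to-\infty}D(k)=0$, and $\lim_{k\to\infty}D(k)=\bar\eta-\bar\chi\geq0$, the last inequality being forced by $D\geq0$. First I would record the topology of $\{D=0\}$: it is closed, $x_{\eta,\chi}$ is its supremum (possibly $\pm\infty$), $D>0$ on $\mathcal I_0=(x_{\eta,\chi},\infty)$, the open set $\{D>0\}\cap(-\infty,x_{\eta,\chi})$ is a countable disjoint union of bounded open intervals $\mathcal I_i=(\ell_i,r_i)$ ($i\geq1$) with $D(\ell_i)=D(r_i)=0$, and $\mathcal I_{-1}=\{D=0\}\cap(-\infty,x_{\eta,\chi}]$ is closed; these sets partition $\R$, so $\eta=\sum_{i\geq-1}\eta_i$ with $\eta_i=\eta|_{\mathcal I_i}$ is immediate.

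For the decomposition of $\chi$ I would define, for $i\geq0$, the measure $\chi_i$ through its modified potential
\[ P_{\chi_i}:=P_{\eta_i}+D\cdot\mathbf 1_{[\ell_i,r_i]},\qquad [\ell_0,r_0]:=[x_{\eta,\chi},\infty). \]
The crux of the construction is to check that $P_{\chi_i}$ is a genuine modified potential: it is nonnegative and nondecreasing, it is trivially convex off $\{\ell_i,r_i\}$, and convexity at each of the two junction points follows because $D$ attains a local minimum of value $0$ there, so that $D'(\ell_i^{+})\geq0$ and $D'(r_i^{-})\leq0$, which gives the correct inequalities between one-sided derivatives of $P_{\chi_i}$. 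Reading off the affine asymptotics of $P_{\chi_i}$ identifies a unique $\chi_i\in\sM$ with $\chi_i(\R)=\eta_i(\R)$ for all $i\geq0$, $\bar\chi_i=\bar\eta_i$ for $i\geq1$, and $\bar\chi_0=\bar\eta_0-(\bar\eta-\bar\chi)\leq\bar\eta_0$; combined with $P_{\chi_i}\geq P_{\eta_i}$ this yields $\eta_i\leq_c\chi_i$ for $i\geq1$ and $\eta_0\leq_{cd}\chi_0$. Since each point where $D>0$ lies in exactly one $\mathcal I_i$ and $D=0$ off $\bigcup_{i\geq0}\mathcal I_i$, we have $\sum_{i\geq0}D\cdot\mathbf 1_{[\ell_i,r_i]}=D$ pointwise, hence $\sum_{i\geq0}P_{\chi_i}=P_\eta-P_{\eta_{-1}}+D=P_\chi-P_{\eta_{-1}}$, so that $\chi-\sum_{i\geq0}\chi_i$ has modified potential $P_{\eta_{-1}}$ and equals $\eta_{-1}=:\chi_{-1}$. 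As the $\chi_i$ are nonnegative and sum to $\chi$, each $\chi_i\leq\chi$; since moreover $P_{\chi_i}''=\chi$ on $\mathcal I_i$ and $P_{\chi_i}$ is affine off $[\ell_i,r_i]$, we get $\chi_i=\chi|_{\mathcal I_i}+\alpha_i\delta_{\ell_i}+\beta_i\delta_{r_i}$ with $0\leq\alpha_i\leq\chi(\{\ell_i\})$ and $0\leq\beta_i\leq\chi(\{r_i\})$, and the identity $D_{\eta_i,\chi_i}=D\cdot\mathbf 1_{[\ell_i,r_i]}$ gives the stated positivity of $D_{\eta_i,\chi_i}$ on $\mathcal I_i$ and its vanishing elsewhere.

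It remains to handle couplings, which also yields uniqueness. Let $\pi\in\Pi_S(\eta,\chi)$, which is nonempty by Lemma~\ref{lem:put_cd}. At every zero $x$ of $D$ the equality case in the conditional Jensen inequality applied to the convex non-increasing function $z\mapsto(x-z)^{+}$ shows, refining Lemma~\ref{lem:chacon}, that no $\pi$-mass crosses $x$, that $\pi$-mass starting at $x$ stays at $x$, and that the restriction of $\pi$ to $\{X<x\}$ is a martingale coupling. Applying this at $x_{\eta,\chi}$ and at every $\ell_i,r_i$ (and on all of $\mathcal I_{-1}$) yields $\pi=\sum_{i\geq-1}\pi_i$ with $\pi_i:=\pi|_{\mathcal I_i\times\overline{\mathcal I_i}}$ for $i\geq0$ and $\pi_{-1}$ the identity coupling of $\eta_{-1}$; each $\pi_i$ has first marginal $\eta_i$, each $\pi_i$ with $i\neq0$ is a martingale coupling (for $i\geq1$ because it is supported in $\{X<r_i\}$), and $\pi_0$ is a supermartingale coupling with first marginal $\eta_0$. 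Writing $\chi_i^\pi$ for the second marginal of $\pi_i$, we have $\bar\chi_i^\pi=\bar\eta_i$ for $i\neq0$, so $P_{\chi_i^\pi}$ and $P_{\chi_i}$ coincide off $[\ell_i,r_i]$ for $i\geq1$ and off $(-\infty,x_{\eta,\chi}]$ for $i=0$; since $\sum_{i\geq0}(P_{\chi_i^\pi}-P_{\chi_i})\equiv0$ and the intervals $[\ell_i,r_i]$ overlap only at endpoints, a short rigidity argument (evaluate on the interior of each component) forces $\chi_i^\pi=\chi_i$ for all $i$, so $\pi_i\in\Pi_M(\eta_i,\chi_i)$ for $i\neq0$ and $\pi_0\in\Pi_S(\eta_0,\chi_0)$; the same rigidity gives uniqueness of the decomposition. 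I expect the main obstacle to be exactly this last reconciliation step — extracting from the "mass cannot cross, and does not leave, a zero of $D$" principle the precise allocation of $\chi$, and in particular the bookkeeping of the atoms of $\chi$ sitting at shared endpoints $\ell_i=r_{i'}$ among the two neighbouring components; establishing the refined behaviour of an arbitrary supermartingale coupling at the zeros of $D$ is where the real work lies.
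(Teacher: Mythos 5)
The paper does not prove this lemma at all: it is imported wholesale from Nutz and Stebegg (their Proposition 3.4), so there is no in-paper argument to compare against, and your proposal is a genuinely self-contained alternative. Its distinctive feature is that you manufacture the decomposition of $\chi$ directly at the level of modified potentials, setting $P_{\chi_i}:=P_{\eta_i}+D\cdot\mathbf 1_{[\ell_i,r_i]}$ and verifying convexity at the junctions via $D'(\ell_i+)\geq 0\geq D'(r_i-)$; reading off the affine asymptotes then yields $\eta_i\leq_c\chi_i$ for $i\geq1$ and $\eta_0\leq_{cd}\chi_0$ through Lemma~\ref{lem:put_cd}. This is exactly the methodology of Section~\ref{sec:shadowLC} (compare Theorem~\ref{thm:shadow_potential}), so your route has the advantage of making the lemma consistent with, rather than external to, the paper's toolkit. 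I checked the construction and it is correct: the sets $[\ell_i,r_i]$ overlap only where $D=0$, so $\sum_{i\geq0}D\cdot\mathbf 1_{[\ell_i,r_i]}=D$ and the residual measure has potential $P_{\eta_{-1}}$, forcing $\chi_{-1}=\eta_{-1}$; and $P_{\chi_i}''=\chi$ on $\sI_i$ with $P_{\chi_i}$ affine outside $[\ell_i,r_i]$ gives the stated form $\chi|_{\sI_i}+\alpha_i\delta_{\ell_i}+\beta_i\delta_{r_i}$.

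Three points need attention, none fatal. First, the components need not be bounded: the leftmost one may have $\ell_i=-\infty$ (the paper itself allows this, cf.\ Lemma~\ref{lem:RSbound}), in which case $\alpha_i=0$ is forced and the convexity check at $\ell_i$ is vacuous; your phrase ``bounded open intervals'' should be amended. Second, in the refinement of Lemma~\ref{lem:chacon} the assertion that mass starting at a zero $x$ of $D$ stays at $x$ does not follow from Jensen alone; you must add that the equality case at level $k=x$ gives $\pi_x((-\infty,x))=0$, so that $\pi_x$ is supported on $[x,\infty)$ with barycentre at most $x$ by the supermartingale constraint, whence $\pi_x=\delta_x$ by the ``barycentre equals endpoint of support'' rigidity; and since the equality case at a fixed level $k$ only holds for $\eta$-a.e.\ $x$, you should run the argument over the countable family consisting of all $\ell_i,r_i$, the point $x_{\eta,\chi}$, and a countable dense subset of $\{D=0\}$ before taking the union of the exceptional null sets. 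Third, in the reconciliation step the clause ``$P_{\chi_0^\pi}$ and $P_{\chi_0}$ coincide off $(-\infty,x_{\eta,\chi}]$'' is what you are trying to prove, not a hypothesis (they coincide \emph{on} $(-\infty,x_{\eta,\chi}]$, where both vanish); the clean route is to identify $\chi_i^\pi=\chi_i$ for $i\geq1$ and $i=-1$ first, by evaluating on the interiors $(\ell_i,r_i)$ and using mass--mean conservation at the two endpoints, and then obtain $\chi_0^\pi=\chi_0$ by subtraction. With these repairs the argument is complete and also delivers the uniqueness claims.
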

\begin{rem}
	\label{rem:martIrreduc}
	When $\eta\leq_{c}\chi$ then $\Pi_S(\eta,\chi)=\Pi_M(\eta,\chi)$ and therefore we also have that $\eta_0\leq_c\chi_0$.
\end{rem}

{\em Notation:} For $x \in \R$ let $\delta_x$ denote the unit point mass at $x$. For real numbers $c,x,d$ with $c \leq x \leq d$ define the probability measure $\chi_{c,x,d}$ by $\chi_{c,x,d} = \frac{d-x}{d-c}\delta_c + \frac{x-c}{d-c} \delta_d$ with $\chi_{c,x,d} = \delta_x$ if $(d-x)(x-c)=0$. Note that $\chi_{c,x,d}$ has mean $x$ and is the law of a Brownian motion started at $x$ evaluated on the first exit from $(c,d)$.
\subsection{Convex hull}\label{prelims:hull}

Our key results will be expressed in terms of the convex hull. For $f : \R \mapsto (-\infty,\infty)$ let $f^c$ be the largest convex function which lies below $f$. In our typical application $f$ will be non-negative and this property will be inherited by $f^c$. However, in general we may have $f^c$ equal to $-\infty$ on $\R$, and the results of this section are stated in a way which includes this case.  Note that if a function $g$ is equal to $-\infty$ (or $\infty$) everywhere, then we deem it to be both linear and convex, and set $g^c$ equal to $g$.

Recall the definition of the sub-differential $\partial h(x)$ of a convex function $h:\R \mapsto \R$ at $x$:
\[ \partial h (x) = \{ \phi \in\R: h(y) \geq h(x) + \phi(y-x) \mbox{ for all } y \in \R \}. \]
We extend this definition to non-convex functions $f$ so that the sub-differential of $f$ at $x$ is given by
\[ \partial f (x) = \{ \phi\in\R : f(y) \geq f(x) + \phi(y-x) \mbox{ for all } y \in \R \}. \]
If $h$ is convex then $\partial h$ is non-empty everywhere, but this is not the case for non-convex functions. Instead we have that $\partial f(x)$ is non-empty if and only if $f(x)=f^c(x)$ and then $\partial f^c(x) = \partial f(x)$. We also write $f'(\cdot-)$ and $f'(\cdot+)$ for the left and right derivatives (provided they exist) of a function $f$.

Fix $x,z\in\R$ with $x \leq z$, and define $L^f_{x,z}:\R\mapsto\R$ by
\begin{equation}\label{eq:L1}
L^f_{x,z}(y)=\begin{cases}
f(x)+\frac{f(z)-f(x)}{z-x}(y-x),&\textrm{ if }x<z,\\
f(x),&\textrm{ if }x=z.
\end{cases}
\end{equation}
Then, see Rockafellar \cite[Corollary 17.1.5]{Rockafellar:72},
\begin{equation}\label{eq:chull}
f^c(y)=\inf_{x\leq y\leq z}L^f_{x,z}(y),\quad y\in\R.
\end{equation}

Moreover, it is not hard to see (at least pictorially, by drawing the graphs of $f$ and $f^c$) that $f^c$ replaces the non-convex segments of $f$ by straight lines. (The proof of the following lemma is standard, see, for example, Hobson and Norgilas \cite[Lemma 2.2]{HobsonNorgilas:21}.)

\begin{lem}
	\label{lem:linear}
	Let $f:\R\mapsto\R$ be lower semi-continuous. Suppose $f>f^c$ on $(a,b)\subseteq\R$. Then $f^c$ is linear on $(a,b)$.
\end{lem}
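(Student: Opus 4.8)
The plan is to use the representation \eqref{eq:chull} of $f^c$ as a pointwise infimum of chord values of $f$, to show that at each point of $(a,b)$ this infimum is attained by a \emph{non-degenerate} chord, and then to argue that $f^c$ must coincide with that chord — hence be affine — on the whole interval spanned by it; patching these local pieces together yields that $f^c$ is affine on all of $(a,b)$.

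First I would dispose of the degenerate case: if $f^c\equiv-\infty$ then it is linear by the convention fixed above. Otherwise $f^c$ is a finite-valued convex function on $\R$: since $f^c\le f<\infty$ everywhere it is nowhere $+\infty$, and a convex function that is $-\infty$ at one point but finite at another cannot occur here. Hence $f^c$ is continuous with non-empty sub-differential at every point, and $\{f>f^c\}=\{f-f^c>0\}$ is open because $f$ is lower semi-continuous and $f^c$ is continuous. Since $(a,b)$ is connected and contained in $\{f>f^c\}$, it lies inside a single connected component of this open set, and it suffices to show that $f^c$ is affine on a neighbourhood of each $y\in(a,b)$: then $(f^c)'$ exists and is locally constant on the connected interval $(a,b)$, hence constant there, so $f^c$ is affine on $(a,b)$.

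Fix $y\in(a,b)$, so $f(y)>f^c(y)$. By \eqref{eq:chull}, $f^c(y)=\inf_{x\le y\le z}L^f_{x,z}(y)$, and since $x=y$ or $z=y$ forces $L^f_{x,z}(y)=f(y)>f^c(y)$, every minimising sequence $(x_n,z_n)$ has $x_n<y<z_n$. The key step is to show that, after passing to a subsequence, $(x_n,z_n)\to(x^*,z^*)$ with $x^*<y<z^*$ and $L^f_{x^*,z^*}(y)=f^c(y)$. Here lower semi-continuity is essential. Fix a supporting affine minorant $\ell_y\le f^c\le f$ at $y$, with slope $\phi\in\partial f^c(y)$ (taken equal to the right derivative $(f^c)'(y+)$); writing $y$ in barycentric coordinates relative to $x<y<z$ gives
\[
L^f_{x,z}(y)-f^c(y)=\tfrac{z-y}{z-x}\bigl(f(x)-\ell_y(x)\bigr)+\tfrac{y-x}{z-x}\bigl(f(z)-\ell_y(z)\bigr)\ \ge\ 0 .
\]
A minimising sequence cannot collapse ($x_n\to y$ or $z_n\to y$ would force $\liminf L^f_{x_n,z_n}(y)\ge f(y)$ by lower semi-continuity, contradicting that the infimum equals $f^c(y)<f(y)$), and it cannot escape to $\pm\infty$: using that $(f^c)'$ is non-decreasing and right-continuous, the quantities $f^c(z_n)-\ell_y(z_n)=\int_y^{z_n}[(f^c)'(t)-\phi]\,dt$ and their analogue on the left stay bounded away from $0$ along such a sequence unless $f^c$ is already affine near $y$ on the relevant side (in which case that side is done directly). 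Hence the sequence stays in a fixed compact set; extracting a limit $(x^*,z^*)$ with $x^*<y<z^*$, lower semi-continuity of $f$ gives $L^f_{x^*,z^*}(y)\le\liminf L^f_{x_n,z_n}(y)=f^c(y)$, so equality holds.

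Finally, on $[x^*,z^*]$ set $g:=L^f_{x^*,z^*}-f^c$. It is concave (affine minus convex). It is non-negative: $f^c$ is convex and, since $f^c\le f$, we have $f^c(x^*)\le f(x^*)=L^f_{x^*,z^*}(x^*)$ and $f^c(z^*)\le f(z^*)=L^f_{x^*,z^*}(z^*)$, so on $[x^*,z^*]$ the function $f^c$ lies below its own chord, which in turn lies below the affine function $L^f_{x^*,z^*}$. And $g(y)=0$ with $y\in(x^*,z^*)$. A concave non-negative function vanishing at an interior point vanishes identically: if $g(w)>0$ for some $w\in(x^*,z^*)$, say $w>y$, then writing $y=\lambda x^*+(1-\lambda)w$ with $\lambda\in(0,1)$, concavity gives $0=g(y)\ge\lambda g(x^*)+(1-\lambda)g(w)\ge(1-\lambda)g(w)>0$, a contradiction. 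Hence $f^c=L^f_{x^*,z^*}$ is affine on $[x^*,z^*]$, a neighbourhood of $y$; since $y\in(a,b)$ was arbitrary, the local-to-global reduction from the second paragraph completes the proof. The one genuinely delicate point is the attainment of the infimum in \eqref{eq:chull} — it is exactly where the lower semi-continuity hypothesis is used, and ruling out degenerate and escaping minimising sequences is a short but careful compactness argument; everything else is elementary convexity, which is why the result is standard (cf. Hobson and Norgilas \cite[Lemma 2.2]{HobsonNorgilas:21}).
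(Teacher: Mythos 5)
The paper does not actually prove this lemma in-house; it defers to Hobson and Norgilas \cite[Lemma 2.2]{HobsonNorgilas:21}, so your argument has to be judged on its own terms. Your overall strategy --- attainment of the infimum in \eqref{eq:chull} by a non-degenerate chord through $y$, the concavity argument showing $L^f_{x^*,z^*}-f^c\equiv 0$ on $[x^*,z^*]$, and the local-to-global patching --- is sound, and the reduction to finite-valued $f^c$, the barycentric identity against the supporting line $\ell_y$, and the exclusion of collapsing minimising sequences via lower semi-continuity are all correct.

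The gap is in the escape-to-infinity branch, in two respects. First, the estimate you state is too weak: when $z_n\to\infty$ the quantity $f(z_n)-\ell_y(z_n)$ enters multiplied by the weight $\tfrac{y-x_n}{z_n-x_n}$, which decays like $1/z_n$ when $x_n$ stays bounded, so knowing that $f^c(z_n)-\ell_y(z_n)$ ``stays bounded away from $0$'' yields no contradiction. What you need --- and what is true precisely when $f^c$ is not affine on the whole half-line $[y,\infty)$, not merely ``near $y$'' --- is that $f^c(z_n)-\ell_y(z_n)$ grows at least linearly in $z_n$, so that the product with the weight stays bounded away from $0$. Second, and more seriously, the fallback conclusion you draw in the escape case is only one-sided (``affine on the relevant side''), whereas your local-to-global reduction explicitly requires $f^c$ to be affine on a two-sided neighbourhood of $y$: affineness on $(y-\epsilon',y]$ and on $[y,y+\epsilon)$ separately does not exclude a kink of $f^c$ at $y$, and nothing in your write-up rules one out. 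The repair lives inside your own decomposition: both terms of the displayed identity are non-negative and their sum tends to $0$, so the left term vanishes as well; if $x_n$ has a bounded subsequence this produces a finite $x^*<y$ with $f(x^*)=\ell_y(x^*)=f^c(x^*)$, forcing $f^c=\ell_y$ on $[x^*,y]$ with the \emph{same} slope $\phi=(f^c)'(y+)$ as on $[y,\infty)$, and if $x_n\to-\infty$ the linear-growth argument forces $f^c=\ell_y$ on $(-\infty,y]$ --- in either case a genuinely two-sided affine neighbourhood with no kink. As written, ``that side is done directly'' skips exactly the point where the two one-sided slopes must be shown to agree.
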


The following definition, for a given function $f$ and $y \in \R$, will allow us to identify the values $x,z\in\R$ with $x\leq y \leq z$ which attain the infimum in \eqref{eq:chull}.
\begin{defn}\label{defn:convh}
	Let $f:\R\mapsto(-\infty,\infty]$ be a measurable function and $f^c$ denote its convex hull. For $y\in\R$, define
	\begin{align*}
	X^f(y)=X(y)&=\sup\{x:x\leq y, f^c(x)=f(x)\},\\
	Z^f(y)=Z(y)&=\inf\{z:z\geq y, f^c(z)=f(z)\},
	\end{align*}
	with the convention that $\sup\emptyset=-\infty$ and $\inf\emptyset = \infty$.
\end{defn}
\begin{rem}\label{rem:XYloc}
If $f$ is continuous, then $f^c(y)=L^f_{X(y),Z(y)}(y)$ (see Hobson and Norgilas \cite[Lemma 2.4]{HobsonNorgilas:21}). Note, however, that $X^f$ and $Z^f$ may take values $-\infty$ and $\infty$, respectively. Hence for the aforementioned equality to remain valid, one has to carefully extend the definition of $L^f_{x,z}$ allowing for $x=-\infty$ and $z=\infty$.
	\end{rem}

The following lemmas are the main ingredients in the proofs of Theorem \ref{thm:shadow_potential} and Proposition \ref{thm:shadow_assoc} (the proof of Lemma \ref{lem:CSpotential} is postponed until the appendix, while the proofs of the remaining two can be found in Beiglb\"{o}ck et al. \cite{BHN:20}).
\begin{lem}\label{lem:diff}
	Let $f,g:\R\mapsto\R$ be convex and lower semi-continuous. Define $\psi:\R\mapsto(-\infty,\infty]$ by
	$
	\psi =g-(g-f)^c
	$.
	Then $\psi$ is convex.
\end{lem}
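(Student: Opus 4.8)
The plan is to show that $\psi$ admits, at every point $z\in\R$, a \emph{supporting affine minorant} --- an affine function $\ell$ with $\ell\le\psi$ on $\R$ and $\ell(z)=\psi(z)$ --- since a real-valued function with this property coincides with its biconjugate $\psi^{**}$ and is therefore convex. Write $h:=g-f$, so that $\psi=g-h^c$. As $f$ is convex and finite on $\R$ it is continuous, hence $h$ is lower semi-continuous and Lemma~\ref{lem:linear} applies. If $h^c\equiv-\infty$ then $\psi\equiv+\infty$, which is convex by the convention of Section~\ref{prelims:hull}; so assume from now on that $h^c$ is real-valued (a convex function on $\R$ that is $>-\infty$ somewhere and lies below a finite function is finite everywhere). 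Then $\psi$ is real-valued, and from $h^c\le h$ we get $\psi=g-h^c\ge g-h=f$ on $\R$, with equality on the closed set $E:=\{h^c=h\}$; moreover every finite endpoint of a component of the open set $\R\setminus E$ belongs to $E$.

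First, if $z\in E$ then $\psi(z)=g(z)-h(z)=f(z)$; choosing $\phi\in\partial f(z)$ (non-empty since $f$ is finite and convex) and setting $\ell(y):=f(z)+\phi(y-z)$ gives $\ell\le f\le\psi$ on $\R$ and $\ell(z)=f(z)=\psi(z)$, the desired supporting minorant. Second, if $z\notin E$, then $z$ lies in some component $(a,b)$ of $\R\setminus E$; by Lemma~\ref{lem:linear}, $h^c$ is affine on $(a,b)$, say $h^c=A$ there, and $A=h^c=h$ at the finite endpoints of $(a,b)$. By convexity of $h^c$ one has $h^c\ge A$ on all of $\R$, hence $\psi\le\tilde\psi:=g-A$ on $\R$, with $\psi=\tilde\psi$ on $[a,b]$ and $\tilde\psi$ convex. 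Take a supporting line $\ell$ of $\tilde\psi$ at $z$. On $[a,b]$, $\ell\le\tilde\psi=\psi$. For $y$ outside $[a,b]$ I combine $\psi\ge f$ with an estimate on $\ell$: the function $\tilde\psi-f$ is non-negative on $[a,b]$ and vanishes at the finite endpoints (since there $\tilde\psi=\psi=f$), so its one-sided derivatives there give $\tilde\psi'_-(b)\le f'_-(b)$ and $\tilde\psi'_+(a)\ge f'_+(a)$; together with $\tilde\psi'_+(z)\le\tilde\psi'_-(b)$, $\tilde\psi'_-(z)\ge\tilde\psi'_+(a)$ (convexity of $\tilde\psi$) and $\ell(b)\le\tilde\psi(b)=f(b)$, $\ell(a)\le\tilde\psi(a)=f(a)$, convexity of $f$ yields $\ell\le f$ on $[b,\infty)$ and on $(-\infty,a]$, whence $\ell\le f\le\psi$ there. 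If $a=-\infty$ or $b=+\infty$ only one (or neither) of these two estimates is needed, and if $h^c$ is globally affine then $\psi=g-h^c$ is convex outright. In every case $\ell\le\psi$ on $\R$ with $\ell(z)=\tilde\psi(z)=\psi(z)$.

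Having exhibited a supporting affine minorant at each point, we conclude $\psi=\psi^{**}$, which is convex (indeed lower semi-continuous). The main obstacle is the second case: one must check that the supporting line of the \emph{locally} extended convex function $\tilde\psi$ does not rise above $\psi$ far away from the component $(a,b)$, and this comes down to the comparison of one-sided derivatives at the endpoints of $(a,b)$ --- equivalently, to the fact recalled in Section~\ref{prelims:hull} that $\partial h^c=\partial h$ at points where $h^c=h$ --- together with a careful treatment of unbounded components.
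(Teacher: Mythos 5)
Your proof is correct. The paper itself gives no argument for this lemma (it defers to Beiglb\"ock et al.\ \cite{BHN:20}), and your proof runs on essentially the same mechanism as the cited one: decompose $\R$ into the contact set $E=\{(g-f)^c=g-f\}$, where $\psi=f$ and a subgradient of $f$ supports $\psi$, and the linearity components of $\R\setminus E$, where $\psi$ agrees with the convex function $g-A$ and the slope comparisons $\tilde\psi'_+(a)\geq f'_+(a)$, $\tilde\psi'_-(b)\leq f'_-(b)$ at the (finite) endpoints keep the supporting line below $f\leq\psi$ outside $[a,b]$; packaging this as ``a supporting affine minorant at every point, hence $\psi=\psi^{**}$'' is a clean and valid way to globalize it, and your handling of the degenerate cases ($(g-f)^c\equiv-\infty$, unbounded components) is sound.
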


\begin{lem}\label{lem:equal_hulls}
	Let $f:\R\mapsto\R$ be measurable and let $g:\R\mapsto\R$ be convex. Then
	$$
	(f-g)^c=(f^c-g)^c.
	$$
\end{lem}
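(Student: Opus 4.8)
The plan is to derive the two inequalities $(f^c-g)^c \le (f-g)^c$ and $(f-g)^c \le (f^c-g)^c$ separately, using only two elementary properties of the convex hull operation $h \mapsto h^c$: it is \emph{monotone} (if $h_1 \le h_2$ pointwise then $h_1^c \le h_2^c$, since $h_1^c$ is then a convex function lying below $h_2$, hence below the largest such, $h_2^c$), and it \emph{fixes convex functions} ($h^c = h$ whenever $h$ is convex). Throughout, the relevant structural facts are that $f^c \le f$ by definition, that a sum of convex functions is convex, and that $g$ takes only finite values so that adding or subtracting $g$ never produces an indeterminate expression.

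The inequality $(f^c-g)^c \le (f-g)^c$ is immediate: from $f^c \le f$ we get $f^c - g \le f - g$ pointwise, and monotonicity of the hull yields the claim.

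For the reverse inequality, the key observation is that $(f-g)^c + g$ is convex, being a sum of the two convex functions $(f-g)^c$ and $g$. Since $(f-g)^c \le f-g$, we have $(f-g)^c + g \le f$, so $(f-g)^c + g$ is a convex minorant of $f$; by maximality of $f^c$ among convex minorants of $f$ this gives $(f-g)^c + g \le f^c$, i.e. $(f-g)^c \le f^c - g$. As the left-hand side is already convex, applying the hull and using that it fixes convex functions and is monotone gives $(f-g)^c = \big((f-g)^c\big)^c \le (f^c-g)^c$. Combining the two inequalities yields $(f-g)^c = (f^c-g)^c$, as required.

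I do not expect any serious obstacle: the argument is a short manipulation of the defining property of $f^c$ together with the stability of convexity under addition of a convex function. The only point meriting a little care is the book-keeping around the extended-real conventions introduced before Lemma~\ref{lem:linear} — namely that $f^c$ may be identically $-\infty$ and that $-\infty$ is deemed convex — but since $g$ is real-valued, every step above (in particular ``$(f-g)^c+g$ is convex'' and the passage from $(f-g)^c+g \le f$ to $(f-g)^c+g \le f^c$) remains valid verbatim in that degenerate case as well.
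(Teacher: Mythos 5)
Your argument is correct and complete, including the care you take with the degenerate case $f^c\equiv-\infty$ (where the inequality $(f-g)^c+g\le f^c$ still forces $(f-g)^c\equiv-\infty$ because $g$ is real-valued). The paper does not prove this lemma itself but defers it to Beiglb\"{o}ck et al.\ \cite{BHN:20}, and the proof there is essentially the same two-inequality argument — monotonicity of the hull gives $(f^c-g)^c\le(f-g)^c$, while the observation that $(f-g)^c+g$ is a convex minorant of $f$ gives the reverse — so your proposal matches the intended route.
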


\begin{lem}\label{lem:CSpotential} Assume that
	$f\in \sD(\alpha, \beta)$ and $g\in \sD(a,b)$ for some $\alpha, a \geq 0$, $\beta, b \in \R$. Let $h:\R\mapsto\R$ be defined by $h(k):=(a-\alpha)k-(b-\beta)$. Define $\eta:=\sup_{k\in\R}\{h(k)-g(k)+f(k)\}$.

	 Suppose that $g\geq f$. Then $\alpha \leq a$. If $\alpha = a$ then $\beta \geq b$. Furthermore, $\eta\in[0,\infty)$ and $ (g-f)^c\in \sD(a-\alpha, {b-\beta+\eta}).$
	
	
\end{lem}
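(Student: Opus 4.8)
The plan is to verify directly the four defining properties of membership in $\sD(a-\alpha,\, b-\beta+\eta)$ for the function $\psi := (g-f)^c$, using only the asymptotic normalisations built into the classes $\sD(\alpha,\beta)$ and $\sD(a,b)$ together with the chord representation \eqref{eq:chull} of the convex hull.

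\emph{Step 1: behaviour at infinity, the sign claims, and finiteness of $\eta$.} I would first set $\Delta := g-f$ and record that, since $f \in \sD(\alpha,\beta)$ and $g \in \sD(a,b)$, one has $f(z) = \alpha z - \beta + o(1)$ and $g(z) = az - b + o(1)$ as $z \to +\infty$, so $\Delta(z) = (a-\alpha)z - (b-\beta) + o(1)$, while $\Delta(z) \to 0$ as $z \to -\infty$. Because $g \geq f$ gives $\Delta \geq 0$, letting $z \to \infty$ forces $\alpha \leq a$ (otherwise $\Delta(z) \to -\infty$), and when $\alpha = a$ it forces $\beta - b = \lim_{z\to\infty}\Delta(z) \geq 0$. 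Writing $\phi(k) := h(k) - g(k) + f(k) = (a-\alpha)k - (b-\beta) - \Delta(k)$, the same asymptotics give $\phi(k) \to 0$ as $k \to +\infty$ and $\phi(k) \to -\infty$ (if $\alpha < a$) or $\phi(k) \to \beta - b \leq 0$ (if $\alpha = a$) as $k \to -\infty$; since finite convex functions on $\R$ are continuous, $\phi$ is continuous, hence bounded above, so $\eta = \sup_k\phi(k) < \infty$, while $\eta \geq \lim_{k\to\infty}\phi(k) = 0$. I would also note that when $\alpha = a$ one has $h \equiv \beta - b$ and $\inf_k \Delta = 0$ (as $\Delta \geq 0$ and $\Delta(k) \to 0$ at $-\infty$), hence $\eta = \beta - b$ and $b-\beta+\eta = 0$, which is exactly what makes $\sD(0, b-\beta+\eta) = \sD(0,0)$ nonempty.

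\emph{Step 2: $\psi = (g-f)^c \in \sD(a-\alpha,\, b-\beta+\eta)$.} Since the zero function is convex and lies below $\Delta \geq 0$, we get $\psi \geq 0$; thus $\psi$ is finite, hence continuous, and $0 \leq \psi \leq \Delta$ forces $\psi(k) \to 0$ as $k \to -\infty$. Convexity of $\psi$ is automatic, and a convex function with limit $0$ at $-\infty$ cannot decrease on any interval (a negative slope would propagate leftward and send the function to $+\infty$), so $\psi$ is non-decreasing. For the asymptote at $+\infty$, put $\ell(k) := (a-\alpha)k - (b-\beta+\eta) = h(k) - \eta$. The definition of $\eta$ gives $\phi \leq \eta$, i.e.\ $\ell \leq \Delta$; as $\ell$ is affine and $\psi$ is the largest convex minorant of $\Delta$, this yields $\psi \geq \ell$. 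For the matching upper bound I would fix $\varepsilon > 0$, pick $k^* \in \R$ with $\phi(k^*) \geq \eta - \varepsilon$ (equivalently $\Delta(k^*) \leq \ell(k^*) + \varepsilon$), and apply \eqref{eq:chull}: for $y > k^*$ and $z \geq y$, $\psi(y) \leq L^{\Delta}_{k^*,z}(y)$, and letting $z \to \infty$, with $\tfrac{\Delta(z)-\Delta(k^*)}{z-k^*} \to a-\alpha$, gives $\psi(y) \leq \Delta(k^*) + (a-\alpha)(y-k^*) \leq \ell(k^*) + (a-\alpha)(y-k^*) + \varepsilon = \ell(y) + \varepsilon$. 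Hence $0 \leq \psi(y) - \ell(y) \leq \varepsilon$ for all $y > k^*$, and since $\varepsilon$ was arbitrary, $\psi(y) - \ell(y) \to 0$ as $y \to \infty$, the last required property.

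The main obstacle is the upper bound in Step 2: one must produce a finite near-maximiser $k^*$ of $\phi$ and exploit that the chord of $\Delta$ joining $k^*$ to $+\infty$ has slope converging to exactly $a-\alpha$ with the correct intercept, so that \eqref{eq:chull} traps $\psi$ between $\ell$ and $\ell + \varepsilon$ near $+\infty$; the sign statements, the finiteness of $\eta$, and all remaining properties are routine bookkeeping with the normalisations defining the classes $\sD(\cdot,\cdot)$. (Lemma \ref{lem:diff} applied to this $f$ and $g$ gives, additionally, convexity of $g - \psi$, which is relevant downstream but not needed for the present statement.)
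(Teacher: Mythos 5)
Your proof is correct, and its core idea coincides with the paper's: both identify the affine function $h-\eta$ as the tight affine minorant of $\Delta=g-f$ and hence as the asymptote of $(g-f)^c$ at $+\infty$. The execution differs in one respect. The paper first shows that the supremum defining $\eta$ is \emph{attained} at some finite $\bar k$ (via a subsequence argument on a maximising sequence), and then propagates the equality $(g-f)^c(\bar k)=\tilde h(\bar k)$ rightward by convexity to conclude $(g-f)^c=\tilde h:=h-\eta$ on all of $[\bar k,\infty)$. You instead work with an $\varepsilon$-near-maximiser $k^*$ and use the chord representation \eqref{eq:chull} with one endpoint at $k^*$ and the other sent to $+\infty$, trapping $(g-f)^c$ between $\ell=h-\eta$ and $\ell+\varepsilon$ on $(k^*,\infty)$. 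Your route buys a little robustness — it does not require attainment of the supremum, a step the paper handles somewhat tersely (the case of a maximising sequence tending to $-\infty$ is not explicitly excluded there) — at the cost of delivering only the limit $(g-f)^c-\ell\to 0$ rather than exact equality on a half-line. Both conclusions suffice for membership in $\sD(a-\alpha,b-\beta+\eta)$, and your verification of the remaining properties (non-negativity, monotonicity, the limit $0$ at $-\infty$, finiteness and non-negativity of $\eta$, and the consistency check $b-\beta+\eta=0$ when $a=\alpha$) is sound.
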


\section{The shadow measure and $\pi_I$}\label{sec:shadowLC}
\subsection{The maximal element}\label{sec:max}
Let $\mu,\nu\in\sM$ with $\mu\leq_{pcd}\nu$ and define $\sM^\nu_\mu=\{\eta\in\sM:\mu\leq_{cd}\eta\leq\nu\}$. Then $\sM^\nu_\mu$ is a set of terminal laws of a supermartingale that embeds $\mu$ into $\nu$. Note that $\eta\in\sM^\nu_\mu$ if and only if $P_\eta\in\sC(\mu,\nu)$.

Nutz and Stebegg \cite{NutzStebegg:18} showed that there exists a measure in $\sM^\nu_\mu$ that is minimal w.r.t. $\leq_{cd}$ (see section \ref{sec:shadow}). The first step in their proof is to show that $\sM^\nu_\mu\neq\emptyset$ by constructing the `left-most' measure $\theta\leq\nu$ of mass $\mu(\R)$. In this section we show that this left-most measure is indeed the largest measure (w.r.t. $\leq_{cd}$) in $\sM^\nu_\mu$.

Let $G_\nu:[0,\nu(\R)]\mapsto\R$ be a quantile function of $\nu$. Define $T^\nu(\mu)\in\sM$ by
\begin{equation}\label{eq:leftmost}
T^\nu(\mu):=\nu\lvert_{(-\infty,G_\nu(\mu(\R)))}+(\mu(\R)-\nu((-\infty,G_\nu(\mu(\R))))\delta_{G_\nu(\mu(\R))}.
\end{equation}
Note that $T^\nu(\mu)$ does not depend on the version of $G_\nu$.
\begin{rem}\label{rem:Ecx0}
	Let $\mu,\nu\in\sM$ with $\mu \leq_{cd} \nu$. Then $\mu(\R)=\nu(\R)$ and therefore $T^\nu(\mu)=\nu$. This is consistent with a fact that if $\mu \leq_{cd} \nu$ then $\{ \eta: \mu \leq_{cd} \eta \leq \nu \}$ is the singleton $\{\nu\}$.
\end{rem}

\begin{prop}\label{thm:maximal}
	Let $\mu,\nu \in\sM$ with $\mu \leq_{pcd} \nu$. Then $T^\nu(\mu)$, defined in \eqref{eq:leftmost}, is a unique measure with the following properties:
	\begin{enumerate}
		\item\label{item1max} $\mu\leq_{cd}T^\nu(\mu)$,
		\item\label{item2max} $T^\nu(\mu)\leq\nu$,
		\item\label{item3max} If $\eta$ is another measure satisfying $\mu \leq_{cd} \eta \leq \nu$ then $\eta\leq_{cd}T^\nu(\mu)$.
	\end{enumerate}
\end{prop}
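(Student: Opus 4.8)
The plan is to verify the three listed properties for $\theta:=T^\nu(\mu)$ directly from the explicit formula \eqref{eq:leftmost}, and then to deduce uniqueness by an antisymmetry argument. Write $m:=\mu(\R)$ and $q:=G_\nu(m)$, so that $\theta=\nu\lvert_{(-\infty,q)}+\alpha\,\delta_q$ with $\alpha:=m-\nu((-\infty,q))$. Since $q$ is an $m$-quantile of $\nu$ one has $\nu((-\infty,q))\le m\le\nu((-\infty,q])$, hence $0\le\alpha\le\nu(\{q\})$; in particular $\theta(\R)=m$ and, directly from the displayed form, $\theta\le\nu$ as measures — this is property~\eqref{item2max}. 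Two elementary consequences of the formula that I would record at the outset are: (a) $P_\theta=P_\nu$ on $(-\infty,q]$, since for $k\le q$ only the mass of $\nu$ strictly to the left of $q$ (where $\theta$ agrees with $\nu$) contributes to $P_\theta(k)$ and to $P_\nu(k)$; and (b) $P_\theta$ is affine with slope $m$ on $[q,\infty)$, because $\theta$ is supported in $(-\infty,q]$, so in particular $C_\theta(q)=0$ and $\overline{\theta}=mq-P_\theta(q)$.

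For property~\eqref{item1max}, since $\mu$ and $\theta$ have the same mass $m$, it is equivalent (recalling that for measures of equal mass $\eta\leq_{cd}\chi$ amounts to $P_\eta\le P_\chi$) to prove $P_\mu\le P_\theta$ on $\R$. Testing the definition of $\leq_{pcd}$ against the non-negative convex non-increasing functions $(k-\cdot)^+$ gives $P_\mu\le P_\nu$ everywhere, so $P_\mu\le P_\nu=P_\theta$ on $(-\infty,q]$ by (a). On $[q,\infty)$, writing $P_\mu(k)=mk-\overline{\mu}+C_\mu(k)$ with $C_\mu$ non-increasing and using (b), the difference $P_\theta(k)-P_\mu(k)=(\overline{\mu}-\overline{\theta})-C_\mu(k)$ is non-decreasing in $k$, hence minimised over $[q,\infty)$ at $k=q$, where a short computation (using $C_\theta(q)=0$) identifies it with $P_\theta(q)-P_\mu(q)=P_\nu(q)-P_\mu(q)\ge0$. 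Thus $P_\mu\le P_\theta$ on $\R$, i.e.\ $\mu\leq_{cd}\theta$. (Alternatively, properties~\eqref{item1max} and~\eqref{item2max} may simply be quoted from Nutz and Stebegg~\cite{NutzStebegg:18}, who introduced $T^\nu(\mu)$ precisely to exhibit an element of $\sM^\nu_\mu$.)

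Property~\eqref{item3max} is the substantive step, and for it I would use a rearrangement argument. Let $\eta$ satisfy $\mu\leq_{cd}\eta\le\nu$; then $\eta(\R)=m=\theta(\R)$, so again it suffices to show $P_\eta\le P_\theta$. Consider the Jordan decomposition $\eta-\theta=\rho_+-\rho_-$, where $\rho_+:=\eta-\eta\wedge\theta$ and $\rho_-:=\theta-\eta\wedge\theta$ are mutually singular non-negative measures with the common total mass $c:=m-(\eta\wedge\theta)(\R)$. Since $\theta$ lives on $(-\infty,q]$, so does $\rho_-$; and since $\eta\le\nu$ one checks $\rho_+=(\eta-\theta)^+\le\nu-\theta=\nu\lvert_{(q,\infty)}+(\nu(\{q\})-\alpha)\delta_q$, so $\rho_+$ lives on $[q,\infty)$. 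As $x\mapsto(k-x)^+$ is non-increasing, for every $k\in\R$
\[
\int(k-x)^+\,\rho_+(dx)\ \le\ (k-q)^+\,c\ \le\ \int(k-x)^+\,\rho_-(dx),
\]
whence $P_\eta(k)-P_\theta(k)=\int(k-x)^+\,(\eta-\theta)(dx)\le0$. Hence $P_\eta\le P_\theta$, i.e.\ $\eta\leq_{cd}\theta$.

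Uniqueness is then immediate: if $\theta'$ also satisfies \eqref{item1max}--\eqref{item3max}, then applying \eqref{item3max} for $\theta'$ to $\eta=\theta$ (admissible by \eqref{item1max}--\eqref{item2max} for $\theta$) gives $\theta\leq_{cd}\theta'$, and by symmetry $\theta'\leq_{cd}\theta$; comparing potentials forces $P_{\theta'}=P_\theta$ and hence $\theta'=\theta$. I expect the only real obstacle to be property~\eqref{item3max} — the other two items are bookkeeping with the modified potentials — and within it the single delicate point is the treatment of a possible atom of $\nu$ at $q$ (checking $0\le\alpha\le\nu(\{q\})$ and that $\nu-\theta$ is carried by $[q,\infty)$), which is exactly what the explicit formula \eqref{eq:leftmost} is engineered to make transparent.
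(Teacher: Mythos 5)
Your proof is correct. Properties \ref{item2max} and \ref{item1max} are handled in essentially the same way as the paper: the paper also reduces \ref{item1max} to showing $P_\mu\le P_{T^\nu(\mu)}$, using that $P_{T^\nu(\mu)}$ coincides with $P_\nu$ up to $q=G_\nu(\mu(\R))$ and is affine with slope $\mu(\R)$ thereafter; your monotonicity-of-$(\overline\mu-\overline\theta)-C_\mu$ argument on $[q,\infty)$ is just a rephrasing of the paper's one-sided-derivative comparison $P'_\mu(q+)\le\mu(\R)=P'_{T^\nu(\mu)}(q+)$. The genuine divergence is in the maximality property \ref{item3max}: the paper stays entirely at the level of potential functions, arguing that if $P_\theta(k)>h(k)$ at some $k>q$ then convexity of $P_\theta$ together with $P'_\theta(k-)\le\mu(\R)$ would force $P_\theta(q)>P_{T^\nu(\mu)}(q)$, contradicting $\theta\le\nu$; you instead work directly with the measures via the Jordan decomposition $\eta-T^\nu(\mu)=\rho_+-\rho_-$, locate $\rho_-$ on $(-\infty,q]$ and $\rho_+$ on $[q,\infty)$, and conclude by the elementary rearrangement inequality for $x\mapsto(k-x)^+$. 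Both arguments hinge on the same structural fact — that $T^\nu(\mu)$ is the left-most submeasure of $\nu$ of mass $\mu(\R)$ — but yours is more measure-theoretic and arguably more transparent about \emph{why} maximality holds (any competitor $\eta$ differs from $T^\nu(\mu)$ only by moving mass from left of $q$ to right of $q$, which can only decrease the put payoff), at the cost of a little bookkeeping with mutual singularity; the paper's is shorter once the potential formalism is set up. Your uniqueness step also correctly supplies the antisymmetry of $\leq_{cd}$ via equality of potentials, which the paper leaves implicit.
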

\begin{proof} First note that $T^\nu(\mu)(\R)=\mu(\R)$. Furthermore, $T^\nu(\mu)$ satisfies Property~\ref{item2max} by definition, while the uniqueness is a direct consequence of Property \ref{item3max}.	
	
	Now define $h:\R\mapsto\R$ by
	$$
	h(k)=\mu(\R)k-\overline{T^\nu(\mu)},\quad k\in\R.
	$$
	Then $h$ is the line tangent to $P_\nu$ at $G_\nu(\mu(\R))$. In particular, $P_{T^\nu(\mu)}$ is convex, $P_{T^\nu(\mu)}=P_\nu$ on $(-\infty,G_\nu(\mu(\R))]$, $P_{T^\nu(\mu)}=h$ on $(G_\nu(\mu(\R)),\infty)$ and $\lim_{\lvert x\lvert\to\infty}\{P_{T^\nu(\mu)}(k)-h^+(k)\}=0$. Note that, since $P_\nu\geq P_\mu$ everywhere, $\overline{T^\nu(\mu)}\leq\bar\mu$.
	
	Now, since $P_{T^\nu(\mu)}=P_\nu\geq P_\mu$ on $(-\infty,G_\nu(\mu(\R))]$, we have that $P_{T^\nu(\mu)}(G_\nu(\mu(\R)))\geq P_\mu(G_\nu(\mu(\R)))$. However, $P_\mu$ is convex and $P'_\mu(G_\nu(\mu(\R))+)\leq\mu(\R)=P'_{T^\nu(\mu)}(G_\nu(\mu(\R))+)$. In particular, if $P_\mu(k)> P_{T^\nu(\mu)}(k)=h(k)$ for some $k\in(G_\nu(\mu(\R)),\infty)$, then $P_\mu'(k+)>\mu(\R)$, a contradiction to the fact that $P_\mu\in\sD(\mu(\R),\overline\mu)$. It follows that $P_{T^\nu(\mu)}\geq P_\mu$ everywhere, and thus Property \ref{item1max} holds.
	
	Finally, we verify Property \ref{item3max}. Let $\theta\in\sM^\nu_\mu$. Then, since $T^\nu(\mu)=\nu\geq\theta$ on $(-\infty,G_\nu(\mu(\R)))$, and by the continuity of $P_{T^\nu(\mu)}$ and $P_\theta$, we have that $P_{T^\nu(\mu)}\geq P_\theta$ on $(-\infty,G_\nu(\mu(\R))]$. Now suppose that there exists $k\in(G_\nu(\mu(\R)),\infty)$ with $P_{T^\nu(\mu)}(k)=h(k)=\mu(\R)k-\overline{T^\nu(\mu)}<P_\theta(k)$. Then, since $P'_\theta(k-)\leq\mu(\R)=P'_{T^\nu(\mu)}(k-)$, convexity of $P_\theta$ ensures that $P_\theta(G_\nu(\mu(\R)))>P_{T^\nu(\mu)}(G_\nu(\mu(\R)))$, a contradiction. We conclude that $P_{T^\nu(\mu)}\geq P_\theta$ everywhere, and since $\theta\in\sM^\nu_\mu$ was arbitrary, Property \ref{item3max} holds.
\end{proof}

The proof of the following lemma is similar to the proof of Beiglb\"{o}ck et al. \cite[Lemma 6]{BHN:20} (replace $\leq_{pc}$ with $\leq_{pcd}$ and $\leq_{c}$ with $\leq_{cd}$), and thus is omitted.
\begin{lem}\label{lem:order}
	Suppose $\mu, \nu \in \sM$. The following are equivalent:
	\begin{itemize}
		\item[(i)] $\mu \leq_{pcd} \nu$;
		\item[(ii)] there exists $\eta \in \sM$ such that $\mu \leq_{cd} \eta \leq \nu$;
		\item[(iii)] there exists $\chi \in \sM$ such that $\mu \leq \chi \leq_{cd} \nu$.
	\end{itemize}
\end{lem}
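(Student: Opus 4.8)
The plan is to prove the three-way equivalence by establishing the cycle (ii)$\Rightarrow$(i)$\Rightarrow$(ii) and (i)$\Leftrightarrow$(iii), leaning heavily on the potential-function reformulations already available. Recall that for $\eta,\chi\in\sM$ we have $\eta\leq\chi$ iff $P_\chi-P_\eta$ is convex (stated in the preliminaries), $\eta\leq_{cd}\chi$ iff $\eta(\R)=\chi(\R)$ and $P_\eta\leq P_\chi$ (Lemma \ref{lem:put_cd}, extended from $\sP$ to $\sM$ via scaling), and $\eta\leq_{pc}\chi$ resp. $\eta\leq_{pcd}\chi$ are the positive-order variants. The implication (ii)$\Rightarrow$(i) is the easy direction: if $\mu\leq_{cd}\eta\leq\nu$, then $\mu\leq_{cd}\eta$ gives $\mu\leq_{pcd}\eta$, and $\eta\leq\nu$ gives $\eta\leq_{pcd}\nu$ (both noted in the text, since non-negative convex non-increasing functions are non-negative), and $\leq_{pcd}$ is transitive because it is defined by an inequality of integrals against a fixed cone of test functions; hence $\mu\leq_{pcd}\nu$. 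The same argument, using $\mu\leq\chi\Rightarrow\mu\leq_{pcd}\chi$ and $\chi\leq_{cd}\nu\Rightarrow\chi\leq_{pcd}\nu$, shows (iii)$\Rightarrow$(i).

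For (i)$\Rightarrow$(ii), I would simply invoke Proposition \ref{thm:maximal}: given $\mu\leq_{pcd}\nu$, the measure $\eta:=T^\nu(\mu)$ defined in \eqref{eq:leftmost} satisfies $\mu\leq_{cd}T^\nu(\mu)\leq\nu$ by Properties \ref{item1max} and \ref{item2max} of that proposition. This closes the loop between (i) and (ii) with essentially no extra work, since Proposition \ref{thm:maximal} has already done the heavy lifting of constructing an explicit witness.

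The remaining task is (i)$\Rightarrow$(iii): from $\mu\leq_{pcd}\nu$ produce $\chi\in\sM$ with $\mu\leq\chi\leq_{cd}\nu$. This is the step I expect to be the main obstacle, since it is dual in flavour to Proposition \ref{thm:maximal} — instead of the largest $\eta$ with $\mu\leq_{cd}\eta\leq\nu$ we now want some $\chi$ sitting below $\nu$ in convex-decreasing order and above $\mu$ in the ordinary ($\leq$) sense. My approach would be to translate to potentials: I seek a function $Q=P_\chi$ that is a valid modified potential (i.e. $Q\in\sD(m,q)$ for some mass $m=\chi(\R)$ and first moment, with $Q\geq (mk-q)^+$), such that $P_\nu-Q$ is convex (this encodes $\chi\leq_{cd}\nu$ once masses agree, so I need $m=\nu(\R)$) and $Q-P_\mu$ is convex with $Q\geq P_\mu$ (this encodes $\mu\leq\chi$). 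A natural candidate is to take $\chi$ to be $\nu$ minus the ``excess'' of $\nu$ over what $\mu$ forces, concentrated appropriately; concretely, one can try $P_\chi:=P_\mu + (P_\nu-P_\mu)^{\mathrm{aff}}$ where the added piece is chosen linear-at-infinity so that $P_\chi$ agrees with $P_\nu$ near $-\infty$ and with the tangent to $P_\nu$ of slope $\mu(\R)$ near $+\infty$ — but more cleanly, one shows directly that $\chi:=\mu+(\nu-T^\nu(\mu))$ works: it has total mass $\nu(\R)$, it dominates $\mu$ since $\nu-T^\nu(\mu)\geq 0$ (as $T^\nu(\mu)\leq\nu$ by Proposition \ref{thm:maximal}\ref{item2max}), and $\chi\leq_{cd}\nu$ should follow because $P_\nu-P_\chi = (P_{T^\nu(\mu)}-P_\mu)$ is convex and non-negative — the former because $P_{T^\nu(\mu)}$ is convex with $P_\mu\leq P_{T^\nu(\mu)}$ and both have the linear-at-infinity structure matching up (using that $P_{T^\nu(\mu)}$ is a tangent line to $P_\nu$ beyond $G_\nu(\mu(\R))$ and equals $P_\nu$ before it), and Lemma \ref{lem:diff}-type convexity bookkeeping handles the rest. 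I would verify $P_\nu-P_\chi$ convex by checking slopes: on $(-\infty,G_\nu(\mu(\R))]$, $P_\chi=P_\mu+P_\nu-P_\nu=P_\mu$... this needs care, so the honest plan is to compute $P_\chi=P_\mu+P_\nu-P_{T^\nu(\mu)}$ piecewise and confirm convexity of $P_\nu-P_\chi=P_{T^\nu(\mu)}-P_\mu$ using convexity of $P_{T^\nu(\mu)}$, $P_\mu$ and the slope bound $P'_\mu\leq\mu(\R)$ established inside the proof of Proposition \ref{thm:maximal}. The delicate point — and the one I would spend the most effort on — is ensuring $\chi\in\sM$ genuinely (finite mass and first moment, which is automatic) and that the piecewise construction of $P_\chi$ is globally convex and dominates $(mk-q)^+$, i.e. that $\chi$ is an honest non-negative measure; this is where a slip is most likely, and I would double-check it by exhibiting $\chi$ directly as the non-negative measure $\mu + \nu\lvert_{[G_\nu(\mu(\R)),\infty)} - (\mu(\R)-\nu((-\infty,G_\nu(\mu(\R))))\delta_{G_\nu(\mu(\R))}$ and arguing its non-negativity from the definition of $T^\nu(\mu)$.
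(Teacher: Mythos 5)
Your proposal is essentially the paper's proof. The cycle is organised slightly differently (the paper proves (i)$\Rightarrow$(ii)$\Rightarrow$(iii)$\Rightarrow$(i), obtaining $\chi$ from $\eta$ via $\chi=\nu-\eta+\mu$, whereas you prove (i)$\Rightarrow$(ii) and (i)$\Rightarrow$(iii) separately), but your witness $\chi=\mu+(\nu-T^\nu(\mu))$ is exactly the paper's $\chi=\nu-\eta+\mu$ specialised to $\eta=T^\nu(\mu)$, and the easy implications are handled by the same chained-integral argument.

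One slip in your verification plan for (i)$\Rightarrow$(iii): you assert that $\chi\leq_{cd}\nu$ is encoded by convexity of $P_\nu-P_\chi$, and later that ``$P_\nu-P_\chi=P_{T^\nu(\mu)}-P_\mu$ is convex and non-negative.'' Convexity of a difference of potentials characterises the setwise order $\chi\leq\nu$, not $\chi\leq_{cd}\nu$; the latter requires only equal masses and $P_\chi\leq P_\nu$ pointwise (Lemma \ref{lem:put_cd}). Moreover $P_{T^\nu(\mu)}-P_\mu$ is generally \emph{not} convex: convexity would mean $\mu\leq T^\nu(\mu)$, which fails already when $\mu$ has an atom not dominated by $\nu$ (e.g.\ $\mu=\delta_0$, $\nu=\tfrac12(\delta_{-2}+\delta_1)$, where $T^\nu(\mu)=\nu$). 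Fortunately the convexity is not needed: $\chi(\R)=\nu(\R)$ by construction and $P_\nu-P_\chi=P_{T^\nu(\mu)}-P_\mu\geq0$ by Property (1) of Proposition \ref{thm:maximal} together with Lemma \ref{lem:put_cd}, and this alone gives $\chi\leq_{cd}\nu$. Similarly, your worry about $\chi$ being an honest non-negative measure is unfounded: $\chi=\mu+(\nu-T^\nu(\mu))$ is a sum of two non-negative measures since $T^\nu(\mu)\leq\nu$. With the convexity claim deleted, your argument is complete and coincides with the paper's.
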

\begin{rem}\label{rem:orderCX}
	If we replace $\leq_{pcd}$ with $\leq_{pc}$ and $\leq_{cd}$ with $\leq_c$ in Lemma \ref{lem:order}, then its statement remains true, see Beiglb\"{o}ck et al. \cite[Lemma 6]{BHN:20}. 
\end{rem}
Let $\mu=\mu_1+\mu_2$ for some $\mu_1,\mu_2\in\sM$ and $\nu\in\sM$ with $\mu\leq_{pcd}\nu$. Then $\sM^\nu_{\mu_1}\neq\emptyset$ and, in particular, we can embed $\mu_1$ into $\nu$ using any supermartingale coupling $\pi\in\Pi_S(\mu_1,T^\nu(\mu_1))$. A natural question is then whether $\sM_{\mu_2}^{\nu-T^\nu(\mu_1)}$ is non-empty, so that the remaining mass $\mu_2$ can also be embedded in what remains of $\nu$.
\begin{eg}\label{eg2}
	Let $\mu=\delta_0$ and $\nu=\frac{1}{2}(\delta_{-2}+\delta_1)$. Then $\mu\leq_{cd}\nu$. Consider $\mu_1=\mu_2=\frac{1}{2}\delta_0$. Then $T^\nu(\mu_1)=\frac{1}{2}\delta_{-2}$. However, $\mu_2\leq_{cd}\nu-T^\nu(\mu_1)$ does not hold. Indeed, $\nu-T^\nu(\mu_1)=\frac{1}{2}\delta_1\leq_{cd}\frac{1}{2}\delta_0=\mu_2$.
\end{eg}
As Example \ref{eg2} demonstrates, for $\mu_1,\mu_2,\nu\in\sM$ with $\mu_1+\mu_2=\mu\leq_{pcd}\nu$,
if we first transport $\mu_1$ to $T^\nu(\mu_1)$, then we cannot,
in general, embed $\mu_2$ in $\nu - T^\nu(\mu_1)$ in a way which respects the supermartingale property. As a consequence, for arbitrary measures in convex-decreasing order we cannot expect the maximal element to \textit{induce} a supermartingale coupling. In Section \ref{sec:shadow} we study the minimal element of $\sM^\nu_\mu$, namely the shadow measure. The shadow measure has the property that if $\mu_1+\mu_2=\mu\leq_{pcd}\nu$ and we transport $\mu_1$ to the shadow $S^{\nu}(\mu_1)$ of $\mu_1$ in $\nu$, then $\mu_2$ is in positive convex-decreasing order with what remains of $\nu$, i.e., $\mu_2 \leq_{pcd} \nu - S^\nu(\mu_1)$.

Let $\mu,\nu\in\sM$ with $\mu\leq_{pcd}\nu$ and define $\tilde{\sM}^\nu_\mu:=\{\eta\in\sM:\mu\leq_c\eta\leq\nu\}\subseteq\sM^\nu_\mu$, so that $\tilde{\sM}^\nu_\mu$ is a set of terminal laws of a martingale that embeds $\mu$ into $\nu$. Note that, by Lemma \ref{lem:order} and Remark \ref{rem:orderCX}, if $\tilde{\sM}^\nu_\mu\neq\emptyset$ then $\mu\leq_{pc}\nu$. We close this section with the following result that allows us to check whether $\leq_{pcd}$ is equivalent to $\leq_{pc}$.
\begin{lem}\label{prop:pcd=pc} Let $\mu,\nu\in\sM$ with $\mu\leq_{pcd}\nu$. Then $\mu\leq_{pc}\nu$ if and only if $C_\mu\leq C_\nu$ everywhere.
\end{lem}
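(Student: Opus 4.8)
The plan is to exploit the potential-function characterisations already assembled in the preliminaries. Recall that $\leq_{pcd}$ and $\leq_{pc}$ are defined via the inequality $\int f\,d\mu \leq \int f\,d\nu$ over non-negative convex non-increasing (resp. non-negative convex) test functions $f$, and that $C_\eta(k)=\int(x-k)^+\eta(dx)$ while $P_\eta(k)=\int(k-x)^+\eta(dx)$, with $C_\eta-P_\eta = \overline\eta - \eta(\R)k$. The key elementary facts I would invoke are: (i) the cone of non-negative convex non-increasing functions is generated (in the sense of pointwise limits of non-negative combinations, plus constants) by the functions $k\mapsto(c-k)^+$ for $c\in\R$ together with the constant function $1$; (ii) the cone of non-negative convex functions is generated by $(k-c)^+$, $(c-k)^+$ for $c\in\R$ and the constant $1$; and (iii) linear functions $x\mapsto \pm x$ are limits of such combinations only up to the mass/mean bookkeeping, which is exactly where the hypothesis $\mu\leq_{pcd}\nu$ enters.

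First I would record the ``only if'' direction, which is immediate: if $\mu\leq_{pc}\nu$ then applying the definition to each non-negative convex function $f(x)=(x-k)^+$ gives $C_\mu(k)=\int(x-k)^+\mu(dx)\leq\int(x-k)^+\nu(dx)=C_\nu(k)$ for all $k$, so $C_\mu\leq C_\nu$ everywhere. For the ``if'' direction, assume $\mu\leq_{pcd}\nu$ and $C_\mu\leq C_\nu$ on $\R$. I want to show $\int f\,d\mu\leq\int f\,d\nu$ for every non-negative convex $f$. Using the decomposition of a non-negative convex function into a non-increasing convex piece plus a non-decreasing convex piece (split at a point where a subgradient vanishes, or handle the monotone cases directly), it suffices to handle non-negative non-decreasing convex $f$, since the non-increasing convex part is already controlled by $\mu\leq_{pcd}\nu$. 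A non-negative non-decreasing convex $f$ can be written, via its (non-negative, non-decreasing, right-continuous) second-derivative measure $\rho$ on $\R$, as $f(x)=f(-\infty)+\int_{\R}(x-k)^+\rho(dk)$ where $f(-\infty)=\lim_{x\downarrow-\infty}f(x)\geq 0$ is a finite non-negative constant (finiteness because $f$ is non-decreasing and non-negative, so bounded below, and convex non-decreasing functions that are finite-valued have a finite left limit at $-\infty$ when... — actually here I must be slightly careful: a non-decreasing convex function need not have a finite limit at $-\infty$ only if it is eventually... no, non-decreasing forces the limit at $-\infty$ to be the infimum, which is $\geq 0$, hence finite). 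Then
\[
\int f\,d\mu = f(-\infty)\,\mu(\R) + \int_{\R} C_\mu(k)\,\rho(dk) \leq f(-\infty)\,\nu(\R) + \int_{\R} C_\nu(k)\,\rho(dk) = \int f\,d\nu,
\]
where the inequality uses $\mu(\R)\leq\nu(\R)$ (a consequence of $\mu\leq_{pcd}\nu$, obtained by testing against $f\equiv 1$), the hypothesis $C_\mu\leq C_\nu$, and $f(-\infty)\geq 0$, $\rho\geq 0$. Combining with the non-increasing case gives $\mu\leq_{pc}\nu$.

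The main obstacle, and the point requiring the most care, is the decomposition of an arbitrary non-negative convex function $f$ into monotone convex pieces in a way that keeps each piece non-negative: if $f$ attains its minimum $m\geq 0$ at some point $x_0$ (or approaches an infimum), one writes $f = f_{\mathrm{dec}} + f_{\mathrm{inc}} - m$ where $f_{\mathrm{dec}}(x)=\max(f(x),f(x_0))$ restricted appropriately — more cleanly, set $f_{\mathrm{inc}}(x)=f(x)$ for $x\geq x_0$ and $f_{\mathrm{inc}}(x)=f(x_0)$ for $x<x_0$, and $f_{\mathrm{dec}}(x)=f(x)$ for $x\leq x_0$ and $f_{\mathrm{dec}}(x)=f(x_0)$ for $x>x_0$, so that $f = f_{\mathrm{inc}}+f_{\mathrm{dec}} - f(x_0)$ with $f_{\mathrm{inc}}$ non-negative non-decreasing convex, $f_{\mathrm{dec}}$ non-negative non-increasing convex, and $f(x_0)\geq 0$ a constant. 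One then applies $\mu\leq_{pcd}\nu$ to $f_{\mathrm{dec}}$, the displayed computation to $f_{\mathrm{inc}}$, and $\mu(\R)\leq\nu(\R)$ together with $f(x_0)\geq 0$ to absorb the constant term (with the correct sign: the constant is subtracted, so we need $\mu(\R)\geq\nu(\R)$ there — hence one must instead keep the constant attached to whichever piece makes the signs work, e.g. write $f=f_{\mathrm{inc}}+(f_{\mathrm{dec}}-f(x_0))$ with $f_{\mathrm{dec}}-f(x_0)$ still non-negative? no, that fails; the honest fix is $f = (f_{\mathrm{inc}}-f(x_0)) + f_{\mathrm{dec}}$ and check $f_{\mathrm{inc}}-f(x_0)\geq 0$, which holds since $f_{\mathrm{inc}}\geq f(x_0)$ everywhere). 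This sign bookkeeping is routine but must be done correctly; once it is, the proof reduces entirely to the two displayed inequalities plus the generating-cone representation of monotone convex functions via $(x-k)^+$ and $(k-x)^+$.
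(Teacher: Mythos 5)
Your proof is correct, but it takes a genuinely different route from the paper. You argue directly on the cone of test functions: split a non-negative convex $f$ at a minimiser $x_0$ as $f=(f_{\mathrm{inc}}-f(x_0))+f_{\mathrm{dec}}$, control the non-increasing piece by the hypothesis $\mu\leq_{pcd}\nu$, and control the non-decreasing piece (which vanishes at $-\infty$) via the Choquet-type representation $g(x)=\int_\R (x-k)^+\rho(dk)$ together with $C_\mu\leq C_\nu$ and Tonelli; your final sign bookkeeping is the right one, and the purely monotone cases (plus the constant $f(-\infty)\geq 0$, absorbed using $\mu(\R)\leq\nu(\R)$) are handled correctly. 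The paper instead works on the potential side: it defines $\tilde{P}^\nu_\mu=\min\{P_\nu,\,C_\nu+(\mu(\R)k-\overline{\mu})\}=P_\nu-((\nu(\R)-\mu(\R))k-(\bar\nu-\bar\mu))^+$, shows via Lemma~\ref{lem:diff} and the convex-hull calculus that $(\tilde{P}^\nu_\mu)^c$ lies in $\sD(\mu(\R),\overline\mu)$, dominates $P_\mu$, and has convex difference with $P_\nu$, so that its second derivative is a measure $\eta$ with $\mu\leq_c\eta\leq\nu$, and then concludes $\mu\leq_{pc}\nu$ from Lemma~\ref{lem:order} and Remark~\ref{rem:orderCX}. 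Your argument is more elementary and self-contained (no convex hulls, no appeal to the order lemma); the paper's argument is constructive in that it exhibits an explicit intermediate measure $\eta$ witnessing $\mu\leq_c\eta\leq\nu$, which is the form in which the lemma is actually exploited later (e.g.\ in the proof of Proposition~\ref{prop:support}) and which stays within the potential-function toolkit the paper builds throughout. Either proof is acceptable.
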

\begin{proof}
		If $\mu\leq_{pc}\nu$ then $C_\mu\leq C_\nu$ since, for each $k\in\R$, $x\mapsto (x-k)^+$ is non-negative and convex, and hence we are done.
		
		Now suppose that $C_\mu\leq C_\nu$ everywhere. Let $\tilde{P}^\nu_\mu:\R\mapsto\R$ be defined by
		$$
		\tilde{P}^{\nu}_\mu(k)=\min\{P_\nu(k),C_\nu(k) +  (\mu(\R)k-\overline{\mu})\},\quad k\in\R.
		$$ We will show that $(\tilde{P}^\nu_\mu)^c\in\sD(\mu(\R),\overline\mu)$, $P_\nu-(\tilde{P}^\nu_\mu)^c$ is convex and $P_\mu\leq(\tilde{P}^\nu_\mu)^c$ everywhere, proving that the second derivative of $(\tilde{P}^\nu_\mu)^c$ corresponds to $\eta\in\sM$ with $\mu\leq_c\eta\leq\nu$.
		
		First, since $C_\mu(k) - P_\mu(k) = (\overline{\mu}- \mu(\R)k)$,
		$$
		\tilde{P}^\nu_\mu(k)=P_\mu(k)+\min\{P_\nu(k)-P_\mu(k),C_\nu(k)-C_\mu(k)\}\geq P_\mu(k),\quad k\in\R.
		$$
		Then from the convexity of $P_\mu$ and the definition of the convex hull it follows that $P_\mu\leq  (\tilde{P}^\nu_\mu)^c\leq\tilde{P}^\nu_\mu$ everywhere.
		
		Second, since $\lim_{k\to-\infty}(P_\nu(k)-P_\mu(k))=\lim_{k\to\infty}(C_\nu(k)-C_\mu(k))=0$, we have that $\lim_{\lvert k\lvert\to\infty}\tilde{P}^\nu_\mu(k)=\lim_{\lvert k\lvert\to\infty}{P}_\mu(k)$. Then, since $P_\mu\in\sD(\mu(\R), \overline{\mu})$, convexity of $(\tilde{P}^\nu_\mu)^c$ ensures that $(\tilde{P}^\nu_\mu)^c \in \sD(\mu(\R), \overline{\mu})$.
		
		Finally we prove the convexity of $P_\nu-(\tilde{P}^\nu_\mu)^c$. First note that $\tilde{P}^\nu_\mu(k) = P_\nu(k) - ((\nu(\R)-\mu(\R))k - (\bar{\nu} - \bar{\mu}))^+$, $k\in\R$. Then, since $p$ given by $p(k)=((\nu(\R)-\mu(\R))k - (\bar{\nu} - \bar{\mu}))^+$ is convex, we can apply Lemma~\ref{lem:diff}, with $g=P_\nu$ and $f=p$. It follows that $P_\nu - (P_\nu - p)^c = P_\nu - (\tilde{P}^\nu_\mu)^c$ is convex, as required.
		
		We showed that $\mu\leq_c\eta\leq\nu$, where $\eta=((\tilde{P}^\nu_\mu)^c)''\in\sM$. By Lemma \ref{lem:order} and Remark \ref{rem:orderCX} it follows that $\mu\leq_{pc}\nu$.
	\end{proof}

\subsection{The shadow measure}\label{sec:shadow}

\begin{defn}[Shadow measure]\label{defn:shadow}
	Let $\mu,\nu\in\sM$ and assume $\mu\leq_{pcd}\nu$. The shadow of $\mu$ in $\nu$, denoted by $S^\nu(\mu)$, has the following properties
	\begin{enumerate}
		\item\label{s1} $\mu\leq_{cd}S^\nu(\mu)$,
		\item\label{s2} $S^\nu(\mu)\leq\nu$,
		\item \label{s3}If $\eta$ is another measure satisfying $\mu \leq_{cd} \eta\leq \nu$, then $S^\nu(\mu)\leq_{cd}\eta$.
	\end{enumerate}
	\label{lem:shadowDef}
\end{defn}

\begin{lem}[Nutz and Stebegg \cite{NutzStebegg:18}, Lemma 6.2]
	For $\mu,\nu\in\sM$ with $\mu\leq_{pcd}\nu$, $S^\nu(\mu)$ exists and is unique.
\end{lem}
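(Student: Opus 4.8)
The plan is to recast everything in terms of modified potentials and to obtain $S^\nu(\mu)$ as the convex hull of a pointwise infimum of such potentials. I would settle uniqueness first, as it is immediate: if $\chi_1$ and $\chi_2$ both satisfy Properties~\ref{s1}--\ref{s3}, then \ref{s1} and \ref{s2} place both of them in $\sM^\nu_\mu=\{\eta\in\sM:\mu\leq_{cd}\eta\leq\nu\}$, so applying Property~\ref{s3} once to $\chi_1$ with $\eta=\chi_2$ and once to $\chi_2$ with $\eta=\chi_1$ yields $\chi_1\leq_{cd}\chi_2$ and $\chi_2\leq_{cd}\chi_1$; by Lemma~\ref{lem:put_cd} this forces $P_{\chi_1}=P_{\chi_2}$, and since a measure in $\sM$ is determined by the second derivative of its modified potential in the sense of distributions, $\chi_1=\chi_2$. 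Hence the content is existence.

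For existence I would start from Proposition~\ref{thm:maximal}, which gives $T^\nu(\mu)\in\sM^\nu_\mu$ and in particular $\sM^\nu_\mu\neq\emptyset$, and recall that $\eta\in\sM^\nu_\mu$ is equivalent to $P_\eta\in\sC(\mu,\nu)$; thus every $\eta\in\sM^\nu_\mu$ has $\eta(\R)=\mu(\R)$, has $P_\nu-P_\eta$ convex, and satisfies $P_\mu\leq P_\eta\leq P_{T^\nu(\mu)}$ (the upper bound because $\eta\leq_{cd}T^\nu(\mu)$ by Proposition~\ref{thm:maximal} together with Lemma~\ref{lem:put_cd}). I then set
\[
g^*:=\sup_{\eta\in\sM^\nu_\mu}\bigl(P_\nu-P_\eta\bigr),\qquad \tilde{q}:=\inf_{\eta\in\sM^\nu_\mu}P_\eta=P_\nu-g^*,\qquad \hat{P}:=\tilde{q}^{c},
\]
and, once $\hat P$ is shown to lie in $\sD_\mu$, let $\eta^*\in\sM$ be the unique measure whose modified potential is $\hat P$. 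The claim is that $S^\nu(\mu)=\eta^*$, and two of the defining properties will then come essentially for free: $P_\mu$ is convex and lies below $\tilde q$, so $P_\mu\leq\hat P\leq\tilde q$, which with $\eta^*(\R)=\mu(\R)$ and Lemma~\ref{lem:put_cd} gives Property~\ref{s1}; and for any $\eta\in\sM^\nu_\mu$ we have $P_{\eta^*}=\hat P\leq\tilde q\leq P_\eta$ with equal masses, i.e.\ $\eta^*\leq_{cd}\eta$, which is Property~\ref{s3}.

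What is left are two genuine checks, and the second is where I expect the main obstacle. First, to see $\hat P\in\sD_\mu$: the sandwich $P_\mu\leq\hat P\leq P_{T^\nu(\mu)}$ forces $\hat P(k)\to0$ as $k\to-\infty$, so $\hat P$ is a non-negative convex function vanishing at $-\infty$, hence non-decreasing, while the same sandwich makes $\hat P(k)-\mu(\R)k$ bounded above near $+\infty$, so being convex it is eventually non-increasing and tends to a finite limit; this pins down the asymptotic line and places $\hat P$ in $\sD(\mu(\R),\gamma)\subseteq\sD_\mu$ for some $\gamma\in[\overline{T^\nu(\mu)},\overline{\mu}]$, after which the correspondence between modified potentials and measures yields $\eta^*$. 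Second, Property~\ref{s2} asks that $P_\nu-\hat P$ be convex, and the worry is that taking the convex hull destroys the curvature constraint; here the key observation is the identity $\tilde q=P_\nu-g^*$, in which $g^*$, being a supremum of non-negative convex continuous functions that are uniformly bounded above by the finite function $P_\nu-P_\mu$, is itself non-negative, real-valued, convex and lower semi-continuous. Lemma~\ref{lem:diff}, applied with $g=P_\nu$ and $f=g^*$, then gives that $P_\nu-(P_\nu-g^*)^c=P_\nu-\hat P$ is convex, i.e.\ $\eta^*\leq\nu$. Putting the pieces together shows $\hat P\in\sC(\mu,\nu)$, so $\eta^*\in\sM^\nu_\mu$, and then Property~\ref{s3} (already verified) says $\eta^*$ is the $\leq_{cd}$-least element of $\sM^\nu_\mu$; we take $S^\nu(\mu):=\eta^*$.
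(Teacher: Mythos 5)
Your proof is correct, but it follows a genuinely different route from the paper's. The paper does not actually reprove this lemma (it cites Nutz--Stebegg); its de facto existence argument is the proof of Theorem \ref{thm:shadow_potential}, which starts from the explicit candidate $h=P_\nu-(P_\nu-P_\mu)^c$ and verifies the four rephrased properties directly, using Lemma \ref{lem:CSpotential} to control the asymptotic slope. You instead construct the shadow intrinsically as the convexified lower envelope of the whole feasible set, $\hat P=\bigl(\inf_{\eta\in\sM^\nu_\mu}P_\eta\bigr)^c$, using Proposition \ref{thm:maximal} only for the non-emptiness of $\sM^\nu_\mu$ and for the sandwich $P_\mu\le\hat P\le P_{T^\nu(\mu)}$ that pins down the limiting behaviour; the one genuinely delicate step --- that taking the convex hull does not destroy convexity of $P_\nu-\hat P$ --- is handled correctly by writing $\hat P=(P_\nu-g^*)^c$ with $g^*=\sup_{\eta\in\sM^\nu_\mu}(P_\nu-P_\eta)$ convex, finite and lower semi-continuous, and invoking Lemma \ref{lem:diff}, the same lemma the paper uses for the analogous curvature checks elsewhere. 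The trade-off is clear: the paper's route delivers the closed-form potential of the shadow, which is its main point and is needed throughout Section \ref{sec:Slc}, whereas your construction only recovers that formula a posteriori via uniqueness; on the other hand, your route is a clean lattice-theoretic argument that the $\leq_{cd}$-minimal element of $\sM^\nu_\mu$ exists without having to guess its formula. (One minor point: Lemma \ref{lem:put_cd} is stated for probability measures, so in both the uniqueness step and Properties 1 and 3 you are implicitly using its equal-mass extension, as the paper itself does throughout.)
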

\begin{rem}\label{rem:Ecx}
	If $\mu \leq_{cd} \nu$ then, in the light of Remark \ref{rem:Ecx0}. $S^\nu(\mu)=\nu=T^\nu(\mu)$.
\end{rem}




Given $\mu$ and $\nu$ with $\mu \leq _{pcd} \nu$ (and, by Remark \ref{rem:Ecx}, with $\mu(\R)<\nu(\R)$) our goal in this section is to construct the shadow measure ${S}^\nu(\mu)$. We do this by finding a corresponding (modified) potential function $P_{S^\nu(\mu)}$ (and then $S^\nu(\mu)$ can be identified as the second derivative of $P_{S^\nu(\mu)}$ in the sense of distributions).

\begin{thm}\label{thm:shadow_potential}
	Let $\mu,\nu\in\sM$ with $\mu\leq_{pcd}\nu$. Then the shadow of $\mu$ in $\nu$ is uniquely defined and given by
	\begin{equation}\label{eq:shadow_potential}
	P_{S^\nu(\mu)}=P_\nu-(P_\nu-P_{\mu})^c. 
	\end{equation}
\end{thm}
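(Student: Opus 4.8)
The plan is to verify that the right-hand side of \eqref{eq:shadow_potential}, call it $\tilde P := P_\nu - (P_\nu - P_\mu)^c$, is the modified potential of a measure satisfying the three defining properties of the shadow in Definition~\ref{defn:shadow}; uniqueness is already given by Nutz and Stebegg, so it suffices to exhibit \emph{one} such measure. First I would observe that by Example~\ref{eg1}/Lemma~\ref{lem:order}, $\mu \leq_{pcd} \nu$ gives $\mu(\R) \leq \nu(\R)$ and, after localising to an irreducible component if needed, we may invoke Lemma~\ref{lem:CSpotential} with $f = P_\mu \in \sD(\mu(\R), \overline{\mu})$ and $g = P_\nu \in \sD(\nu(\R), \overline{\nu})$ (the hypothesis $g \geq f$ being exactly $P_\mu \leq P_\nu$, which holds on each component where $D_{\mu,\nu} \geq 0$; globally $\mu \leq_{pcd} \nu$ does \emph{not} force $P_\mu \leq P_\nu$, so here I would use Lemma~\ref{lem:order}(ii) to produce $\eta$ with $\mu \leq_{cd} \eta \leq \nu$, hence $P_\mu \leq P_\eta \leq P_\nu$, which does give $P_\mu \leq P_\nu$ everywhere). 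Lemma~\ref{lem:CSpotential} then yields $(P_\nu - P_\mu)^c \in \sD(\nu(\R) - \mu(\R), \overline{\nu} - \overline{\mu} + \eta_0)$ for the appropriate constant $\eta_0 \geq 0$, so $(P_\nu - P_\mu)^c$ is convex and non-negative with the prescribed asymptotics.

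Next I would check the three properties. For Property~\ref{s2}, $S^\nu(\mu) \leq \nu$ is equivalent to $P_\nu - \tilde P = (P_\nu - P_\mu)^c$ being convex, which we just established; this also shows $\tilde P$ is a genuine modified potential, i.e., $\tilde P = P_\theta$ for a unique $\theta \in \sM$ with $\theta(\R) = \mu(\R)$ and $\overline{\theta} = \overline{\mu} - \eta_0$ (using the converse direction of the Chacon–Walsh characterisation quoted after the definition of $\sD(\alpha,\beta)$, once I confirm $\tilde P \in \sD(\mu(\R), \overline\mu - \eta_0)$ — the asymptotics follow since $P_\nu - P_\mu$ and its convex hull have the same behaviour at $\pm\infty$ by Lemma~\ref{lem:CSpotential}). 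For Property~\ref{s1}, $\mu \leq_{cd} S^\nu(\mu)$ amounts to $P_\mu \leq \tilde P$, i.e., $(P_\nu - P_\mu)^c \leq P_\nu - P_\mu$, which is immediate from the definition of the convex hull, together with the mass equality $\theta(\R) = \mu(\R)$; convex-decreasing order needs equal masses but only $\overline{\theta} \leq \overline{\mu}$, which holds.

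The substantive step is Property~\ref{s3}: if $\mu \leq_{cd} \eta \leq \nu$ then $S^\nu(\mu) \leq_{cd} \eta$, i.e., $\tilde P \leq P_\eta$. Here I would argue as follows. From $\eta \leq \nu$ we get $P_\nu - P_\eta$ convex, and from $\mu \leq_{cd} \eta$ we get $P_\mu \leq P_\eta$; hence $P_\nu - P_\eta$ is a convex function lying below $P_\nu - P_\mu$, so by maximality of the convex hull $P_\nu - P_\eta \leq (P_\nu - P_\mu)^c$, which rearranges to $\tilde P = P_\nu - (P_\nu - P_\mu)^c \leq P_\eta$. Combined with $\theta(\R) = \mu(\R) = \eta(\R)$ (the first from the construction, the second since $\mu \leq_{cd} \eta$), this gives $S^\nu(\mu) \leq_{cd} \eta$. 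I expect the main obstacle to be the bookkeeping around masses and first moments when $\mu$ and $\nu$ have \emph{different} total mass — in particular making sure $\mu \leq_{pcd} \nu$ (rather than the stronger $\mu \leq_{cd} \nu$) still delivers $P_\mu \leq P_\nu$ on all of $\R$ and that $(P_\nu - P_\mu)^c$ is not identically $-\infty$; this is precisely where the existence half of Lemma~\ref{lem:order} and the $\sD$-class arithmetic of Lemma~\ref{lem:CSpotential} do the work, and I would spell out that reduction carefully before running the convex-hull comparison above.
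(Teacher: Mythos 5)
Your proposal is correct and follows essentially the same route as the paper: check that $P_\nu-(P_\nu-P_\mu)^c$ satisfies the three shadow properties, with the asymptotics/$\sD$-class bookkeeping supplied by Lemma~\ref{lem:CSpotential} and the minimality property obtained from the maximality of the convex hull among convex minorants of $P_\nu-P_\mu$. One aside is inaccurate but harmless: $\mu\leq_{pcd}\nu$ \emph{does} force $P_\mu\leq P_\nu$ globally (apply the definition to $x\mapsto(k-x)^+$, which is non-negative, convex and non-increasing), so neither the detour through Lemma~\ref{lem:order}(ii) nor any localisation to irreducible components is needed.
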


\begin{proof}
	If $\mu \leq_{pcd} \nu$ and $\mu(\R) = \nu(\R)$ then $\mu \leq_{cd} \nu$ and $\overline{\nu} \leq \overline{\mu}$. Then $\lim_{k\to-\infty}\{P_\nu(k)-P_\mu(k)\}=0$ and $\lim_{k\to\infty}\{P_\nu(k)-P_\mu(k)\}=\bar\mu-\bar\nu\geq0$. Since $0\leq(P_{\nu} - P_{\mu})^c\leq P_{\nu} - P_{\mu}$, it follows that $(P_{\nu} - P_{\mu})^c$ is the zero function which is a unique element in $\sD(0,0)$. Then \eqref{eq:shadow_potential} gives that $P_{S^\nu(\mu)}=P_\nu$, and thus $S^\nu(\mu)=\nu$, which is consistent with Remark \ref{rem:Ecx}. In the rest of the proof we assume that $\mu \leq_{pcd} \nu$ with $\mu(\R) < \nu(\R)$.
	
	First we will rephrase Definition \ref{defn:shadow} in terms of the modified potential function: $h$ is the potential of the shadow of $\mu$ in $\nu$ if
	\begin{enumerate}
		\item[0]\label{shadow0} $h\in \sD(\mu(\R), \overline\mu-c)$, for some $c\in[0,\infty)$,
		\item[1]\label{shadow1} $P_\mu \leq  h$,
		\item[2]\label{shadow2} $P_\nu-h$ is a potential function, i.e., $P_\nu-h\in \sD(\alpha, \beta)$ for some $\alpha\geq 0, \beta\in \R$,
		\item[3]\label{shadow3} If $p$ is another potential function satisfying properties 0,1,2 then $h \leq p$.
	\end{enumerate}
	Equivalently we can write this as
	\begin{enumerate}
		\item[0] $h\in \sD(\mu(\R), \overline\mu-c)$, for some $c\in[0,\infty)$,
		\item[$1'$]\label{shadow1x} $(P_\nu-h) \leq (P_\nu- P_\mu)$,
		\item[2] $P_\nu-h$ is a potential function, i.e., $P_\nu-h\in \sD( \alpha, \beta)$ for some $\alpha\geq 0, \beta\in \R$,
		\item[$3'$]\label{shadow3x} If $p$ is another potential function with properties 0,$1'$,2 then $(P_\nu-h) \geq (P_\nu-p)$.
	\end{enumerate}

	By Lemma \ref{lem:CSpotential} with $g=P_\nu$ and $f= P_\mu$ we have $(P_\nu-P_\mu)^c\in\sD(\nu(\R)-\mu(\R),\overline\nu-\overline\mu+c_{\mu,\nu})$, where $c_{\mu,\nu}:=\sup_{k\in\R}\{(\nu(\R)-\mu(\R))k-(\bar\nu-\bar\mu)-P_\nu(k)+P_\mu(k)\}\in[0,\infty)$.

	Now set $h=P_\nu-(P_\nu-P_\mu)^c$. Since $ P_\nu - h = (P_\nu-P_\mu)^c$, Property $2$ is satisfied. Furthermore, using the definition of the convex hull we have that $P_\nu-h=(P_\nu-P_\mu)^c\leq P_\nu-P_\mu$, and thus Property $1'$ is also satisfied.
	
	 We now verify that $h\in \sD(\mu(\R), \overline\mu-c_{\mu,\nu})$. First note that $h\geq P_\nu-(P_\nu-P_\mu)=P_\mu\geq0$ and $\lim_{k\to-\infty}h(k)=\lim_{k\to-\infty}P_\nu(k)-\lim_{k\to-\infty}(P_\nu-P_\mu)^c(k)=0$. Next, by applying Lemma \ref{lem:diff}, with $g=P_\nu$ and $f=P_\mu$, we have that $h$ is convex (and thus also non-decreasing). We are left to show that the asymptotic slope of $h$ at $\infty$ is equal to $\overline\mu-c_{\mu,\nu}$. But
	 \begin{align*}
	&\lim_{k\to\infty}\{h(k)-\mu(\R)k+(\bar\mu-c_{\mu,\nu})\}\\&=\lim_{k\to\infty}\Big\{\{P_\nu(k)-\nu(\R)k+\bar\nu\}-\{(P_\nu-P_\mu)^c(k)-(\nu(\R)-\mu(\R))k+(\bar\nu-\bar\mu+c_{\mu,\nu})\}\Big\}=0
	 \end{align*}
	 and we conclude that $h\in \sD(\mu(\R), \overline\mu-c_{\mu,\nu})$.
	
Finally we claim that $ P_\nu - h = (P_\nu-P_\mu)^c$ satisfies property $3'$. If $p$ is another potential satisfying properties $0,1',2'$, then $P_\nu- p\leq P_\nu-P_\mu$ and $P_\nu-p$ is convex. Then by the maximality of the convex hull we have that $P_\nu-p\leq P_\nu-h=(P_\nu-P_\mu)^c\leq P_\nu-P_\mu$, which concludes the proof.
\end{proof}
\begin{rem}\label{rem:samePotential}
	Suppose $\mu,\nu\in\sM$ with $\mu\leq_{pc}\nu$. Then replacing $\leq_{pcd}$ with $\leq_{pc}$ in Definition \ref{defn:shadow} we recover the definition of the shadow measure in the martingale case (see Beiglb\"{o}ck et al. \cite[Definition 2]{BHN:20}). Surprisingly, the functional representation \eqref{eq:shadow_potential} of the modified potential $P_{S^\nu(\mu)}$ is the same in both cases (see Beiglb\"{o}ck et al. \cite[Theorem 2]{BHN:20}). 
	\end{rem}
	
We now turn to the associativity of the shadow measure. The proof in the martingale case, given by Beiglb\"{o}ck and Juillet \cite{BeiglbockJuillet:16}, is delicate and relies on the approximation of $\mu$ by atomic measures. On the other hand, Nutz and Stebegg \cite{NutzStebegg:18} only give a comment that the proof in the supermartingale case can be obtained along the lines of Beiglb\"{o}ck and Juillet \cite{BeiglbockJuillet:16}. Thanks to Theorem \ref{thm:shadow_potential}, and similarly to Beiglb\"{o}ck et al. \cite[Theorem 3]{BHN:20}, we are able to give a direct proof of the associativity of the supermartingale shadow.

\begin{prop}\label{thm:shadow_assoc}
	Let $\mu_1,\mu_2,\nu\in\sM$. Suppose $\mu=\mu_1+\mu_2$ and $\mu\leq_{pcd}\nu$. Then $\mu_2\leq_{pcd}\nu-S^\nu(\mu_1)$ and
	\begin{equation}\label{eq:assoc}
	S^\nu(\mu_1+\mu_2)=S^\nu(\mu_1)+S^{\nu-S^\nu(\mu_1)}(\mu_2).
	\end{equation}
\end{prop}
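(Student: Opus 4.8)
The plan is to work entirely at the level of modified potential functions, using the explicit representation $P_{S^\nu(\mu)} = P_\nu - (P_\nu - P_\mu)^c$ from Theorem~\ref{thm:shadow_potential}. First I would establish $\mu_2 \leq_{pcd} \nu - S^\nu(\mu_1)$. By Lemma~\ref{lem:put_cd}-type reasoning (more precisely, since $\eta \in \sM^{\chi}_{\vartheta}$ iff $P_\eta \in \sC(\vartheta,\chi)$, and using Lemma~\ref{lem:order}), it suffices to exhibit a measure $\eta$ with $\mu_2 \leq_{cd} \eta \leq \nu - S^\nu(\mu_1)$, equivalently a potential $h$ with $h \in \sD(\mu_2(\R), \overline{\mu_2} - c)$ for some $c \geq 0$, with $P_{\mu_2} \leq h$ and $P_{\nu - S^\nu(\mu_1)} - h$ convex. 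The natural candidate is $h := P_{S^\nu(\mu)} - P_{S^\nu(\mu_1)}$; one checks $P_{\nu - S^\nu(\mu_1)} - h = P_\nu - P_{S^\nu(\mu)}= (P_\nu - P_\mu)^c$ is convex, and that $h \geq P_{\mu_2}$ follows from $P_{S^\nu(\mu)} \geq P_\mu = $ (curvature-wise) $P_{\mu_1} + P_{\mu_2}$ combined with $P_{S^\nu(\mu_1)} \geq P_{\mu_1}$ — this needs the convexity of $P_{S^\nu(\mu)} - P_{S^\nu(\mu_1)}$, which itself follows from $S^\nu(\mu_1) \leq_{cd} S^\nu(\mu)$ (a consequence of Definition~\ref{defn:shadow}\ref{s3}, since $S^\nu(\mu)$ is a valid competitor $\eta$ with $\mu_1 \leq_{cd} \mu_1 + \mu_2 \leq_{cd} S^\nu(\mu)$... wait, need $\mu_1 \leq_{cd} S^\nu(\mu)$, which holds as $\mu_1 \leq_{cd} \mu \leq_{cd} S^\nu(\mu)$ using $\mu_1 \leq_{pcd}\mu$ and monotonicity — I will need to be careful here and may instead argue directly that the curvature of $P_{S^\nu(\mu)}$ dominates that of $P_{S^\nu(\mu_1)}$).

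For the identity~\eqref{eq:assoc} itself, write $\vartheta := S^\nu(\mu_1)$ and $\chi := \nu - \vartheta$. Theorem~\ref{thm:shadow_potential} applied to $\mu_2 \leq_{pcd} \chi$ gives $P_{S^\chi(\mu_2)} = P_\chi - (P_\chi - P_{\mu_2})^c$, so the claim~\eqref{eq:assoc} becomes
\[
P_{S^\nu(\mu)} \;=\; P_\vartheta + P_\chi - (P_\chi - P_{\mu_2})^c \;=\; P_\nu - (P_\chi - P_{\mu_2})^c,
\]
i.e.\ we must show $(P_\nu - P_\mu)^c = (P_\chi - P_{\mu_2})^c = (P_\nu - P_\vartheta - P_{\mu_2})^c$. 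Now $P_\nu - P_\vartheta = (P_\nu - P_{\mu_1})^c$ by definition of $\vartheta = S^\nu(\mu_1)$, so the right-hand side is $\bigl((P_\nu - P_{\mu_1})^c - P_{\mu_2}\bigr)^c$, and by Lemma~\ref{lem:equal_hulls} (with $f = P_\nu - P_{\mu_1}$ measurable and $g = P_{\mu_2}$ convex) this equals $\bigl((P_\nu - P_{\mu_1}) - P_{\mu_2}\bigr)^c = (P_\nu - P_\mu)^c$, which is exactly the left-hand side. This closes the identity.

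The main obstacle I anticipate is the bookkeeping around the affine asymptotics and total masses: the convex-hull operation can shift the asymptotic intercept (the $\eta$ appearing in Lemma~\ref{lem:CSpotential}), so one must verify that all the potentials in sight actually lie in the correct spaces $\sD(\cdot,\cdot)$ — in particular that $P_\vartheta$, $P_\chi$, $P_{S^\chi(\mu_2)}$ have the right masses ($\vartheta(\R) = \mu_1(\R)$, $\chi(\R) = \nu(\R) - \mu_1(\R)$, $S^\chi(\mu_2)(\R) = \mu_2(\R)$) and that $\mu_2 \leq_{pcd}\chi$ is genuinely in force so that Theorem~\ref{thm:shadow_potential} may be invoked for the inner shadow. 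The Lemma~\ref{lem:equal_hulls} step is the clean conceptual heart; everything else is verifying that the hypotheses of the cited lemmas hold, which should be routine given Lemmas~\ref{lem:diff}, \ref{lem:equal_hulls}, \ref{lem:CSpotential} and Theorem~\ref{thm:shadow_potential}. A secondary subtlety: if $\mu_1 \leq_{cd} \nu$ (equal mass) then $\vartheta = \nu$, $\chi = 0$, and one should check the degenerate cases separately, but these are handled by Remark~\ref{rem:Ecx} and the zero-function conventions in Section~\ref{prelims:hull}.
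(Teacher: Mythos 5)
Your overall architecture is the paper's: work at the level of modified potentials via Theorem~\ref{thm:shadow_potential}, and reduce \eqref{eq:assoc} to the identity $(P_\nu-P_\mu)^c=\bigl((P_\nu-P_{\mu_1})^c-P_{\mu_2}\bigr)^c$ supplied by Lemma~\ref{lem:equal_hulls}. That second half of your argument is correct and in fact fills in the computation the paper omits (it defers to the martingale case in Beiglb\"ock et al.). Your candidate for the first half is also the right object: $h:=P_{S^\nu(\mu)}-P_{S^\nu(\mu_1)}$ equals, after one application of Lemma~\ref{lem:equal_hulls}, exactly the paper's $P_\theta=(P_\nu-P_{\mu_1})^c-\bigl((P_\nu-P_{\mu_1})^c-P_{\mu_2}\bigr)^c$.

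However, the two justifications you sketch for the properties of $h$ do not work as stated. For convexity of $h$: you appeal to $S^\nu(\mu_1)\leq_{cd}S^\nu(\mu)$, but these measures have different total masses, so $\leq_{cd}$ is not even applicable; and more importantly, convexity of $P_{\chi}-P_{\eta}$ encodes $\eta\leq\chi$ (domination of measures), whereas $\leq_{cd}$ with equal masses encodes only pointwise domination of potentials. What you actually need is $S^\nu(\mu_1)\leq S^\nu(\mu)$, which is Lemma~\ref{prop:shadowSum} — stated \emph{after} this proposition; the self-contained route is to rewrite $h$ as $P_\theta$ and apply Lemma~\ref{lem:diff} with $g=(P_\nu-P_{\mu_1})^c$ and $f=P_{\mu_2}$. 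For $h\geq P_{\mu_2}$: combining $P_{S^\nu(\mu)}\geq P_{\mu_1}+P_{\mu_2}$ with $P_{S^\nu(\mu_1)}\geq P_{\mu_1}$ gives an inequality pointing the wrong way (you would need an \emph{upper} bound on $P_{S^\nu(\mu_1)}$, not a lower one); the correct one-line argument is $\bigl((P_\nu-P_{\mu_1})^c-P_{\mu_2}\bigr)^c\leq (P_\nu-P_{\mu_1})^c-P_{\mu_2}$, i.e.\ the defining property of the convex hull applied to the rewritten $h$. Finally, the asymptotic bookkeeping you flag is genuinely needed: Lemma~\ref{lem:CSpotential} yields $P_\theta\in\sD(\mu_2(\R),\overline{\mu_2}-\bar c)$ with $\bar c=c_{\mu,\nu}-c_{\mu_1,\nu}\geq 0$, and the shifted intercept is precisely why one only obtains $\mu_2\leq_{cd}\theta$ (not $\leq_c$), which suffices for $\mu_2\leq_{pcd}\nu-S^\nu(\mu_1)$ via Lemma~\ref{lem:order}.
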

\begin{proof}
	
	
	

	We first prove that  $\mu_2\leq_{pcd}\nu-S^\nu(\mu_1)$. Define $P_\theta:\R\to\R_+$ by
	$$
	P_\theta(k)=(P_\nu-P_{\mu_1})^c(k)-((P_\nu-P_{\mu_1})^c-P_{\mu_2})^c(k),\quad k\in\R.
	$$
	We will show that $P_\theta\in\sC(\mu_2,\nu-S^\nu(\mu_1))$. Then the second derivative of $P_\theta$ corresponds to a measure $\theta\in\sM^{\nu-S^\nu(\mu_1)}_{\mu_2}$, which by Lemma~\ref{lem:order} is enough to prove the assertion.
	
	Convexity of $P_\theta$ is a direct consequence of Lemma~\ref{lem:diff} with $g=(P_\nu-P_{\mu_1})^c$ and $f=P_{\mu_2}$. Moreover, since
	$
	P_{\nu-S^\nu(\mu_1)}=P_\nu-P_{S^\nu(\mu_1)}=(P_\nu-P_{\mu_1})^c,
	$
	we have that
	$$
	P_{\nu-S^{\nu}(\mu_1)}-P_\theta =((P_\nu-P_{\mu_1})^c-P_{\mu_2})^c\leq(P_\nu-P_{\mu_1})^c-P_{\mu_2},
	$$
	and it follows that $(P_{\nu-S^{\nu}(\mu_1)}-P_\theta)$ is convex and $P_{\mu_2}\leq P_\theta$. To prove that $\mu_2 \leq_{pcd} \nu - S^{\nu}(\mu_1)$ it only remains to show that $P_\theta$ has the correct limiting behaviour to ensure that $P_\theta \in \sC(\mu_2,\nu-S^\nu(\mu_1))$. For this we will apply Lemma~\ref{lem:CSpotential} to each of the convex hulls in the definition of $P_\theta$ and then to $P_\theta$ itself.
	
	First, since $\mu_1\leq_{pcd}\nu$ (so that $P_{\mu_1}\leq P_\nu$), and by Lemma \ref{lem:CSpotential} with $g=P_\nu$ and $f=P_{\mu_1}$,
	we have that $(P_\nu-P_{\mu_1})^c\in\sD(\nu(\R)-\mu_1(\R),\overline\nu-\overline{\mu_1}+c_{\mu_1,\nu})$, where we write $c_{\eta,\chi}:=\sup_{k\in\R}\{(\chi(\R)-\eta(\R))k-(\bar\chi-\overline{\eta})-P_\chi(k)+P_{\eta}(k)\}$ for $\eta,\chi\in\sM$ with $\eta\leq_{pcd}\chi$ (recall that $c_{\eta,\chi}\in[0,\infty)$). Similarly, since $\mu_1+\mu_2=\mu\leq_{pcd}\nu$, $(P_\nu-P_{\mu_1}-P_{\mu_2})^c\in\sD(\nu(\R)-\mu_1(\R)-\mu_2(\R),\overline{\nu-\mu_1-\mu_2}+c_{\mu,\nu})$. But, by Lemma \ref{lem:equal_hulls}, with $f=(P_\nu-P_{\mu_1})$ and $ g=P_{\mu_2}$, we have that $((P_\nu-P_{\mu_1})^c-P_{\mu_2})^c=(P_\nu-P_{\mu_1}-P_{\mu_2})^c$. Finally, recall that $P_\theta\geq P_{\mu_2}$ and, since $P_\theta$ is convex, $P_\theta=P_\theta^c$. Therefore, by applying Lemma~\ref{lem:CSpotential} with $g=(P_\nu-P_{\mu_1})^c$ and $f=((P_\nu-P_{\mu_1})^c-P_{\mu_2})^c$, we conclude that $P_\theta\in\sD(\mu_2(\R),\overline{\mu_2}-\bar c)$, where $\bar c=c_{\mu,\nu}-c_{\mu_1,\nu}\geq0$. (Note that $\bar c\geq0$ since $P_\theta\geq P_{\mu_2}$.)
	
	We are left to prove the associativity property \eqref{eq:assoc}. However this follows from similar arguments used in the proof of the associativity in the martingale case, see Beiglb\"{o}ck et al. \cite[Theorem 3]{BHN:20}. Hence we omit the details.
\end{proof}

We give one further result which is easy to prove using Theorem~\ref{thm:shadow_potential} and which describes a
structural property of the shadow. (The proof can be obtained along the lines of the proof of Beiglb\"{o}ck et al. \cite[Proposition 2]{BHN:20} with $\leq_{pcd}$ in place of $\leq_{pc}$.)
\begin{lem}\label{prop:shadowSum}
	Suppose $\xi,\mu,\nu\in\sM$ with $\xi \leq \mu \leq_{pcd} \nu$. Then, $\xi \leq_{pcd} \nu$, $\xi \leq_{pcd} S^\nu(\mu)$ and
	$$
	S^{S^\nu(\mu)}(\xi) = S^\nu(\xi).
	$$
	In particular, $S^\nu(\xi)\leq S^\nu(\mu)$.
\end{lem}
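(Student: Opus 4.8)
The plan is to reduce everything to the potential-function identity \eqref{eq:shadow_potential} and the calculus of convex hulls developed in Section \ref{prelims:hull}. First I would dispose of the order relations. Since $\xi \leq \mu$ and $\mu \leq_{pcd} \nu$, transitivity of the relevant orders (a non-negative, convex and non-increasing test function $f$ gives $\int f\,d\xi \leq \int f\,d\mu \leq \int f\,d\nu$) yields $\xi \leq_{pcd} \nu$ immediately. For $\xi \leq_{pcd} S^\nu(\mu)$, I would invoke Lemma \ref{lem:order}: it suffices to exhibit a measure $\eta$ with $\xi \leq_{cd} \eta \leq S^\nu(\mu)$, and $\eta := S^{S^\nu(\mu)}(\xi)$ will be the natural candidate once we know $\xi \leq_{pcd} S^\nu(\mu)$ — so to avoid circularity I would instead first observe that $\mu \leq_{cd} S^\nu(\mu)$ (property \ref{s1} of the shadow) combined with $\xi \leq \mu$ gives, via $\mu - \xi \geq 0$ and the definition of $\leq_{cd}$, a chain $\xi \leq \mu \leq_{cd} S^\nu(\mu)$; but $\leq$ followed by $\leq_{cd}$ is exactly condition (iii) of Lemma \ref{lem:order} applied with target $S^\nu(\mu)$, hence $\xi \leq_{pcd} S^\nu(\mu)$. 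In particular $S^{S^\nu(\mu)}(\xi)$ is well-defined.

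The heart of the matter is the identity $S^{S^\nu(\mu)}(\xi) = S^\nu(\xi)$, which I would prove at the level of modified potentials. By Theorem \ref{thm:shadow_potential}, $P_{S^\nu(\xi)} = P_\nu - (P_\nu - P_\xi)^c$, while $P_{S^{S^\nu(\mu)}(\xi)} = P_{S^\nu(\mu)} - (P_{S^\nu(\mu)} - P_\xi)^c$. Substituting $P_{S^\nu(\mu)} = P_\nu - (P_\nu - P_\mu)^c$ into the latter, the claim becomes
\begin{equation*}
P_\nu - (P_\nu - P_\mu)^c - \big( P_\nu - (P_\nu - P_\mu)^c - P_\xi \big)^c = P_\nu - (P_\nu - P_\xi)^c,
\end{equation*}
i.e.
\begin{equation*}
(P_\nu - P_\mu)^c + \big( (P_\nu - P_\mu)^c - (P_\xi - (P_\nu - P_\mu)) \big)^c = (P_\nu - P_\xi)^c.
\end{equation*}
Writing $g := P_\nu - P_\mu$ (a difference of potentials) and noting $P_\nu - P_\xi = g + (P_\mu - P_\xi)$ with $P_\mu - P_\xi$ convex (since $\xi \leq \mu$), the right side is $\big(g^c + (P_\mu - P_\xi)\big)^c$ by Lemma \ref{lem:equal_hulls} applied with the convex function $-(P_\mu-P_\xi)$ subtracted — more precisely, $(f - h)^c = (f^c - h)^c$ for convex $h$, used here with $f = P_\nu - P_\xi$ and $h$ a suitable convex rearrangement. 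The remaining algebraic identity among convex hulls is precisely the content of the associativity computation already carried out for Proposition \ref{thm:shadow_assoc} (with $\mu_1 = \mu - \xi$, or rather in the ``subtractive'' direction), so I would either cite that argument or reproduce the two-line convex-hull manipulation: $g^c + (h')^c$-type expressions collapse using Lemma \ref{lem:equal_hulls} and the fact that $(g^c)^c = g^c$. Having matched the potentials, the two measures coincide by the uniqueness of the second-derivative-in-distributions correspondence recorded in Section \ref{prelims:convex}.

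Finally, $S^\nu(\xi) \leq S^\nu(\mu)$ follows at once: $S^\nu(\xi) = S^{S^\nu(\mu)}(\xi) \leq S^\nu(\mu)$ by property \ref{s2} of the shadow (the shadow of any measure inside a reference measure is dominated by that reference measure). The main obstacle I anticipate is bookkeeping in the convex-hull identity — getting the signs and the roles of the convex pieces $P_\mu - P_\xi$ right so that Lemma \ref{lem:equal_hulls} and Lemma \ref{lem:diff} apply cleanly — rather than any conceptual difficulty; this is why deferring to the mechanics of the proof of Proposition \ref{thm:shadow_assoc} is attractive.
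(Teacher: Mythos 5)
Your approach is the paper's own: the paper proves this by deferring to the martingale-case argument (Beiglb\"{o}ck et al., Proposition 2 of the shadow-potential paper), which writes both potentials via Theorem \ref{thm:shadow_potential} after decomposing $P_\mu=P_\xi+P_{\mu-\xi}$, collapses the nested hulls with three applications of Lemma \ref{lem:equal_hulls}, and then uses Lemma \ref{lem:diff} to see that $(P_\nu-P_\xi)^c-\big((P_\nu-P_\xi)^c-P_{\mu-\xi}\big)^c$ is already convex so the outer hull is redundant --- exactly the mechanics you outline, and your derivations of $\xi\leq_{pcd}S^\nu(\mu)$ via Lemma \ref{lem:order}(iii) and of $S^\nu(\xi)\leq S^\nu(\mu)$ via property (2) of Definition \ref{defn:shadow} are likewise the intended ones. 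The only blemish is a transcription slip in your displayed ``i.e.'' identity: the second summand should be $\big(P_\nu-(P_\nu-P_\mu)^c-P_\xi\big)^c$, not $\big((P_\nu-P_\mu)^c-(P_\xi-(P_\nu-P_\mu))\big)^c$ (the arguments differ by $2(P_\nu-P_\mu)^c-P_\mu$), but this does not affect the validity of the strategy.
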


\begin{eg}\label{eg3}
	The assertion of Lemma~\ref{prop:shadowSum} does not hold for $\xi,\mu,\nu\in\sM$ with $\xi\leq_{pcd}\mu\leq_{pcd}\nu$. To see this, let $\xi=\frac{1}{3}\delta_0$, $\mu=\frac{1}{3}(\delta_{-2}+\delta_2)$ and $\nu=\frac{1}{3}(\delta_{-2}+\delta_0+\delta_2)$. Then $S^\nu(\mu)=\mu$ and $S^{S^\nu(\mu)}(\xi)=S^\mu(\xi)=\frac{1}{6}(\delta_{-2}+\delta_2)\neq\xi=S^\nu(\xi)$.
\end{eg}
\subsection{The increasing supermartingale coupling $\pi_{I}$}\label{sec:LCcoupling} 
The left-curtain martingale coupling (introduced by Beiglb\"{o}ck and Juillet~\cite{BeiglbockJuillet:16}), and the increasing supermartingale coupling (introduced by Nutz nad Stebegg \cite{NutzStebegg:18}), and denoted by $\pi_{lc}$ and $\pi_I$ respectively, are the couplings that arise via the shadow measure, created working from left to right. More specifically, when $\mu\leq_{cd}\nu$ (resp. $\mu\leq_c\nu$), $\pi_{I}$ (resp. $\pi_{lc}$) is a unique measure in $\Pi_S(\mu,\nu)$ (resp. $\Pi_M(\mu,\nu)$) which for each $x \in \R$ transports $\mu\lvert_{(-\infty,x]}$ to the shadow $S^\nu(\mu\lvert_{(-\infty,x]})$, see Nutz and Stebegg \cite[Theorem 6.6]{NutzStebegg:18} (resp. Beiglb\"{o}ck and Juillet~\cite[Theorem 4.18]{BeiglbockJuillet:16}). In other words, for each $x$, the first and second marginals of $\pi_{I}\lvert_{(-\infty,x]\times\R}$ (resp. $\pi_{lc}\lvert_{(-\infty,x]\times\R}$) are $\mu\lvert_{(-\infty,x]}$ and $S^\nu(\mu\lvert_{(-\infty,x]})$, respectively. 

An alternative characterization of $\pi_I$ and $\pi_{lc}$ is through their supports. As a consequence of the minimality of the shadow measure with respect to convex order (when $\mu\leq_c\nu$), $\pi_{lc}$ is also the unique martingale coupling  which is second-order left-monotone in the sense of Definition \ref{def:lmon} (see Beiglb\"{o}ck and Juillet ~\cite[Theorem 5.3]{BeiglbockJuillet:16}):

\begin{defn}\label{def:lmon}
	A transport plan $\pi\in\Pi(\mu,\nu)$ is said to be \textit{second-order left-monotone} if there exists $\Gamma\in\sB(\R^2)$ with $\pi(\Gamma)=1$ and such that, if $(x,y^-),(x,y^+),(x^\prime,y^\prime)\in\Gamma$ we cannot have $x<x^\prime$ and $y^-<y^\prime<y^+$.
\end{defn}

While the second-order left-monotonicity (in the case $\mu\leq_c\nu$) can be seen as a martingale counterpart of the \textit{c-cyclical monotonicity} in the classical OT theory, the supermartingale constraint requires a novel distinction of the origins $x$. In particular, when $\mu\leq_{cd}\nu$, the support of the initial measure $\mu$ splits into a set $\mathbb M$ of `martingale points' and their complement (i.e., the `supermartingale points'). In particular, Nutz and Stebegg \cite[Corollary 9.5]{NutzStebegg:18} showed that there exists $(\Gamma,\mathbb M)\in\sB(\R^2)\times\sB(\R)$ such that $\pi_{I}$ is concentrated on $\Gamma$, $\pi_{I}\lvert_{\mathbb M\times\R}$ is a martingale and $\pi_{I}$ is second-order left-monotone (w.r.t. $\Gamma$) and first-order right-monotone (w.r.t. $(\Gamma,\mathbb M)$) in the sense of Definition \ref{def:forightmon}. Furthermore, the converse is also true. Suppose $\mu\leq_{cd}\nu$ and let $\pi\in\Pi_S(\mu,\nu)$. If, for some $(\Gamma,\mathbb M)\in\sB(\R^2)\times\sB(\R)$, $\pi(\Gamma)=1$, $\pi\lvert_{\mathbb M\times\R}$ is a martingale and $\pi$ is both, second-order left-monotone (w.r.t. $\Gamma$) and first-order right-monotone (w.r.t. $(\Gamma,\mathbb M)$), then $\pi=\pi_{I}$ (see Nutz and Stebegg \cite[Theorem 8.1]{NutzStebegg:18}).

\begin{defn}\label{def:forightmon}
A transport plan $\pi\in\Pi(\mu,\nu)$ is said to be \textit{first-order right-monotone} if there exists $\Gamma\in\sB(\R^2)$ and $\mathbb M\in\sB(\R)$ such that $\pi(\Gamma)=1$ and, if $(x_1,y_1),(x_2,y_2)\in\Gamma$ with $x_1<x_2$ and $x_1\notin \mathbb M$, then we cannot have $y_1<y_2$.
\end{defn}

When the initial law $\mu$ is continuous (i.e., $\mu(\{x\})=0$ for all $x\in\R$), and if $\mu\leq_c\nu$, the left-curtain martingale coupling has a rather simple representation. In particular, for $x\in\mathbb{R}$, the element $\pi^x_{lc}(\cdot)$ in the disintegration $\pi_{lc}(dx,dy) = \mu(dx) \pi_{lc}^x(dy)$ is a measure supported on a set of at most two points.

\begin{lem}[{Beiglb\"ock and Juillet \cite[Corollary 1.6]{BeiglbockJuillet:16}}]\label{lem:LC}
	Let $\mu,\nu$ be probability measures in convex order and assume that $\mu$ is continuous. Then there exists a pair of measurable functions $T_d : \R \mapsto \R$ and $T_u : \R \mapsto \R$ such that $T_d(x) \leq x \leq T_u(x)$, such that for all $x<x'$ we have $T_u(x) \leq T_u(x')$ and $T_d(x') \notin (T_d(x),T_u(x))$, and such that, if we define $\bar{\pi}(dx,dy) = \mu(dx) \chi_{T_d(x),x,T_u(x)}(dy)$, then $\bar{\pi} \in {\Pi_M}(\mu,\nu)$ and $\bar{\pi}=\pi_{lc}$.
\end{lem}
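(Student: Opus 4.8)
The plan is to read the functional representation of $\pi_{lc}$ off its characterisation (recalled in \cref{sec:LCcoupling}) as the unique second-order left-monotone martingale coupling, the atomlessness of $\mu$ being what forces the fibres to be two-point sets; this is \cite[Corollary~1.6]{BeiglbockJuillet:16}. First I would fix a Borel set $\Gamma\subseteq\R^2$ with $\pi_{lc}(\Gamma)=1$ witnessing second-order left-monotonicity in the sense of \cref{def:lmon}, disintegrate $\pi_{lc}(dx,dy)=\mu(dx)\,\pi^x_{lc}(dy)$, and note that for $\mu$-a.e.\ $x$ the kernel $\pi^x_{lc}$ is concentrated on the fibre $\Gamma_x=\{y:(x,y)\in\Gamma\}$ and has barycentre $x$ by the martingale property, so $x$ lies in the closed convex hull of $\operatorname{supp}\pi^x_{lc}$.

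The crucial step --- and the one I expect to be the main obstacle --- is to prove that for $\mu$-a.e.\ $x$ the measure $\pi^x_{lc}$ is supported on at most two points. Granting this, I would set $T_d(x)$ and $T_u(x)$ to be the left and right endpoints of $\operatorname{supp}\pi^x_{lc}$ (and $T_d(x)=T_u(x)=x$ on the remaining null set): they are measurable in $x$ because the disintegrating kernel is, they satisfy $T_d(x)\le x\le T_u(x)$ by the barycentre property, and $\pi^x_{lc}=\chi_{T_d(x),x,T_u(x)}$ since a mean-$x$ probability measure on $\{T_d(x),T_u(x)\}$ is unique. For the two-point property I would run the measure-theoretic argument of \cite[Section~1]{BeiglbockJuillet:16}: with $I_x=(T_d(x),T_u(x))$, second-order left-monotonicity makes the fibre over $x'$ avoid every open interval with endpoints in $\operatorname{supp}\pi^x_{lc}$ whenever $x<x'$, which together with the barycentre constraint yields a nesting rule (if $x<x'$ and $x'\in I_x$ then $I_x\subseteq I_{x'}$); one then shows that, were $\pi^x_{lc}$ to place mass strictly inside $I_x$ for a set of $x$ of positive $\mu$-measure, conservation of mass together with this nesting would be incompatible with $\mu$ having no atoms. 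Turning this into a clean contradiction is the delicate part.

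Next I would deduce the monotonicity assertions. From the two-point property and $\pi^x_{lc}(\Gamma_x)=1$ it follows that for $\mu$-a.e.\ $x$ both $T_d(x)$ and $T_u(x)$ belong to $\Gamma_x$; let $A$ be the full-measure set of such $x$. For $x<x'$ in $A$, since $(x,T_d(x)),(x,T_u(x)),(x',T_d(x'))\in\Gamma$, \cref{def:lmon} forbids $T_d(x)<T_d(x')<T_u(x)$, i.e.\ $T_d(x')\notin(T_d(x),T_u(x))$; likewise $(x',T_u(x'))\in\Gamma$ forbids $T_d(x)<T_u(x')<T_u(x)$, and since $T_u(x')\ge x'>x\ge T_d(x)$ this excludes $T_u(x')<T_u(x)$, so $T_u(x)\le T_u(x')$. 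On the exceptional null set one redefines $T_d,T_u$ so as to preserve these inequalities, which does not change $\bar\pi$.

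Finally I would observe that $\bar\pi(dx,dy)=\mu(dx)\,\chi_{T_d(x),x,T_u(x)}(dy)$ agrees with $\mu(dx)\,\pi^x_{lc}(dy)=\pi_{lc}(dx,dy)$, hence $\bar\pi=\pi_{lc}$; as each kernel $\chi_{T_d(x),x,T_u(x)}$ has mean $x$, $\bar\pi$ satisfies \eqref{eq:martingalepi} with equality, its first marginal is $\mu$ and its second marginal is that of $\pi_{lc}$, namely $\nu$, so $\bar\pi\in\Pi_M(\mu,\nu)$, which completes the argument.
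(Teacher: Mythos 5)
The paper does not prove this lemma at all: it is quoted verbatim from Beiglb\"ock and Juillet \cite[Corollary 1.6]{BeiglbockJuillet:16}, so there is no in-paper argument to compare against. Judged on its own terms, your outline follows the right strategy (the one used in \cite{BeiglbockJuillet:16}): start from the characterisation of $\pi_{lc}$ as the unique second-order left-monotone martingale coupling, disintegrate, show the fibres are two-point sets, and read off $T_d,T_u$. Your deduction of $T_u(x)\leq T_u(x')$ and $T_d(x')\notin(T_d(x),T_u(x))$ from \cref{def:lmon} is correct, as is the identification $\pi^x_{lc}=\chi_{T_d(x),x,T_u(x)}$ once the two-point property and the barycentre condition are in hand.

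The genuine gap is exactly the step you flag: the claim that $\pi^x_{lc}$ is supported on at most two points for $\mu$-a.e.\ $x$. This is the entire content of the lemma, and the route you sketch for it (a nesting rule $I_x\subseteq I_{x'}$ combined with ``conservation of mass'') presupposes that the fibres are already two-point sets in order to define the intervals $I_x$, so it cannot be used to establish that property; I do not see how to close a contradiction along those lines. The argument that works is purely combinatorial and uses atomlessness only at the very end: call $x$ a \emph{branching point} if there exist $y^-<y<y^+$ all in $\Gamma_x$, and to such an $x$ assign a pair of rationals $q_1\in(y^-,y)$, $q_2\in(y,y^+)$. If $x<x'$ were both branching points assigned the same pair $(q_1,q_2)$, with triples $y^-<y<y^+$ and $z^-<z<z^+$, then $(x,y^-),(x,y^+),(x',z)\in\Gamma$ with $y^-<q_1<z<q_2<y^+$, contradicting \cref{def:lmon}. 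Hence the assignment is injective, the set of branching points is countable, and since $\mu$ is continuous it is $\mu$-null; for the remaining $x$ one passes from $\Gamma_x$ to $\operatorname{supp}\pi^x_{lc}\subseteq\overline{\Gamma_x}$ by a routine approximation. Without this (or an equivalent) argument your proof does not get off the ground. A secondary, smaller issue is the final redefinition on the exceptional null set so that the monotonicity relations hold for \emph{all} $x<x'$ as the statement requires; this is doable (e.g.\ by taking one-sided limits along the full-measure set) but deserves a sentence rather than an appeal to ``one redefines''.
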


Lemma~\ref{lem:LC} is expressed in terms of elements of $\Pi_M$. We can give an equivalent expression in terms of a martingale. First we give an analogue of Definitions~\ref{def:lmon} and \ref{def:forightmon} for functions.

\begin{defn}\label{def:lmonfns}
	Given an interval $I$, $\hat I\subset I$ and an increasing function $g:I \mapsto \R$, a triple of functions $(f,h,l)$, where $f,h : \hat I \mapsto \R$, $l:I\setminus\hat I\mapsto\R$, is said to be second-order left-monotone and first-order right monotone with respect to $(I,\hat I,g)$ if
	\begin{itemize}
		\item $(f,h)$ is second-order left-monotone with respect to $g$ on $\hat I$: $f \leq g \leq h$ on $\hat I$ and for $x,x'\in\hat I$ with $x < x'$ we have $h(x) \leq h(x')$ and $f(x') \notin (f(x), h(x))$,
		\item $l$ is non-increasing and $l\leq g$ on $I\setminus\hat I$, and $l(x')\notin(f(x),h(x))$ for all $x'\in I\setminus \hat I$ and $x\in\hat I$ with $x<x'$.
	\end{itemize} 
\end{defn}

\begin{cor}\label{cor:BJ}
	Let $(\Omega,\sF,\Prob) = (I \times (0,1), \sB(\Omega), \Prob_X \times Leb((0,1)))$ where $\Prob_X((-\infty,x])= \mu((-\infty,x])$.
	Let $\omega = (x,v)$ and let the canonical random variable $(X,V)$ on $(\Omega,\sF,\Prob)$ be given by $(X(\omega),V(\omega))=(x,v)$.
	Then $X$ has law $\mu$, $V$ is a $U(0,1)$ random variable and $X$ and $V$ are independent. Let $\F = (\sF_1 = \sigma(X), \sF_2 = \sigma(X,V) )$ and set $\mathbf{M} = (\Omega, \sF, \F, \Prob)$.
	
	Suppose $\mu$ is continuous.
	Then there exists $T_d,T_u:I\mapsto\R$ such that $(T_d,T_u)$ is second-order left-monotone with respect to the identity function on $I$ and such that
	if we define $Y(x,v) \in \{T_d(x),T_u(x) \}$ by $Y(x,v) = x$ on $T_d(x)=x=T_u(x)$ and
	\begin{equation} Y(x,v) = T_d(x) I_{\{ v \leq \frac{T_u(x) - x}{T_u(x)-T_d((x)} \}} +  T_u(x) I_{ \{ v > \frac{T_u(x) - x}{T_u(x)-T_d(x)} \} }
	\label{eq:YXVdef}
	\end{equation}
	otherwise, then
	$M = (X,Y(X,V))$ is a $\mathbf{M}$-martingale for which $\sL(X) = \mu$ and $\sL(Y) = \nu$.
\end{cor}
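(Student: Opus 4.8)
The plan is to deduce the corollary directly from Lemma~\ref{lem:LC}: that lemma already produces functions $T_d,T_u$ with exactly the monotonicity properties required, so the only work is to recognise the left-curtain coupling $\pi_{lc}$ as the law of $(X,Y(X,V))$ on the product space $\mathbf{M}$ and then to verify the martingale property. First I would apply Lemma~\ref{lem:LC} to $\mu,\nu$ (which are in convex order, with $\mu$ continuous). This yields measurable maps $T_d,T_u:I\mapsto\R$ with $T_d(x)\leq x\leq T_u(x)$, with $T_u$ non-decreasing and $T_d(x')\notin(T_d(x),T_u(x))$ whenever $x<x'$ — that is, $(T_d,T_u)$ is second-order left-monotone with respect to the identity on $I$ in the sense of Definition~\ref{def:lmonfns} — and such that $\bar\pi(dx,dy)=\mu(dx)\,\chi_{T_d(x),x,T_u(x)}(dy)$ satisfies $\bar\pi=\pi_{lc}\in\Pi_M(\mu,\nu)$.

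Next I would compute the joint law of $(X,Y(X,V))$ under $\Prob$. The function $Y$ defined in \eqref{eq:YXVdef} is measurable in $(x,v)$, since $T_d,T_u$ are measurable, and under $\Prob$ the coordinates $X$ and $V$ are independent with $V\sim U(0,1)$. Hence the conditional law of $Y(X,V)$ given $X=x$ is the push-forward of $\Leb((0,1))$ under $v\mapsto Y(x,v)$. When $T_d(x)<T_u(x)$, this push-forward assigns mass $\frac{T_u(x)-x}{T_u(x)-T_d(x)}$ to $T_d(x)$ and mass $\frac{x-T_d(x)}{T_u(x)-T_d(x)}$ to $T_u(x)$, which is precisely $\chi_{T_d(x),x,T_u(x)}$; the degenerate sub-cases $T_d(x)=x$ and $x=T_u(x)$ are covered automatically, as then the threshold equals $1$ or $0$ and the push-forward is $\delta_x$. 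When $T_d(x)=x=T_u(x)$ we set $Y(x,v)=x$, which again equals $\chi_{x,x,x}=\delta_x$. Disintegrating against $\mu(dx)$ then shows that the law of $(X,Y(X,V))$ is exactly $\bar\pi=\pi_{lc}$; in particular $\sL(X)=\mu$ and $\sL(Y(X,V))=\nu$.

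Finally I would verify that $M=(X,Y(X,V))$ is an $\mathbf{M}$-martingale. Both coordinates are integrable, since $X$ has law $\mu\in\sP$ and $Y(X,V)$ has law $\nu\in\sP$, and both are $\F$-adapted, $X$ being $\sF_1$-measurable and $Y(X,V)$ being $\sF_2$-measurable. Since $\sF_1=\sigma(X)$, it suffices to check $\E[Y(X,V)\mid X]=X$ a.s. By the disintegration computed above, the conditional law of $Y(X,V)$ given $X=x$ is $\chi_{T_d(x),x,T_u(x)}$, whose mean is $x$ by construction; hence $\E[Y(X,V)\mid X]=X$. As $\E[X\mid\sF_1]=X$ trivially, $(X,Y(X,V))$ is an $\mathbf{M}$-martingale with $\sL(X)=\mu$ and $\sL(Y(X,V))=\nu$, which is the claim.

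The argument is essentially a translation between the coupling formulation of Lemma~\ref{lem:LC} and the random-variable formulation of the corollary, so no single step is genuinely hard. The point requiring the most care is the disintegration: the maps $T_d,T_u$, and hence $Y$, are determined only up to $\mu$- (resp.\ $\Prob$-) null sets, and one must check that for $\mu$-a.e.\ $x$ the map $v\mapsto Y(x,v)$ has the stated push-forward, including the boundary cases where the threshold equals $0$ or $1$; this, however, is immediate from the explicit form of \eqref{eq:YXVdef}.
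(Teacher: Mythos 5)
Your proof is correct and follows exactly the route the paper intends: the paper states this corollary without proof, treating it as the immediate translation of Lemma~\ref{lem:LC} from the coupling formulation to the random-variable formulation, which is precisely what you carry out (including the correct identification of the push-forward of $\Leb((0,1))$ under $v\mapsto Y(x,v)$ with $\chi_{T_d(x),x,T_u(x)}$ and the resulting martingale property).
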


In the case with atoms, $T_d$ and $T_u$ cannot be constructed unless we allow them to be multi-valued. By changing their viewpoint Hobson and Norgilas \cite{HobsonNorgilas:21} constructed the generalised lower and upper functions that support $\pi_{lc}$.

\begin{lem}[Hobson and Norgilas~{\cite[Theorem 7.8]{HobsonNorgilas:21}}]
	\label{lem:construction}
	Let $(\Omega,\sF,\Prob) = ((0,1) \times (0,1), \sB(\Omega), \Leb(\Omega))$. Let $\omega = (u,v)$ and let $(U,V)$ be the canonical random variables on $(\Omega,\sF,\Prob)$ given by $(U(\omega),V(\omega))=(u,v)$ so that $U$ and $V$ are independent $U(0,1)$ random variables. Let $\F = ( \sF_1 = \sigma(U), \sF_2 = \sigma(U,V) )$ and set $\mathbf{M} = (\Omega, \sF, \F, \Prob)$.
	
	Fix $\mu\leq_{c}\nu$ and let $G=G_\mu$ be a quantile function of $\mu$.
	
	Then there exists $R,S:(0,1)\mapsto\R$ such that the pair $(R,S)$ is second-order left-monotone with respect to $G$ on $\sI=(0,1)$ and such that if we define $X(u,v)=X(u)=G(u)$ and $Y(u,v) \in \{R(u),S(u) \}$ by $Y(u,v) = G(u)$ on $R(u)=S(u)$ and
	\begin{equation*}
	Y(u,v) = R(u) I_{\{ v \leq \frac{S(u) - G(u)}{S(u)-R(u)} \}} +  S(u) I_{ \{ v > \frac{S(u) - G(u)}{S(u)-R(u)} \} }
	\end{equation*}
	otherwise, then
	$M = (X(U),Y(U,V))$ is a $\mathbf{M}$-martingale for which $\sL(X) = \mu$ and $\sL(Y) = \nu$.
\end{lem}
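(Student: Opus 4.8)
The plan is to build $R$ and $S$ out of the shadow measures of the initial segments of $\mu$ and then to deduce every claim of the lemma from the geometry of the convex hulls $(P_\nu-P_{\mu_u})^c$.

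First I would fix notation and record the facts that do the work. For $u\in(0,1)$ let $\mu_u$ be the pushforward of $\Leb|_{(0,u)}$ under $G$, so $\mu_u(\R)=u$, $\mu_u\leq\mu$, $u\mapsto P_{\mu_u}$ is continuous and non-decreasing, and $\mu_u\leq_{pc}\nu$ (since $\mu_u\leq\mu\leq_c\nu$, by the $\leq_{pc}$-version of Lemma~\ref{lem:order} in Remark~\ref{rem:orderCX}). Put $\phi_u:=P_\nu-P_{\mu_u}\geq0$ and $\theta_u:=S^\nu(\mu_u)$; then $P_{\theta_u}=P_\nu-\phi_u^c$ by Theorem~\ref{thm:shadow_potential} (valid in the $\leq_{pc}$ setting, Remark~\ref{rem:samePotential}), i.e.\ $P_{\nu-\theta_u}=\phi_u^c$. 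I will use: (a) for $u<u'$, associativity of the shadow in the martingale case (the $\leq_{pc}$-analogue of Proposition~\ref{thm:shadow_assoc}; cf.\ \cite{HobsonNorgilas:21}) gives $\mu_{u'}-\mu_u\leq_{pc}\nu-\theta_u$ and $\theta_{u'}-\theta_u=S^{\nu-\theta_u}(\mu_{u'}-\mu_u)$, whence $0\leq\theta_u\leq\theta_{u'}\leq\nu$ and $\phi_u^c-\phi_{u'}^c=P_{\theta_{u'}-\theta_u}$; (b) $\theta_{0+}=0$ and $\theta_1=S^\nu(\mu)=\nu$, because $\mu\leq_c\nu$ forces the shadow to exhaust $\nu$ (Remark~\ref{rem:Ecx}); (c) since $P_{\nu-\theta_u}=\phi_u^c$, $\phi_u^c$ is affine on an open interval $J$ if and only if $(\nu-\theta_u)(J)=0$.

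Next I would set, for $u\in(0,1)$,
\[
R(u):=X^{\phi_u}(G(u)),\qquad S(u):=Z^{\phi_u}(G(u))
\]
in the notation of Definition~\ref{defn:convh}; then $R(u)\leq G(u)\leq S(u)$, with equality throughout exactly when $\phi_u(G(u))=\phi_u^c(G(u))$, and by Lemma~\ref{lem:linear} $\phi_u^c$ is affine on $(R(u),S(u))$ and touches $\phi_u$ at both endpoints (and one checks $R,S$ are real-valued, with range in $[\ell_\nu,r_\nu]$, since $\nu$ has finite first moment). The local statement that makes everything run is that the ``next lump'' of $\mu$ splits onto $R(u)$ and $S(u)$: by (c), $\nu-\theta_u$ has no mass inside the gap $(R(u),S(u))$, which contains $G(u)$ whenever $R(u)<S(u)$; as $u'\downarrow u$, $\mu_{u'}-\mu_u$ is $(u'-u)$ times a probability measure collapsing onto $G(u)$, and its shadow inside $\nu-\theta_u$ (fact (a)) is $\leq\nu-\theta_u$, dominates $\mu_{u'}-\mu_u$ in convex order, and avoids the gap, hence in the limit equals $(u'-u)\,\chi_{R(u),G(u),S(u)}$ (the boundary cases, where $\nu-\theta_u$ has insufficient atom at $R(u)$ or $S(u)$, are absorbed by the irreducible decomposition of \cref{lem:irreducibleComp}). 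Iterating associativity, $\theta_{u'}-\theta_u=\int_u^{u'}\chi_{R(w),G(w),S(w)}\,dw$; thus, with $Y$ given by the displayed formula, $\sL(Y\mid U=u)=\chi_{R(u),G(u),S(u)}$ for a.e.\ $u$, which has barycentre $G(u)=X(u)$, so $\E[Y\mid\sF_1]=X$ (as $X$ is $\sF_1$-measurable), i.e.\ $M=(X(U),Y(U,V))$ is an $\mathbf{M}$-martingale; integrating over $u$ and using (b), $\sL(Y)=\int_0^1\chi_{R(u),G(u),S(u)}\,du=\theta_1-\theta_{0+}=\nu$, while $\sL(X)=\sL(G(U))=\mu$ is immediate.

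The step I expect to be the main obstacle is the second-order left-monotonicity of $(R,S)$ with respect to $G$: $u\mapsto S(u)$ non-decreasing and $R(u')\notin(R(u),S(u))$ for $u<u'$. Two facts drive it. Since $\theta_{u'}-\theta_u\leq\nu-\theta_u$ carries no mass in $(R(u),S(u))$, neither does $\nu-\theta_{u'}$, so by (c) $\phi_{u'}^c$ is affine on $(R(u),S(u))$; and since, in the $\leq_{pc}$ setting, $\mu_{u'}-\mu_u\leq_c\theta_{u'}-\theta_u$, we have $P_{\mu_{u'}-\mu_u}\leq P_{\theta_{u'}-\theta_u}$, so from $\phi_{u'}-\phi_{u'}^c=(\phi_u-\phi_u^c)+(P_{\theta_{u'}-\theta_u}-P_{\mu_{u'}-\mu_u})$ and $\phi_u>\phi_u^c$ on $(R(u),S(u))$ we get $\phi_{u'}>\phi_{u'}^c$ there --- i.e.\ $\phi_{u'}$ has no contact point with its hull strictly inside the old gap. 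Combined with $G(u)\leq G(u')$ (so that, when $R(u)<S(u)$, either $G(u')\in(R(u),S(u))$ or $G(u')\geq S(u)$), this gives $X^{\phi_{u'}}(G(u'))\leq R(u)$ or $\geq S(u)$, hence $R(u')\notin(R(u),S(u))$, and $Z^{\phi_{u'}}(G(u'))\geq S(u)$, hence $S(u')\geq S(u)$; the trivial-gap case $R(u)=S(u)$ is immediate since $S(u')\geq G(u')\geq G(u)=S(u)$. The remaining work --- consistently allocating atoms of $\nu$ at the contact points $R(u),S(u)$ across $u$, and checking that $R,S$ are well defined and behave correctly on the $u$-intervals where $G$ is constant (the atoms of $\mu$), on which $u\mapsto\phi_u^c$ still varies --- is the bookkeeping carried out by Hobson and Norgilas~\cite{HobsonNorgilas:21}.
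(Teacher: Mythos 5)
This lemma is not proved in the paper at all: it is imported verbatim from Hobson and Norgilas \cite[Theorem 7.8]{HobsonNorgilas:21}, so there is no in-paper argument to compare against. Your sketch follows the same shadow-measure strategy as that reference and as the paper's own Section \ref{sec:Constr} (define the targets from the contact set of $(P_\nu-P_{\mu_u})^c$ with $P_\nu-P_{\mu_u}$, use associativity of the shadow to identify the incremental target law with $\chi_{R(u),G(u),S(u)}$, and read off the marginal and the left-monotonicity from the monotonicity of the linear segments of the hulls). At that level the plan is sound.

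There is, however, one concrete error. You set $R(u):=X^{\phi_u}(G(u))$, which in the paper's notation is $Q_{\mu,\nu}(u)$, the \emph{largest} contact point below $G(u)$. The correct lower function is $R_{\mu,\nu}(u)$ of \eqref{eq:R}, the \emph{infimum} of the set $\{k\le G(u): D_{\mu,\nu}(k)=L(k)\}$ where $L$ is the supporting line of $\phi_u^c$ at $G(u)$; the paper records $R_{\mu,\nu}(u)\le Q_{\mu,\nu}(u)$ and the inequality can be strict (take $D_{\mu,\nu}$ touching $L$ at some $k_0<Q_{\mu,\nu}(u)$ and rising strictly above $L$ on $(k_0,Q_{\mu,\nu}(u))$). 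In that configuration $\phi_u^c=L$ on all of $[R_{\mu,\nu}(u),S(u)]$, so $\nu-\theta_u=(\phi_u^c)''$ charges neither $(R_{\mu,\nu}(u),S(u))$ nor the point $Q_{\mu,\nu}(u)$ itself; the limiting shadow of $\epsilon\delta_{G(u)}$ in $\nu-\theta_u$ therefore sends mass down to $R_{\mu,\nu}(u)$ (or to wherever $\nu-\theta_u$ first charges at or below it), not to $Q_{\mu,\nu}(u)$. With your choice the identity $\theta_{u'}-\theta_u=\int_u^{u'}\chi_{R(w),G(w),S(w)}\,dw$ fails and $\sL(Y)=\nu$ is not guaranteed. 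Beyond this, the steps you explicitly defer --- the allocation of atoms of $\nu$ at the contact points, the behaviour on the flat stretches of $G$ (atoms of $\mu$), and the gluing across irreducible components --- are precisely the substance of the cited theorem, so as written the proposal is an outline of the reference's argument rather than a self-contained proof.
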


Our main goal in later sections is to construct the suitable supporting functions for $\pi_{I}$ in the case when $\mu\leq_{cd}\nu$, and similar in form to the construction in Lemma \ref{lem:construction}. In particular, we will show that the set $\mathbb M$ of martingale points is actually an interval $(-\infty,x^*]$ whose right boundary $x^*$ will be determined explicitly. To the left of $x^*$, $\pi_{I}$ is just a martingale left-curtain coupling (and we can use Corollary \ref{cor:BJ} or Lemma \ref{lem:construction} to embed $\mu\lvert_{(-\infty,x*]}$ into $\nu$ using the shadow $S^\nu(\mu\lvert_{(-\infty,x*]})$), while to the right of $x^*$ the supermartingale left-curtain coupling $\pi_{I}$ is concentrated on a deterministic decreasing map (and $\mu\lvert_{(x^*,\infty)}$ is embedded into $\nu$ through the \textit{antitone} coupling). A special care will be needed in the case when $\mu$ has an atom at $x^*$.

\subsection{Lifted supermartingale transport plans}
Just as Corollary~\ref{cor:BJ} has an equivalent expression via Lemma~\ref{lem:LC}, Theorem~\ref{thm:mainTHM_intro} has an equivalent expression in terms of transport plans, provided that we generalise the notion of a supermartingale transport plan.
Let $(\mu_u)_{0 \leq u \leq 1}$ be a family of measures with $\mu_u(\R)=u$, $\mu_1=\mu$ and $\mu_u \leq \mu_v$ for $0 \leq u \leq v \leq 1$, and let $\lambda$ denote the Lebesgue measure on the unit interval. Then a \textit{lift} (Beiglb\"{o}ck and Juillet \cite{BeiglbockJuillet:16, BeiglbockJuillet:16s}) of $\mu$ with respect to $(\mu_u)_{0 \leq u \leq 1}$ is a probability measure $\hat\mu\in\Pi(\lambda,\mu)$ such that, for all $u\in[0,1]$ and Borel $A\subseteq\R$, $\hat\mu([0,u]\times A)=\mu_u(A)$. A {\it lifted supermartingale transport plan} is a probability measure $\hat{\pi} \in \Pi(\hat{\mu},\nu)$ such that $\int_{\R} y \hat{\pi}_{u,x}(dy)\leq x$, $\hat\mu\textrm{-a.e. }(u,x)$, where $\hat{\pi}_{u,x}$ denotes the disintegration of $\hat\pi\in\Pi(\hat\mu,\nu)$ with respect to $\hat\mu$: $\hat{\pi}(du,dx,dy) = \hat{\mu}(du,dx) \hat{\pi}_{u,x}(dy)$.

One of the insights of Beiglb\"{o}ck and Juillet \cite{BeiglbockJuillet:16,BeiglbockJuillet:16s} is that, when $\mu\leq_c\nu$ and $(\mu_u)_{0 \leq u \leq 1}$ as above, the (martingale) shadow measure induces a family of martingale couplings. In particular the idea is that for all $u\in[0,1]$, $\mu_u$ is mapped to $S^\nu(\mu_u)$. A crucial result making this possible is the fact that if $0<u<v<1$ and $\mu_u \leq \mu_v$ then $S^\nu(\mu_u) \leq S^\nu(\mu_v)$ (recall Lemma~\ref{prop:shadowSum}). More precisely, for any lift $\hat\mu\in\Pi(\lambda,\mu)$, there exists a unique lifted martingale transport plan $\hat{\pi}$
such that for all $u\in[0,1]$ and Borel $A,B\subseteq\R$,  $\hat\pi([0,u]\times A\times\R)=\mu_u(A)$ and $\hat\pi([0,u]\times \R\times B)=S^\nu(\mu_u)(B)$ (see Beiglb\"{o}ck and Juillet \cite[Theorem 2.9]{BeiglbockJuillet:16s}).

The proof of Beiglb\"{o}ck and Juillet \cite[Theorem 2.9]{BeiglbockJuillet:16s} relies on the associativity of the shadow measure. Since this property also holds when $\mu\leq_{cd}\nu$ (see Proposition \ref{thm:shadow_assoc}), we believe that, by replacing $\leq_c$ and $\leq_{pc}$ with $\leq_{cd}$ and $\leq_{pcd}$, respectively, the theorem can be extended to the supermartingale case (for arbitrary lifts). The rigorous proof, however, is left for future work.

In this paper we work with a particular lift instead. In particular, we choose the {\it quantile lift} $\hat{\mu}^Q$ whose support is of the form $\{(u,G(u)): 0<u<1 \}$ where $G$ is a quantile function of $\mu$. Then $\hat{\mu}^Q(du,dx) = du \delta_{G(u)}(dx)$ and for a Borel set $A$, $\hat{\mu}^Q([0,w] \times A) = \int_{0}^w du I_{ \{G(u) \in A \}}$. (This is precisely the lift used by Hobson and Norgilas \cite{HobsonNorgilas:21} to construct the lifted left-curtain martingale coupling.) By analogy with the correspondence between Lemma~\ref{lem:LC} and Corollary~\ref{cor:BJ} we have the following equivalent restatement of Theorem~\ref{thm:mainTHM_intro}:
\begin{cor}\label{cor:construction}
	Let $\mu,\nu$ be probability measures in convex-decreasing order and let $\hat{\mu}^Q$ be the quantile lift of $\mu$. Then there exists a unique regime-switching point $u^*\in[0,1]$ and a triple of measurable functions $R,S : (0,u^*] \mapsto \R$ and $T(u^*,1)\mapsto\R$ such that $(R,S,T)$ is second-order left-monotone and first-order right-monotone with respect to $((0,1),(0,u^*],G)$, and such that if
	$\hat{\pi}^Q(du,dx,dy) = du \delta_{G(u)}(dx) \hat{\pi}^Q_{u,x}(dy)$ (recall $\hat{\mu}$ has support on $\{(u,G(u)) : 0 < u < 1 \}$) then
	$$
	\hat{\pi}^Q_{u,x}(dy) = \hat{\pi}^Q_{u,G(u)} (dy) = I_{\{u\leq u^*\}}\chi_{R(u),G(u),S(u)}(dy)+I_{\{u>u^*\}}\delta_{T(u)}$$
	and $\hat{\pi}^Q$ is the lifted increasing supermartingale transport plan which transports a second marginal $\mu$ to third marginal $\nu$.
\end{cor}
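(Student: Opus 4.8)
The plan is to derive Corollary~\ref{cor:construction} from Theorem~\ref{thm:mainTHM_intro} by the same translation that turns Lemma~\ref{lem:LC} into Corollary~\ref{cor:BJ}: the regime-switching point $u^*$ and the triple $(R,S,T)$ are those furnished by Theorem~\ref{thm:mainTHM_intro}, and the disintegration kernel $\hat\pi^Q_{u,G(u)}$ is simply the regular conditional law of $Y(U,V)$ given $U=u$.

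First I would check that the two bulleted conditions in Theorem~\ref{thm:mainTHM_intro} are, clause by clause, the requirements of Definition~\ref{def:lmonfns} with $I=(0,1)$, $\hat I=(0,u^*]$ and $g=G$: ``$R\le G\le S$ on $(0,u^*]$, $S$ non-decreasing, $R(u')\notin(R(u),S(u))$ for $u,u'\in(0,u^*]$'' is precisely second-order left-monotonicity of $(R,S)$ w.r.t.\ $G$ on $(0,u^*]$, and ``$T< G$ (hence $T\le G$), $T$ non-increasing on $(u^*,1)$, $T(u')\notin(R(u),S(u))$ for $u\in(0,u^*]$, $u'\in(u^*,1)$'' gives the remaining conditions on $l:=T$, the side-condition $u<u'$ being automatic because $(0,1)\setminus(0,u^*]=(u^*,1)$ lies entirely to the right of $(0,u^*]$. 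Hence $(R,S,T)$ is second-order left-monotone and first-order right-monotone with respect to $((0,1),(0,u^*],G)$.

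Next I would set $\hat\pi^Q_{u,G(u)}(dy)=I_{\{u\le u^*\}}\chi_{R(u),G(u),S(u)}(dy)+I_{\{u>u^*\}}\delta_{T(u)}(dy)$ and $\hat\pi^Q(du,dx,dy)=du\,\delta_{G(u)}(dx)\,\hat\pi^Q_{u,x}(dy)$. Since $R(u)\le G(u)\le S(u)$ on $(0,u^*]$, each $\chi_{R(u),G(u),S(u)}$ is a probability measure of barycentre $G(u)$, so $\hat\pi^Q$ is a probability measure on $[0,1]\times\R^2$ whose marginal on $[0,1]$ is $\lambda$ and whose marginal on the first two coordinates is $\hat\mu^Q$ (so its marginal on the second coordinate is the pushforward of $\lambda$ under $G$, namely $\mu$). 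The key point is that, under $\Prob=\Leb$ on $(0,1)^2$ with canonical $(U,V)$, the regular conditional law of $Y(U,V)$ given $U=u$ is exactly $\hat\pi^Q_{u,G(u)}$: for $u>u^*$ this is the definition of $Y$; for $u\le u^*$ with $R(u)=S(u)$ both equal $\delta_{G(u)}=\chi_{R(u),G(u),S(u)}$; and for $u\le u^*$ with $R(u)<S(u)$, integrating $v\sim U(0,1)$ out of the formula for $Y(u,v)$ puts mass $\tfrac{S(u)-G(u)}{S(u)-R(u)}$ at $R(u)$ and the complementary mass at $S(u)$. It follows that the third marginal of $\hat\pi^Q$ is $\int_0^1\hat\pi^Q_{u,G(u)}(dy)\,du=\sL(Y(U,V))=\nu$, so $\hat\pi^Q\in\Pi(\hat\mu^Q,\nu)$, and that for a.e.\ $u$ one has $\int_\R y\,\hat\pi^Q_{u,G(u)}(dy)=G(u)$ when $u\le u^*$ and $=T(u)<G(u)$ when $u>u^*$, which is exactly the lifted supermartingale inequality $\int_\R y\,\hat\pi^Q_{u,x}(dy)\le x$ $\hat\mu^Q$-a.e.\ (equivalently, the $\sigma(U)$-supermartingale property of $(X(U),Y(U,V))$).

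Finally I would identify $\hat\pi^Q$ as the lifted \emph{increasing} supermartingale transport plan. Projecting onto the last two coordinates yields $\pi\in\Pi_S(\mu,\nu)$, and pushing the monotonicity of $(R,S,T)$ forward along $u\mapsto G(u)$ (with the usual care where $G$ is flat on atoms of $\mu$) produces a Borel $\Gamma$ carrying $\pi$ such that $\pi$ is second-order left-monotone and, with $\mathbb M:=(-\infty,G(u^*)]$, first-order right-monotone in the sense of Definitions~\ref{def:lmon} and \ref{def:forightmon}, while $\pi\lvert_{\mathbb M\times\R}$ is a martingale (the part over $\{u\le u^*\}$ is mean-preserving because $\chi_{R(u),G(u),S(u)}$ has barycentre $G(u)$); by the converse characterisation of Nutz and Stebegg~\cite[Theorem~8.1]{NutzStebegg:18} this forces $\pi=\pi_I$, and hence $\hat\pi^Q$, being a lifted supermartingale transport plan in $\Pi(\hat\mu^Q,\nu)$ projecting onto $\pi_I$, is the lift of $\pi_I$ along the quantile lift, with kernel determined $\hat\mu^Q$-a.e.\ by the requirement that $\mu_u$ be mapped to $S^\nu(\mu_u)$ for all $u$. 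Uniqueness of $u^*$ is inherited from its explicit construction in Section~\ref{sec:u^*}. I expect no genuine obstacle here: Corollary~\ref{cor:construction} is by design a restatement of Theorem~\ref{thm:mainTHM_intro}, so all the analytic work sits in that theorem; the only step one might worry about is the identification $\pi=\pi_I$, and it is already carried out in the course of proving Theorem~\ref{thm:mainTHM_intro} in Section~\ref{sec:Constr} (where $\mu\lvert_{(-\infty,x]}$ is shown to be mapped to $S^\nu(\mu\lvert_{(-\infty,x]})$ for every $x$), so it may simply be quoted.
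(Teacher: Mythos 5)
Your proposal is correct and follows exactly the route the paper intends: the corollary is presented there as an ``equivalent restatement'' of Theorem~\ref{thm:mainTHM_intro} by the same translation that relates Lemma~\ref{lem:LC} to Corollary~\ref{cor:BJ}, with no separate proof given, and your elaboration (matching the bullet points to Definition~\ref{def:lmonfns}, reading off the kernel as the conditional law of $Y(U,V)$ given $U=u$, and quoting the identification with $\pi_I$ established in Section~\ref{sec:Constr}) fills in precisely the intended details.
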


\section{The geometric construction of $\pi_{I}$}\label{sec:Slc}
In this section we construct a generalized (or lifted) version of the supermartingale increasing coupling for two measures $\mu,\nu\in\sP$ with $\mu\leq_{cd}\nu$. In particular, we will split the construction into two parts, martingale and (strict) supermartingale, by explicitly determining the unique regime-switching point.

We begin by introducing a particular parametrization of an initial measure $\mu$. Let $G_\mu:(0,1)\mapsto\R$ be a quantile function of $\mu$. For now, we use an arbitrary version of $G=G_\mu$. For each $u\in(0,1)$ define $\mu_u\in\sM$ by
\begin{equation}\label{eq:muLift}
\mu_u=\mu\lvert_{(-\infty,G(u))}+(u-\mu\lvert_{(-\infty,G(u))}(\R))\delta_{G(u)}.
\end{equation}
It is easy to verify that $\mu_u$ does not depend on the choice of $G$. Note that $\mu_u(\R)=u$, $u\in(0,1)$. For $v,u\in(0,1)$ with $v<u$, $\mu_u-\mu_v\geq0$ (i.e., $(\mu_u-\mu_v)\in\sM$), $(\mu_u-\mu_v)(\R)=(u-v)$ and the support of $(\mu_u-\mu_v)$ is contained in $[G(v),G(u)]$. Furthermore, we treat $\mu_0$ as the zero measure, and set $\mu_1=\mu$.

Note that, for $u\in(0,1)$, we have $P_{\mu_u}(k)=P_\mu(k)$ for $k\leq G(u)$, while $P_{\mu_u}(k)\leq P_\mu(k)$ for $k> G(u)$. In particular,
	\begin{equation*}
	P_{\mu_u}(k)=P_\mu(k\wedge G(u))+u(k-G(u))^+,\quad k\in\R,
	\end{equation*}
	and thus, $P_{\mu_u}(\cdot)$ is linear on $[G(u),\infty)$ and $u\in\partial P_\mu(G(u))$, so that $P'_\mu(G(u)-)\leq u\leq P'_\mu(G(u)+)$.
\subsection{The regime-switching point $u^*$}\label{sec:u^*}
For $\eta,\chi\in\sM$ with $\eta\leq_{pcd}\chi$, recall the definition of $c_{\eta,\chi}:=\sup_{k\in\R}\{(\chi(\R)-\eta(\R))k-(\bar\chi-\bar\eta)-P_\chi(k)+P_\eta(k)\}\in[0,\infty)$, which was used in the proof of Theorem \ref{thm:shadow_potential}, see Figure \ref{fig:C}. Note that $c_{\eta,\chi}=\sup_{k\in\R}\{C_{\eta}(k)-C_{\chi}(k)\}$.

\begin{figure}[H]
	\centering
	\begin{tikzpicture}[scale=1]

	\begin{axis}[%
	width=6.028in,
	height=2.754in,
	at={(1.011in,0.642in)},
	scale only axis,
	xmin=-11,
	xmax=11,
	ymin=-0.75,
	ymax=3,
	axis line style={draw=none},
	ticks=none
	]
\addplot [color=black, dashed, line width=1.0pt, forget plot]
table[row sep=crcr]{%
	2.5	1.390625\\
	2.53128286567997	1.40046205913031\\
	2.56818181818182	1.41222236570248\\
	2.6038739617008	1.42375997552646\\
	2.63636363636364	1.43440082644628\\
	2.66981244424612	1.44549365546571\\
	2.70454545454545	1.45716038223141\\
	2.74216116198107	1.46996548989234\\
	2.77272727272727	1.48050103305785\\
	2.8107222959441	1.49375998905733\\
	2.84090909090909	1.50442277892562\\
	2.87379754287642	1.51616951984016\\
	2.90909090909091	1.52892561983471\\
	2.94186292595522	1.54090984219436\\
	2.97727272727273	1.55400955578512\\
	3.01530153897903	1.56825271068558\\
	3.04545454545455	1.57967458677686\\
	3.08201720135142	1.59367687683913\\
	3.11363636363636	1.60592071280992\\
	3.14951910985763	1.6199669139599\\
	3.18181818181818	1.6327479338843\\
	3.21559287091146	1.64625234446604\\
	3.25	1.66015625\\
	3.28409984781444	1.67408198815093\\
	3.31818181818182	1.68814566115702\\
	3.35362780135877	1.7029262143779\\
	3.38636363636364	1.71671616735537\\
	3.42318691966234	1.73238804293421\\
	3.45454545454545	1.74586776859504\\
	3.48805135860873	1.76040639251826\\
	3.52272727272727	1.77560046487603\\
	3.55704200638889	1.79078423970094\\
	3.59090909090909	1.80591425619835\\
	3.62502634462459	1.82130099995139\\
	3.65909090909091	1.83680914256198\\
	3.69566847163685	1.85362284076567\\
	3.72727272727273	1.86828512396694\\
	3.7597556933937	1.88348517962539\\
	3.79545454545455	1.90034220041322\\
	3.83122796384398	1.91739423193375\\
	3.86363636363636	1.93298037190083\\
	3.88936854816097	1.94544923146398\\
	3.93181818181818	1.96619963842975\\
	3.96527578564731	1.98271325351505\\
	4	2\\
};
\addplot [color=black, dashed, line width=1.0pt, forget plot]
table[row sep=crcr]{%
	-2.5	0.140625\\
	-2.45828951242671	0.148554451718342\\
	-2.40909090909091	0.158186983471074\\
	-2.36150138439893	0.167792357082914\\
	-2.31818181818182	0.176782024793388\\
	-2.27358340767184	0.186282140641623\\
	-2.22727272727273	0.196410123966942\\
	-2.1771184506919	0.207681071425493\\
	-2.13636363636364	0.217071280991736\\
	-2.08570360540786	0.229033167896779\\
	-2.04545454545455	0.238765495867769\\
	-2.00160327616477	0.24959934161471\\
	-1.95454545454545	0.261234504132231\\
	-1.9108494320597	0.271790902999819\\
	-1.86363636363636	0.282928719008264\\
	-1.81293128136129	0.294580010566356\\
	-1.77272727272727	0.303589876033058\\
	-1.72397706486477	0.314243942488765\\
	-1.68181818181818	0.323217975206612\\
	-1.63397452018982	0.333132954210653\\
	-1.59090909090909	0.341813016528926\\
	-1.54587617211805	0.350641678779853\\
	-1.5	0.359375\\
	-1.45453353624741	0.367770761995725\\
	-1.40909090909091	0.375903925619835\\
	-1.36182959818831	0.384088759093642\\
	-1.31818181818182	0.39139979338843\\
	-1.26908410711689	0.399339095566459\\
	-1.22727272727273	0.405862603305785\\
	-1.18259818852169	0.412591345281576\\
	-1.13636363636364	0.419292355371901\\
	-1.09061065814814	0.425660524520854\\
	-1.04545454545455	0.431689049586777\\
	-0.999964873833882	0.43750439084788\\
	-0.954545454545455	0.443310950413223\\
	-0.905775371150867	0.449832971148751\\
	-0.863636363636364	0.455707644628099\\
	-0.820325742141739	0.46197695966584\\
	-0.772727272727273	0.469137396694215\\
	-0.725029381541366	0.4765968798708\\
	-0.681818181818182	0.48360020661157\\
	-0.647508602452041	0.489327061277577\\
	-0.590909090909091	0.499096074380165\\
	-0.546298952470256	0.507077920974317\\
	-0.5	0.515625\\
};
\addplot [color=black, dashed, line width=1.0pt, forget plot]
table[row sep=crcr]{%
	0.5	0.765625\\
	0.541710487573291	0.778768262665004\\
	0.590909090909091	0.794550619834711\\
	0.638498615601069	0.810104684033047\\
	0.681818181818182	0.824509297520661\\
	0.726416592328157	0.839584214682642\\
	0.772727272727273	0.855501033057851\\
	0.822881549308096	0.873041265089005\\
	0.863636363636364	0.887525826446281\\
	0.914296394592138	0.905820217220796\\
	0.954545454545455	0.920583677685951\\
	0.998396723835225	0.936898932094113\\
	1.04545454545455	0.954416322314049\\
	1.0891505679403	0.970434723992356\\
	1.13636363636364	0.987474173553719\\
	1.18706871863871	1.00546360039619\\
	1.22727272727273	1.01949896694215\\
	1.27602293513523	1.03624680938067\\
	1.31818181818182	1.05049070247934\\
	1.36602547981018	1.06638613918693\\
	1.40909090909091	1.08044938016529\\
	1.45412382788195	1.0949071572651\\
	1.5	1.109375\\
	1.54546646375259	1.1234540699648\\
	1.59090909090909	1.13726756198347\\
	1.63817040181169	1.1513600593201\\
	1.68181818181818	1.1641270661157\\
	1.73091589288312	1.17820358217685\\
	1.77272727272727	1.18995351239669\\
	1.81740181147831	1.20226657171636\\
	1.86363636363636	1.21474690082645\\
	1.90938934185186	1.22683419225234\\
	1.95454545454545	1.23850723140496\\
	2.00003512616612	1.25000878161864\\
	2.04545454545455	1.26149276859504\\
	2.09422462884913	1.27411104975489\\
	2.13636363636364	1.28525309917355\\
	2.17967425785826	1.29693624189812\\
	2.22727272727273	1.31004648760331\\
	2.27497061845863	1.32346820717813\\
	2.31818181818182	1.3358729338843\\
	2.35249139754796	1.34588848597107\\
	2.40909090909091	1.36273243801653\\
	2.45370104752974	1.37629055191554\\
	2.5	1.390625\\
};
\addplot [color=black, dashed, line width=1.0pt, forget plot]
table[row sep=crcr]{%
	4	2\\
	4.06256573135994	2.03030424799482\\
	4.13636363636364	2.06353305785124\\
	4.2077479234016	2.09308416178138\\
	4.27272727272727	2.11776859504132\\
	4.33962488849224	2.14097617802528\\
	4.40909090909091	2.16270661157025\\
	4.48432232396214	2.18351913360905\\
	4.54545454545454	2.19886363636364\\
	4.62144459188821	2.21786114797205\\
	4.68181818181818	2.23295454545455\\
	4.74759508575284	2.24939877143821\\
	4.81818181818182	2.26704545454545\\
	4.88372585191044	2.28343146297761\\
	4.95454545454546	2.30113636363636\\
	5.03060307795806	2.32015076948951\\
	5.09090909090909	2.33522727272727\\
	5.16403440270284	2.35350860067571\\
	5.22727272727273	2.36931818181818\\
	5.29903821971527	2.38725955492882\\
	5.36363636363636	2.40340909090909\\
	5.43118574182293	2.42029643545573\\
	5.5	2.4375\\
	5.56819969562888	2.45454992390722\\
	5.63636363636364	2.47159090909091\\
	5.70725560271753	2.48931390067938\\
	5.77272727272727	2.50568181818182\\
	5.84637383932467	2.52409345983117\\
	5.90909090909091	2.53977272727273\\
	5.97610271721746	2.55652567930437\\
	6.04545454545454	2.57386363636364\\
	6.11408401277778	2.59102100319445\\
	6.18181818181818	2.60795454545455\\
	6.25005268924918	2.62501317231229\\
	6.31818181818182	2.64204545454545\\
	6.3913369432737	2.66033423581842\\
	6.45454545454546	2.67613636363636\\
	6.51951138678739	2.69237784669685\\
	6.59090909090909	2.71022727272727\\
	6.66245592768795	2.72811398192199\\
	6.72727272727273	2.74431818181818\\
	6.77873709632194	2.75718427408048\\
	6.86363636363636	2.77840909090909\\
	6.93055157129462	2.79513789282365\\
	7	2.8125\\
};
\addplot [color=black, dashed, line width=1.0pt, forget plot]
table[row sep=crcr]{%
	-7	0\\
	-6.90615140296009	0\\
	-6.79545454545455	0\\
	-6.6883781148976	0\\
	-6.59090909090909	0\\
	-6.49056266726165	0\\
	-6.38636363636364	0\\
	-6.27351651405678	0\\
	-6.18181818181818	0\\
	-6.06783311216769	0\\
	-5.97727272727273	0\\
	-5.87860737137074	0\\
	-5.77272727272727	0\\
	-5.67441122213433	0\\
	-5.56818181818182	0\\
	-5.45409538306291	0\\
	-5.36363636363636	0\\
	-5.25394839594574	0\\
	-5.15909090909091	0\\
	-5.0514426704271	0\\
	-4.95454545454546	0\\
	-4.85322138726561	0\\
	-4.75	0\\
	-4.64770045655668	0\\
	-4.54545454545454	0\\
	-4.4391165959237	0\\
	-4.34090909090909	0\\
	-4.23043924101299	0\\
	-4.13636363636364	0\\
	-4.03584592417381	0\\
	-3.93181818181818	0.000290547520661158\\
	-3.82887398083333	0.00183025715223957\\
	-3.72727272727273	0.00464876033057852\\
	-3.62492096612623	0.00879276760322985\\
	-3.52272727272727	0.0142368285123967\\
	-3.41299458508945	0.0215359598208941\\
	-3.31818181818182	0.0290547520661157\\
	-3.22073291981891	0.0379535738908722\\
	-3.11363636363636	0.0491025309917355\\
	-3.00631610846807	0.0617129797681272\\
	-2.90909090909091	0.0743801652892562\\
	-2.83189435551709	0.0852794247920518\\
	-2.70454545454545	0.104887654958678\\
	-2.60417264305808	0.121770875649217\\
	-2.5	0.140625\\
};
\addplot [color=black, dashed, line width=1.0pt, forget plot]
table[row sep=crcr]{%
	-0.5	0.515625\\
	-0.479144756213355	0.519562542034584\\
	-0.454545454545455	0.524276859504132\\
	-0.430750692199466	0.528908961877029\\
	-0.409090909090909	0.533186983471074\\
	-0.386791703835921	0.537652562925788\\
	-0.363636363636364	0.542355371900826\\
	-0.338559225345952	0.54752409048019\\
	-0.318181818181818	0.551782024793388\\
	-0.292851802703931	0.557147185470701\\
	-0.272727272727273	0.56146694214876\\
	-0.250801638082387	0.566230931833454\\
	-0.227272727272727	0.571410123966942\\
	-0.205424716029852	0.576281278114784\\
	-0.181818181818182	0.581611570247934\\
	-0.156465640680647	0.587413683374439\\
	-0.136363636363636	0.592071280991736\\
	-0.111988532432386	0.597786706354176\\
	-0.0909090909090909	0.602789256198347\\
	-0.0669872600949105	0.608533640789711\\
	-0.0454545454545455	0.613765495867769\\
	-0.0229380860590236	0.619298363222247\\
	0	0.625\\
	0.0227332318762936	0.630683307969073\\
	0.0454545454545455	0.636363636363636\\
	0.0690852009058446	0.642271300226461\\
	0.0909090909090909	0.647727272727273\\
	0.115457946441558	0.653864486610389\\
	0.136363636363636	0.659090909090909\\
	0.158700905739154	0.664675226434789\\
	0.181818181818182	0.670454545454545\\
	0.204694670925928	0.676173667731482\\
	0.227272727272727	0.681818181818182\\
	0.250017563083059	0.687504390770765\\
	0.272727272727273	0.693181818181818\\
	0.297112314424567	0.699278078606142\\
	0.318181818181818	0.704545454545455\\
	0.33983712892913	0.709959282232283\\
	0.363636363636364	0.715909090909091\\
	0.387485309229317	0.721871327307329\\
	0.409090909090909	0.727272727272727\\
	0.426245698773979	0.731561424693495\\
	0.454545454545455	0.738636363636364\\
	0.476850523764872	0.744212630941218\\
	0.5	0.75\\
};
\addplot [color=blue, dotted, line width=1.0pt, forget plot]
table[row sep=crcr]{%
	-4.25	0\\
	-4.01537850740024	0.0586553731499402\\
	-3.73863636363636	0.127840909090909\\
	-3.47094528724399	0.194763678189002\\
	-3.22727272727273	0.255681818181818\\
	-2.97640666815412	0.318398332961471\\
	-2.71590909090909	0.383522727272728\\
	-2.43379128514196	0.45405217871451\\
	-2.20454545454545	0.511363636363636\\
	-1.91958278041923	0.582604304895194\\
	-1.69318181818182	0.639204545454546\\
	-1.44651842842686	0.700870392893286\\
	-1.18181818181818	0.767045454545455\\
	-0.936028055335836	0.828492986166041\\
	-0.670454545454545	0.894886363636364\\
	-0.385238457657277	0.966190385585681\\
	-0.159090909090909	1.02272727272727\\
	0.115129010135656	1.09128225253391\\
	0.352272727272728	1.15056818181818\\
	0.621393323932257	1.21784833098306\\
	0.863636363636363	1.27840909090909\\
	1.11694653183598	1.341736632959\\
	1.375	1.40625\\
	1.6307488586083	1.47018721465208\\
	1.88636363636364	1.53409090909091\\
	2.15220851019075	1.60055212754769\\
	2.39772727272727	1.66193181818182\\
	2.67390189746752	1.73097547436688\\
	2.90909090909091	1.78977272727273\\
	3.16038518956548	1.85259629739137\\
	3.42045454545455	1.91761363636364\\
	3.67781504791669	1.98195376197917\\
	3.93181818181818	2.04545454545455\\
	4.18769758468441	2.1094243961711\\
	4.44318181818182	2.17329545454546\\
	4.71751353727637	2.24187838431909\\
	4.95454545454546	2.30113636363636\\
	5.19816770045272	2.36204192511318\\
	5.46590909090909	2.42897727272727\\
	5.73420972882982	2.49605243220746\\
	5.97727272727273	2.55681818181818\\
	6.17026411120727	2.60506602780182\\
	6.48863636363636	2.68465909090909\\
	6.73956839235481	2.7473920980887\\
	7	2.8125\\
};
\addplot [color=red, dashdotted, line width=1.0pt, forget plot]
table[row sep=crcr]{%
	-2.5	0\\
	-2.30187518402687	0.0495312039932831\\
	-2.06818181818182	0.107954545454546\\
	-1.84213157589492	0.164467106026269\\
	-1.63636363636364	0.215909090909091\\
	-1.42452118644125	0.268869703389686\\
	-1.20454545454545	0.323863636363636\\
	-0.966312640786543	0.383421839803364\\
	-0.772727272727273	0.431818181818182\\
	-0.532092125687346	0.491976968578164\\
	-0.340909090909091	0.539772727272727\\
	-0.13261556178268	0.59184610955433\\
	0.0909090909090908	0.647727272727273\\
	0.298465197716405	0.699616299429101\\
	0.522727272727273	0.755681818181818\\
	0.763576413533855	0.815894103383464\\
	0.954545454545455	0.863636363636364\\
	1.18610894189233	0.921527235473083\\
	1.38636363636364	0.971590909090909\\
	1.61362102909835	1.02840525727459\\
	1.81818181818182	1.07954545454545\\
	2.03208818243928	1.13302204560982\\
	2.25	1.1875\\
	2.46596570282479	1.2414914257062\\
	2.68181818181818	1.29545454545455\\
	2.90630940860552	1.35157735215138\\
	3.11363636363636	1.40340909090909\\
	3.3468504911948	1.4617126227987\\
	3.54545454545455	1.51136363636364\\
	3.75765860452197	1.56441465113049\\
	3.97727272727273	1.61931818181818\\
	4.19459937379631	1.67364984344908\\
	4.40909090909091	1.72727272727273\\
	4.62516684928906	1.78129171232227\\
	4.84090909090909	1.83522727272727\\
	5.07256698703338	1.89314174675835\\
	5.27272727272727	1.94318181818182\\
	5.47845272482674	1.99461318120668\\
	5.70454545454546	2.05113636363636\\
	5.93111043767851	2.10777760941963\\
	6.13636363636364	2.15909090909091\\
	6.29933413835281	2.1998335345882\\
	6.56818181818182	2.26704545454546\\
	6.78007997576628	2.32001999394157\\
	7	2.375\\
};
\addplot [color=black, line width=.5pt, forget plot]
table[row sep=crcr]{%
	-4	0\\
	-3.7705923183469	0\\
	-3.5	0\\
	-3.23825761419412	0\\
	-3	0\\
	-2.75470874219514	0\\
	-2.5	0\\
	-2.22415147880547	0\\
	-2	0\\
	-1.72136982974324	0\\
	-1.5	0\\
	-1.25881801890626	0\\
	-1	0\\
	-0.759671876328373	0\\
	-0.5	0\\
	-0.221122047487116	0\\
	0	0\\
	0.268126143243753	0\\
	0.5	0\\
	0.763140138955985	0\\
	1	0\\
	1.24768105335074	0\\
	1.5	0\\
	1.75006555063923	0\\
	2	0\\
	2.25993720996429	0\\
	2.5	0\\
	2.77003741085713	0\\
	3	0\\
	3.2457099631307	0\\
	3.5	0\\
	3.7516413801852	0\\
	4	0\\
	4.25019319391365	0\\
	4.5	0\\
	4.76823545867023	0\\
	5	0\\
	5.23820841822043	0\\
	5.5	0\\
	5.76233840152249	0\\
	6	0\\
	6.18870268651377	0\\
	6.5	0\\
	6.74535576141359	0\\
	7	0\\
};
	
	\node (R)[scale=1] at (-4.25,-0.25) {$\frac{\overline\chi-\overline\eta}{\chi(\R)-\eta(\R)}$};
	\draw[gray,dashed,thin] (-4.25,-0.1) -- (-4.25,0.05);
	
	\node (Q)[scale=1] at (0,-0.25) {$\overline k$};
	\draw[gray,dashed,thin] (0,-0.1) -- (0,0.65);
	
	\node (G)[scale=1] at (-2.5,-0.6) {$\frac{\overline\chi-\overline\eta}{\chi(\R)-\eta(\R)}+c_{\eta,\chi}$};
	\draw[gray,dashed,thin] (-2.5,-0.5) -- (-2.5,0.05);

	\node (E)[scale=1,red] at (7,1.7) {$k\mapsto(l(k)-c_{\eta,\chi})$};
	\node (D)[scale=1,blue] at (2,2) {$k\mapsto l(k)$};
	\node (P)[scale=1,black] at (-4,1.5) {$k\mapsto(P_\chi(k)-P_\eta(k))$};
	\draw [gray,->] (-4,1.3) to[out=320, in=180] (2.6,1.5);
	\end{axis}
	
	\end{tikzpicture}
	\caption{The geometrical representation of the case $c_{\eta,\chi}>0$ for $\eta\leq_{pcd}\chi$. The dashed curve represents $(P_\chi-P_\eta)$. The dotted line corresponds to $k\mapsto l(k)=(\chi(\R)k-\overline\chi)-(\eta(\R)k-\overline\eta)$. Note that $(P_\chi-P_\eta)$ converges to $l$ at $\infty$. The dash-dotted curve represents $k\mapsto (l(k)-c_{\eta,\chi})$. Note that $(l(k)-c_{\eta,\chi})\leq(P_\chi(k)-P_\eta(k))$ for all $k\in\R$, but there exists $\overline k\in\R$ such that $(l(\overline k)-c_{\eta,\chi})=(P_\chi(\overline k)-P_\eta(\overline k))$.}
	\label{fig:C}
\end{figure}
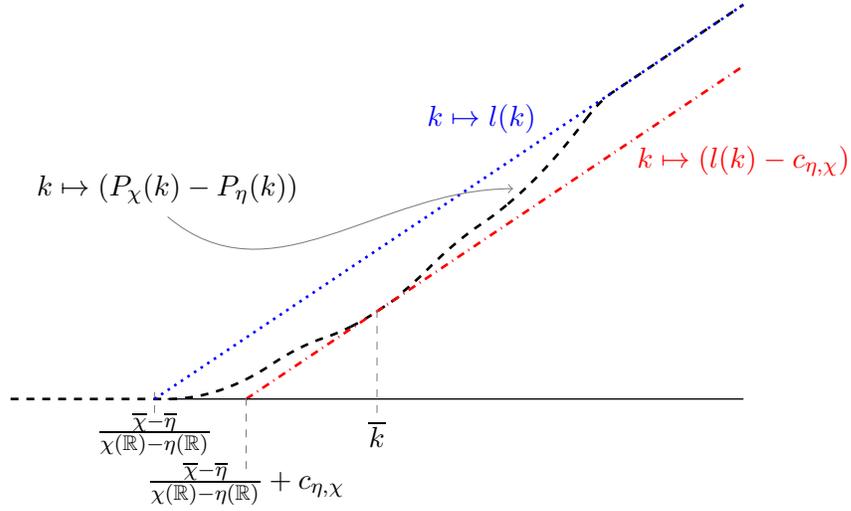
 We will use $c_{\eta,\chi}$ with $\eta=\mu_u$, for $u\in[0,1]$, and $\chi=\nu$. Define $c:[0,1]\mapsto[0,\infty)$ by
\begin{equation}\label{eq:c}
c(u):=c_{\mu_u,\nu}=\sup_{k\in\R}\{C_{\mu_u}(k)-C_{\nu}(k)\},\quad u\in(0,1),\quad c(0)=0,\quad c(1)=c_{\mu,\nu}.
\end{equation}
(That $c(\cdot)$ is non-negative is due to the fact that $\mu_u\leq\mu\leq_{cd}\nu$, so that $\mu_u\leq_{pcd}\nu$.) The following lemma summarizes the properties of $c(\cdot)$.
\begin{lem}\label{lem:cproperties}
$c(\cdot)$, defined in \eqref{eq:c}, is non-decreasing and lower semi-continuous, and thus also left-continuous. 
\end{lem}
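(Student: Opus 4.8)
The plan is to use the representation $c(u)=\sup_{k\in\R}\{C_{\mu_u}(k)-C_\nu(k)\}$, which is valid for $u\in(0,1)$ by \eqref{eq:c} and which also covers the endpoints, since $C_{\mu_0}\equiv 0$ gives $\sup_k\{-C_\nu(k)\}=0$ (as $C_\nu\ge 0$ with $C_\nu(k)\downarrow 0$), and $c(1)=c_{\mu,\nu}$ by definition. Write $g_u:=C_{\mu_u}-C_\nu$. Monotonicity is immediate: for $v\le u$, \eqref{eq:muLift} gives $\mu_v\le\mu_u$ as measures, hence $C_{\mu_v}(k)\le C_{\mu_u}(k)$ for every $k$ (the kernel $x\mapsto(x-k)^+$ is non-negative), so $g_v\le g_u$ pointwise and $c(v)\le c(u)$; together with $c(u)\le c(1)=c_{\mu,\nu}$ and $c(0)=0\le c(u)$ this gives $c$ non-decreasing on $[0,1]$.

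For continuity I first record two elementary facts. (i) $C_\eta$ is $\eta(\R)$-Lipschitz for every $\eta\in\sM$, so each $g_u$ is continuous (indeed $2$-Lipschitz here). (ii) For fixed $k$, $u\mapsto C_{\mu_u}(k)$ is continuous: for $v<u<1$, $0\le C_{\mu_u}(k)-C_{\mu_v}(k)=\int(x-k)^+\,d(\mu_u-\mu_v)\le(u-v)(G(u)-k)^+$, because $\mu_u-\mu_v$ has mass $u-v$ and support in $[G(v),G(u)]$; at the endpoint $u=1$ use instead $\mu-\mu_v\le\mu|_{[G(v),\infty)}$ and dominated convergence as $v\uparrow 1$. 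The crux is then a localisation of the supremum. Using $C_\eta(k)=P_\eta(k)+\overline\eta-\eta(\R)k$, $\mu(\R)=\nu(\R)=1$, and $P_{\mu_u}\le P_\mu\le P_\nu$ (the first because $\mu_u\le\mu$, the second by \cref{lem:put_cd}), one gets
\[
g_u(k)=P_{\mu_u}(k)-P_\nu(k)+\overline{\mu_u}-\overline\nu+(1-u)k\le\overline{\mu_u}-\overline\nu+(1-u)k,
\]
while $\mu_u$ is supported in $(-\infty,G(u)]$, so $C_{\mu_u}(k)=0$ and $g_u(k)=-C_\nu(k)\le 0$ for all $k\ge G(u)$. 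Hence, given $u_0\in[0,1)$, fix a neighbourhood $N=[u_0',u_0'']$ of $u_0$ with $0\le u_0'$ and $u_0''<1$; since $1-u\ge 1-u_0''>0$ and $\overline{\mu_u}\le\overline{\mu_{u_0''}}<\infty$ on $N$, a sufficiently negative finite $A$ makes $g_u(k)\le 0$ for all $k\le A$ and all $u\in N$, and with $B:=G(u_0'')<\infty$ we have $G(u)\le B$ for $u\in N$. Consequently, for $u\in N$, $g_u\le 0$ off $(A,G(u))\subseteq(A,B)$, so if $c(u)>0$ the continuous function $g_u$ attains its supremum at some $k_u\in[A,B]$.

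With the localisation in hand, continuity follows; monotonicity already supplies the easy one-sided inequalities $c(u_0^-)\le c(u_0)\le c(u_0^+)$. For right-continuity at $u_0\in[0,1)$: take $u_n\downarrow u_0$; if $c(u_n)=0$ for all large $n$ then $c(u_0)\le c(u_n)=0$ forces $c(u_0^+)=0=c(u_0)$, so along a subsequence assume $c(u_n)>0$; then the sup is attained at $k_n\in[A,B]$, and after passing to a further subsequence $k_n\to k^\ast\in[A,B]$. Since $|g_{u_n}(k_n)-g_{u_0}(k_n)|=|C_{\mu_{u_n}}(k_n)-C_{\mu_{u_0}}(k_n)|\le(u_n-u_0)(B-A)\to 0$ (for $u_0=0$, $|C_{\mu_{u_n}}(k_n)|\le u_n(B-A)\to 0$) and $g_{u_0}$ is continuous, $\lim_n c(u_n)=g_{u_0}(k^\ast)\le c(u_0)$ (and $\le -C_\nu(k^\ast)\le 0=c(0)$ when $u_0=0$); hence $c(u_0^+)\le c(u_0)$. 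For left-continuity at $u_0\in(0,1]$: if $c(u_0)=0$ it is immediate; if $c(u_0)>0$, fix $\varepsilon\in(0,c(u_0))$ and pick a \emph{finite} $k_0$ with $g_{u_0}(k_0)>c(u_0)-\varepsilon$ — such $k_0$ exists because $g_{u_0}(k)\to 0$ as $k\to+\infty$ while $g_{u_0}(k)\to-\infty$ as $k\to-\infty$ when $u_0<1$, and the sup is likewise approached at finite $k$ when $u_0=1$ — so by (ii), $c(u)\ge g_u(k_0)\to g_{u_0}(k_0)>c(u_0)-\varepsilon$ as $u\uparrow u_0$, giving $c(u_0^-)\ge c(u_0)-\varepsilon$; letting $\varepsilon\downarrow 0$ finishes.

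The main obstacle is exactly the localisation step. Since $g_u=C_{\mu_u}-C_\nu$ is not bounded below and the increment $C_{\mu_u}-C_{\mu_v}$ is \emph{not} bounded above (it grows like $(u-v)|k|$ as $k\to-\infty$), one cannot simply estimate $c(u)-c(v)\le\sup_k(C_{\mu_u}-C_{\mu_v})(k)$. What rescues the argument is the explicit structure $g_u\equiv-C_\nu$ on $[G(u),\infty)$ together with the bound $g_u(k)\le\overline{\mu_u}-\overline\nu+(1-u)k$, whose negative slope $1-u$ stays bounded away from $0$ once $u$ is bounded away from $1$; these confine the supremum to a fixed compact interval on a neighbourhood of any $u_0\in[0,1)$, after which continuity reduces to the pointwise continuity $u\mapsto g_u(k)$ and the (equi)continuity of the family $\{g_u\}$.
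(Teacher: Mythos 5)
Your proof is correct, and at the key step it is actually more careful than the paper's own argument. The paper also reduces continuity to the convergence $C_{\mu_v}(k)\to C_{\mu_u}(k)$, but then asserts that this convergence is \emph{uniform in $k$} on all of $\R$ and interchanges $\sup$ and $\lim$ on that basis; the bound $(G(v)-k)^+(v-u)$ it exhibits is not uniform in $k$ (it grows linearly as $k\to-\infty$, and indeed $\sup_k\{C_{\mu_v}(k)-C_{\mu_u}(k)\}=+\infty$ for $v>u$), which is precisely the obstruction you flag. Your localisation step — confining the supremum of $g_u=C_{\mu_u}-C_\nu$ to a fixed compact $[A,B]$ for $u$ in a neighbourhood bounded away from $1$, via $g_u(k)\le\overline{\mu_u}-\overline{\nu}+(1-u)k$ on the left and $g_u=-C_\nu\le 0$ on $[G(u),\infty)$ — supplies the missing justification, and the split into the two one-sided limits (left-continuity from pointwise convergence alone, right-continuity from compactness of $[A,B]$) is clean and covers the endpoints correctly. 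One small slip: $u\mapsto\overline{\mu_u}$ need not be non-decreasing (the increment $\mu_{u_0''}-\mu_u$ is a positive measure, but it may be supported at negative locations), so $\overline{\mu_u}\le\overline{\mu_{u_0''}}$ can fail; however, the argument only requires a uniform upper bound on $\overline{\mu_u}$ over $N$, and $\overline{\mu_u}\le\int_\R x^+\,\mu(dx)<\infty$ serves, so your choice of $A$ goes through unchanged.
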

\begin{proof}
Fix $u,v\in(0,1)$ with $u<v$. Then $\mu_u\leq\mu_v$. For each $k\in\R$, $x\mapsto (x-k)^+$ is non-negative, and therefore $C_{\mu_u}(k)\leq C_{\mu_v}(k)$. It follows that $c(\cdot)$ is non-decreasing:
$$
c(u)=\sup_{k\in\R}\{C_{\mu_u}(k)-C_\nu(k)\}\leq \sup_{k\in\R}\{C_{\mu_v}(k)-C_\nu(k)\}=c(v).
$$

We now turn to the continuity of $c(\cdot)$.

If $u<v$, then the support of $(\mu_v-\mu_u)$ is contained in $[G(u),G(v)]$ and we have that, for each $k\in\R$,
$$
0\leq C_{\mu_v}(k)-C_{\mu_u}(k)=\int_\R(x-k)^+(\mu_v-\mu_u)(dx)\leq(G(v)-k)^+(\mu_v-\mu_u)(\R),$$
and it follows that
$$
0\leq\lim_{v\downarrow u}\{C_{\mu_v}(k)-C_{\mu_u}(k)\}\leq(G(u+)-k)^+\lim_{v\downarrow u}(v-u)=0.
$$
On the other hand, if $v<u$, then the support of $(\mu_u-\mu_v)$ is contained in $[G(v),G(u)]$, and therefore
$$
0\leq C_{\mu_u}(k)-C_{\mu_v}(k)\leq(G(u)-k)^+(u-v),\quad k\in\R.
$$
It follows that, for each $k\in\R$,
$$
0\leq  \lim_{v\uparrow u}\{C_{\mu_u}(k)-C_{\mu_v}(k)\}\leq(G(u)-k)^+\lim_{v\uparrow u}(u-v)=0.
$$
Combining both cases we have that, for each $k\in\R$, $u\mapsto (C_{\mu_u}(k)-C_\nu(k))$ is continuous. Since the supremum of any collection of continuous functions is lower semi-continuous (l.s.c.), we conclude that $c(\cdot)$ is l.s.c.

Finally, fix $u\in(0,1]$. Then $c(v)\leq c(u)\leq\liminf_{v\uparrow u}c(v)$ for all $0<v<u$. Letting $v\uparrow u$ shows that $c(\cdot)$ is left-continuous.
\end{proof}
We have the following representation of $c(\cdot)$.
\begin{lem}\label{lem:cmeandiff}
	For each $u\in(0,1)$,
	$$
	c(u)=\overline{\mu_u}-\overline{S^\nu(\mu_u)}.
	$$
	\end{lem}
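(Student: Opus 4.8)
The plan is to extract $\overline{S^\nu(\mu_u)}$ directly from the explicit potential formula of Theorem~\ref{thm:shadow_potential}. Fix $u\in(0,1)$. Since $\mu_u\leq\mu\leq_{cd}\nu$ we have $\mu_u\leq_{pcd}\nu$, and moreover $\mu_u(\R)=u<1=\nu(\R)$, so the non-degenerate case of Theorem~\ref{thm:shadow_potential} applies and gives $P_{S^\nu(\mu_u)}=P_\nu-(P_\nu-P_{\mu_u})^c$. In the course of proving that theorem, Lemma~\ref{lem:CSpotential} is invoked with $g=P_\nu$ and $f=P_{\mu_u}$ to show that this function lies in $\sD\big(\mu_u(\R),\,\overline{\mu_u}-c_{\mu_u,\nu}\big)$, where by the definition \eqref{eq:c} the constant $c_{\mu_u,\nu}$ is exactly $c(u)$.

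I would then conclude using the bijective correspondence between the classes $\sD(\alpha,\beta)$ and measures recalled in Section~\ref{sec:prelims} (every modified potential $P_\eta$ lies in $\sD(\eta(\R),\overline\eta)$, and $\sD(\alpha,\beta)\cap\sD(\alpha',\beta')=\emptyset$ unless $(\alpha,\beta)=(\alpha',\beta')$, by comparing the asymptotic lines at $+\infty$). On the one hand $P_{S^\nu(\mu_u)}\in\sD\big(S^\nu(\mu_u)(\R),\overline{S^\nu(\mu_u)}\big)$, and on the other $P_{S^\nu(\mu_u)}\in\sD\big(\mu_u(\R),\overline{\mu_u}-c(u)\big)$ by the previous paragraph. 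Hence $S^\nu(\mu_u)(\R)=\mu_u(\R)$ (which is anyway forced by Property~\ref{s1} of Definition~\ref{defn:shadow}) and, crucially, $\overline{S^\nu(\mu_u)}=\overline{\mu_u}-c(u)$. Rearranging gives $c(u)=\overline{\mu_u}-\overline{S^\nu(\mu_u)}$, as claimed.

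No genuine obstacle arises; the only point requiring attention is purely bookkeeping, namely checking that the constant $c_{\mu_u,\nu}$ produced by Lemma~\ref{lem:CSpotential} inside the proof of Theorem~\ref{thm:shadow_potential} coincides with $c(u)$ as defined in \eqref{eq:c}. This follows from the identity $C_\zeta(k)-P_\zeta(k)=\overline\zeta-\zeta(\R)k$ applied with $\zeta=\mu_u$ and $\zeta=\nu$, which rewrites $\sup_{k\in\R}\{(\nu(\R)-\mu_u(\R))k-(\overline\nu-\overline{\mu_u})-P_\nu(k)+P_{\mu_u}(k)\}$ as $\sup_{k\in\R}\{C_{\mu_u}(k)-C_\nu(k)\}$, precisely the expression in \eqref{eq:c} (this identity is already recorded in the text just before the statement of Lemma~\ref{lem:CSpotential}). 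As an alternative to quoting the internals of the proof of Theorem~\ref{thm:shadow_potential}, one could rerun the asymptotic-slope computation appearing there for the specific measures $\mu_u$ and $\nu$, but the route above is shorter.
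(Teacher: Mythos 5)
Your proposal is correct and follows essentially the same route as the paper: both rest on applying Lemma~\ref{lem:CSpotential} with $g=P_\nu$, $f=P_{\mu_u}$ and reading off the asymptotic intercept of the resulting element of $\sD(\cdot,\cdot)$, the only difference being that the paper extracts the mean of $\theta_u=\nu-S^\nu(\mu_u)$ from $(P_\nu-P_{\mu_u})^c$ while you extract the mean of $S^\nu(\mu_u)$ from $P_\nu-(P_\nu-P_{\mu_u})^c$ directly. The bookkeeping identification $c_{\mu_u,\nu}=c(u)$ you flag is already recorded in the paper (just before \eqref{eq:c}), so there is no gap.
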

\begin{proof}
By Lemma \ref{lem:CSpotential} with $f=P_{\mu_u}$ and $g=P_\nu$ we have that $(P_\nu-P_{\mu_u})^c\in\sD(\nu(\R)-u,\overline\nu-\overline{\mu_u}+c(u))$. Let $\theta_u:=((P_\nu-P_{\mu_u})^c)''\in\sM$, then $\overline{\theta_u}=\overline\nu-\overline{\mu_u}+c(u)$. By Theorem \ref{thm:shadow_potential}, $S^\nu(\mu_u)=\nu-\theta_u$, and therefore $\overline{S^\nu(\mu_u)}=\overline\nu-\overline{\theta_u}=\overline{\mu_u}-c(u)$. It follows that $\overline{\mu_u}-\overline{S^\nu(\mu_u)}=c(u)$, as claimed.
\end{proof}
Let $u,v\in(0,1)$ with $u<v$. Then $\mu_u\leq\mu_v$ and, by Lemma \ref{prop:shadowSum}, $S^\nu(\mu_u)\leq S^\nu(\mu_v)$. Combining this with Lemma \ref{lem:cmeandiff} and the associativity of the shadow measure (see Proposition \ref{thm:shadow_assoc}) it follows that
$$
c(v)=c(u)+\overline{\mu_v-\mu_u}-\overline{S^{\nu-S^\nu(\mu_u)}(\mu_v-\mu_u)},\quad u,v\in(0,1)\textrm{ with }u<v,
$$
and thus we have the following property.
\begin{cor}\label{cor:pcVSpcd}
	Fix $u,v\in(0,1)$ with $u<v$. Then 
	$$c(u)=c(v)\quad\textrm{if and only if}\quad(\mu_v-\mu_u)\leq_{pc}{(\nu-S^\nu(\mu_u))}.$$
	\end{cor}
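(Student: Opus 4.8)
The plan is to deduce the corollary from the (unnumbered) identity displayed immediately above its statement, namely
\[
c(v) \;=\; c(u) + \overline{\mu_v-\mu_u} - \overline{S^{\nu-S^\nu(\mu_u)}(\mu_v-\mu_u)},
\]
by recognising the increment $c(v)-c(u)$ as a quantity of the same type as $c_{\eta,\chi}=\sup_{k\in\R}\{C_\eta(k)-C_\chi(k)\}$ for the residual problem, and then applying Lemma~\ref{prop:pcd=pc}. Throughout, write $\eta:=\mu_v-\mu_u$ and $\chi:=\nu-S^\nu(\mu_u)$.

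First I would check that $\eta\leq_{pcd}\chi$. Since $u<v$ we have $\mu_v=\mu_u+\eta$, and $\mu_v\leq\mu\leq_{cd}\nu$ gives $\mu_v\leq_{pcd}\nu$ (by Lemma~\ref{lem:order}); hence Proposition~\ref{thm:shadow_assoc}, applied with $\mu_1=\mu_u$ and $\mu_2=\eta$, yields exactly $\eta\leq_{pcd}\chi$. In particular $c_{\eta,\chi}\in[0,\infty)$, and since $\eta\leq_{pcd}\chi$ forces $P_\eta\leq P_\chi$ (apply the order to $x\mapsto(k-x)^+$), Lemma~\ref{lem:CSpotential} is applicable with $f=P_\eta$ and $g=P_\chi$.

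Next I would compute $\overline{S^\chi(\eta)}$ along the lines of the proof of Lemma~\ref{lem:cmeandiff}: Lemma~\ref{lem:CSpotential} gives $(P_\chi-P_\eta)^c\in\sD\bigl(\chi(\R)-\eta(\R),\,\overline{\chi}-\overline{\eta}+c_{\eta,\chi}\bigr)$, while Theorem~\ref{thm:shadow_potential} identifies $(P_\chi-P_\eta)^c=P_\chi-P_{S^\chi(\eta)}=P_{\chi-S^\chi(\eta)}$; comparing first moments (and using $S^\chi(\eta)(\R)=\eta(\R)$, which holds because $\eta\leq_{cd}S^\chi(\eta)$) gives $\overline{S^\chi(\eta)}=\overline{\eta}-c_{\eta,\chi}$. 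Substituting this into the displayed identity yields $c(v)-c(u)=c_{\eta,\chi}$.

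Finally, since $c_{\eta,\chi}\geq 0$ always, $c(u)=c(v)$ holds if and only if $c_{\eta,\chi}=0$, i.e.\ if and only if $C_\eta\leq C_\chi$ on all of $\R$; and by Lemma~\ref{prop:pcd=pc} (applicable since $\eta\leq_{pcd}\chi$) this last condition is equivalent to $\eta\leq_{pc}\chi$, that is, $(\mu_v-\mu_u)\leq_{pc}(\nu-S^\nu(\mu_u))$, as claimed. The only step requiring genuine (if routine) care is verifying $\eta\leq_{pcd}\chi$, which legitimises every subsequent invocation of Proposition~\ref{thm:shadow_assoc}, Lemma~\ref{lem:CSpotential} and Lemma~\ref{prop:pcd=pc}; everything else is bookkeeping with potential-theoretic identities already established above, so I do not anticipate a real obstacle.
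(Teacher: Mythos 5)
Your proof is correct, and it follows essentially the same route as the paper: both arguments hinge on the displayed identity $c(v)=c(u)+\overline{\mu_v-\mu_u}-\overline{S^{\nu-S^\nu(\mu_u)}(\mu_v-\mu_u)}$ preceding the corollary, together with the fact (from Proposition \ref{thm:shadow_assoc}) that $(\mu_v-\mu_u)\leq_{pcd}(\nu-S^\nu(\mu_u))$. The only divergence is in the final equivalence: you identify the increment $c(v)-c(u)$ with $c_{\eta,\chi}=\sup_k\{C_\eta(k)-C_\chi(k)\}$ (re-running the computation of Lemma \ref{lem:cmeandiff} for the residual pair) and then invoke Lemma \ref{prop:pcd=pc}, whereas the paper passes directly from the equality of means $\overline{\eta}=\overline{S^\chi(\eta)}$ to $\eta\leq_c S^\chi(\eta)$ via Remark \ref{rem:cVScd} and thence to $\eta\leq_{pc}\chi$. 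Your version is marginally more computational but equally valid, and it has the small virtue of making explicit that $c(v)-c(u)$ is itself a quantity of the form $c_{\eta,\chi}$ for the residual transport problem.
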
 
\begin{proof}
	Since $(\mu_v-\mu_u)\leq_{pcd}{(\nu-S^\nu(\mu_u))}$, the statement follows immediately by noting that $(\mu_v-\mu_u)\leq_{pc}{(\nu-S^\nu(\mu_u))}$ if and only if $\overline{\mu_v-\mu_u}=\overline{S^{\nu-S^\nu(\mu_u)}(\mu_v-\mu_u)}$. See Remark \ref{rem:cVScd}.
\end{proof}
Corollary \ref{cor:pcVSpcd} motivates us to introduce a special point $u^*$ that will separate the construction of the supermartingale coupling into two parts.

Define
$$
u^*:=u^*_{\mu,\nu}:=\sup\{u\in(0,1):c(u)=0\}\in[0,1],
$$
with convention $\sup\emptyset=0$. Note that, since $c(\cdot)$ is non-decreasing and left-continuous, $c(u^*)=0$. Then by Lemma \ref{lem:cmeandiff} and Remark \ref{rem:cVScd} we have that $\mu_{u^*}\leq_{c}S^\nu(\mu_{u^*})\leq\nu$, and thus we can transport $\mu_{u^*}$ to $S^\nu(\mu_{u^*})$ using any martingale coupling $\pi\in\Pi_M(\mu_{u^*},S^\nu(\mu_{u^*}))$ (in Section \ref{sec:Constr} for this we will actually choose the generalised left-curtain martingale coupling).

Now consider the remaining initial mass $(\mu-\mu_{u^*})$. We will show that if $\mu_{u^*}$ is transported to $S^\nu(\mu_{u^*})$ then no portion of mass from $(\mu-\mu_{u^*})$ can be transported to $(\nu-S^\nu(\mu_{u^*}))$ using a martingale. This proves that $u^*$ defines a unique regime-switching point: to the left of (and including) $u^*$ we use a martingale, to the right of $u^*$ we must use a strict supermartingale. 

Before we proceed observe that if $c(u)=0$ for all $u\in(0,1)$, or equivalently if $u^*=1$, then $\overline\mu=\overline\nu$ and therefore $\mu\leq_{c}\nu$, so that $\Pi_S(\mu,\nu)=\Pi_M(\mu,\nu)$. Then by embedding $\mu_u$ to $S^\nu(\mu_u)$ for each $u\in(0,1)$, in Section \ref{sec:u=1} we will recover the generalised version of the martingale left-curtain coupling. In the rest of this section we focus on the supermartingale case $0\leq u^*<1$.

The following crucial result shows that there does not exist $\eta,\chi\in\sM$ with $\eta\leq(\mu-\mu_{u^*})$, $\chi\leq(\nu-S^\nu(\mu_{u^*}))$ and such that $\eta\leq_c\chi$.
\begin{prop}\label{prop:support} Suppose $\mu,\nu\in\sM$ with $\mu\leq_{cd}\nu$ and $u^*\in[0,1)$.
	
	Then the support of $(\mu-\mu_{u^*})$ is strictly to the right of the support of $(\nu-S^{\nu}(\mu_{u^*}))$:
	$$
	r_{\nu-S^{\nu}(\mu_{u^*})}\leq\ell_{\mu-\mu_{u^*}}\quad\textrm{and}\quad (\mu-\mu_{u^*})(\{\ell_{\mu-\mu_{u^*}}\})\wedge(\nu-S^{\nu}(\mu_{u^*}))(\{\ell_{\mu-\mu_{u^*}}\})=0.
	$$
\end{prop}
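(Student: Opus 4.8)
The plan is to recast the defining property of $u^*$ — namely $c(u^*)=0$ while $c(u)>0$ for every $u\in(u^*,1)$ — as a statement about the modified potentials $C$, and then to argue that a failure of the asserted support inclusion would let us exhibit some $u\in(u^*,1)$ with $\mu_u-\mu_{u^*}\leq_{pc}\nu-S^\nu(\mu_{u^*})$, contradicting $c(u)>0$ via \cref{cor:pcVSpcd}. Throughout write $\theta:=S^\nu(\mu_{u^*})$, $\rho:=\nu-\theta$, $\sigma:=\mu-\mu_{u^*}$, $a:=\ell_\sigma$, and $\sigma_u:=\mu_u-\mu_{u^*}$ for $u\in[u^*,1]$; since $\mu_w$ is nondecreasing in $w$, $\sigma_u$ is a nonnegative measure of mass $u-u^*$ supported in $(-\infty,G(u)]$, and $\theta(\R)=\mu_{u^*}(\R)=u^*$, so $\rho(\R)=\mu(\R)-u^*=\sigma(\R)>0$. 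By \cref{lem:cproperties} and the definition of $u^*$ we have $c(u^*)=0$ and $c(u)>0$ for all $u\in(u^*,1)$.

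First I would fix $u\in(u^*,1)$ and use the associativity of the shadow: since $\mu_u=\mu_{u^*}+\sigma_u$ and $\mu_u\le\mu\le_{cd}\nu$, so $\mu_u\leq_{pcd}\nu$ by \cref{eg1}, \cref{thm:shadow_assoc} gives $\sigma_u\leq_{pcd}\rho$ and $S^\nu(\mu_u)=\theta+S^{\rho}(\sigma_u)$. Taking first moments (which are additive over sums of measures) and invoking \cref{lem:cmeandiff} together with $c(u^*)=0$ gives $c(u)=\overline{\sigma_u}-\overline{S^{\rho}(\sigma_u)}$; this is precisely \cref{cor:pcVSpcd} read with $\mu_{u^*}$ in place of the lower index (when $u^*=0$ one reads $\mu_{u^*}=0$, $\theta=0$, $\rho=\nu$ and reaches the same conclusion directly from \cref{lem:cmeandiff} and \cref{rem:cVScd}). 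Since $c(u)>0$ we conclude $\sigma_u\not\leq_{pc}\rho$, and because $\sigma_u\leq_{pcd}\rho$, \cref{prop:pcd=pc} produces a point $k\in\R$ with $C_{\sigma_u}(k)>C_{\rho}(k)$. This is the property I will contradict.

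The core of the argument is the following elementary estimate: since $\sigma_u\ge 0$ is supported in $(-\infty,G(u)]$ with mass $u-u^*$, one has, for every $k\le G(u)$,
\[
C_{\sigma_u}(k)=\int_{(k,G(u)]}(x-k)\,\sigma_u(dx)\le (u-u^*)(G(u)-k)
\quad\text{and}\quad
C_{\rho}(k)\ge \int_{(G(u),\infty)}(x-k)\,\rho(dx)\ge (G(u)-k)\,\rho\big((G(u),\infty)\big),
\]
while $C_{\sigma_u}(k)=0\le C_\rho(k)$ for $k\ge G(u)$. Hence \emph{if} some $u\in(u^*,1)$ satisfies $u-u^*\le\rho\big((G(u),\infty)\big)$, then $C_{\sigma_u}\le C_\rho$ on all of $\R$, a contradiction. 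Now suppose, for contradiction, $r_\rho>a$, i.e. $\rho\big((a,\infty)\big)>0$. Using that $G(u)\downarrow \ell_{\mu-\mu_{u^*}}=a$ as $u\downarrow u^*$ — a routine consequence of \cref{eq:muLift} and the definition of a quantile function of $\mu$ — we get $\rho\big((G(u),\infty)\big)\uparrow\rho\big((a,\infty)\big)>0$, so for $u$ close enough to $u^*$ both $u-u^*>0$ and $u-u^*\le\rho\big((G(u),\infty)\big)$ hold; contradiction. Thus $r_\rho\le a$, which is the first assertion. For the atom statement, assume $\sigma(\{a\})>0$ and $\rho(\{a\})>0$. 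From \cref{eq:muLift} one checks that $\sigma(\{a\})>0$ forces $\mu$ to have an atom at $a$ with $F_\mu(a)>u^*$, and then $G(u)=a$ for every $u\in(u^*,F_\mu(a))$; since $\sigma_u$ then has mass $u-u^*$, is supported in $[G(u^*),a]$, and $\mu$ puts no mass in $(G(u^*),a)$, in fact $\sigma_u=(u-u^*)\delta_a$, whence $C_{\sigma_u}(k)=(u-u^*)(a-k)^+\le\rho(\{a\})(a-k)^+\le C_\rho(k)$ as soon as $u-u^*\le\rho(\{a\})$. Choosing such a $u\in(u^*,F_\mu(a))$ again contradicts $C_{\sigma_u}\not\le C_\rho$ and completes the proof.

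The only genuinely delicate point is the atom bookkeeping isolated above: the identification $\lim_{u\downarrow u^*}G(u)=\ell_{\mu-\mu_{u^*}}$, and the fact that an atom of $\mu-\mu_{u^*}$ at that point forces $G$ to be constant (equal to $a$) on an interval of levels immediately to the right of $u^*$. These are elementary properties of the parametrisation \cref{eq:muLift} and of quantile functions; everything else is the soft comparison of $C_{\sigma_u}$ with $C_{\rho}$ together with the shadow/potential dictionary from \cref{sec:max,sec:shadow}.
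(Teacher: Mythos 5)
Your proof is correct. It shares the paper's skeleton --- contradict the maximality of $u^*$ by producing $u\in(u^*,1)$ with $(\mu_u-\mu_{u^*})\leq_{pc}(\nu-S^\nu(\mu_{u^*}))$, whence $c(u)=c(u^*)=0$ via \cref{cor:pcVSpcd} --- but the mechanism certifying $\leq_{pc}$ is genuinely different and leaner. The paper splits into three cases according to whether $\mu-\mu_{u^*}$ has an atom at $G(u^*)$, at $G(u^*+)$, or at neither, and within these invokes \cref{lem:disjointSupport} or builds bespoke convex minorants of the call function. You instead exploit that $\sigma_u:=\mu_u-\mu_{u^*}\leq_{pcd}\rho:=\nu-S^\nu(\mu_{u^*})$ is already supplied by \cref{thm:shadow_assoc}, so \cref{prop:pcd=pc} reduces matters to the one-sided bound $C_{\sigma_u}\leq C_\rho$, and this follows from the single estimate $C_{\sigma_u}(k)\leq (u-u^*)(G(u)-k)^+$ versus $C_\rho(k)\geq \rho((G(u),\infty))\,(G(u)-k)^+$, valid as soon as $u-u^*\leq\rho((G(u),\infty))$. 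Together with $G(u)\downarrow\ell_{\mu-\mu_{u^*}}$ as $u\downarrow u^*$, this settles the first assertion uniformly (including $u^*=0$ and $\ell_\mu=-\infty$ without special pleading), and the shared-atom case collapses to comparing $(u-u^*)\delta_a$ with $\rho(\{a\})\delta_a\leq\rho$. The quantile-function bookkeeping you defer --- that $\lim_{u\downarrow u^*}G(u)$ equals $\ell_{\mu-\mu_{u^*}}$ both when $\mu-\mu_{u^*}$ has a leftover atom at $G(u^*)$ and when it does not, and that an atom of $\mu-\mu_{u^*}$ at $a$ forces $G\equiv a$ on $(u^*,F_\mu(a))$ and hence $\sigma_u=(u-u^*)\delta_a$ there --- is precisely the case analysis the paper carries out explicitly, and it checks out. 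What your route buys is that \cref{lem:disjointSupport}, whose two-sided mass hypothesis is superfluous here because the left tail is already controlled by the $\leq_{pcd}$ relation, is never needed, and the paper's sub-case distinctions disappear.
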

\begin{proof}
	(Note that if $u^*=1$ then there is nothing to prove since in this case both $\mu-\mu_{u^*}$ and $\nu-S^\nu(\mu_{u^*})$ are the zero measures.) First suppose that $u^*\in(0,1)$.
	
	\textit{Case 1.} Suppose $(\mu-\mu_{u^*})(\{G(u^*)\})>0$. We either have $(\nu-S^{\nu}(\mu_{u^*}))(\{G(u^*)\})>0$ or $(\nu-S^{\nu}(\mu_{u^*}))(\{G(u^*)\})=0$. In the former case let $h:=(\mu-\mu_{u^*})(\{G(u^*)\})\wedge(\nu-S^{\nu}(\mu_{u^*}))(\{G(u^*)\})>0$. Then 
	$$
	(\mu_{u^*+h}-\mu_{u^*})=h\delta_{G(u^*)}\leq(\nu-S^{\nu}(\mu_{u^*}))(\{G(u^*)\}) \delta_{G(u^*)}\leq(\nu-S^{\nu}(\mu_{u^*}))
	$$ and therefore $C_{\mu_{u^*+h}-\mu_{u^*}}\leq C_{\nu-S^{\nu}(\mu_{u^*})}$ everywhere. Using Lemma \ref{prop:pcd=pc} we then have that $(\mu_{u^*+h}-\mu_{u^*})\leq_{pc}(\nu-S^{\nu}(\mu_{u^*}))$. It follows that $(\mu_{u^*+h}-\mu_{u^*})\leq_c S^{\nu-S^\nu(\mu_{u^*})}(\mu_{u^*+h}-\mu_{u^*})$ and therefore $\overline{S^{\nu-S^\nu(\mu_{u^*})}(\mu_{u^*+h}-\mu_{u^*})}=\overline{(\mu_{u^*+h}-\mu_{u^*})}$. Hence, $c(u^*+h)=c(u^*)=0$, contradicting the maximality of $u^*$.
	
	Now suppose that $(\nu-S^{\nu}(\mu_{u^*}))(\{G(u^*)\})=0$. If $\sI_{\nu-S^{\nu}(\mu_{u^*})}\subseteq (-\infty,G(u^*))$ then we are done. On the other hand, if $\sI_{\nu-S^{\nu}(\mu_{u^*})}\subseteq (G(u^*),\infty)$, fix $v\in(u^*,\mu((-\infty,G(u^*)])]$. Then for any $\eta_v\in\sM$ with $\eta_v\leq(\nu-S^{\nu}(\mu_{u^*}))$ and $\eta_v(\R)=(v-u^*)$ we have that $\sI_{\eta_v}\subseteq(G(u^*),\infty)$, and therefore $\overline{\eta_v}>(v-u^*)\overline{\delta_{G(u^*)}}=\overline{(\mu_v-\mu_{u^*})}$, contradicting the fact that $(\mu_v-\mu_{u^*})\leq_{pcd}{\nu-S^\nu(\mu_{u^*})}$. Hence suppose that $\tilde h:=(\nu-S^\nu(\mu_{u^*}))((-\infty,G(u^*)))\wedge(\nu-S^\nu(\mu_{u^*}))((G(u^*),\infty))\in(0,1-u^*)$. Let $\tilde u:=(u^*+\tilde h)\wedge\mu((-\infty,G(u^*)])$. 

	Using Lemma \ref{lem:disjointSupport} with $\eta=(\mu_{\tilde u}-\mu_{u^*})$ and $\chi=(\nu-S^\nu(\mu_{u^*}))$ we have that $(\mu_{\tilde u}-\mu_{u^*})\leq_{pc}(\nu-S^\nu(\mu_{u^*}))$. But then $(\mu_{\tilde u}-\mu_{u^*})\leq_c S^{\nu-S^\nu(\mu_{u^*})}(\mu_{\tilde u}-\mu_{u^*})$. It follows that $\overline{\mu_{\tilde u}-\mu_{u^*}}=\overline{S^{\nu-S^\nu(\mu_{u^*})}(\mu_{\tilde u}-\mu_{u^*})}$, and hence $c(\tilde u)=c(u^*)=0$, contradicting the maximality of $u^*$. We conclude that $(\nu-S^\nu(\mu_{u^*}))(\R)=(\nu-S^\nu(\mu_{u^*}))((-\infty,G(u^*)))$.
	
	\textit{Case 2.} Now suppose $G(u^*)<G(u^*+)$ and $(\mu-\mu_{u^*})([G(u^*),G(u^*+)])=(\mu-\mu_{u^*})(\{G(u^*+)\})>0$. By replacing $G(u^*)$ with $G(u^*+)$ we can use the same arguments as in \textit{Case 1} and conclude that $(\nu-S^\nu(\mu_{u^*}))(\R)=(\nu-S^\nu(\mu_{u^*}))((-\infty,G(u^*+)))$.
	
	\textit{Case 3.} Suppose that $(\mu-\mu_{u^*})([G(u^*),G(u^*+)])=0$. (Note that in this case $G(u)>G(u^*+)$ for all $u\in(u^*,1)$.) Then either $\sI_{\mu-\mu_{u^*}}=(G(u^*+),r_\mu)$ or $\sI_{\mu-\mu_{u^*}}=(G(u^*+),r_\mu]$. To ease the notation let $\tilde\mu:=(\mu-\mu_{u^*})$ and $\tilde\nu:=(\nu-S^\nu(\mu_{u^*}))$. If $\tilde\nu(\R)=\tilde\nu((-\infty,G(u^*+)])$, i.e., $\sI_{\tilde\nu}\subseteq(-\infty,G(u^*+)]$, then we are done. Hence we can suppose that either $\tilde\nu((-\infty,G(u^*+)])\wedge\tilde\nu((G(u^*+),\infty))>0$ or $\tilde\nu(\R)=\tilde\nu((G(u^*+),\infty))$.
	
		We first argue that if $\tilde\nu((-\infty,G(u^*+)])\wedge\tilde\nu((G(u^*+),\infty))>0$ then there exists $\hat u\in(u^*,1)$ with $c(\hat u)=c(u^*)=0$, contradicting the maximality of $u^*$. Indeed, let $k>G(u^*+)$ belong to the support of $\tilde\nu\lvert_{(G(u^*+),\infty)}$. Then we can pick a small enough $\hat u>u^*$ such that $G(u^*)<G(\hat u)<k$. Let $\bar h:=\tilde\nu((-\infty,G(u^*+)])\wedge\tilde\nu((G(\hat u),\infty))\wedge(\hat u-u^*)>0$. Then using Lemma \ref{lem:disjointSupport} with $\eta=(\mu_{u^*+\bar h}-\mu_{u^*})$ and $\chi=\tilde{\nu}\lvert_{(-\infty,G(u^*+)]\cup(G(\hat u),\infty)}$ we have that $(\mu_{u^*+\bar h}-\mu_{u^*})\leq_{pc}\tilde{\nu}\lvert_{(-\infty,G(u^*+)]\cup(G(\hat u),\infty)}\leq\tilde\nu$ and therefore $(\mu_{u^*+\bar h}-\mu_{u^*})\leq_{pc}\tilde\nu$. But then $(\mu_{u^*+\bar h}-\mu_{u^*})\leq_c S^{\tilde\nu}(\mu_{u^*+\bar h}-\mu_{u^*})$. It follows that $\overline{\mu_{u^*+\bar h}-\mu_{u^*}}=\overline{S^{\tilde\nu}(\mu_{u^*+\bar h}-\mu_{u^*})}$ and therefore $c(u^*+\bar h)=c(u^*)=0$, contradicting the maximality of $u^*$.
		
			Finally suppose that $\tilde\nu(\R)=\tilde\nu((G(u^*+),\infty))$. Pick a small enough $\underline{u}\in(u^*,1)$ such that $G(u^*+)<G(\underline u)<(\overline{\tilde\nu}/\tilde{\nu}(\R))$. Define $g:\R\mapsto\R_+$ by
			$$
			g(k)=\begin{cases}C_{\mu_{\underline u}-\mu_{u^*}}(k),&k\in(-\infty,G(u^*+))\cup(G(\underline{u}),\infty),\\ \frac{C_{\mu_{\underline u}-\mu_{u^*}}(G(u^*+))}{G(\underline u)-G(u^*+)}(G(\underline u)-k),&k\in[G(u^*+),G(\underline u)].
			\end{cases}
			$$
			Then $g(\cdot)$ is convex. Furthermore, since the support of $(\mu_{\underline u}-\mu_{u^*})$ is contained in $(G(u^*+),G(\underline{u})]$ and $C_{\mu_{\underline u}-\mu_{u^*}}$ is convex, we have that
			$$
			 C_{\mu_{\underline u}-\mu_{u^*}}(k)\leq g(k)\leq (\overline{\tilde\nu}-\tilde{\nu}(\R)k)^+\leq C_{\tilde\nu}(k),\quad k\in\R.
			 $$
			 By Lemma \ref{prop:pcd=pc}, $(\mu_{\underline u}-\mu_{u^*})\leq_{pc}\tilde\nu$. But then $(\mu_{\underline u}-\mu_{u^*})\leq_{c}S^{\tilde\nu}(\mu_{\underline u}-\mu_{u^*})$.  It follows that $\overline{\mu_{\underline u}-\mu_{u^*}}=\overline{S^{\tilde\nu}(\mu_{\underline u}-\mu_{u^*})}$ and therefore $c(\underline u)=c(u^*)=0$, contradicting the maximality of $u^*$.	
	
		To finish the proof we must cover the case when $u^*=0$. However this can be achieved using the same arguments as in the above three cases. To see this first note that $G(u^*+)=G(0+)=\ell_\mu$. If $\mu(\{\ell_{\mu}\})>0$ then $\ell_{\mu}>-\infty$, and we can use the arguments of \textit{Case 2} to conclude that $(\nu-S^\nu(\mu_{u^*}))(\R)=\nu(\R)=\nu((-\infty,\ell_\mu))$. On the other hand, if $\mu(\{\ell_{\mu}\})=\mu(\{G(0+)\})=0$, then the arguments of \textit{Case 3} show that $\nu(\R)=\nu((-\infty,\ell_\mu])$, as required. (Note that if $\ell_\mu=G(0+)=-\infty$, then $\nu(\R)=\nu(\{-\infty\})$, a contradiction to the fact that $\nu$ is integrable. Hence, if $u^*=0$ then we must have that $\ell_\mu>-\infty$.)
		\end{proof}
\begin{cor}\label{cor:cincresing}
$c(\cdot)$ is strictly increasing on $(u^*,1)$.
\end{cor}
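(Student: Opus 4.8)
The plan is to argue by contradiction. Since $c(\cdot)$ is non-decreasing (\cref{lem:cproperties}), if it fails to be strictly increasing on $(u^*,1)$ then there are $u^*<u<v<1$ with $c(u)=c(v)$; in particular $u^*<1$, for otherwise $(u^*,1)=\emptyset$ and there is nothing to prove, so that \cref{prop:support} is available. By \cref{cor:pcVSpcd} the equality $c(u)=c(v)$ is equivalent to
\[
\mu_v-\mu_u\ \leq_{pc}\ \nu-S^\nu(\mu_u),
\]
so it suffices to show this relation is impossible. The idea is to place the supports of $\mu_v-\mu_u$ and of $\nu-S^\nu(\mu_u)$ on opposite sides of the point $\ell:=\ell_{\mu-\mu_{u^*}}$ — which is essentially the separation furnished by \cref{prop:support} — and then test $\leq_{pc}$ against carefully chosen non-negative convex functions.

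First I would locate $\nu-S^\nu(\mu_u)$. Since $\mu_{u^*}\leq\mu_u$, \cref{prop:shadowSum} gives $S^\nu(\mu_{u^*})\leq S^\nu(\mu_u)$, hence $\nu-S^\nu(\mu_u)\leq\nu-S^\nu(\mu_{u^*})$ and in particular $\mathrm{supp}(\nu-S^\nu(\mu_u))\subseteq\mathrm{supp}(\nu-S^\nu(\mu_{u^*}))$. By \cref{prop:support} this last support lies in $(-\infty,\ell]$, where $\ell$ is finite because $\mu-\mu_{u^*}$ is a non-zero measure (as $u^*<1$) with finite first moment. Next I would test $\mu_v-\mu_u\leq_{pc}\nu-S^\nu(\mu_u)$ against the non-negative convex function $x\mapsto(x-\ell)^+$: since $\nu-S^\nu(\mu_u)$ integrates it to $0$, so does $\mu_v-\mu_u$, whence $\mathrm{supp}(\mu_v-\mu_u)\subseteq(-\infty,\ell]$. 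On the other hand $\mu_v-\mu_u\leq\mu-\mu_{u^*}$, so its support also lies in $[\ell,\infty)$. Therefore $\mu_v-\mu_u$ is concentrated at $\ell$, i.e. $\mu_v-\mu_u=(v-u)\delta_\ell$. Consequently $(\mu-\mu_{u^*})(\{\ell\})\geq v-u>0$, so the disjoint-atom clause of \cref{prop:support} forces $(\nu-S^\nu(\mu_{u^*}))(\{\ell\})=0$, and hence $(\nu-S^\nu(\mu_u))(\{\ell\})=0$ as well.

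For the final contradiction I would test $\mu_v-\mu_u\leq_{pc}\nu-S^\nu(\mu_u)$ against $x\mapsto(x-(\ell-\varepsilon))^+$ for small $\varepsilon>0$. Its integral against $\mu_v-\mu_u=(v-u)\delta_\ell$ equals $(v-u)\varepsilon$, while its integral against $\nu-S^\nu(\mu_u)$ is at most $\varepsilon\cdot(\nu-S^\nu(\mu_u))((\ell-\varepsilon,\ell])$, because $\nu-S^\nu(\mu_u)$ is supported in $(-\infty,\ell]$. Since $(\nu-S^\nu(\mu_u))(\{\ell\})=0$, the quantity $(\nu-S^\nu(\mu_u))((\ell-\varepsilon,\ell])$ tends to $0$ as $\varepsilon\downarrow0$, so for $\varepsilon$ small enough it is strictly less than $v-u$; then the right-hand integral is strictly smaller than the left-hand one, contradicting $\leq_{pc}$. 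Hence $c(u)<c(v)$ for all $u^*<u<v<1$, which is exactly the assertion.

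I expect the main obstacle to be precisely this last step. A crude support-separation argument would want $\nu-S^\nu(\mu_u)$ to sit strictly to the left of $\ell$, but this can fail: $\ell$ may belong to the support of $\nu-S^\nu(\mu_u)$ without being an atom (continuous mass accumulating at $\ell$ from below). The remedy is to use only the absence of an atom at $\ell$, via the limiting test functions $(x-(\ell-\varepsilon))^+$ above, rather than a direct positivity comparison. Everything else is bookkeeping with the comparison relations and with the monotonicity and associativity of the shadow (\cref{prop:shadowSum}, \cref{thm:shadow_assoc}), which enter through \cref{cor:pcVSpcd} and \cref{prop:support}.
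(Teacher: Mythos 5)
Your argument is correct, and it reaches the conclusion by a genuinely different (if closely related) route. The paper's proof is direct: it invokes the identity $c(v)=c(u)+\overline{\mu_v-\mu_u}-\overline{S^{\nu-S^\nu(\mu_u)}(\mu_v-\mu_u)}$ displayed just before Corollary \ref{cor:pcVSpcd}, notes via Proposition \ref{prop:support} that $\mu_v-\mu_u$ is carried by $[\ell,\infty)$ while the shadow of $\mu_v-\mu_u$ in the remaining target is carried by $(-\infty,\ell]$, and concludes that the two barycentres differ strictly, whence $c(v)>c(u)$. You instead argue by contradiction through the equivalence of Corollary \ref{cor:pcVSpcd} and refute $(\mu_v-\mu_u)\leq_{pc}(\nu-S^\nu(\mu_u))$ by testing against the explicit non-negative convex functions $(x-\ell)^+$ and $(x-(\ell-\varepsilon))^+$. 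What your version buys is that it makes fully explicit the strictness which the paper's one-line mean comparison leaves implicit: the only way the barycentres could coincide is if $\mu_v-\mu_u=(v-u)\delta_\ell$, and your $\varepsilon$-argument, driven by the no-common-atom clause of Proposition \ref{prop:support}, is precisely what excludes that boundary case; the cost is a longer argument that never uses the shadow of $\mu_v-\mu_u$ itself. One small repair: your justification that $\ell=\ell_{\mu-\mu_{u^*}}>-\infty$ (``non-zero measure with finite first moment'') is insufficient as stated, since an integrable measure can have $\ell_\eta=-\infty$; finiteness instead follows from $\ell\geq G(u^*)>-\infty$ when $u^*\in(0,1)$, and, when $u^*=0$, from the observation at the end of the proof of Proposition \ref{prop:support} that $\ell_\mu>-\infty$ (alternatively, from $\ell\geq r_{\nu-S^\nu(\mu_{u^*})}>-\infty$).
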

\begin{proof}
	By Proposition \ref{prop:support}, the support of $\nu-S^{\nu}(\mu_{u^*})$ is (strictly) to the left of the support of $\mu-\mu_{u^*}$. Since, for any $u,v\in(0,1)$ with $u^*< u<v$, $(\mu_v-\mu_u)\leq (\mu-\mu_{u^*})$ and $(S^{\nu-S^\nu(\mu_{u^*})}(\mu_v-\mu_u))\leq(\nu-S^\nu(\mu_{u^*}))$, we have that $\overline{\mu_v-\mu_u}>\overline{S^{\nu-S^\nu(\mu_{u^*})}(\mu_v-\mu_u)}$ and therefore $c(v)>c(u)$, as claimed.
\end{proof}
\subsection{Proof of Theorem \ref{thm:mainTHM_intro}}\label{sec:Constr}
 We will prove Theorem \ref{thm:mainTHM_intro} by explicitly constructing the functions that support the lifted increasing supermartingale coupling. 
 
 Fix $\mu,\nu\in\sM$ with $\mu\leq_{cd}\nu$. In this section we work with a left-continuous version of the quantile function $G_\mu=G^-_\mu$ of the initial measure $\mu$. Define $D_{\mu,\nu}:\R\mapsto\R$ by $D_{\mu,\nu}(k)=P_\nu(k)-P_\mu(k)$, $k\in\R$. Then $D_{\mu,\nu}\geq0$ everywhere, $\lim_{k\to-\infty}D_{\mu,\nu}(k)=0$ and $\lim_{k\to\infty}D_{\mu,\nu}(k)=(\overline\mu-\overline\nu)\geq0$. 

Let $m_{\mu,\nu}:=\mu(\R)=\nu(\R)$. For each $u \in (0,m_{\mu,\nu})$ define $\sE^{\mu,\nu}_u:\R\mapsto\R_+$ by $\sE^{\mu,\nu}_u=P_\nu-P_{\mu_u}$, so that 
\[
\sE^{\mu,\nu}_u(k)=P_\nu(k)-P_{\mu_u}(k)=D_{\mu,\nu}(k)+P_\mu(k)-P_{\mu_u}(k),\quad k \in \R.
\]
Then, by Theorem \ref{thm:shadow_potential}, we have that
\begin{equation*}
P_{S^\nu(\mu_u)}(k)=P_\nu(k)-(\sE_u^{\mu,\nu})^c(k),\quad k\in\R.
\end{equation*}
The underlying idea is that using the graph of $k\mapsto(\sE_u^{\mu,\nu})^c(k)$, for each $u\in(0,m_{\mu,\nu})$, we can define candidate functions that characterise the increasing supermartingale coupling.

Note that, $\sE^{\mu,\nu}_u(k)=D_{\mu,\nu}(k)$ for $k\leq G_\mu(u)$. Since $P_\mu-P_{\mu_u}$ is non-negative on $\R$, we have that $\sE^{\mu,\nu}_u(k)\geq D_{\mu,\nu}(k)$ for $k> G_\mu(u)$. Moreover, since $P_{\mu_u}$ is linear on $[G_\mu(u),+\infty)$, $\sE^{\mu,\nu}_u$ is convex on $(G_\mu(u),+\infty)$. It is also easy to see that $k\mapsto\sE^{\mu,\nu}_u(k)-D_{\mu,\nu}(k)$ is non-decreasing.

Let $Q_{\mu,\nu},S_{\mu,\nu}:(0,m_{\mu,\nu})\mapsto[-\infty,+\infty]$ be given by
\begin{align*}
 Q_{\mu,\nu}(u) &:= X^{\sE^{\mu,\nu}_u}(G_\mu(u)), \\
 S_{\mu,\nu}(u) &:= Z^{\sE^{\mu,\nu}_u}(G_\mu(u)).
\end{align*}

By definition, $Q_{\mu,\nu}(u)\leq G_{\mu}(u) \leq S_{\mu,\nu}(u)$, for $u\in(0,m_{\mu,\nu})$. Furthermore, for each $u\in(0,m_{\mu,\nu})$, either $Q_{\mu,\nu}(u)<G_{\mu}(u)< S_{\mu,\nu}(u)$ or $Q_{\mu,\nu}(u)= G_{\mu}(u)=S_{\mu,\nu}(u)$, see Hobson and Norgilas \cite[Lemma 4.1]{HobsonNorgilas:21}.

We now introduce a function $\phi_{\mu,\nu}:(0,m_{\mu,\nu})\mapsto\R$ which represents the slope of $(\sE_u^{\mu,\nu})^c(\cdot)$ at $G_\mu(u)$. If $Q_{\mu,\nu}(u)< G_{\mu}(u) < S_{\mu,\nu}(u)$, then this slope is well defined.
If $Q_{\mu,\nu}(u)=G_{\mu}(u)=S_{\mu,\nu}(u)$ then the slope of $(\sE^{\mu,\nu}_u)^c$ may not be well defined at $G_\mu(u)$. To cover all cases we define:
\begin{defn}
	\label{def:slope}
	$\phi_{\mu,\nu}:(0,m_{\mu,\nu}) \mapsto \R$ is given by $\phi_{\mu,\nu}(u) = \inf \{ \psi: \psi \in \partial (\sE_u^{\mu,\nu})^c (G_\mu(u)) \}$.
\end{defn}

Now we can introduce our second candidate lower function.

Recall the definition of $ L^f_{a,b}$ for any $f:\R\mapsto\R$ (see \eqref{eq:L1}), so that (in the case $a<b$) $L^f_{a,b}$ is the line passing through $(a,f(a))$ and $(b, f(b))$. Define also $L^{f,\psi}_a$ by $L^{f,\psi}_a(y) = f(a) + \psi(y-a)$ so that $L^{f,\psi}_a$ is the line passing through $(a,f(a))$ with slope $\psi$. (Note that, in the case $a=b$, $L^f_{a,a}=L^{f,0}_a$.)
Define $R_{\mu,\nu}:(0,m_{\mu,\nu})\mapsto[-\infty,\infty]$ by
\begin{equation}    \label{eq:R}
R_{\mu,\nu}(u):=  \inf\{k:k \leq G_\mu(u), D_{\mu,\nu}(k) = {L}^{(\sE^{\mu,\nu}_u)^c ,\phi_{\mu,\nu}(u)}_{G_\mu(u)}(k) \}, \quad u\in(0.m_{\mu,\nu}).
\end{equation}
If $Q_{\mu,\nu}(u)<G_\mu(u)$ then the definition of $R_{\mu,\nu}$ can be rewritten as $R_{\mu,\nu}(u)=   \inf\{k:k \leq G_\mu(u), D_{\mu,\nu}(k) = L^{\sE^{\mu,\nu}_u}_{Q_{\mu,\nu}(u),S_{\mu,\nu}(u)}(k) \}$.
Note that 
$Q_{\mu,\nu}(u) \in \{k:k \leq G_\mu(u), D_{\mu,\nu}(k) = {L}^{(\sE^{\mu,\nu}_u)^c,\phi_{\mu,\nu}(u)}_{G_\mu(u)}(k) \}$ so that $R_{\mu,\nu}(u)$ exists in all cases and satisfies $R_{\mu,\nu}(u) \leq Q_{\mu,\nu}(u)$. (See, for example, Figure~\ref{fig:RS} which corresponds to the martingale case when $\overline\mu=\overline\nu$).

If $Q_{\mu,\nu}(u)<S_{\mu,\nu}(u)$, then by construction, $(\sE^{\mu,\nu}_u)^c<\sE^{\mu,\nu}_u$ on $(Q_{\mu,\nu}(u),S_{\mu,\nu}(u))$ and $(\sE^{\mu,\nu}_u)^c \leq \sE^{\mu,\nu}_u$ on $[R_{\mu,\nu}(u),S_{\mu,\nu}(u)]$. In particular, $(\sE^{\mu,\nu}_u)^c$ is linear on $(R_{\mu,\nu}(u),S_{\mu,\nu}(u))$, whilst $(\sE^{\mu,\nu}_u)^c(S_{\mu,\nu}(u))=\sE^{\mu,\nu}_u(S_{\mu,\nu}(u))$, $(\sE^{\mu,\nu}_u)^c(Q_{\mu,\nu}(u))=\sE^{\mu,\nu}_u(Q_{\mu,\nu}(u))=D_{\mu,\nu}(Q_{\mu,\nu}(u))$ and $(\sE^{\mu,\nu}_u)^c(R_{\mu,\nu}(u))=\sE^{\mu,\nu}_u(R_{\mu,\nu}(u))=D_{\mu,\nu}(R_{\mu,\nu}(u))$ (provided that $S_{\mu,\nu},Q_{\mu,\nu}$ and $R_{\mu,\nu}$ are finite, respectively). Then we have that
\begin{equation*}
\phi_{\mu,\nu}(u) = \frac{\sE^{\mu,\nu}_u(S_{\mu,\nu}(u))-D_{\mu,\nu}(Q_{\mu,\nu}(u))}{S_{\mu,\nu}(u)-Q_{\mu,\nu}(u)} = \frac{\sE^{\mu,\nu}_u(S_{\mu,\nu}(u))-D_{\mu,\nu}(R_{\mu,\nu}(u))}{S_{\mu,\nu}(u)-R_{\mu,\nu}(u)}.
\end{equation*}
Further, $\phi_{\mu,\nu}(u)$ is an element of each of $\partial \sE^{\mu,\nu}_u(R_{\mu,\nu}(u))$, $\partial \sE^{\mu,\nu}_u(Q_{\mu,\nu}(u))$ and $\partial\sE^{\mu,\nu}_u(S_{\mu,\nu}(u))$ together with $\partial (\sE^{\mu,\nu}_u)^c(R_{\mu,\nu}(u))$, $\partial (\sE^{\mu,\nu}_u)^c(Q_{\mu,\nu}(u))$ and $\partial(\sE^{\mu,\nu}_u)^c(S_{\mu,\nu}(u))$.
\subsubsection{Martingale case: $u^*=1$}\label{sec:u=1}
Suppose $\mu,\nu\in\sP$ with $\mu\leq_{cd}\nu$ and $u^*=1$. Then $\overline\mu=\overline\nu$ and therefore $\mu\leq_c\nu$, so that we are in the martingale set-up. Hobson and Norgilas \cite{HobsonNorgilas:21} showed how to construct the upper and lower functions that support the generalised left-curtain martingale coupling, on a single irreducible component only. In this section we will extend their result by \textit{gluing} together the constructions obtained on separate irreducible components.
 
Let $(\sI_i)_{i\geq 0}=(\sI^{\mu,\nu}_i)_{i\geq 0}:=((\ell_i,r_i))_{i\geq0}$ denote the collection of irreducible components associated to $\mu$ and $\nu$, and let $\overline{\sI_i}=[\ell_i,r_i]$ denote the closure of $\sI_i$. Define
$$\mu_i:=\mu\lvert_{\sI_i},\quad\nu_i:=S^\nu(\mu_i),\quad i\geq 0.
$$
Since each $\sI_i$ is irreducible, $\mu_i$ is embedded in $\nu\lvert_{\overline{\sI_i}}$ under any $\pi\in\Pi_M(\mu,\nu)$. In particular,
$$\nu_i={\nu\lvert_{\sI_i}}+\alpha_i\delta_{\ell_i}+\beta_i\delta_{r_i},\quad i\geq0,$$
where $\alpha_i\in[0,\nu(\{\ell_i\})]$ and $\beta_i\in[0,\nu(\{r_i\})]$, recall Lemma \ref{lem:irreducibleComp}.

Let $G=G_\mu$ and, for $i\geq0$, define
$$
u^\ell_i:=\inf\{u\in(0,m_{\mu,\nu}):G(u)>\ell_i\},\quad
u^r_i:=\sup\{u\in(0,m_{\mu,\nu}):G(u)<r_i\},
$$
and let $(\sU_i)_{i\geq1}$ be such that $\sU_i:=(u^\ell_i,u^r_i)$. Since each $\sI_i$ is irreducible, it is easy to see that there exists $u\in(0,m_{\mu,\nu})$ with $G(u)\in(\ell_i,r_i)$, and therefore both $u^\ell_i$ and $u^r_i$ are well-defined (for example, we can take $u\in(0,m_{\mu,\nu})$ with $G(u)\leq x\leq G(u+)$, where $x\in\argsup_{k\in\overline{\sI_i}}D_{\mu,\nu}(k)$).
\begin{lem}
	\label{lem:Uirreducible}
	If $u\in\bigcup_{i\geq0}\sU_i$ then $G(u)\in\bigcup_{i\geq0}\sI_i$.
	\end{lem}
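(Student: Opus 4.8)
I would prove the slightly stronger assertion that $u\in\sU_i$ already forces $G(u)\in\sI_i$; the stated lemma follows immediately. So the plan is to fix $i\geq0$ and $u\in\sU_i=(u^\ell_i,u^r_i)$, and to show $\ell_i<G(u)<r_i$ by a pure monotonicity argument on the left-continuous quantile $G=G^-_\mu$. Recall that $\sU_i$ is non-empty (so such $u$ exists and lies in $(0,m_{\mu,\nu})$) precisely because $\sI_i$ is irreducible; in particular the sets entering the definitions of $u^\ell_i$ and $u^r_i$ are non-empty, so these endpoints are well defined.

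\textbf{Key steps.} First I would establish the lower bound $G(u)>\ell_i$. Since $G$ is non-decreasing, the set $A_i:=\{u'\in(0,m_{\mu,\nu}):G(u')>\ell_i\}$ is upward closed: if $u'\in A_i$ and $u''\geq u'$ then $G(u'')\geq G(u')>\ell_i$. Hence $A_i\supseteq(u^\ell_i,m_{\mu,\nu})$, because any $u'>u^\ell_i=\inf A_i$ is not a lower bound of $A_i$, so there is $u''\in A_i$ with $u''<u'$, whence $G(u')\geq G(u'')>\ell_i$. Applying this with $u'=u$, which is legitimate since $u>u^\ell_i$, yields $G(u)>\ell_i$. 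By the symmetric argument, $B_i:=\{u'\in(0,m_{\mu,\nu}):G(u')<r_i\}$ is downward closed, so $B_i\supseteq(0,u^r_i)$, and since $u<u^r_i$ we obtain $G(u)<r_i$. Combining the two inequalities gives $G(u)\in(\ell_i,r_i)=\sI_i\subseteq\bigcup_{j\geq0}\sI_j$, which proves the lemma.

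\textbf{Main obstacle.} There is no substantial difficulty here; it is a bookkeeping argument with monotone functions. The only two places needing a sentence of care are (i) that $u^\ell_i$ and $u^r_i$ are well defined, which is exactly the content of irreducibility of $\sI_i$ and has already been recorded in the text just before the lemma, and (ii) the boundary cases $u^\ell_i=0$ or $u^r_i=m_{\mu,\nu}$, which are harmless because the argument never uses strict positivity of $u^\ell_i$ nor the strict inequality $u^r_i<m_{\mu,\nu}$.
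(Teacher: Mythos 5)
Your proof is correct, and it is in fact a cleaner route than the one the paper takes. The paper proves the same two inequalities ($G(u)>\ell_i$ for $u>u^\ell_i$ and $G(u)<r_i$ for $u<u^r_i$) but does so via a case analysis: it first shows $G(u^r_i)\leq r_i$ using left-continuity, then splits according to whether $\mu(\{r_i\})$ is zero or positive (identifying $u^r_i$ with $F_\mu(r_i-)$ in the atomic case), and treats the left endpoint symmetrically by distinguishing $G(u^\ell_i)=\ell_i=G(u^\ell_i+)$ from $G(u^\ell_i)\leq\ell_i<G(u^\ell_i+)$. Your argument bypasses the atoms and the continuity properties of $G$ entirely: since $A_i=\{u':G(u')>\ell_i\}$ is upward closed and $u^\ell_i=\inf A_i$, any $u>u^\ell_i$ dominates some element of $A_i$ and so lies in $A_i$; dually for $B_i$ and $u^r_i=\sup B_i$. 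This uses only monotonicity of $G$ together with the very definitions of $u^\ell_i$ and $u^r_i$ as an infimum and a supremum, and you correctly flag the one genuine hypothesis needed, namely non-emptiness of $A_i$ and $B_i$, which is supplied by irreducibility as recorded just before the lemma. The only thing your streamlined proof does not deliver is the collection of side facts the paper's case analysis establishes along the way (e.g.\ $G(u^r_i)\leq r_i$ and the dichotomy at $u^\ell_i$), which the paper reuses in Remark \ref{rem:G(u)} and in the proof of Theorem \ref{thm:constructionMART}; but those are not part of the statement of Lemma \ref{lem:Uirreducible}, so this is not a gap in your proof of the lemma itself.
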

\begin{proof}
 Since $G$ is non-decreasing and left-continuous, we have $G(u^r_i)\leq r_i$. Suppose $G(u^r_i)=r_i$. If $\mu(\{r_i\})=0$ then $G(u)<r_i$ for $u<u^r_i$. If $\mu(\{r_i\})>0$, then $u^r_i=F_\mu(r_i-)=\mu((-\infty,r_i))$ and therefore $G(u)<r_i$ for $u<u^r_i$. On the other hand, we have that either $G(u^\ell_i)=\ell_i=G(u^\ell_i+)$ or $G(u^\ell_i)\leq\ell_i<G(u^\ell_i+)$. If $G(u^\ell_i)=\ell_i=G(u^\ell_i+)$ and $\mu(\{\ell_i\})>0$ then $u^\ell_i=F_\mu(\ell_i)=\mu((-\infty,\ell_i])$. In either case $\ell_i<G(u)$ for $u>u^\ell_i$. We conclude that $G(u)\in\sI_i$ for all $u\in\sU_i$. It follows that $G(u)\in\bigcup_{i\geq1}\sI_i$ provided that $u\in\bigcup_{i\geq0}\sU_i$.
 \end{proof}
\begin{rem}\label{rem:G(u)}
 The reverse statement of Lemma \ref{lem:Uirreducible} is almost true. In fact we have that if $G(u)\in\bigcup_{i\geq0}\sI_i$ then $u\in\Big(\bigcup_{i\geq0}\sU_i\Big)\setminus\sN$ for some Lebesgue null-set $\sN$. 
 
 Suppose $G(u)\in\bigcup_{i\geq0}\sI_i$ for some $u\in(0,1)$. Then $G(u)\in\sI_i$ for some $i\geq0$, and therefore $\ell_i<G(u)<r_i$. It follows that $u^\ell_i\leq u\leq u^r_i$. If $u^\ell_i=u$, then $G(u^\ell_i)>\ell_i$. But his cannot happen since $G$ is left-continuous. It follows that $u^\ell_i<u \leq u^r_i$. Now suppose that $u=u^r_i$, so that $G(u^r_i)<r_i$. This, however is a possible situation. It happens when $G(u^r_i)<r_i\leq G(u^r_i+)$. Since there are countably many $u^r_i$'s the assertion follows.
\end{rem}

We will now define candidate functions that support the left-curtain martingale coupling on each irreducible component.

For $i\geq0$, let $G_i:(0,u^r_i-u^\ell_i)\mapsto\R$ be a (left-continuous) quantile function of $\mu_i$, i.e., $G_i=G_{\mu_i}$. By construction, $G(u)=G_i(u-u^\ell_i)$ for $u\in\sU_i$, and therefore, for a uniform random variable $U$ on $[0,1]$ we have that $\sL(I_{\{U\in\sU_i\}}G(U))=\sL(I_{\{U\in\sU_i\}}G_i(U-u^\ell_i))=\mu_i$.

Define $R_i,S_i:(0,u^r_i-u^\ell_i)\mapsto[-\infty,\infty]$ by
$$
R_i(u)=R_{\mu_i,\nu_i}(u),\quad S_i(u)=S_{\mu_i,\nu_i}(u),\quad u\in(0,u^r_i-u^\ell_i).
$$
The next result shows that $R_i$ and $S_i$ are real-valued. The proof is presented in Section \ref{sec:proofs}.
\begin{lem}\label{lem:RSbound}
	For $i\geq0$, $R_i(u),S_i(u)\in[\ell_i,r_i]$ for all $u\in\sU_i$. If $\ell_i=-\infty$ (resp. $r_i=\infty$) then $R_i(u),S_i(u)\in(\ell_i,r_i]$ (resp. $R_i(u),S_i(u)\in[\ell_i,r_i)$).
	\end{lem}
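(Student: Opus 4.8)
Since $\sI_i=(\ell_i,r_i)$ is an irreducible component of $(\mu,\nu)$ and we are in the martingale regime $u^*=1$, Lemma~\ref{lem:irreducibleComp} and Remark~\ref{rem:martIrreduc} say that $(\mu_i,\nu_i)$ is itself a single irreducible pair in convex order: $m_i:=\mu_i(\R)=\nu_i(\R)$, $\overline{\mu_i}=\overline{\nu_i}$, the support of $\nu_i$ is contained in $[\ell_i,r_i]$, $D:=D_{\mu_i,\nu_i}>0$ on $\sI_i$, $D=0$ on $\R\setminus\sI_i$, and $C_{(\mu_i)_u}\le C_{\mu_i}\le C_{\nu_i}$ (the first inequality because $(\mu_i)_u\le\mu_i$, the second because $\mu_i\le_c\nu_i$). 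Abbreviate $\sE_u:=\sE^{\mu_i,\nu_i}_u=P_{\nu_i}-P_{(\mu_i)_u}$ and $G_i:=G_{\mu_i}$; by Lemma~\ref{lem:Uirreducible}, for $u$ in the domain one has $G_i(u)\in\sI_i$. In view of Definition~\ref{def:slope} and \eqref{eq:R}, the assertion is really a statement about where the convex hull $(\sE_u)^c$ detaches from and rejoins $\sE_u$ (this is also what Hobson and Norgilas \cite{HobsonNorgilas:21} analyse for a single irreducible pair), so the plan is to pin those points down.

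The geometry of $\sE_u$ that I would record is as follows. Since $G_i(u)>\ell_i$ and $(\mu_i)_u$ agrees with $\mu_i$ on $(-\infty,G_i(u))$, we have $\sE_u=D$ on $(-\infty,G_i(u)]$; hence $\sE_u\equiv 0$ on $(-\infty,\ell_i]$, while $\sE_u\ge D>0$ on all of $(\ell_i,r_i)$ because $P_{\mu_i}\ge P_{(\mu_i)_u}$. On $[G_i(u),\infty)$ the function $P_{(\mu_i)_u}$ is affine, so $\sE_u$ is convex there; writing $\ell_u(k):=(m_i-u)k-(\overline{\nu_i}-\overline{(\mu_i)_u})$ for the affine function of slope $m_i-u>0$, one checks $\sE_u-\ell_u=C_{\nu_i}-C_{(\mu_i)_u}\ge 0$ everywhere, with $\sE_u-\ell_u\ge C_{\mu_i}-C_{(\mu_i)_u}\ge(G_i(u)-k)(m_i-u)>0$ for $k<G_i(u)$ and $\sE_u(G_i(u))-\ell_u(G_i(u))=C_{\nu_i}(G_i(u))>0$. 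Because $\mu_i=\mu\lvert_{\sI_i}$ has no atom at $r_i$, one also gets $\sE_u(r_i)=\ell_u(r_i)>0$ when $r_i<\infty$; and on $[\,r_{\nu_i}\vee G_i(u),\infty)$ both potentials are affine, so $\sE_u=\ell_u$ there. Finally, since $\sE_u\ge 0$ and $\sE_u\to 0$ at $-\infty$, the hull satisfies $\max\{0,\ell_u\}\le(\sE_u)^c\le\sE_u$ and $(\sE_u)^c\to 0$ at $-\infty$, so $(\sE_u)^c=\sE_u=0$ on $(-\infty,\ell_i]$ and $\{(\sE_u)^c=0\}$, if non-empty, is a ray $(-\infty,b]$.

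With this in hand I would argue in three steps. \emph{(a) $S_i(u)=Z^{\sE_u}(G_i(u))\le r_i$ and is finite.} If $r_{\nu_i}<\infty$ this is immediate since $(\sE_u)^c=\sE_u=\ell_u$ on $[\,r_{\nu_i}\vee G_i(u),\infty)$, an interval whose left endpoint is $\le r_i$. If $r_{\nu_i}=\infty$ (so $r_i=\infty$), suppose $(\sE_u)^c<\sE_u$ throughout $[G_i(u),\infty)$: by Lemma~\ref{lem:linear}, $(\sE_u)^c$ is then affine on an interval $(a,\infty)$ with $a$ a contact point, and that affine piece, having the asymptotic slope $m_i-u$, must equal $\ell_u$; hence $\sE_u(a)=\ell_u(a)$, contradicting $\sE_u>\ell_u$ on $(-\infty,G_i(u)]$ and (since $r_{\nu_i}=\infty$) on $(G_i(u),\infty)$ as well; so $(\sE_u)^c$ rejoins $\sE_u$ at a finite $S_i(u)<\infty=r_i$. \emph{(b) $(\sE_u)^c(G_i(u))>0$.} If not, then $(\sE_u)^c=0$ on $(-\infty,G_i(u)]$; since $\sE_u(G_i(u))=D(G_i(u))>0$, the point $G_i(u)$ is interior to the detachment interval $(Q_{\mu_i,\nu_i}(u),S_i(u))$, on which $(\sE_u)^c$ is linear (Lemma~\ref{lem:linear}); this linear piece vanishes on $(Q_{\mu_i,\nu_i}(u),G_i(u)]$, hence is identically $0$, so $\sE_u(S_i(u))=(\sE_u)^c(S_i(u))=0$ — impossible, because $S_i(u)\in(G_i(u),r_i]$ by (a) and $\sE_u>0$ on $(\ell_i,r_i]$. \emph{(c) $R_i(u)\in[\ell_i,G_i(u)]$, and $R_i(u)>-\infty$ when $\ell_i=-\infty$.} The supporting line $L:=L^{(\sE_u)^c,\phi_{\mu_i,\nu_i}(u)}_{G_i(u)}$ has $L(G_i(u))=(\sE_u)^c(G_i(u))>0$ by (b), satisfies $L\le(\sE_u)^c\le\sE_u$, and is non-decreasing ($\phi_{\mu_i,\nu_i}(u)\ge 0$ since $(\sE_u)^c$ is convex, non-negative and vanishes at $-\infty$); as $L\le(\sE_u)^c=0$ on $(-\infty,\ell_i]$, $L(k)<0$ for all $k$ below some point $\ge\ell_i$ (or $L$ is constant $>0$, when $\phi_{\mu_i,\nu_i}(u)=0$), so for $k<\ell_i$ we get $D(k)=0\ne L(k)$ and the infimum in \eqref{eq:R} is attained in $[\ell_i,G_i(u)]$; together with $D(k)\to 0$ as $k\to-\infty$ this also forces $R_i(u)>-\infty$ when $\ell_i=-\infty$, and $R_i(u)\le Q_{\mu_i,\nu_i}(u)\le G_i(u)<r_i$ finishes the proof.

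The part that needs the most care is the coupling of (a) and (b): one must use both that $\sE_u$ is \emph{strictly} positive on $(\ell_i,r_i]$ — which relies on $\sE_u\ge D>0$ on $\sI_i$ and on the absence of a $\mu$-atom at $r_i$ — and that $\sE_u$ \emph{strictly} exceeds the affine lower bound $\ell_u$ on $(-\infty,G_i(u)]$ (and beyond, when $r_{\nu_i}=\infty$); without these the convex hull could a priori collapse to $0$ at $G_i(u)$ or never rejoin $\sE_u$, and $R_i,S_i$ would leave $[\ell_i,r_i]$ or become infinite.
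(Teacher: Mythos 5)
Your proof is correct. It shares the paper's overall strategy — pin down where $(\sE^{\mu_i,\nu_i}_u)^c$ detaches from and rejoins $\sE^{\mu_i,\nu_i}_u$ and rule out detachment or reattachment outside $[\ell_i,r_i]$ — but it derives the contradictions at the level of potential functions rather than measures. The paper identifies $(\sE^{\mu_i,\nu_i}_u)^c$ with $P_{\nu_i-S^{\nu_i}(\mu_{i,u})}$, reads linearity of the hull on a half-line as absence of mass of the residual target measure there, and then contradicts the convex order of $\mu_i-\mu_{i,u}$ and $\nu_i-S^{\nu_i}(\mu_{i,u})$ (a mean comparison when $S_i(u)>r_i$; a support comparison, and an atom at $-\infty$, when $R_i(u)\leq\ell_i$ with $\ell_i$ possibly $-\infty$). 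You instead exploit the two strict inequalities $\sE^{\mu_i,\nu_i}_u>0$ on $(\ell_i,r_i]$ and $\sE^{\mu_i,\nu_i}_u>\ell_u$ on $(-\infty,G_i(u)]$, which encode exactly the same facts (irreducibility plus the absence of a $\mu_i$-atom at $r_i$, and the mass and mean of $\mu_i-\mu_{i,u}$ sitting to the right of $G_i(u)$); your intermediate step (b), positivity of the hull at $G_i(u)$, is a clean way of packaging what the paper's second contradiction does implicitly. The one point deserving an extra sentence is step (a) in the case $r_{\nu_i}=\infty$ and $\ell_i=-\infty$: the contact set could a priori be empty, so the finite endpoint $a$ need not exist; but then $(\sE^{\mu_i,\nu_i}_u)^c$ would be linear on all of $\R$ and, tending to $0$ at $-\infty$, identically zero, which contradicts $(\sE^{\mu_i,\nu_i}_u)^c\geq\ell_u$ with slope $m_i-u>0$. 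With that patched, the argument is complete and yields the same conclusions as the paper's.
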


Let $\sN:=\bigcup_{i\geq0}\{u^r_i:G(u^r_i)<r_i)\}$, recall Remark \ref{rem:G(u)}. Finally, define $\tilde R_{\mu,\nu},\tilde S_{\mu,\nu}:(0,1)\mapsto\R$ by
\begin{align}
\label{eq:Rgeneral}
\tilde R_{\mu,\nu}(u)&=I_{\{u\notin(\bigcup_{i\geq0}\sU_i)\bigcup \sN\}}G(u)+\sum_{i\geq0}\Big(I_{\{u\in\sU_i\}}R_i(u-u^\ell_i)+I_{\{u=u^r_i\in\sN\}}\ell_i\Big),\\ \label{eq:Sgeneral} \tilde S_{\mu,\nu}(u)&=I_{\{u\notin(\bigcup_{i\geq0}\sU_i)\bigcup \sN\}}G(u)+\sum_{i\geq0}I_{\{u\in\sU_i\}}\Big(S_i(u-u^\ell_i)+I_{\{u=u^r_i\in\sN\}}r_i\Big).
\end{align}

\begin{figure}[H]
	\centering
	\begin{tikzpicture}[scale=1]

	\begin{axis}[%
	width=6.028in,
	height=2.754in,
	at={(1.011in,0.642in)},
	scale only axis,
	xmin=-11,
	xmax=11,
	ymin=-0.5,
	ymax=2,
	axis line style={draw=none},
	ticks=none
	]
	\draw[black, thin] (-11,0) -- (11,0);
	\addplot [color=black, dashed, line width=0.5pt, forget plot]
	table[row sep=crcr]{%
		5.10778631336004
		0.579120487313966\\
		5.28116588395275	0.550692356321193\\
		5.45454545454545	0.516281441424168\\
		5.79769279116164	0.441487336695587\\
		6.14084012777783	0.37232841678\\
		6.81818181818182	0.25309834481619\\
		7.50052689249177	0.156184065001039\\
		8.18181818181818	0.0826473956587641\\
		8.913369432737	0.0295195446611451\\
		9.54545454545455	0.00516182995117376\\
		10.1951138678739	-4.02019102274664e-06\\
		10.9090909090909	5.23608497005057e-06\\
		11.6245592768795	4.99949232768415e-06\\
		12.2727272727273	5.69424161334098e-06\\
		12.7873709632194	-0.000900998195628944\\
		13.6363636363636	2.55228360046544e-06\\
		14.3055157129462	-7.12227511812102e-06\\
		15	5.82376882718449e-06\\
	};
	\addplot [color=red, line width=1.0pt, forget plot]
	table[row sep=crcr]{%
		-15	0\\
		-14.3743426864006	0\\
		-13.6363636363636	0\\
		-12.922520765984	0\\
		-12.2727272727273	0\\
		-11.6037511150776	0\\
		-10.9090909090909	0\\
		-10.1567767603786	0\\
		-9.54545454545455	0.00516529194875333\\
		-8.78555408111793	0.0241111129606366\\
		-8.18181818181818	0.0392044808885227\\
		-7.52404914247162	0.0556486746725015\\
		-6.81818181818182	0.0732953232255121\\
		-6.16274148089556	0.0896812995719799\\
		-5.45454545454546	0.107386165562501\\
		-4.69396922041941	0.12640053418326\\
		-4.09090909090909	0.141477007899491\\
		-3.35965597297158	0.159758300050986\\
		-2.72727272727273	0.17556785023648\\
		-2.00961780284731	0.193509188215844\\
		-1.36363636363636	0.20965869257347\\
		-0.688142581770708	0.226546004052749\\
		0	0.243749534910459\\
		0.681996956288809	0.260799348155818\\
		1.36363636363636	0.277840010546298\\
		2.07255602717534	0.295562666422883\\
		2.72727272727273	0.31193027388147\\
		3.46373839324672	0.330341566774406\\
		4.09090909090909	0.346020537216643\\
		4.76102717217463	0.362773171911045\\
		5.45454545454545	0.380110800551816\\
		6.14084012777783	0.397267842384896\\
		6.81818181818182	0.414201063886988\\
		7.50052689249177	0.431259367617357\\
		8.18181818181818	0.449124682527086\\
		8.913369432737	0.487440956412348\\
		9.54545454545455	0.542093630196611\\
		10.1951138678739	0.618135397881325\\
		10.9090909090909	0.707391398215165\\
		11.6245592768795	0.796824991527279\\
		12.2727272727273	0.877846509150189\\
		12.7873709632194	0.942167254247437\\
		13.6363636363636	1.04829872537446\\
		14.3055157129462	1.13389926036962\\
		15	1.21874929168467\\
	};
	\addplot [color=blue, dotted, line width=1.5pt, forget plot]
	table[row sep=crcr]{%
		-15	0\\
		-14.3743426864006	0\\
		-13.6363636363636	0\\
		-12.922520765984	0\\
		-12.2727272727273	0\\
		-11.6037511150776	0\\
		-10.9090909090909	0\\
		-10.1567767603786	0\\
		-9.54545454545455	0.00516529194875333\\
		-8.78555408111793	0.0368719582992022\\
		-8.18181818181818	0.0826446406924073\\
		-7.52404914247162	0.153258338675332\\
		-6.81818181818182	0.253099065950084\\
		-6.16274148089556	0.368114076983243\\
		-5.80864346772051	0.43918676050732\\
		-5.45454545454546	0.516270463100197\\
		-5.26440139601394	0.553708788226936\\
		-5.07425733748243	0.58391669559351\\
		-4.88411327895092	0.606892956316372\\
		-4.69396922041941	0.622638584241668\\
		-4.54320418804183	0.626617195164236\\
		-4.39243915566425	0.632781558477449\\
		-4.24167412328667	0.631034673976273\\
		-4.09090909090909	0.624741555295822\\
		-3.90809581142471	0.611011938733503\\
		-3.72528253194034	0.590598341633919\\
		-3.54246925245596	0.563502600307279\\
		-3.35965597297158	0.532183251804731\\
		-2.72727272727273	0.435950274547483\\
		-2.00961780284731	0.350962188489822\\
		-1.36363636363636	0.296342723908122\\
		-0.688142581770708	0.261838896997191\\
		0	0.249996711366964\\
		0.681996956288809	0.261628059973472\\
		1.36363636363636	0.296487735376558\\
		2.07255602717534	0.357385147678734\\
		2.72727272727273	0.435946494150791\\
		3.46373839324672	0.549937949132153\\
		3.62053106766232	0.575890831516237\\
		3.77732374207791	0.597092931388165\\
		3.9341164164935	0.61337407422424\\
		4.09090909090909	0.624743965908923\\
		4.25843861122547	0.63190866150059\\
		4.42596813154186	0.632554779333081\\
		4.59349765185824	0.628031280250534\\
		4.76102717217463	0.617918105339684\\
		4.93440674276733	0.601515203917605\\
		5.10778631336004	0.580572723208292\\
		5.45454545454545	0.542107891512015\\
		6.14084012777783	0.483683429598855\\
		6.81818181818182	0.449121067486052\\
		7.50052689249177	0.437499922994881\\
		8.18181818181818	0.449124682527086\\
		8.913369432737	0.487440956412348\\
		9.54545454545455	0.542093630196611\\
		10.1951138678739	0.618135397881325\\
		10.9090909090909	0.707391398215165\\
		11.6245592768795	0.796824991527279\\
		12.2727272727273	0.877846509150189\\
		12.7873709632194	0.942167254247437\\
		13.6363636363636	1.04829872537446\\
		14.3055157129462	1.13389926036962\\
		15	1.21874929168467\\
	};
	
	\node (R)[scale=1] at (-9.5,-0.2) {$R(u)$};
	\draw[gray,dashed,thin] (-9.5,-0.1) -- (-9.5,0.05);
	
	\node (Q)[scale=1] at (0.5,-0.2) {$Q(u)$};
	\draw[gray,dashed,thin] (0.5,-0.1) -- (0.5,0.27);
	
	\node (G)[scale=1] at (5,-0.2) {$G(u)$};
	\draw[gray,dashed,thin] (5,-0.1) -- (5,0.64);
	
	\node (S)[scale=1] at (8,-0.2) {$S(u)$};
	\draw[gray,dashed,thin] (8,-0.1) -- (8,0.45);

	\node (E)[scale=1,blue] at (7,0.7) {$y\mapsto\sE_{u}(y)$};
	\node (D)[scale=1,black] at (9.5,0.2) {$y\mapsto D(y)$};
	\node (P)[scale=1,red] at (0.5,0.9) {slope $\phi(u)$};
	\draw [gray,->] (0.5,0.8) to[out=230, in=70] (-4.5,0.25);
	\draw [gray,->] (0.55,0.8) to[out=310, in=110] (4,0.45);
	\end{axis}
	
	\end{tikzpicture}
	\caption{Plot of locations of $R(u)=R_{\mu,\nu}(u)$, $Q(u)=Q_{\mu,\nu}(u)$, $G(u)=G_\mu(u)$ and $S(u)=S_{\mu,\nu}(u)$ in the case where $R(u)<Q(u)<G(u)<S(u)$ and such that $\{k:D_{\mu,\nu}(u)(k)>0\}$ is an interval. The dashed curve represents $D=D_{\mu,\nu}(u)$. The dotted curve corresponds to the graph of $\sE_{u}=\sE_u^{\mu,\nu}$. Note that $D=\sE_u$ on $(-\infty, G(u)]$, while $\sE_u$ is convex and $D\leq\sE_u$ on $(G(u),\infty)$. The solid curve below $\sE_{u}$ represents $\sE^c_{u}$. The convex hull $\sE^c_{u}$ is linear on $[R(u),S(u)]$, and its slope is given by $\phi(u)=\phi_{\mu,\nu}(u)$.}
	\label{fig:RS}
\end{figure}
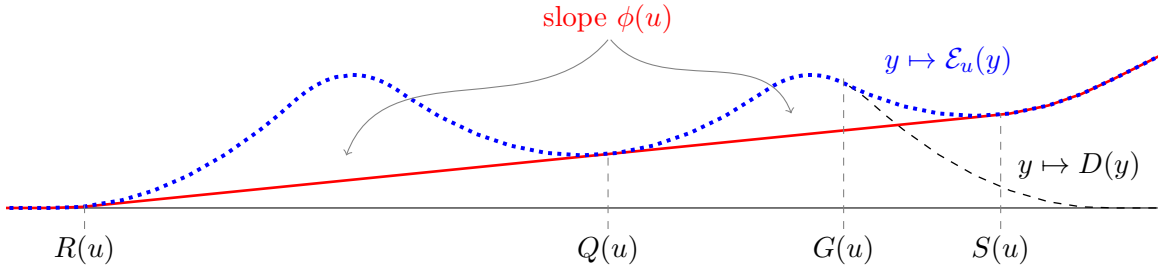
\begin{rem}
	\label{rem:intuitionRS}
	The intuition behind our choice of $R$ and $S$ is because (at least in regular cases) they do satisfy the mean and mass preservation conditions
	\begin{equation}\label{eq:meanMass}
	\int^{G(u)}_{R(u)}x^i\mu(dx)=\int^{S(u)}_{R(u)}x^i\nu(dx),\quad i=0,1,
	\end{equation}
	which are also satisfied by the pair of upper and lower functions $(T_d,T_u)$ constructed by Henry-Labord\`{e}re and Touzi~\cite{HenryLabordereTouzi:16} when $\mu$ is continuous.
	
	Indeed, suppose $\mu,\nu$ are atomless with positive density everywhere. In this case $D_{\mu,\nu}$ and $\sE^{\mu,\nu}_u$, for $u\in(0,1)$, are differentiable. Then if $R(u)<G(u)\leq S(u)$, by construction (see Figure \ref{fig:RS}) we have that
	$$
	D'(R(u))=\sE_u'(S(u))\quad\textrm{and}\quad D(R(u))+D'(R(u))(S(u)-R(u))=\sE_u(S(u)),
	$$
	which, using the definition of $D$ and $\sE_u$, can be easily shown to be equivalent to \eqref{eq:meanMass}.
\end{rem}

We are now ready to present the main result of this section. Since $u^*=1$, the function $T$ in Theorem \ref{thm:mainTHM_intro} is irrelevant. In particular, Theorem \ref{thm:mainTHM_intro} is now a direct consequence of the following result.
\begin{thm}\label{thm:constructionMART}
	Suppose $\mu\leq_c\nu$. Define $\tilde R$ and $\tilde S$ as in \eqref{eq:Rgeneral} and \eqref{eq:Sgeneral}, respectively.
	Then they are second-order left-monotone with respect to $G$ on $(0,1)$ and define a construction as in Theorem~\ref{thm:mainTHM_intro} such that $\sL(Y)=\nu$. In particular, $\tilde R$ and $\tilde S$ define the (lifted) left-curtain martingale coupling.
\end{thm}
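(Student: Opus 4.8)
The strategy is to deduce the global statement from the single-irreducible-component result of Hobson and Norgilas (Lemma~\ref{lem:construction}), applied separately on each component $\sI_i$, $i\ge 0$, and then to glue the pieces, the trivial component $\sI_{-1}$ and the exceptional countable set $\sN$ requiring care.

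\emph{Decomposition.} Since $\mu\le_c\nu$, Lemma~\ref{lem:irreducibleComp} with Remark~\ref{rem:martIrreduc} gives $\mu=\sum_{i\ge-1}\mu_i$ and $\nu=\sum_{i\ge-1}\nu_i$ with $\mu_{-1}=\nu_{-1}$ and, for $i\ge0$, $\mu_i=\mu\lvert_{\sI_i}\le_c\nu_i=S^\nu(\mu_i)$, $\nu_i$ supported on $\overline{\sI_i}$. On the parameter side I would first record that the intervals $\sU_i=(u^\ell_i,u^r_i)$ are pairwise disjoint (Lemma~\ref{lem:Uirreducible} plus disjointness of the $\sI_i$) and that, setting $\sZ:=(0,1)\setminus\big(\bigcup_{i\ge0}\sU_i\cup\sN\big)$, every $u\in\sZ$ has $G(u)\in\sI_{-1}$ (from the case analysis behind Remark~\ref{rem:G(u)}, using left-continuity of $G$ and that $\sN$ is countable, hence Lebesgue-null). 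Matching masses ($u^r_i-u^\ell_i=\mu(\sI_i)$) then gives that $G(U)$ restricted to $\{U\in\sU_i\}$ has law $\mu_i$ and $G(U)$ restricted to $\{U\in\sZ\}$ has law $\mu_{-1}$, and on $\sU_i$ one has $G(u)=G_i(u-u^\ell_i)$.

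\emph{Transfer and marginals.} For each $i\ge0$, Lemma~\ref{lem:construction} applied to $(\mu_i,\nu_i)$ — which are in convex order, the construction being scale-invariant so one may reduce to probability measures — shows $(R_i,S_i)=(R_{\mu_i,\nu_i},S_{\mu_i,\nu_i})$ is second-order left-monotone with respect to $G_i$ on $(0,u^r_i-u^\ell_i)$ and that the kernel $\chi_{R_i(w),G_i(w),S_i(w)}(dy)$ transports $\mu_i$ to $\nu_i$. Since $u^*=1$, for $u\in\sU_i$ the formula defining $Y$ is precisely that kernel evaluated at $w=u-u^\ell_i$, while for $u\in\sZ$ it gives $Y(u,v)=G(u)=X(u)$; summing over $i$ yields $\sL(Y)=\mu_{-1}+\sum_{i\ge0}\nu_i=\nu$. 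The identity $\sL(X)=\mu$ is immediate, and $\E[Y(U,V)\mid U]=X(U)$ since on $\{R(U)=S(U)\}$ one has $Y=G(U)$ and on $\{R(U)<S(U)\}$ the values $R(U),S(U)$ are taken with weights $\tfrac{S(U)-G(U)}{S(U)-R(U)}$, $\tfrac{G(U)-R(U)}{S(U)-R(U)}$, so $S=(X(U),Y(U,V))$ is a martingale.

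\emph{Gluing the monotonicity (the main obstacle).} It remains to prove $(\tilde R,\tilde S)$ is second-order left-monotone with respect to $G$ on all of $(0,1)$; this is the one step not covered by \cite{HobsonNorgilas:21}. The inequalities $\tilde R\le G\le\tilde S$ hold componentwise via $G(u)=G_i(u-u^\ell_i)$, with equality on $\sZ$, and at $u^r_i\in\sN$ since $\ell_i\le G(u^r_i)\le r_i$. For $\tilde S$ non-decreasing and for $\tilde R(u')\notin(\tilde R(u),\tilde S(u))$ whenever $u<u'$, the inputs are: Lemma~\ref{lem:RSbound}, which gives $\ell_i\le R_i,S_i\le r_i$ (so the boundary values $\ell_i,r_i$ assigned on $\sN$ are exactly right); disjointness of the $\sI_i$, so that any two are ordered as intervals, giving $S_i\le r_i\le\ell_j\le R_j$ when $\sI_i$ lies to the left of $\sI_j$; the fact that $G(u')\ge r_i$ whenever $u'>u^r_i$; and that $(\tilde R(u),\tilde S(u))$ is the empty interval for $u\in\sZ$. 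One then runs through the cases — $u,u'$ in the same $\sU_i$ (handled by \cite{HobsonNorgilas:21}); in distinct components; one in a component and the other in $\sZ$ or $\sN$; $u\in\sN$ — and dispatches each. Given second-order left-monotonicity of $(\tilde R,\tilde S)$ with respect to $G$, the push-forward $\bar\pi$ of $\Prob$ under $(G(U),Y(U,V))$ is a martingale coupling of $\mu$ and $\nu$ whose support satisfies Definition~\ref{def:lmon}; by uniqueness of the second-order left-monotone martingale coupling (Beiglb\"ock--Juillet \cite[Theorem~5.3]{BeiglbockJuillet:16}) it equals $\pi_{lc}$, and tracking the quantile lift identifies $\hat\pi^Q$ with the lifted left-curtain coupling of Corollary~\ref{cor:construction}. (Alternatively one checks directly, via $S^{\nu_i}(\cdot)=S^\nu(\cdot)$ on submeasures of $\mu_i$, Lemma~\ref{prop:shadowSum}, and associativity, Proposition~\ref{thm:shadow_assoc}, that $\mu_u$ is transported to $S^\nu(\mu_u)$ for every $u$.) All the genuine care is in the bookkeeping around $\sN$ and $\sI_{-1}$, ensuring the monotonicity holds everywhere rather than merely almost everywhere.
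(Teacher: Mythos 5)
Your proposal is correct and follows essentially the same route as the paper: decompose into irreducible components via Lemma~\ref{lem:irreducibleComp}, invoke the Hobson--Norgilas single-component construction (Lemma~\ref{lem:construction}) to get the marginals and componentwise monotonicity, handle the null set $\sN$ and the diagonal component $\sI_{-1}$ separately, and glue the second-order left-monotonicity by the same case analysis using Lemma~\ref{lem:RSbound}, disjointness of the $\sI_i$, and $G(u')\geq r_i$ for $u'>u_i^r$. The only difference is cosmetic: you spell out the final identification with $\pi_{lc}$ via uniqueness of the left-monotone martingale coupling, which the paper leaves implicit.
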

\begin{proof}
We have $\sL(X(U,V))=\sL(X(U))=\sL(G(U))=\mu$. Furthermore, since $\sL(I_{\{U\in\sU_i\}}G(U))=\mu_i$ and $\Prob[\{U\in\sN\}]=0$, we still have that $\sL(I_{\{U\in\sU_i\cup\sN\}}G(U))=\mu_i$. Since $\sU_i$'s are disjoint, we conclude that $\sL(I_{\{U\in(\bigcup_{i\geq0}\sU_i)\cup\sN\}}G(U))=\sum_{i\geq0}\mu_i$, and therefore $\sL(I_{\{U\notin(\bigcup_{i\geq0}\sU_i)\cup\sN\}}G(U))=\mu_{-1}$.

We now turn to $Y(U,V)$. For $y\in\R$ we have
\begin{align*}
\Prob[I_{\{U\notin\bigcup_{i\geq0}\sU_i\}}&Y(U,V)\leq y]\\&=\Prob[I_{\{U\notin(\bigcup_{i\geq0}\sU_i)\cup\sN\}}G(U)\leq y]+\Prob[I_{\{U\in\sN\}}Y(U,V)\leq y]=\mu_{-1}((-\infty,y]),
\end{align*}
where we used that $\Prob[U\in\sN]=0$ and therefore $$\Prob[I_{U\in\sN}Y(U,V)\leq y]\leq\int_{\sN}\sum_{i\geq 0}I_{\{u=u^r_i:\ell_i\leq y\}}du\leq\int_{\sN}du=0.$$

We will now show that $\sL(I_{\{U\in\bigcup_{i\geq0}\sU_i\}}Y(U,V))=\sum_{i\geq0}\nu_i$. Note that it is enough to show that $\sL(I_{\{U\in\sU_i\}}Y(U,V))=\nu_i$, $i\geq0$. But this follows from the results of Hobson and Norgilas \cite{HobsonNorgilas:21}. Indeed, if for a fixed $i\geq0$ we define
$$
Y_i(u,v) =I_{\{R_i(u)=S_i(u)\}}G_i(u)+I_{\{R_i(u)<S_i(u)\}}\Big\{R_i(u) I_{\{ v \leq \frac{S_i(u) - G_i(u)}{S_i(u)-R_i(u)} \}} +  S_i(u) I_{ \{ v > \frac{S_i(u) - G_i(u)}{S_i(u)-R_i(u)} \} }\Big\},
$$
for $u\in(0,u^r_i-u^\ell_i)$ and $v\in(0,1)$, then $\sL(I_{\{U\in\sU_i\}}Y(U,V))=\sL(Y_i(U_i,V))$ where $U_i$ is a uniform random variable on $[0,u^r_i-u^\ell_i]$ that is independent of $V$. By Hobson and Norgilas \cite[Theorem 7.8]{HobsonNorgilas:21} we have that $\sL(Y_i(U_i,V))=\nu_i$, and therefore $\sL(I_{\{U\in\sU_i\}}Y(U,V))=\nu_i$ as required.

The martingale property of $Y$ follows by construction. Hence we are left to show that $\tilde{R}=\tilde{R}_{\mu,\nu}$ and $\tilde{S}=\tilde{S}_{\mu,\nu}$ are second-order left-monotone with respect to $G$ on $(0,1)$. Fix $u,v\in(0,1)$ with $u<v$. If $u\notin(\bigcup_{i\geq0}\sU_i)\cup\sN$, then $\tilde{R}(u)=G(u)=\tilde{S}(u)$ and the second-order left-monotonicity property trivially holds. Hence we can assume that either $u\in\bigcup_{i\geq0}\sU_i$ or $u\in\sN$.

\textit{Case 1: $u\in\bigcup_{i\geq0}\sU_i$}. Suppose $u\in\sU_i$ for some $i\geq0$. We know that $u\mapsto R_i(u-u^\ell_i)$ and $u\mapsto S_i(u-u^\ell_i)$ are second-order left-monotone with respect to $G(\cdot)=G_i(\cdot-u^\ell_i)$ on $\sU_i$ (see Hobson and Norgilas \cite[Theorem 7.8]{HobsonNorgilas:21}). Hence if $v\in\sU_i$ we are done. On the other hand if $v\in\sU_j$ with $i\neq j$, then using Lemma \ref{lem:RSbound} we have that either $\ell_i\leq \tilde{R}(u)\leq\tilde{S}(u)\leq r_i\leq\ell_j\leq \tilde{R}(v)$ or  $\tilde{R}(v)\leq r_j\leq\ell_i\leq \tilde{R}(u)\leq\tilde{S}(u)\leq r_i$. In either case we have that $\tilde{R}(v)\notin(\tilde{R}(u),\tilde{S}(u))$ as required.

Now suppose that $v\in\sN$, so that $v=u^r_j$ for some $j\geq0$. If $i=j$, then (using Lemma \ref{lem:RSbound} again) we have that $\tilde{R}(v)=\ell_i\leq\tilde{R}(u)\leq\tilde{S}(u)$ as required.

Suppose $v\notin\bigcup_{i\geq0}\sU_i$ and $v\notin\sN$. Then $\tilde{R}(v)=G(v)=\tilde{S}(v)$. Since $v\notin\sU_i$ and $u<v$, $v\geq u^r_i$. Recall that $G(u^r_i)\leq r_i$. If $G(u^r_i)= r_i$ then (using Lemma \ref{lem:RSbound}) $\tilde{R}(u)\leq\tilde{S}(u)\leq r_i\leq G(v)$. On the other hand if $G(u^r_i)< r_i$, then $u^r_i<v$. By the definition of $u^r_i$ we have that $G(u^r_i)< r_i\leq G(u^r_i+)$, and then it follows that $\tilde{R}(u)\leq\tilde{S}(u)\leq r_i\leq G(u^r_i+)\leq G(v)$. In both cases the second-order left-monotonicity property holds.

\textit{Case 2: $u\in\sN$}. In this case $u=u^r_i$ for some $i\geq0$ and $\tilde{R}(u)=\ell_i<r_i=\tilde{S}(u)$. If $v\in\sN$ then $u^r_i=u<v=u^r_j$ for some $i\neq j$. Then $r_i<r_j$ and we have that $\tilde{R}(u)=\ell_i<r_i=\tilde{S}(u)<\ell_j=\tilde{R}(v)$. On the other hand, if $v\in\sU_j$ for some $j\geq0$, then $i\neq j$ and $\ell_j\leq\tilde{R}(v)\leq r_j$. Then either $r_i\leq\ell_j$ or $r_j\leq\ell_i$. In either case $\tilde{R}(v)\notin (\tilde{R}(u),\tilde{S}(u))$. Finally, suppose $v\notin \Big(\bigcup_{i\geq0}\sU_i\Big)$ and $v\notin\sN$, so that $\tilde{R}(v)=G(v)=\tilde{S}(v)$. Then $u^r_i=u<v$. By definition of $u^r_i$ and since $u^r_i\in\sN$ we have that $G(u)<r_i\leq G(u+)\leq G(\tilde{u})$ for all $\tilde{u}>u$. It follows that $\tilde{R}(u)=\ell_i<r_i=\tilde{S}(u)\leq G(u+)\leq G(v)=\tilde{R}(v)$ as required.
\end{proof}
We finish this section with a remark that will be useful in Section \ref{sec:u^*General}.
\begin{rem}\label{rem:R<x<S}
Suppose $x\in\R$ with $\tilde{R}_{\mu,\nu}(u)<x<\tilde{S}_{\mu,\nu}(u)$ for some $u\in(0,1)$. Then $u\in\sU_i\cup(\{u^r_i\}\cap\sN)$ for some $i\geq0$. Combining Lemma \ref{lem:RSbound} and the definitions of $\tilde{R}_{\mu,\nu}$ and $\tilde{S}_{\mu,\nu}$ on $\sN$, we have that $\ell_i\leq\tilde{R}_{\mu,\nu}(u)<x<\tilde{S}_{\mu,\nu}(u)\leq r_i$ and therefore $x\in\sI_i^{\mu,\nu}$.
\end{rem}

\subsubsection{The (strict) supermartingale case: $u^*=0$}\label{sec:u=0}
In this section we consider $\mu,\nu\in\sP$ with $r_\nu\leq\ell_{\mu}$ and $\mu(\{\ell_\mu\})\wedge\nu(\{\ell_\mu\})=0$. Then the support of $\nu$ is strictly to the left of the support of $\mu$ and we automatically have that $\mu\leq_{cd}\nu$ and $u^*=0$. Note that the converse is also true (recall Proposition \ref{prop:support}): if $\mu\leq_{cd}\nu$ and $u^*=0$ the the support of $\nu$ is strictly to the left of the support of $\mu$.

Recall the definition of $D(k)=D_{\mu,\nu}(k)=P_\nu(k)-P_\mu(k)$, $k\in\R$. Note that $D(k)=P_\nu(k)$ for $k\leq\ell_\mu$ and $D(k)=(k-\overline\nu)-P_\mu(k)$ for $k>\ell_\mu$. It follows that $D$ is continuous, non-decreasing, convex on $(-\infty,\ell_\mu)$ and concave on $(\ell_\mu,\infty)$. Furthermore $\lim_{k\to-\infty}D(k)=0$ but $\lim_{k\to\infty}D(k)=\overline\mu-\overline\nu>0$.

Now, as in Section \ref{sec:u=1}, for each $u\in(0,1)$ we will use the function $\sE_u(k)=\sE_u^{\mu,\nu}(k)=P_\nu(k)-P_{\mu_u}(k)$, $k\in\R$, and its convex hull $\sE^c_u$. First note that $\sE_u(k)=D(k)+P_\mu(k)-P_{\mu_u}(k)$ for all $k\in\R$, and therefore $\sE_u\geq D$ everywhere. Similarly as $D$, $\sE_u$ is continuous, non-decreasing, convex on $(-\infty,\ell_\mu)$ and concave on $(\ell_\mu,\infty)$. In fact, since $\mu_u$ coincides with $\mu$ on $(-\infty,G(u))$ and does not charge $(G(u),\infty)$, we have that $\sE_u=D$ on $(-\infty,G(u+))$ and $\sE_u=(k-\overline\nu)-(uk-\overline{\mu_u})$ (so that it is linear with slope $(1-u)$) on $(G(u+),\infty)$.

We now introduce our candidate supporting function for the increasing supermartingale coupling. Define $T:(0,1)\mapsto\R$ by
\begin{equation}\label{eq:T}
T(u):=T_{\mu,\nu}:=R_{\mu,\nu}(u)=:R(u),\quad u\in(0,1),
\end{equation}
where $R_{\mu,\nu}$ is given by \eqref{eq:R}. (Note that we chose the lower function from the martingale set-up to be the only supporting function in the strict supermartingale case.) See Figure \ref{fig:T}.

Let $F_\nu$ and $G_\nu$ denote the (right-continuous) cumulative distribution function of $\nu$ and (left-continuous) quantile function of $\nu$, respectively. Let $U$ be a uniform random variable on $[0,1]$.
\begin{prop}\label{prop:U=0Construct}
	$T$, defined in \eqref{eq:T}, is non-increasing, right-continuous, $T<G$ on $(0,1)$ and $\sL(T(U))=\nu$.
\end{prop}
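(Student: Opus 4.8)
\textbf{Proof proposal for Proposition \ref{prop:U=0Construct}.}

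The plan is to exploit the explicit geometry of $D=D_{\mu,\nu}$ in this regime: since $r_\nu\le\ell_\mu$ and $\mu(\{\ell_\mu\})\wedge\nu(\{\ell_\mu\})=0$, we have $D=P_\nu$ on $(-\infty,\ell_\mu)$ (so $D$ is convex and non-decreasing there) and $D=(k-\overline\nu)-P_\mu(k)$ on $(\ell_\mu,\infty)$ (so $D$ is concave and non-decreasing there), with $\lim_{k\to-\infty}D(k)=0$ and $\lim_{k\to\infty}D(k)=\overline\mu-\overline\nu>0$. For a fixed $u\in(0,1)$ the function $\sE_u=\sE_u^{\mu,\nu}$ satisfies $\sE_u=D$ on $(-\infty,G(u+))$ and $\sE_u$ is linear with slope $1-u$ on $(G(u+),\infty)$; in particular $\sE_u$ is non-decreasing and eventually linear. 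First I would identify $T(u)=R_{\mu,\nu}(u)$ geometrically: by \eqref{eq:R}, $T(u)$ is the left-most point $k\le G(u)$ where $D(k)$ meets the line $L^{(\sE_u)^c,\phi(u)}_{G(u)}$ through $(G(u),\sE_u(G(u)))$ with slope $\phi(u)=\phi_{\mu,\nu}(u)$. Because $\sE_u$ is concave on $(\ell_\mu,\infty)$ and the tail slope there is $1-u<$ the left derivative of $\sE_u$ just right of $\ell_\mu$, the convex hull $(\sE_u)^c$ picks up the tail line and the supporting point on the left must lie in $(-\infty,\ell_\mu]$, where $D=P_\nu$. So $T(u)\le\ell_\mu<G(u)$, giving $T<G$ on $(0,1)$, and $T(u)$ is characterised as the point on the convex curve $P_\nu$ at which the tangent line has the same slope as the chord/tangent reaching $(G(u),\sE_u(G(u)))$.

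Next I would pin down $T(u)$ in closed form. The key computation: the line through $(G(u),\sE_u(G(u)))$ tangent to $P_\nu$ (equivalently to $D$ on $(-\infty,\ell_\mu)$) at a point $k_0$ must satisfy the mean/mass balance of Remark \ref{rem:intuitionRS} with $S(u)=+\infty$ collapsed — more concretely, $\phi(u)\in\partial P_\nu(k_0)$ and $P_\nu(k_0)+\phi(u)(G(u)-k_0)=\sE_u(G(u))=P_\nu(G(u))-P_{\mu_u}(G(u))$. Writing out $\sE_u(G(u))=P_\nu(G(u))-P_\mu(G(u))$ and using $P_\nu(G(u))-P_\nu(k_0)=\int_{k_0}^{G(u)}F_\nu(x)\,dx$ together with $\phi(u)=F_\nu(k_0-)$ (or a value in $[F_\nu(k_0-),F_\nu(k_0)]$), this equation reduces to $\int_{k_0}^{G(u)}(F_\nu(x)-u)\,dx=\;$ (something that forces) $F_\nu(k_0)=1-u$, i.e. $T(u)=k_0=G_\nu(1-u)$ — the antitone coupling location. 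I expect this algebra to be short once one substitutes $P_\mu(G(u))=\int_{-\infty}^{G(u)}F_\mu$, $P_\nu$ likewise, and $P_{\mu_u}(G(u))=P_\mu(G(u))$. Having shown $T(u)=G_\nu(1-u)$ for the left-continuous quantile function $G_\nu$, monotonicity and right-continuity follow immediately: $u\mapsto 1-u$ is decreasing and continuous, $G_\nu$ is non-decreasing and left-continuous, so $T=G_\nu\circ(1-\cdot)$ is non-increasing and right-continuous. Finally $\sL(T(U))=\sL(G_\nu(1-U))=\sL(G_\nu(U'))=\nu$ since $1-U\sim U[0,1]$ and $G_\nu$ pushes $U[0,1]$ forward to $\nu$.

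An alternative, more robust route avoiding the quantile identification: show directly that the measure $\tau$ defined by $\tau((-\infty,k])=1-F_\mu\big(\text{the }u\text{ with }T(u)=k\big)$ — i.e. the pushforward of Lebesgue measure on $(0,1)$ under $T$ — has modified potential $P_\tau=P_\nu-(P_\nu-P_\mu)^c$ evaluated appropriately, using Theorem \ref{thm:shadow_potential} with $\mu_1=\mu$ (so $S^\nu(\mu)=\nu$ here, since $\mu\le_{cd}\nu$ with $u^*=0$ still gives $S^\nu(\mu)$ of mass $1$); and monotonicity of $T$ then comes from the first-order right-monotonicity built into the construction of $R_{\mu,\nu}$ (no two chords cross). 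I would cross-check the two approaches for consistency.

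The main obstacle I anticipate is handling atoms of $\nu$ and the endpoint behaviour cleanly: when $\nu$ has an atom, $T(u)=G_\nu(1-u)$ is constant across the corresponding $u$-interval and the "tangent line" at $k_0$ is really a supporting line with a range of admissible slopes, so the equation $\phi(u)\in\partial P_\nu(k_0)$ must be read as a subdifferential inclusion; I need to verify that the particular $\phi(u)$ produced by the convex-hull construction is exactly the one making $T$ left-continuous in the right-continuous-quantile sense, i.e. that we land on the correct version of $G_\nu$. Likewise the degenerate cases $T(u)=\ell_\mu$ (when $\nu$ charges $\ell_\mu$ is excluded, so this is a boundary possibility only if $F_\nu(\ell_\mu-)\ge 1-u$) and $\ell_\mu=-\infty$ (ruled out as noted at the end of the proof of Proposition \ref{prop:support}) need a line each. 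Beyond that, the argument is essentially a one-parameter specialisation of the $\sE_u$-convex-hull machinery already set up, so I expect it to be short.
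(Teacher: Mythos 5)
Your overall strategy — identify $T(u)=G_\nu(1-u)$ and then read off monotonicity, right-continuity, $T<G$ and $\sL(T(U))=\nu$ — is exactly the paper's, and those four deductions are handled correctly. The gap is in the identification step. You characterise $T(u)$ via a line that is tangent to $P_\nu$ at $k_0$ \emph{and passes through $(G(u),\sE_u(G(u)))$}, i.e.\ you impose $\phi(u)\in\partial P_\nu(k_0)$ together with $P_\nu(k_0)+\phi(u)(G(u)-k_0)=\sE_u(G(u))$. But the line in the definition \eqref{eq:R} of $R_{\mu,\nu}(u)$ is anchored at $(G(u),(\sE_u)^c(G(u)))$, and in this regime one has $(\sE_u)^c(G(u))<\sE_u(G(u))$ strictly: the convex hull detaches from $\sE_u$ already at $G_\nu((1-u)+)\le r_\nu\le\ell_\mu\le G(u)$, and $G(u)>G_\nu((1-u)+)$ always holds under the standing no-common-atom assumption. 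Anchoring the tangent at the higher point $(G(u),\sE_u(G(u)))$ moves the tangency point strictly to the right of $G_\nu(1-u)$ and yields a slope strictly larger than $1-u$. Concretely, take $\nu=\tfrac12(\delta_{-1}+\delta_0)$, $\mu=\tfrac12(\delta_1+\delta_2)$, $u=\tfrac12$: then $G(u)=1$, $\sE_u(1)=P_\nu(1)=\tfrac32$, and your system is solved by $k_0=0$ with slope $1$ (indeed $P_\nu(0)+1\cdot(1-0)=\tfrac32$, $1\in\partial P_\nu(0)$), whereas $(\sE_u)^c(1)=1$, $\phi(u)=\tfrac12$ and $T(u)=G_\nu(\tfrac12)=-1$. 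So the equation you propose determines the wrong point, and no amount of algebra downstream can recover $F_\nu(k_0)=1-u$ from it.

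The correct second condition, replacing \emph{passes through $(G(u),\sE_u(G(u)))$}, is that the linear part of $(\sE_u)^c$ must be \emph{parallel to the linear tail of $\sE_u$}, i.e.\ have slope exactly $1-u$: a larger slope would eventually push the hull above $\sE_u$ on the right, a smaller one would leave room to raise it. Combined with tangency to $P_\nu$ this gives $1-u\in\partial P_\nu(k_0)$ directly, hence $k_0=G_\nu(1-u)$ with no integral computation at all; but justifying that the convex hull really is this tangent configuration (equivalently, computing $(\sE_u)^c(G(u))$ and $\phi(u)$) is precisely the work the paper does by exhibiting the candidate $h_u$ and proving $h_u=(\sE_u)^c$. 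Your alternative "potential of the pushforward" route is too undeveloped to assess; note that here $S^\nu(\mu)=\nu$, so matching the total potential is vacuous, and one would need $T_\#\Leb\lvert_{(0,u]}=S^\nu(\mu_u)$ for every $u$, which is essentially the statement to be proved.
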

\begin{proof}
	Fix $u\in(0,1)$. By definition of $G_\nu$ we have that $(1-u)\in\partial P_\nu(G_\nu(1-u))$. Moreover, since $G_\nu$ is non-decreasing and left-continuous, we immediately have that $u\mapsto G_\nu(1-u)$ is non-increasing and right-continuous. Furthermore, $G_\nu(1-u)\leq r_\nu\leq\ell_\mu\leq G(u)$. Since $\mu$ (and thus also $\mu_u$, for each $u\in(0,1)$) and $\nu$ cannot have atom at $\ell_\mu$ simultaneously, it follows that $G_\nu(1-u)<G(u)$ for all $u\in(0,1)$. Finally, since $\tilde U=1-U$ is still a uniform random variable on $[0,1]$, $\sL(G_\nu(1-U))=\nu$. We are left to show that $T(u)=G_\nu(1-u)$, $u\in(0,1)$.
	
	Let $h_u:\R\mapsto\R$ be defined by
	$$
h_u(k)=\begin{cases}
P_\nu(k),&k\leq G_\nu(1-u),\\
L^{P_\nu,1-u}_{G_\nu(1-u)}(k),&k>G_\nu(1-u).
\end{cases}
$$
Note that $h_u$ is continuous and convex. Furthermore, $h_u\leq \sE_u$ everywhere and $h_u<\sE_u$ on $(G_\nu((1-u)+),\infty)$. To see this first observe that, $h_u=P_\nu=\sE_u$ on $(-\infty,G_\nu(1-u)]$. Also, if $G_\nu(1-u)<G_\nu((1-u)+)$, then $h_u=L^{P_\nu,1-u}_{G_\nu(1-u)}=P_\nu=\sE_u$ on $(G_\nu(1-u),G_\nu((1-u)+)]$. On the other hand, for all $k\in (G_\nu((1-u)+),G(u))$, $(1-u)<\inf\{\theta:\theta\in\partial\sE_u(k)\}$, and therefore $h_u=L^{P_\nu,1-u}_{G_\nu(1-u)}<\sE_u$ on $(G_\nu((1-u)+),G(u)]$. Finally, since $L^{\sE_u,1-u}_{G(u)}=L^{D,1-u}_{G(u)}$ is parallel to $L^{P_\nu,1-u}_{G_\nu(1-u)}$, we also have that $h_u=L^{P_\nu,1-u}_{G_\nu(1-u)}<L^{D,1-u}_{G(u)}=\sE_u$ on $(G(u),\infty)$, as claimed. It follows that $h_u\leq\sE^c_u\leq\sE_u$ everywhere.

We claim that $\sE^c_u=h_u$ everywhere. First, since $\sE_u(k)=P_\nu(k)=h_u(k)$ for $k\leq G_\nu((1-u)+)$ we must have that $\sE^c_u=h_u$ on $(-\infty,G_\nu((1-u)+)]$. On the other hand, if $\sE^c_u(k)>h_u(k)$ for some $k>G_\nu((1-u)+)$, then by convexity of $\sE^c_u$ we have that $(1-u)<\inf\{\theta:\theta\in\partial\sE^c_u(k)\}$. Then, since $\sE_u=L^{D,1-u}_{G(u)}$ on $[G(u),\infty)$, for large enough $\hat k\geq k$ we have that $\sE_u(\hat k)<\sE^c_u(\hat k)$, a contradiction.

Finally, $T(u)=R(u)=\inf\{k:k \leq G(u), D(k) = {L}^{\sE^c_u ,\phi(u)}_{G(u)}(k) \}=\inf\{k:k \leq G(u), D(k) = {L}^{P_\nu ,1-u}_{G_\nu(1-u)}(k) \}=\inf\{k:k \leq G(u), P_\nu(k) = {L}^{P_\nu ,1-u}_{G_\nu(1-u)}(k) \}=G_\nu(1-u)$, which finishes the proof.	
\end{proof}

Using Proposition \ref{prop:U=0Construct} we immediately have the following.
\begin{thm}\label{thm:constructionSUPM}
	Define $T$ as in \eqref{eq:T}. Then it defines a construction as in Theorem~\ref{thm:mainTHM_intro} such that $\sL(Y)=\nu$. In particular, $T$ defines the (lifted) increasing supermartingale coupling, which coincides with the (lifted) antitone coupling.
\end{thm}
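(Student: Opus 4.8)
The plan is to lean almost entirely on Proposition~\ref{prop:U=0Construct}, which already records the analytic facts we need about $T$, and then to identify the induced coupling with $\pi_I$ and $\pi_{AT}$. I would split the argument into two parts: (i) checking that $T$ yields a valid construction in the sense of Theorem~\ref{thm:mainTHM_intro}, and (ii) recognising the associated (lifted) supermartingale transport plan as the increasing one.

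For part (i): since $u^*=0$ we have $\sI=(0,u^*]=\emptyset$, so the conditions on $R$ and $S$ in Theorem~\ref{thm:mainTHM_intro} are vacuous, while the conditions on $T$ — that $T<G$ on $\hat\sI=(0,1)$ and that $T$ be non-increasing — are exactly part of the conclusion of Proposition~\ref{prop:U=0Construct}. Because $u>u^*=0$ for every $u$, the formula for $Y$ collapses to $Y(u,v)=T(u)$, so $X=G(U)$ and $Y=T(U)$; hence $\sL(X)=\sL(G(U))=\mu$ and $\sL(Y)=\sL(T(U))=\nu$, again by Proposition~\ref{prop:U=0Construct}. Finally $Y=T(U)$ is $\sF_1$-measurable, so $\E[Y\mid\sF_1]=T(U)$, and since $T<G$ pointwise on $(0,1)$ we obtain $\E[Y\mid\sF_1]=T(U)<G(U)=X$; in particular $S=(X,Y)$ is an $\mathbf S$-supermartingale. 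This proves the first sentence of the theorem.

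For part (ii): let $\pi:=\sL(G(U),T(U))\in\Pi_S(\mu,\nu)$ be the induced coupling, concentrated on (a Borel set carrying the closure of) $\Gamma:=\{(G(u),T(u)):u\in(0,1)\}$. I would verify the two monotonicity properties characterising $\pi_I$ and invoke uniqueness. For first-order right-monotonicity (Definition~\ref{def:forightmon}) take $\mathbb M=\emptyset$: if $(x_1,y_1),(x_2,y_2)\in\Gamma$ with $x_1<x_2$, write $x_i=G(u_i)$; monotonicity of $G$ forces $u_1<u_2$, and then $y_1=T(u_1)\ge T(u_2)=y_2$ since $T$ is non-increasing, so $y_1<y_2$ cannot occur. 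For second-order left-monotonicity (Definition~\ref{def:lmon}), the configuration $(x,y^-),(x,y^+),(x',y')\in\Gamma$ with $x<x'$ is non-trivial only if $y^-<y^+$; then the two coincident origins $u_1\ne u_2$ satisfy $G(u_1)=G(u_2)=x$, so they lie in a non-degenerate flatness interval of $G$ at level $x$, with the larger origin mapping to $y^-$ (as $T$ is non-increasing), while $x'>x$ forces the origin of $(x',y')$ to exceed both $u_1$ and $u_2$, giving $y'=T(\cdot)\le y^-$ and so $y^-<y'<y^+$ is impossible. By Nutz and Stebegg \cite[Theorem 8.1]{NutzStebegg:18} it follows that $\pi=\pi_I$, with set of martingale points $\mathbb M=\emptyset$, consistent with the general claim $\mathbb M=(-\infty,G(u^*)]$ since $G(0)=-\infty$. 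Hence the quantile lift $\hat\pi^Q=\sL(U,G(U),T(U))$ is the lifted increasing supermartingale transport plan of Corollary~\ref{cor:construction}, and Proposition~\ref{prop:U=0Construct} identifies $T(u)=G_\nu(1-u)$, so $\pi$ maps the $u$-th quantile of $\mu$ to the $(1-u)$-th quantile of $\nu$; that is, $\pi=\pi_{AT}$.

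The main obstacle is making the second-order left-monotonicity step fully rigorous in the presence of atoms of $\mu$: one must fix a genuine Borel graph $\Gamma$ carrying full $\pi$-mass and argue carefully about flatness intervals of the left-continuous quantile $G$. An alternative route that sidesteps this bookkeeping is to use the shadow characterisation of $\pi_I$ (Nutz and Stebegg \cite[Theorem 6.6]{NutzStebegg:18}): one checks $\sL(I_{\{U\le u\}}G(U))=\mu_u$ directly from \eqref{eq:muLift}, and $\sL(I_{\{U\le u\}}T(U))=S^\nu(\mu_u)$ using the identity $(\sE^{\mu,\nu}_u)^c=h_u$ established inside the proof of Proposition~\ref{prop:U=0Construct} together with $P_{S^\nu(\mu_u)}=P_\nu-(\sE^{\mu,\nu}_u)^c=P_\nu-h_u$ from Theorem~\ref{thm:shadow_potential}; translating $\{U\le u\}$ into $\{G(U)\le x\}$ then shows that $\pi$ transports $\mu\lvert_{(-\infty,x]}$ to $S^\nu(\mu\lvert_{(-\infty,x]})$ for every $x$, whence $\pi=\pi_I$.
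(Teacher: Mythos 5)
Your proposal is correct. Part (i) is exactly the paper's argument: the paper states the theorem as an immediate consequence of Proposition~\ref{prop:U=0Construct}, and your verification that the conditions of Theorem~\ref{thm:mainTHM_intro} collapse to the conclusions of that proposition (with $\sI=\emptyset$, so the $R,S$ clauses are vacuous, and the supermartingale property following from $T<G$ and the $\sF_1$-measurability of $T(U)$) is precisely what "immediately" means there. For part (ii) you supply more than the paper writes. Your primary route — verifying first-order right-monotonicity with $\mathbb M=\emptyset$ and second-order left-monotonicity on the graph of $(G,T)$, then invoking the uniqueness result of Nutz and Stebegg — is sound (the flatness-interval argument for coincident origins is the right way to handle atoms of $\mu$), but it is a different mechanism from the one the paper implicitly relies on, namely the shadow characterisation: the identity $(\sE^{\mu,\nu}_u)^c=h_u$ proved inside Proposition~\ref{prop:U=0Construct} together with Theorem~\ref{thm:shadow_potential} shows that $\mu_u$ is transported exactly to $S^\nu(\mu_u)$ for every $u$, which is the defining property of $\pi_I$; this is your stated "alternative route" and is the cleaner one here, since it avoids the Borel-graph bookkeeping you flag as the main obstacle. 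Either way the identification with $\pi_{AT}$ via $T(u)=G_\nu(1-u)$ is immediate.
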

\begin{figure}[H]
	\centering
	\begin{tikzpicture}[scale=1]

	\begin{axis}[%
	width=4.028in,
	height=2.754in,
	at={(1.011in,0.642in)},
	scale only axis,
	xmin=-2,
	xmax=2,
	ymin=-0.5,
	ymax=2,
	axis line style={draw=none},
	ticks=none
	]
	\draw[black, thin] (-2,0) -- (2,0);
\addplot [color=black, dashed, line width=1.0pt, forget plot]
table[row sep=crcr]{%
	0.545454545454545	0.89669401425384\\
	0.634803622956617	0.933312919164825\\
	0.727272727272727	0.962809410137732\\
	0.773025705488219	0.974241993218211\\
	0.818778683703711	0.983895490286957\\
	0.86393479639731	0.990742557411827\\
	0.909090909090909	0.995868226707272\\
	1.00007025233224	1.00013968779498\\
	1.09090909090909	1.00000024012117\\
	1.18844925769827	1.00000015911441\\
	1.27272727272727	1.00000060644527\\
	1.35934851571652	0.999999558130977\\
	1.45454545454545	1.00000009224857\\
	1.54994123691727	0.99999819611556\\
	1.63636363636364	1.00002907474622\\
	1.70498279509592	0.999509618190581\\
	1.81818181818182	1.00033762370232\\
	1.90740209505949	1.0000009156506\\
	2	0.999998969134129\\
};
\addplot [color=blue, dotted, line width=1.0pt, forget plot]
table[row sep=crcr]{%
		-0.545454545454545	0.103305838975067\\
	-0.447954129729544	0.15237743659336\\
	-0.363636363636364	0.202508554005905\\
	-0.267949040379642	0.26794919827413\\
	-0.181818181818182	0.335048153804228\\
	-0.0917523442360944	0.412456743706039\\
	0	0.5\\
	0.0909329275051746	0.586798352495356\\
	0.181818181818182	0.665288842775788\\
	0.276340803623378	0.738158725561154\\
	0.363636363636364	0.797520730795605\\
	0.46183178576623	0.855187073053214\\
	0.545454545454545	0.897727072783314\\
	0.634803622956617	0.942398935407721\\
	0.727272727272727	0.988635862509857\\
	0.818778683703711	1.03470531148313\\
	0.909090909090909	1.07954590825459\\
	1.00007025233224	1.12517487582868\\
	1.09090909090909	1.17045475642469\\
	1.18844925769827	1.21922501403464\\
	1.27272727272727	1.26136440028013\\
	1.35934851571652	1.30467359645597\\
	1.45454545454545	1.35227286470508\\
	1.54994123691727	1.39996926004764\\
	1.63636363636364	1.44318095599006\\
	1.70498279509592	1.47700032634203\\
	1.81818181818182	1.53409158310667\\
	1.90740209505949	1.57870114905301\\
	2	1.62499938433451\\
};
\addplot [color=red, line width=1.0pt, forget plot]
table[row sep=crcr]{%
	-2	0\\
	-1.91657902485342	0\\
	-1.81818181818182	0\\
	-1.72300276879786	0\\
	-1.63636363636364	0\\
	-1.54716681534369	0\\
	-1.45454545454545	0\\
	-1.35423690138381	0\\
	-1.27272727272727	0\\
	-1.17140721081572	0\\
	-1.09090909090909	0\\
	-1.00320655232955	0\\
	-0.956148730710229	0.000961467324401677\\
	-0.909090909090909	0.0041322321180808\\
	-0.865394886605159	0.00905926621532028\\
	-0.821698864119408	0.0158956463592492\\
	-0.774485795696068	0.0254283302165158\\
	-0.727272727272727	0.0371900935858536\\
	-0.676567644997657	0.0522734653412368\\
	-0.625862562722587	0.0699894241241011\\
	-0.545454545454545	0.103305838975067\\
	-0.447954129729544	0.151023049974868\\
	-0.363636363636364	0.19321112905674\\
	-0.267949040379642	0.241025370513682\\
	-0.181818181818182	0.284428651449477\\
	-0.0917523442360944	0.329123694206483\\
	0	0.374999671405176\\
	0.0909329275051746	0.420466289804514\\
	0.181818181818182	0.465908899271805\\
	0.276340803623378	0.513170429123079\\
	0.363636363636364	0.556818055811411\\
	0.46183178576623	0.605915572347986\\
	0.545454545454545	0.647727505014768\\
	0.634803622956617	0.692399392099147\\
	0.727272727272727	0.738636155960953\\
	0.818778683703711	0.784403178686012\\
	0.909090909090909	0.829545998711309\\
	1.00007025233224	0.87503482275064\\
	1.09090909090909	0.920454170811474\\
	1.18844925769827	0.969224799778874\\
	1.27272727272727	1.01136496434949\\
	1.35934851571652	1.05467445676585\\
	1.45454545454545	1.10227298177649\\
	1.54994123691727	1.14997087120432\\
	1.63636363636364	1.19375611457704\\
	1.70498279509592	1.2270011232803\\
	1.81818181818182	1.28409099284289\\
	1.90740209505949	1.32870093859336\\
	2	1.37500042464913\\
};
	\node (lNU)[scale=1] at (-1,-0.2) {$\ell_\nu$};
\draw[gray,dashed,thin] (-1,-0.1) -- (-1,0);

	\node (rNU)[scale=1] at (0,-0.2) {$r_\nu=\ell_\mu$};
\draw[gray,dashed,thin] (0,-0.1) -- (0,0.5);

\node (rMU)[scale=1] at (1,-0.2) {$r_\mu$};
\draw[gray,dashed,thin] (1,-0.1) -- (1,1);

\node (G)[scale=1] at (0.5,-0.3) {$G(u)$};
\draw[gray,dashed,thin] (0.5,-0.2) -- (0.5,0.85);

\node (T)[scale=1] at (-0.5,-0.3) {$T(u)$};
\draw[gray,dashed,thin] (-0.5,-0.2) -- (-0.5,0.13);
\node (E)[scale=1,blue] at (1.1,1.5) {$y\mapsto\sE_{u}(y)$};
\node (D)[scale=1,black] at (1.7,0.8) {$y\mapsto D(y)$};
\node (Ec)[scale=1,red] at (-1.5,0.2) {$y\mapsto \sE^c_u(y)$};
	\end{axis}
	
	\end{tikzpicture}
	\caption{Plot of locations of $G(u)=G_\mu(u)$ and $T(u)=T_{\mu,\nu}(u)$ in the case where $\mu\leq_{cd}\nu$ and $u^*=0$. The dashed curve represents $D=D_{\mu,\nu}$. Since the support of $\mu$ is (strictly) to the right of the support of $\nu$, $D$ is convex to the left of $r_\nu$, linear on $(r_\nu,\ell_\mu)$ and concave to the right of $\ell_\nu$. In particular, $D=P_\nu$ on $(-\infty,\ell_\mu]$ and $D=P_{\delta_{\overline\nu/\nu(\R)}}-P_\mu$ on $(\ell_\mu,\infty)$. The dotted curve corresponds to the graph of $\sE_{u}=\sE_u^{\mu,\nu}$. Note that $D=\sE_u$ on $(-\infty, G(u+)]$, while $\sE_u$ is linear (with slope $1-u$) and $D<\sE_u$ on $(G(u+),\infty)$. The solid curve below $\sE_{u}$ represents $\sE^c_{u}$. The convex hull $\sE^c_{u}$ is linear on $[T(u),\infty)$, and its slope is given by $\phi(u)=1-u$. Note that the linear section of $\sE_u$ on $(G(u+),\infty)$ is parallel to the linear section of $\sE^c_u$ on $(T(u),\infty)$.}
	\label{fig:T}
\end{figure}
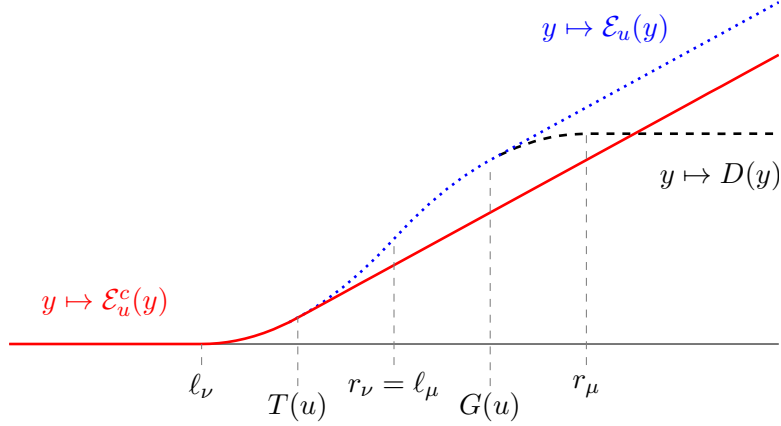
\subsubsection{The general supermartingale case: $u^*\in(0,1)$}\label{sec:u^*General}
In this section we combine the constructions of Sections \ref{sec:u=0} and \ref{sec:u=1},	and show how to build the increasing supermartingale coupling when $u^*\in(0,1)$.

Let $\mu,\nu\in\sP$ with $\mu\leq_{cd}\nu$ and suppose that $u^*\in(0,1)$. Recall that in this case $c(u^*)=\overline{\mu_{u^*}}-\overline{S^\nu(\mu_{u^*})}=0$ and therefore $\mu_{u^*}\leq_c{S^\nu(\mu_{u^*})}$. Hence we can embed $\mu_{u^*}$ into $S^\nu(\mu_{u^*})$ using a construction similar the one provided in Section \ref{sec:u=0}. In particular, define $\hat D:=P_{S^\nu(\mu_{u^*})}-P_{\mu_{u^*}}$ and $\hat{\sE}_u:=\hat D+P_{\mu_{u^*}}-P_{\mu_u}$, $u\in(0,u^*]$. ($\hat{\sE}_u$ serves the same role for $\mu_{u^*}$ and $S^\nu(\mu_{u^*})$ as $\sE_u$ did for $\mu_u$ and $P_\nu$ in Section \ref{sec:u=0}.)

Define $\hat R,\hat S:(0,u^*]\mapsto\R$ by
\begin{align}
\label{eq:hR} \hat{R}(u)&:= \tilde{R}_{\mu_{u^*},S^\nu(\mu_{u^*})},\\
\label{eq:hS} \hat{S}(u) &:= \tilde{S}_{\mu_{u^*},S^\nu(\mu_{u^*})},
\end{align}
where $\tilde{R}$ and $\tilde{S}$ are defined as in \eqref{eq:Rgeneral} and \eqref{eq:Sgeneral}, respectively. Furthermore,  for $u\in(0,u^*]$ and $v\in(0,1)$, define $\hat{Y}(u,v) \in \{\hat{R}(u),\hat{S}(u) \}$ by $\hat{Y}(u,v) = G(u)$ on $\hat{R}(u)=\hat{S}(u)$ and
\begin{equation}
\hat{Y}(u,v) = \hat{R}(u) I_{\{ v \leq \frac{\hat{S}(u) - G(u)}{\hat{S}(u)-\hat{R}(u)} \}} +  \hat{S}(u) I_{ \{ v > \frac{\hat{S}(u) - G(u)}{\hat{S}(u)-\hat{R}(u)} \} }
\label{eq:YUVdef3}
\end{equation}
otherwise.

 We now deal with $\tilde\mu:=(\mu-\mu_{u^*})$ and $\tilde\nu:=(\nu-S^\nu(\mu_{u^*}))$. Recall that, by Proposition \ref{prop:support}, the support of $\tilde\nu$ is strictly to the left of the support of $\tilde\mu$. Let $\tilde G=G_{\tilde\mu}$ be the (left-continuous) quantile function of $\tilde\mu$. Consider the lift $(\tilde{\mu}_u)_{u\in(0,1-u^*)}$ of $\tilde\mu$ defined as in \eqref{eq:muLift} but with $\tilde\mu$ and $\tilde G$ in place of $\mu$ and $G$, respectively, and set $\tilde{\mu}_0$ to be the zero measure and $\tilde{\mu}_{1-u^*}=\tilde\mu$. Let $\tilde{D}:=P_{\tilde\nu}-P_{\tilde\mu}$ and $\tilde{\sE}_u:=\tilde D+P_{\tilde\mu}-P_{\tilde{\mu}_u}$ for $u\in(0,1-u^*)$.

Define $\tilde T:(0,1-u^*)\mapsto\R$ by
\begin{align}
\label{eq:tRT} \tilde{T}(u):=T_{\tilde\mu,\tilde\nu}(u),\quad u\in(0,1-u^*),
\end{align}
where $T_{\tilde\mu,\tilde\nu}$ is defined as in \eqref{eq:T}.

We are now ready to prove Theorem \ref{thm:mainTHM_intro} in the case when $u^*\in(0,1)$.
\begin{thm}\label{thm:construction3}
	Define $\hat R$ and $\hat S$ as in \eqref{eq:hR} and \eqref{eq:hS}, and $\tilde T$ as in \eqref{eq:tRT}.
	Then $(\hat R,\hat S, \tilde T)$ is second-order left-monotone and first-order right-monotone with respect to $((0,1),(0,u^*],G)$.
	
	Furthermore, let $\hat Y$ be as in \eqref{eq:YUVdef3} and set $$Y(u,v)=I_{\{u\in(0,u^*]\}}\hat{Y}(u,v)+I_{\{u\in(u^*,1)\}}\tilde{T}(u-u^*),\quad u,v\in(0,1).$$
	Then $(\hat R,\hat S,\tilde T)$ defines a construction as in Theorem~\ref{thm:mainTHM_intro} such that $\sL(Y)=\nu$. In particular, $(\hat R,\hat S,\tilde T)$ defines the (lifted) increasing supermartingale coupling.
\end{thm}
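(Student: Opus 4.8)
The plan is to assemble the statement from the two building blocks already established, namely Theorem \ref{thm:constructionMART} applied to the pair $(\mu_{u^*}, S^\nu(\mu_{u^*}))$ (which are in convex order since $c(u^*)=\overline{\mu_{u^*}}-\overline{S^\nu(\mu_{u^*})}=0$, recall Lemma \ref{lem:cmeandiff} and Remark \ref{rem:cVScd}), and Theorem \ref{thm:constructionSUPM} applied to the pair $(\tilde\mu,\tilde\nu)=(\mu-\mu_{u^*},\nu-S^\nu(\mu_{u^*}))$, which by Proposition \ref{prop:support} has $\tilde\nu$ supported strictly to the left of $\tilde\mu$, so the hypotheses of Section \ref{sec:u=0} hold. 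I would first verify the marginal identities. On $\{U\le u^*\}$ we have $X(U)=G(U)\sim\mu_{u^*}$ restricted appropriately; more precisely $\sL(I_{\{U\le u^*\}}G(U))=\mu_{u^*}$ by construction of the lift \eqref{eq:muLift}, and since $G$ restricted to $(0,u^*]$ is a quantile function of $\mu_{u^*}$, Theorem \ref{thm:constructionMART} gives $\sL(I_{\{U\le u^*\}}\hat Y(U,V))=S^\nu(\mu_{u^*})$, with the martingale property of $(G(U),\hat Y(U,V))$ on $\{U\le u^*\}$ inherited from that theorem. On $\{U>u^*\}$, the map $u\mapsto G(u)$ for $u\in(u^*,1)$ agrees (up to the shift by $u^*$) with the quantile function $\tilde G$ of $\tilde\mu$: indeed $G(u^*+v)$ for $v\in(0,1-u^*)$ is a quantile function of $\mu-\mu_{u^*}$ because $\mu_{u^*}$ coincides with $\mu$ below $G(u^*)$ (or $G(u^*+)$) and places an atom at that point. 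Hence $\sL(I_{\{U>u^*\}}G(U))=\tilde\mu$ and, by Proposition \ref{prop:U=0Construct} / Theorem \ref{thm:constructionSUPM} applied to $(\tilde\mu,\tilde\nu)$, $\sL(I_{\{U>u^*\}}\tilde T(U-u^*))=\tilde\nu$. Adding the two pieces, $\sL(X)=\mu_{u^*}+\tilde\mu=\mu$ and $\sL(Y)=S^\nu(\mu_{u^*})+\tilde\nu=\nu$.

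Next I would check the supermartingale property and the monotonicity/order-theoretic properties. For $u\le u^*$ the conditional law of $Y$ given $U=u$ is $\chi_{\hat R(u),G(u),\hat S(u)}$, which has barycentre $G(u)=X(u)$, so the martingale (hence supermartingale) inequality holds there; for $u>u^*$, $Y(u,v)=\tilde T(u-u^*)$ is deterministic and by Proposition \ref{prop:U=0Construct} satisfies $\tilde T(u-u^*)<\tilde G(u-u^*)=G(u)=X(u)$, giving the strict supermartingale inequality. For the structural properties I would invoke: (i) $(\hat R,\hat S)$ is second-order left-monotone with respect to $G$ on $(0,u^*]$ — this is exactly the conclusion of Theorem \ref{thm:constructionMART} for $(\mu_{u^*},S^\nu(\mu_{u^*}))$; (ii) $\tilde T$ is non-increasing with $\tilde T<G$ on $(u^*,1)$ — from Proposition \ref{prop:U=0Construct}; and (iii) the cross condition $\tilde T(u')\notin(\hat R(u),\hat S(u))$ for $u\in(0,u^*]$, $u'\in(u^*,1)$. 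For (iii) I would argue as follows: if $x:=\tilde T(u')\in(\hat R(u),\hat S(u))$ then by Remark \ref{rem:R<x<S}, $x$ lies in an irreducible component $\sI_j^{\mu_{u^*},S^\nu(\mu_{u^*})}$ of the pair $(\mu_{u^*},S^\nu(\mu_{u^*}))$, hence $x$ is in the interior of the support of $S^\nu(\mu_{u^*})$, i.e. $x<r_{S^\nu(\mu_{u^*})}$ — but $x=\tilde T(u')\le r_{\tilde\nu}=r_{\nu-S^\nu(\mu_{u^*})}$, and Proposition \ref{prop:support} says $r_{\tilde\nu}\le \ell_{\tilde\mu}$ while also $\tilde\nu$ and $S^\nu(\mu_{u^*})$ together make up $\nu$; combining the support information from Proposition \ref{prop:support} (the support of $\tilde\nu$ sits to the left of $G(u^*)$ and to the left of the support of $\tilde\mu$) with the fact that $x$ is interior to the support of $S^\nu(\mu_{u^*})$ yields a contradiction. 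I would spell out the boundary/atom bookkeeping at $G(u^*)$ here carefully.

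The main obstacle I anticipate is precisely this interface at $u^*$, and in particular the possibility that $\mu$ has an atom at $G(u^*)$ (flagged repeatedly in the introduction as needing "extra care"). In that case the mass of $\mu$ at $G(u^*)$ is split between the "martingale part" $\mu_{u^*}$ (which absorbs a sub-mass $u^*-\mu|_{(-\infty,G(u^*))}(\R)$ of the atom) and the "supermartingale part" $\tilde\mu$ (which gets the remainder), and one must confirm that: the quantile function $G$ on $(0,u^*]$ really is a quantile function of $\mu_{u^*}$ including its atom at $G(u^*)$; the shifted $G$ on $(u^*,1)$ really is a quantile function of $\tilde\mu$; the two conditional kernels glue to a genuine disintegration of a measure in $\Pi_S(\mu,\nu)$; and the cross-monotonicity (iii) survives because $\tilde T$ cannot land on the atom at $G(u^*)$ — this follows because $\tilde\nu$ charges only $(-\infty,G(u^*))$ (or $(-\infty,G(u^*)]$ with no atom there), by Proposition \ref{prop:support}, whence $\tilde T(u')=G_{\tilde\nu}(1-u')<G(u)$ for every relevant $u$. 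A secondary technical point is handling the null set $\sN$ arising in \eqref{eq:Rgeneral}–\eqref{eq:Sgeneral} for the pair $(\mu_{u^*},S^\nu(\mu_{u^*}))$; since $\Prob[U\in\sN]=0$ this does not affect laws, exactly as in the proof of Theorem \ref{thm:constructionMART}. Once these gluing lemmas are in place, the verification that $(\hat R,\hat S,\tilde T)$ is second-order left-monotone and first-order right-monotone with respect to $((0,1),(0,u^*],G)$ in the sense of Definition \ref{def:lmonfns} is a direct transcription of (i), (ii), (iii), and the identification with $\pi_I$ follows from the support characterisation of Nutz and Stebegg \cite[Theorem 8.1]{NutzStebegg:18}: the coupling we built is second-order left-monotone (w.r.t. its support graph $\Gamma$), is a martingale on $\mathbb M\times\R$ with $\mathbb M=(-\infty,G(u^*)]$, and is first-order right-monotone, so it must equal $\pi_I$.
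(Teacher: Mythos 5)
Your overall architecture is the same as the paper's: split at $u^*$, invoke Theorem \ref{thm:constructionMART} for the pair $(\mu_{u^*},S^\nu(\mu_{u^*}))$ and Theorem \ref{thm:constructionSUPM} for $(\tilde\mu,\tilde\nu)=(\mu-\mu_{u^*},\nu-S^\nu(\mu_{u^*}))$ to obtain the marginals, the (super)martingale property and the monotonicity on each piece, and reduce everything to the cross condition $\tilde T(v-u^*)\notin(\hat R(u),\hat S(u))$ via Remark \ref{rem:R<x<S}. The gap is in how you close that last step, which is the only genuinely new content of this theorem.

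Having placed $x:=\tilde T(v-u^*)$ in the interior of an irreducible component $\sI_j$ of $(\mu_{u^*},S^\nu(\mu_{u^*}))$, you try to derive a contradiction from support locations: ``$x$ is in the interior of the support of $S^\nu(\mu_{u^*})$, i.e.\ $x<r_{S^\nu(\mu_{u^*})}$, but $x\le r_{\tilde\nu}\le\ell_{\tilde\mu}$.'' Two problems. First, the interior of an irreducible component (a component of $\{D_{\mu_{u^*},S^\nu(\mu_{u^*})}>0\}$) need not meet the support of $S^\nu(\mu_{u^*})$ at all (take $\mu_{u^*}=\delta_0$ shadowed into $\tfrac12(\delta_{-1}+\delta_1)$: the component is $(-1,1)$, the support is $\{-1,1\}$), so the only thing you really get is $\ell_j<x<r_j$. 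Second, and decisively, the two inequalities you end up with are perfectly compatible: Proposition \ref{prop:support} orders $\tilde\nu$ against $\tilde\mu$, not against $S^\nu(\mu_{u^*})$, and the supports of $\tilde\nu$ and $S^\nu(\mu_{u^*})$ (both sub-measures of $\nu$) may interleave arbitrarily, with $r_{S^\nu(\mu_{u^*})}$ exceeding $\ell_{\tilde\mu}$. No contradiction is exhibited, and none follows from support bookkeeping alone. The correct obstruction is a curvature statement about the potential $P_{\tilde\nu}=(\sE^{\mu,\nu}_{u^*})^c$: since $x=G_{\tilde\nu}(1-v)$ is a genuine quantile of $\tilde\nu$, $P_{\tilde\nu}$ cannot be affine on any open interval containing $x$ (the paper deduces this from the strict inequality $P_{\tilde\nu}>L^{P_{\tilde\nu},1-v}_{x}$ to the left of $x$, a consequence of left-continuity of $G_{\tilde\nu}$); whereas $x\in\sI_j$ would give $\sE^{\mu,\nu}_{u^*}>(\sE^{\mu,\nu}_{u^*})^c$ on a neighbourhood of $x$, so Lemma \ref{lem:linear} would force $(\sE^{\mu,\nu}_{u^*})^c=P_{\tilde\nu}$ to be affine there --- a contradiction. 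Equivalently, one shows $D_{\mu_{u^*},S^\nu(\mu_{u^*})}(x)=\sE^{\mu,\nu}_{u^*}(x)-(\sE^{\mu,\nu}_{u^*})^c(x)=0$, so $x$ cannot lie in the interior of any irreducible component. You need this convex-hull argument (or the equivalent remark that $\tilde\nu$ assigns zero mass to each open set $\sI_j$, hence no point of the support of $\tilde\nu$ lies in one); the support-ordering route does not work. The remainder of your proposal (marginals, gluing, null set $\sN$, identification with $\pi_I$) is sound and matches the paper.
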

\begin{proof}
	Let $U,V$ be two independent uniform random variables on $[0,1]$. Fix $y\in\R$. Then using Theorem \ref{thm:constructionMART} we have that $\Prob[Y(U,V)\leq y,U\leq u^*]=\Prob[\hat{Y}(U_{u^*},V)\leq y]=S^\nu(\mu_{u^*})((-\infty,y])$, where $U_{u^*}$ is a uniform random variable on $[0,u^*]$, independent of $V$. On the other hand, using Theorem \ref{thm:constructionSUPM} we have that $\Prob[Y(U,V)\leq y,U> u^*]=\Prob[\tilde{T}(U_{1-u^*})\leq y]=(\nu-S^\nu(\mu_{u^*}))((-\infty,y])$, where $U_{{1-u^*}}$ is a uniform random variable on $[0,1-u^*]$, independent of $V$. It follows that $\sL(Y(U,V))=\nu$.
	
	Furthermore, from Theorem \ref{thm:constructionMART} we have that $(\hat R,\hat S)$ is second-order left-monotone with respect to $G$ on $(0,u^*]$. On the other hand, from Theorem \ref{thm:constructionSUPM} it follows that $u\mapsto\tilde T(u-u^*)$ is non-increasing and $\tilde T(u-u^*)<\tilde G(u-u^*)=G(u)$ for all $u\in(u^*,1)$. We are left to show that for all $u,v\in(0,1)$ with $u\leq u^*<v$, $\tilde T(v-u^*)\notin (\hat R(u),\hat S(u))$.

	If $\hat R(u)<\tilde T(v-u^*) < \hat S(u)$, then by Remark \ref{rem:R<x<S} we have that $\tilde T(v-u^*)$ belongs to an interior of an irreducible component of $\mu_{u^*}$ and $S^\nu(\mu_{u^*})$. We will show that this cannot happen by proving that $D_{\mu_{u^*},S^\nu(\mu_{u^*})}(\tilde T(v-u^*))=0$.
	
By construction we have that, for $v\in(u^*,1)$, $\tilde T(v-u^*)=G_{\nu-S^\nu(\mu_{u^*})}(1-v)$ and 
$$
(\sE^{\nu-S^\nu(\mu_{u^*}),\mu-\mu_{u^*}}_{v-u^*})^c(k)=\begin{cases}P_{\nu-S^\nu(\mu_{u^*})}(k),&k\leq G_{\nu-S^\nu(\mu_{u^*})}(1-v),\\
L^{P_{\nu-S^\nu(\mu_{u^*})},1-v}_{G_{\nu-S^\nu(\mu_{u^*})}}(k),&k>G_{\nu-S^\nu(\mu_{u^*})}(1-v).
\end{cases}
$$
(see the proof Proposition \ref{prop:U=0Construct}). Furthermore, since $G_{\nu-S^\nu(\mu_{u^*})}$ is taken to be left-continuous, $P_{\nu-S^\nu(\mu_{u^*})}>L^{P_{\nu-S^\nu(\mu_{u^*})},1-v}_{G_{\nu-S^\nu(\mu_{u^*})}}$ on $(-\infty,\tilde{T}(v-u^*)=G_{\nu-S^\nu(\mu_{u^*})}(1-v))$. Note that this implies that if $P_{\nu-S^\nu(\mu_{u^*})}$ is linear on $(x,\tilde{T}(v-u^*))$ for some $x<\tilde{T}(v-u^*)$, then the slope of this linear section must be strictly smaller than $1-v$. In particular, there does not exist $\epsilon>0$ such that $P_{\nu-S^\nu(\mu_{u^*})}$ is linear on $(\tilde{T}(v-u^*)-\epsilon,\tilde{T}(v-u^*)+\epsilon)$.
\begin{figure}[H]
	\centering
	\begin{tikzpicture}[scale=1]

	\begin{axis}[%
	width=4.528in,
	height=3.754in,
	at={(1.011in,0.642in)},
	scale only axis,
	xmin=-5,
	xmax=9,
	ymin=-0.5,
	ymax=3,
	axis line style={draw=none},
	ticks=none
	]
	\addplot [color=black, dashed, line width=1.0pt, forget plot]
	table[row sep=crcr]{%
		4.5	2.1875\\
		4.55213810946661	2.19985493175197\\
		4.61363636363636	2.21268078512397\\
		4.67312326950134	2.22328790076463\\
		4.72727272727273	2.23140495867769\\
		4.7830207404102	2.23823000022697\\
		4.81196491565964	2.24116070176428\\
		4.84090909090909	2.24367252066116\\
		4.8722555137721	2.24592033655959\\
		4.90360193663512	2.24767685334487\\
		4.92907369559029	2.2487423648357\\
		4.95454545454545	2.24948347107438\\
		4.98620797389281	2.24995244500396\\
		5.01787049324017	2.25\\
		5.06818181818182	2.25\\
		5.12299590479403	2.25\\
		5.18181818181818	2.25\\
		5.23643820992537	2.25\\
		5.29545454545454	2.25\\
		5.35883589829838	2.25\\
		5.40909090909091	2.25\\
		5.47002866891903	2.25\\
		5.52272727272727	2.25\\
		5.58253184976272	2.25\\
		5.63636363636364	2.25\\
		5.69265478485244	2.25\\
		5.75	2.25\\
		5.80683307969073	2.25\\
		5.86363636363636	2.25\\
		5.92271300226461	2.25\\
		5.97727272727273	2.25\\
		6.03864486610389	2.25\\
		6.09090909090909	2.25\\
		6.14675226434789	2.25\\
		6.20454545454546	2.25\\
		6.26173667731482	2.25\\
		6.31818181818182	2.25\\
		6.37504390770765	2.25\\
		6.43181818181818	2.25\\
		6.49278078606142	2.25\\
		6.54545454545454	2.25\\
		6.59959282232283	2.25\\
		6.65909090909091	2.25\\
		6.71871327307329	2.25\\
		6.77272727272727	2.25\\
		6.81561424693495	2.25\\
		6.88636363636364	2.25\\
		6.94212630941218	2.25\\
		7	2.25\\
	};
	\addplot [color=black!30!green, dashdotted, line width=1.0pt, forget plot]
	table[row sep=crcr]{%
		0.5	0.765625\\
		0.572993353253259	0.788435422891643\\
		0.659090909090909	0.815340909090909\\
		0.74237257730187	0.841366430406834\\
		0.818181818181818	0.865056818181818\\
		0.896229036574275	0.889446573929461\\
		0.977272727272727	0.914772727272727\\
		1.06504271128917	0.942200847277865\\
		1.13636363636364	0.964488636363636\\
		1.22501869053624	0.992193340792575\\
		1.29545454545455	1.01420454545455\\
		1.37219426671164	1.03818570834739\\
		1.45454545454545	1.06392045454545\\
		1.53101349389552	1.08781671684235\\
		1.61363636363636	1.11363636363636\\
		1.70237025761774	1.14136570550554\\
		1.77272727272727	1.16335227272727\\
		1.85804013648665	1.19001254265208\\
		1.93181818181818	1.21306818181818\\
		2.01554458966781	1.23923268427119\\
		2.09090909090909	1.26278409090909\\
		2.16971669879342	1.28741146837294\\
		2.25	1.3125\\
		2.32956631156703	1.3373644723647\\
		2.40909090909091	1.36221590909091\\
		2.49179820317046	1.38806193849077\\
		2.56818181818182	1.41222236570248\\
		2.65410281254545	1.4402663587226\\
		2.72727272727273	1.46487603305785\\
		2.80545317008704	1.49191046809696\\
		2.88636363636364	1.52069344008264\\
		2.96643134824075	1.54998218398909\\
		3.04545454545455	1.57967458677686\\
		3.12506147079071	1.61037557476379\\
		3.20454545454545	1.6418194731405\\
		3.28989310048598	1.67646228828908\\
		3.36363636363636	1.70712809917355\\
		3.43942995125196	1.73935489934806\\
		3.52272727272727	1.77560046487603\\
		3.60619858230261	1.81279176343758\\
		3.68181818181818	1.84723657024793\\
		3.74185994570893	1.8750947408313\\
		3.84090909090909	1.92203641528926\\
		3.91897683317705	1.95989871368615\\
		4	2\\
	};
	\addplot [color=black!30!green, dashdotted, line width=1.0pt, forget plot]
	table[row sep=crcr]{%
		4	2\\
		4.06256573135994	2.03128286567997\\
		4.13636363636364	2.06818181818182\\
		4.2077479234016	2.1038739617008\\
		4.27272727272727	2.13636363636364\\
		4.33962488849224	2.16981244424612\\
		4.40909090909091	2.20454545454545\\
		4.48432232396214	2.24216116198107\\
		4.54545454545454	2.27272727272727\\
		4.62144459188821	2.3107222959441\\
		4.68181818181818	2.34090909090909\\
		4.74759508575284	2.37379754287642\\
		4.81818181818182	2.40909090909091\\
		4.88372585191044	2.44186292595522\\
		4.95454545454546	2.47727272727273\\
		5.03060307795806	2.51530153897903\\
		5.09090909090909	2.54545454545455\\
		5.16403440270284	2.58201720135142\\
		5.22727272727273	2.61363636363636\\
		5.29903821971527	2.64951910985763\\
		5.36363636363636	2.68181818181818\\
		5.43118574182293	2.71559287091146\\
		5.5	2.75\\
		5.56819969562888	2.78409984781444\\
		5.63636363636364	2.81818181818182\\
		5.70725560271753	2.85362780135877\\
		5.77272727272727	2.88636363636364\\
		5.84637383932467	2.92318691966234\\
		5.90909090909091	2.95454545454545\\
		5.97610271721746	2.98805135860873\\
		6.04545454545454	3.02272727272727\\
		6.11408401277778	3.05704200638889\\
		6.18181818181818	3.09090909090909\\
		6.25005268924918	3.12502634462459\\
		6.31818181818182	3.15909090909091\\
		6.3913369432737	3.19566847163685\\
		6.45454545454546	3.22727272727273\\
		6.51951138678739	3.2597556933937\\
		6.59090909090909	3.29545454545455\\
		6.66245592768795	3.33122796384398\\
		6.72727272727273	3.36363636363636\\
		6.77873709632194	3.38936854816097\\
		6.86363636363636	3.43181818181818\\
		6.93055157129462	3.46527578564731\\
		7	3.5\\
	};
	\addplot [color=black, dashed, line width=1.0pt, forget plot]
	table[row sep=crcr]{%
		-2.5	0.140625\\
		-2.45828951242671	0.148554451718342\\
		-2.40909090909091	0.158186983471074\\
		-2.36150138439893	0.167792357082914\\
		-2.31818181818182	0.176782024793388\\
		-2.27358340767184	0.186282140641623\\
		-2.22727272727273	0.196410123966942\\
		-2.1771184506919	0.207681071425493\\
		-2.13636363636364	0.217071280991736\\
		-2.08570360540786	0.229033167896779\\
		-2.04545454545455	0.238765495867769\\
		-2.00160327616477	0.24959934161471\\
		-1.95454545454545	0.261234504132231\\
		-1.9108494320597	0.271790902999819\\
		-1.86363636363636	0.282928719008264\\
		-1.81293128136129	0.294580010566356\\
		-1.77272727272727	0.303589876033058\\
		-1.72397706486477	0.314243942488765\\
		-1.68181818181818	0.323217975206612\\
		-1.63397452018982	0.333132954210653\\
		-1.59090909090909	0.341813016528926\\
		-1.54587617211805	0.350641678779853\\
		-1.5	0.359375\\
		-1.45453353624741	0.367770761995725\\
		-1.40909090909091	0.375903925619835\\
		-1.36182959818831	0.384088759093642\\
		-1.31818181818182	0.39139979338843\\
		-1.26908410711689	0.399339095566459\\
		-1.22727272727273	0.405862603305785\\
		-1.18259818852169	0.412591345281576\\
		-1.13636363636364	0.419292355371901\\
		-1.09061065814814	0.425660524520854\\
		-1.04545454545455	0.431689049586777\\
		-0.999964873833882	0.43750439084788\\
		-0.954545454545455	0.443310950413223\\
		-0.905775371150867	0.449832971148751\\
		-0.863636363636364	0.455707644628099\\
		-0.820325742141739	0.46197695966584\\
		-0.772727272727273	0.469137396694215\\
		-0.725029381541366	0.4765968798708\\
		-0.681818181818182	0.48360020661157\\
		-0.647508602452041	0.489327061277577\\
		-0.590909090909091	0.499096074380165\\
		-0.546298952470256	0.507077920974317\\
		-0.5	0.515625\\
	};
	\addplot [color=black, dashed, line width=1.0pt, forget plot]
	table[row sep=crcr]{%
		0.5	0.765625\\
		0.541710487573291	0.778768262665004\\
		0.590909090909091	0.794550619834711\\
		0.638498615601069	0.810104684033047\\
		0.681818181818182	0.824509297520661\\
		0.726416592328157	0.839584214682642\\
		0.772727272727273	0.855501033057851\\
		0.822881549308096	0.873041265089005\\
		0.863636363636364	0.887525826446281\\
		0.914296394592138	0.905820217220796\\
		0.954545454545455	0.920583677685951\\
		0.998396723835225	0.936898932094113\\
		1.04545454545455	0.954416322314049\\
		1.0891505679403	0.970434723992356\\
		1.13636363636364	0.987474173553719\\
		1.18706871863871	1.00546360039619\\
		1.22727272727273	1.01949896694215\\
		1.27602293513523	1.03624680938067\\
		1.31818181818182	1.05049070247934\\
		1.36602547981018	1.06638613918693\\
		1.40909090909091	1.08044938016529\\
		1.45412382788195	1.0949071572651\\
		1.5	1.109375\\
		1.54546646375259	1.1234540699648\\
		1.59090909090909	1.13726756198347\\
		1.63817040181169	1.1513600593201\\
		1.68181818181818	1.1641270661157\\
		1.73091589288312	1.17820358217685\\
		1.77272727272727	1.18995351239669\\
		1.81740181147831	1.20226657171636\\
		1.86363636363636	1.21474690082645\\
		1.90938934185186	1.22683419225234\\
		1.95454545454545	1.23850723140496\\
		2.00003512616612	1.25000878161864\\
		2.04545454545455	1.26149276859504\\
		2.09422462884913	1.27411104975489\\
		2.13636363636364	1.28525309917355\\
		2.17967425785826	1.29693624189812\\
		2.22727272727273	1.31004648760331\\
		2.27497061845863	1.32346820717813\\
		2.31818181818182	1.3358729338843\\
		2.35249139754796	1.34588848597107\\
		2.40909090909091	1.36273243801653\\
		2.45370104752974	1.37629055191554\\
		2.5	1.390625\\
	};
	\addplot [color=blue, dotted, line width=1.0pt, forget plot]
	table[row sep=crcr]{%
		4	2\\
		4.06256573135994	2.03030424799482\\
		4.13636363636364	2.06353305785124\\
		4.2077479234016	2.09308416178138\\
		4.27272727272727	2.11776859504132\\
		4.33962488849224	2.14097617802528\\
		4.40909090909091	2.16270661157025\\
		4.48432232396214	2.18351913360905\\
		4.54545454545454	2.19886363636364\\
		4.62144459188821	2.21786114797205\\
		4.68181818181818	2.23295454545455\\
		4.74759508575284	2.24939877143821\\
		4.81818181818182	2.26704545454545\\
		4.88372585191044	2.28343146297761\\
		4.95454545454546	2.30113636363636\\
		5.03060307795806	2.32015076948951\\
		5.09090909090909	2.33522727272727\\
		5.16403440270284	2.35350860067571\\
		5.22727272727273	2.36931818181818\\
		5.29903821971527	2.38725955492882\\
		5.36363636363636	2.40340909090909\\
		5.43118574182293	2.42029643545573\\
		5.5	2.4375\\
		5.56819969562888	2.45454992390722\\
		5.63636363636364	2.47159090909091\\
		5.70725560271753	2.48931390067938\\
		5.77272727272727	2.50568181818182\\
		5.84637383932467	2.52409345983117\\
		5.90909090909091	2.53977272727273\\
		5.97610271721746	2.55652567930437\\
		6.04545454545454	2.57386363636364\\
		6.11408401277778	2.59102100319445\\
		6.18181818181818	2.60795454545455\\
		6.25005268924918	2.62501317231229\\
		6.31818181818182	2.64204545454545\\
		6.3913369432737	2.66033423581842\\
		6.45454545454546	2.67613636363636\\
		6.51951138678739	2.69237784669685\\
		6.59090909090909	2.71022727272727\\
		6.66245592768795	2.72811398192199\\
		6.72727272727273	2.74431818181818\\
		6.77873709632194	2.75718427408048\\
		6.86363636363636	2.77840909090909\\
		6.93055157129462	2.79513789282365\\
		7	2.8125\\
	};
	\addplot [color=red, line width=1.0pt, forget plot]
	table[row sep=crcr]{%
		-7	0\\
		-6.70802658698696	0\\
		-6.36363636363636	0\\
		-6.03050969079252	0\\
		-5.72727272727273	0\\
		-5.4150838537029	0\\
		-5.09090909090909	0\\
		-4.73982915484333	0\\
		-4.45454545454545	0\\
		-4.09992523785504	0\\
		-3.95905352801843	0.000104788347983613\\
		-3.81818181818182	0.00206611570247935\\
		-3.66470237566762	0.00702653105518359\\
		-3.51122293315342	0.0149314388171964\\
		-3.3465205574858	0.0266897113617917\\
		-3.18181818181818	0.0418388429752066\\
		-3.02888210311806	0.0589418731027757\\
		-2.87594602441793	0.0789685837513662\\
		-2.71070028493624	0.103893359703969\\
		-2.54545454545455	0.132231404958678\\
		-2.19051896952906	0.198652693213302\\
		-1.90909090909091	0.251420454545455\\
		-1.56783945405341	0.315405102364987\\
		-1.27272727272727	0.370738636363636\\
		-0.937821641328747	0.43353344225086\\
		-0.636363636363636	0.490056818181818\\
		-0.32113320482633	0.551162107246044\\
		0	0.625\\
		0.318265246268111	0.704566311567028\\
		0.636363636363636	0.784090909090909\\
		0.967192812681825	0.866798203170456\\
		1.27272727272727	0.943181818181818\\
		1.61641125018181	1.02910281254545\\
		1.90909090909091	1.10227272727273\\
		2.22181268034816	1.18045317008704\\
		2.54545454545455	1.26136363636364\\
		2.86572539296299	1.34143134824075\\
		3.18181818181818	1.42045454545455\\
		3.50024588316283	1.50006147079071\\
		3.81818181818182	1.57954545454545\\
		4.15957240194393	1.66489310048598\\
		4.45454545454545	1.73863636363636\\
		4.75771980500782	1.81442995125196\\
		5.09090909090909	1.89772727272727\\
		5.42479432921044	1.98119858230261\\
		5.72727272727273	2.05681818181818\\
		5.96743978283571	2.11685994570893\\
		6.36363636363636	2.21590909090909\\
		6.67590733270821	2.29397683317705\\
		7	2.375\\
	};
	\node (lNU)[scale=1] at (-4,-0.2) {$\ell_\nu$};
	\draw[gray,dashed,thin] (-4,-0.1) -- (-4,0);
	
	\node (rNU)[scale=1] at (4,-0.2) {$\ell_{\tilde\mu}$};
	\draw[gray,dashed,thin] (4,-0.1) -- (4,2);
	
	
	\node (G)[scale=1] at (4.5,-0.4) {$G_\mu(u)$};
	\draw[gray,dashed,thin] (4.5,-0.3) -- (4.5,2.2);
	
	\node (T)[scale=1] at (0,-0.4) {$\tilde T(u-u^*)$};
	\draw[gray,dashed,thin] (0,-0.3) -- (0,0.6);
	\node (E)[scale=1,blue] at (7.5,2.9) {$y\mapsto\sE^{\mu,\nu}_{u}(y)$};
	\node (ET)[scale=1,red] at (7.5,1.9) {$y\mapsto P_{\tilde\nu-S^{\tilde\nu}(\tilde\mu_{u-u^*})}(y)$};
	
	\node (D)[scale=1,black] at (-2.8,0.5) {$y\mapsto D_{\mu,\nu}(y)$};
	\node (Ec)[scale=1,black!30!green] at (3.7,2.8) {$y\mapsto (\sE^{\mu,\nu}_{u^*})^c(y)$};
	\end{axis}
	
	\end{tikzpicture}
	\caption{Plot of locations of $u^*$, $G_\mu(u)=G_{\tilde\mu}(u-u^*)$ and $\tilde T(u-u^*)$ in the case $\mu\leq_{cd}\nu$ and $0<u^*<u<1$. The dashed curve represents $D_{\mu,\nu}$. The dash-dotted curve corresponds to $(\sE^{\mu,\nu}_{u^*})^c=(P_\nu-P_{\mu_{u^*}})^c=P_{\nu-S^\nu(\mu_{u^*})}=P_{\tilde\nu}$ (note that it is linear on $(r_{\tilde\nu}=r_\nu=r_{\nu-S^\nu(\mu_{u^*})}=\ell_{\tilde\mu},\infty)$). The dotted curve represents $\sE^{\mu,\nu}_u$. Note that $\sE^{\mu,\nu}_u\geq P_{\tilde\nu}$ on $(-\infty,r_\nu)$ and $\sE^{\mu,\nu}_u\leq P_{\tilde\nu}$ on $[r_\nu,\infty)$. The solid curve below $\sE^{\mu,\nu}_u$ and $P_{\tilde\nu}$ corresponds to $P_{\tilde\nu-S^{\tilde\nu}(\tilde\mu_{u-u^*})}=(P_{\tilde\nu}-P_{\tilde\mu_{u-u^*}})^c$. Note that $P_{\tilde\nu-S^{\tilde\nu}(\tilde\mu_{u-u^*})}=P_{\tilde\nu}$ on $(-\infty,\tilde{T}(u-u^*)]$ and $P_{\tilde\nu-S^{\tilde\nu}(\tilde\mu_{u-u^*})}<(P_{\tilde\nu}\wedge\sE^{\mu,\nu}_u)$ on $(\tilde{T}(u-u^*),\infty)$. Furthermore, $\sE^{\mu,\nu}_u$ is linear on $(G_\mu(u),\infty)$ while $P_{\tilde\nu-S^{\tilde\nu}(\tilde\mu_{u-u^*})}$ is linear on $(\tilde{T}(u-u^*),\infty)$, and both of these linear sections are parallel to each other.}
	\label{fig:general}
\end{figure}
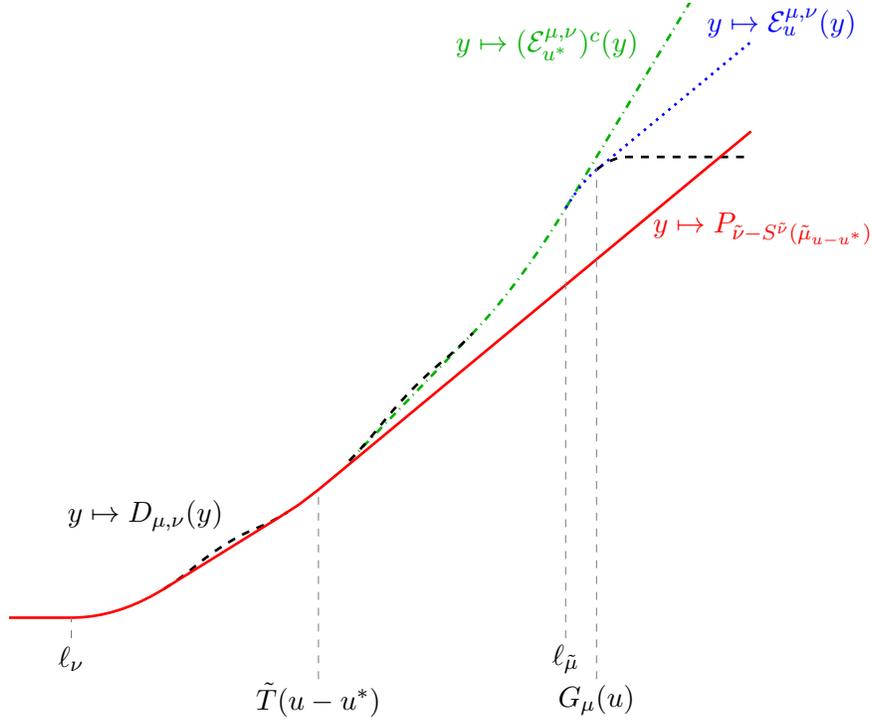

We claim that $(\sE^{\mu,\nu}_{u^*})^c(\tilde{T}(v-u^*))=\sE^{\mu,\nu}_{u^*}(\tilde{T}(v-u^*))$, see Figure \ref{fig:general}. Suppose not. Since $(\sE^{\mu,\nu}_{u^*})^c$ and $\sE^{\mu,\nu}_{u^*}$ are both continuous, $(\sE^{\mu,\nu}_{u^*})^c < \sE^{\mu,\nu}_{u^*}$ on $(\tilde{T}(v-u^*)-\epsilon,\tilde{T}(v-u^*)+\epsilon)$ for some $\epsilon>0$. Then by Lemma \ref{lem:linear} we have that $(\sE^{\mu,\nu}_{u^*})^c$ is linear on $(\tilde{T}(v-u^*)-\epsilon,\tilde{T}(v-u^*)+\epsilon)$. But $(\sE^{\mu,\nu}_{u^*})^c=(P_\nu-P_{\mu_{u^*}})^c=P_\nu-P_{S^\nu(\mu_{u^*})}=P_{\nu-S^\nu(\mu_{u^*})}$, and we have a contradiction since $P_{\nu-S^\nu(\mu_{u^*})}$ cannot be linear on an open interval including $\tilde{T}(v-u^*)$.

Finally, note that
$$
D_{\mu,\nu}-D_{\mu_{u^*},S^\nu(\mu_{u^*})}=P_{\nu-S^\nu(\mu_{u^*})}-(P_{\mu}-P_{\mu_{u^*}})=(\sE^{\mu,\nu}_{u^*})^c-P_{\mu-\mu_{u^*}}.
$$
Then since $(\sE^{\mu,\nu}_{u^*})^c(\tilde{T}(v-u^*))=\sE^{\mu,\nu}_{u^*}(\tilde{T}(v-u^*))=P_\nu(\tilde{T}(v-u^*))-P_{\mu_{u^*}}(\tilde{T}(v-u^*))$ we conclude that $D_{\mu_{u^*},S^\nu(\mu_{u^*})}(\tilde{T}(v-u^*))=0$, which finishes the proof.

\end{proof}
\section{Appendix}\label{sec:proofs}
\begin{proof}[Proof of Lemma \ref{lem:disjointSupport}]
	Let $G_\chi:[0,\chi(\R)]\mapsto\R$ be a quantile function of $\chi$. Then each $\zeta\in[0,\eta(\R)]$ defines a measure
	$$
	\theta^\zeta=\chi\lvert_{(-\infty,G_\chi(\zeta))}+\chi\lvert_{(G_\chi(\zeta+\chi(\R)-\eta(\R)),\infty)}+\alpha^\zeta\delta_{G_\chi(\zeta)}
	+\beta^\zeta\delta_{G_\chi(\zeta+\chi(\R)-\eta(\R))},
	$$
	where $0\leq\alpha^\zeta = \zeta - \chi((-\infty,G_\chi(\zeta))) \leq\chi(\{G_\chi(\zeta)\})$ and $0\leq\beta^\zeta = \eta(\R) - \zeta  - \chi((G_\chi(\zeta+\chi(\R)-\eta(\R)),\infty)) \leq\chi(\{G_\chi(\zeta+\chi(\R)-\eta(\R))\})$. By construction, $\theta^\zeta\leq\chi$ and $\theta^\zeta(\R)=\eta(\R)$.  Furthermore, $\overline{\theta^{0}}\geq\overline\eta\geq \overline{\theta^{\eta(\R)}}$ and $\overline{\theta^{\zeta}}$ is continuous and decreasing in $\zeta$, and therefore there exists $\zeta_*$ such that $\overline{\theta^{\zeta_*}}=\overline{\eta}$.
	
	Now let $f:\R\mapsto\R$ be convex and $g:\R\mapsto\R$ be linear with $g=f$ on $\{\ell_{\eta},r_\eta\}$. Then
	$$
	\int_{\R}fd\eta\leq\int_{\R}gd\eta=\int_{\R}gd\theta^{\zeta_*}\leq\int_{\R}fd\theta^{\zeta_*}.
	$$
	For the the first inequality we use that, by convexity of $f$, $g\geq f$ on $[\ell_{\eta},r_\eta]$, and that $\eta$ does not charge $\R\setminus[\ell_{\eta},r_\eta]$. For the quality we use that $\overline{\theta^{\zeta_*}}=\overline{\eta}$, $\theta^{\zeta_*}(\R)=\eta(\R)$ and the linearity of $g$. To deduce the second inequality we use that $g\leq f$ on $\R\setminus[\ell_{\eta},r_\eta]$ and that $\theta^{\zeta_*}$ does not charge $(\ell_{\eta},r_\eta)$. Since $f$ was arbitrary, $\eta\leq_c\theta^{\zeta_*}$. By Lemma \ref{lem:order} and Remark \ref{rem:orderCX} it follows that $\eta\leq_{pc}\chi$.
\end{proof}
\begin{proof}[Proof of Lemma \ref{lem:CSpotential}]
	
	Since $g\in\sD(a,b)$ and $f\in\sD(\alpha,\beta)$ with $g\geq f$, we have that
	\begin{align*}
	0\leq\lim_{k\to\infty}\{g(k)-f(k)\}&=\lim_{k\to\infty}\{g(k)-(ak-b)-f(k)+(\alpha k-\beta)+(a-\alpha)k-(b-\beta)\}\\
	&=\lim_{k\to\infty}\{(a-\alpha)k-(b-\beta)\}=\lim_{k\to\infty}h(k),
	\end{align*}
	and therefore $a\geq\alpha$. Also, if $\alpha = a$ then $\beta \geq b$.
	
	

	Now suppose $f \leq g$ and $g-f \geq h$. Note that in this case $\eta=0$, since $\lim_{k\to\infty}\{g(k)-f(k)\}=\lim_{k\to\infty}h(k)$. Then $g - f\geq h^+$ and since $h^+$ is convex, we have that $(g-f)\geq (g-f)^c\geq h^+$. Then, $\lim_{\lvert k\lvert\to\infty}\{g(k)-f(k)-  h^+(k) \}=0$, and it follows that $(g-f)^c\in\sD(a-\alpha,b-\beta)$.
	
	Now suppose that $\{k\in\R:h(k)>g(k)-f(k)\neq\emptyset$. Then $\eta>0$. We claim that $\eta<\infty$. Let $\{k_n\}_{n\geq1}$ be such that $\lim_{n\to\infty}\{h(k_n)-g(k_n)+f(k_n)\}=\eta$. Then (up to a subsequence) $\lim_{n\to\infty}k_n$ exists. Set $\bar k:=\lim_{n\to\infty}k_n$.
	
	Suppose $\bar k=\infty$. Then $\eta=\lim_{n\to\infty}\{h(k_n)-g(k_n)+f(k_n)\}=\lim_{k\to\infty}\{h(k)-g(k)+f(k)\}=0$, contradicting the fact that $\eta>0$. Hence $\bar k<\infty$. Then, by the continuity (and finiteness on $\R$) of $k\mapsto h(k)-g(k)+f(k)$,  $\eta=h(\bar k)-g(\bar k)+f(\bar k)<\infty$.
	
	Finally, since $g-f\geq 0$ and $g-f\geq \tilde h:=h-\eta$, we have that $g-f\geq(g-f)^c\geq \tilde h^+$. Then, since $g(\bar k)-f(\bar k)=\tilde h(\bar k)$, convexity of $(g-f)^c$ ensures that $(g-f)^c=\tilde h$ on $[\bar k,\infty)$. It follows that $(g-f)^c\in\sD(a-\alpha,b-\beta+\eta)$.
\end{proof}

\begin{proof}[Proof of Lemma \ref{lem:RSbound}]
	Fix $u\in(0,u^r_i-u^\ell_i)$. Consider the case $S_i(u) \leq r_i < \infty$ first.
	We will prove it by contradiction: suppose $r_i < S_i(u) \leq \infty$.
	Since $G_i(u)=G(u-u^\ell_i) < r_i < S_i(u)$, $R_i(u) \leq Q_{\mu_i,\nu_i}(u) < G_i(u) < r_i < S_i(u)$ (see Hobson and Norgilas \cite[Lemma 4.1]{HobsonNorgilas:21}). Therefore $(\sE^{\mu_i,\nu_i}_u)^c$ is linear on $( R_i(u),S_i(u) )$.
	But since $(\sE^{\mu_i,\nu_i}_u)^c = P_{\nu_i-S^{\nu_i}(\mu_{i,u})}$ (where $\mu_{i,u}$ is defined as in \eqref{eq:muLift} just with respect to $\mu_i$),  $(\sE^{\mu_i,\nu_i}_u)^c$ is linear on $(r_i,\infty)$, and we conclude that $(\sE^{\mu_i,\nu_i}_u)^c$ is linear on $(R_i(u),\infty)$.
	It follows that $(\nu_i-S^{\nu_i}(\mu_{i,u}))(R_i(u),\infty)=0$.
	But $(\mu_i-\mu_{i,u})(R_i(u),\infty)=(\mu_i-\mu_{i,u})(\R)=1-u >0$.
	Therefore $\overline{\mu_i-\mu_{i,u}} > (1-u)R_i(u) \geq \overline{(\nu_i-S^{\nu_i}(\mu_{i,u}))}$, contradicting the fact that $(\mu_i-\mu_{i,u}) \leq _{c} (\nu_i-S^{\nu_i}(\mu_{i,u}))$.
	
	Consider the case $r_i=S_i(u)=\infty$.
	Then again, $R_i(u)<G_i(u)<S_i(u)$ and therefore $(\sE^{\mu_i,\nu_i}_u)^c$ is linear on $(R_i(u),\infty)$.
	It follows that $(\nu_i-S^{\nu_i}(\mu_{i,u}))$ does not charge $(R_i(u),\infty)$, while the support of $(\mu_i-\mu_{i,u})$ is contained $(R_i(u),\infty)$. We conclude as in the previous case.
	
	We now turn to $R_i$.	We need to show that either $-\infty < \ell_i \leq R_i(u)$ or $-\infty = \ell_i < R_i(u)$.
	
	Suppose $-\infty \leq R_i(u) < \ell_i$.
	Then $R_i(u) < G_i(u) \leq S_i(u)$, and $(\sE^{\mu_i,\nu_i}_u)^c$ is linear on $(R_i(u),S_i(u))$.
	Since $0 \leq (\sE^{\mu_i,\nu_i}_u)^c \leq D_{\mu_i,\nu_i} = 0$ on $(-\infty , \ell _i]$, it follows that $\phi_{\mu_i,\nu_i}(u)=0$ and then in fact we have that $R_i(u)=-\infty$.
	Hence, $(\sE^{\mu_i,\nu_i}_u)^c$=0 on $(-\infty, S_i(u)]$.
	Since $(\sE^{\mu_i,\nu_i}_u)^c(S_i(u))=\sE^{\mu_i,\nu_i}_u(S_i(u))\geq D_{\mu_i,\nu_i}(S_i(u))$ we also have that $D_{\mu_i,\nu_i}(S_i(u))=0$ and therefore $S_i(u)=r_i$.
	It follows that $(\nu_i-S^{\nu_i}(\mu_{i,u}))$ is supported on $\{ S_i(u)=r_i\}$. But the support of $(\mu_i-\mu_{i,u})$ is contained in $[G_i(u),r_i)$. Hence $(\mu_i-\mu_{i,u})$ and $(\nu_i-S^{\nu_i}(\mu_{i,u}))$ cannot be in convex order, a contradiction.
	
	Finally suppose $-\infty = R_i(u) = \ell_i$. Then either $\phi_{\mu_i,\nu_i}(u)=0$ or $\phi_{\mu_i,\nu_i}(u)>0$.
	If $\phi_{\mu_i,\nu_i}(u)=0$ then we can argue as in the previous case when $-\infty \leq R_i(u) < \ell_i$.
	If $\phi_{\mu_i,\nu_i}(u)>0$, then, since $(\sE^{\mu_i,\nu_i}_u)^c$ is linear on $(-\infty, S_i(u))$, we must have that $(\nu_i-S^{\nu_i}(\mu_{i,u}))$ has an atom at $R_i(u)=-\infty$, contradicting the fact that $\nu_i$ is integrable.		
\end{proof}

\bibliographystyle{plainnat}

\end{document}